\documentclass[11pt]{imsart}
\RequirePackage{amsthm,amsmath,amsfonts,amssymb}
\RequirePackage[colorlinks,citecolor=blue,urlcolor=blue]{hyperref}
\RequirePackage[numbers]{natbib}
\RequirePackage{graphicx}

\startlocaldefs
\theoremstyle{plain}

\newtheorem{theorem}{Theorem}[section]
\newtheorem{lemma}[theorem]{Lemma}
\newtheorem{problem}{Problem}[section]
\newtheorem{proposition}{Proposition}[section]
\newtheorem{corollary}{Corollary}[section]
\theoremstyle{plain} 
\newtheorem{definition}[theorem]{Definition}
\newtheorem{example}{Example}[section]

\newtheorem{assumption}{Assumption}[section]
\newtheorem{remark}{Remark}[section]

\usepackage{amsmath}
\usepackage{amssymb}
\usepackage{amsthm}
\usepackage{dsfont}
\usepackage{algorithm}
\usepackage{algpseudocode}

\usepackage{booktabs}
\usepackage{graphicx}
\usepackage{multirow}
\usepackage{natbib}
\usepackage{siunitx}
\usepackage{caption}
\usepackage{subcaption}
\usepackage{algorithm}
\usepackage{algpseudocode}
\numberwithin{equation}{section}

\graphicspath{{../figures/}}

\newcommand\comment[1]{{\color{red}\fbox{ commented out text }}}
\renewcommand{\appendixname}{Supplements}

\def\E{\mathbb E}

\def\P{\mathbb P}

\def\R{\mathbb R}

\def\M{\mathbb M}
\def\N{\mathbb N}

\def\wh{\widehat} 
\def\wt{\widetilde}

\def\wh{\widehat }

\def\SX{{S_X}}

\def\To{\longrightarrow}
\allowdisplaybreaks

\endlocaldefs

\begin{document}

\begin{frontmatter}

\title{On the optimal prediction of extreme events}

\runtitle{Optimal prediction of extreme events}

\begin{aug}
\author[A]{\fnms{Benjamin}~\snm{Bobbia}\thanks{[\textbf{Corresponding author.}]}\ead[label=e1]{benjamin.bobbia@isae-supaero.fr}}\orcid{0009-0007-2163-4298},
\and
\author[B]{\fnms{Stilian}~\snm{Stoev}\ead[label=e2]{sstoev@umich.edu}\orcid{0000-0003-2108-5421}}
\address[A]{Département d'Ingénierie des Systêmes complexes\\ Fédération ONERA ISAE-SUPAERO ENAC, université de Toulouse\\ Toulouse, France\printead[presep={ ,\ }]{e1}}

\address[B]{Department of Statistics\\
University of Michigan\\ Ann Arbor, USA\printead[presep={,\ }]{e2}}
\end{aug}

\begin{abstract}  
The prediction of the extremely large values of a response variable $Y$ in terms of a vector of covariates 
$X=(X_i)_{i=1}^d$ is a fundamental problem arising in many scientific and engineering domains. The scarcity of data in the extremes 
makes the {\em optimal 
solution} of this problem of particular importance. 
%
The optimal predictors of such events can be explicitly characterized in just a few cases and it is of fundamental 
practical and theoretical interest to develop optimal estimators over large classes of models and predictors.
In this work, the focus is on the case where $(Y,X)$ have a multivariate regularly varying 
distribution and one seeks an optimal predictor expressed as a positive homogeneous function $h(X)$ of the covariates. 
The asymptotic prediction precision in this setting coincides with the {\em tail-dependence} coefficient $\lambda(Y,h(X))$ 
and it can be expressed as an integral functional of the associated angular measure of $(Y,X)$.  Thus, finding  
asymptotically optimal homogeneous predictors amounts to solving a variational problem.  We obtain a general solution 
to this problem, which is expressed in terms of a {\em non-extreme} conditional quantile of a tilted distribution derived 
from the angular measure.  This leads to a general inference methodology for the optimal predictors in the 
peaks-over-threshold framework from extreme value theory.  We establish the universal consistency for these estimators over large classes 
of angular measures. A general-purpose implementation of the resulting inference procedure is shown to work remarkably well 
against optimal oracle estimators, as well as in the challenging problem of extreme solar flare prediction.

\end{abstract}

\begin{keyword}[class=MSC]
\kwd[Primary ]{62G32}
\kwd{60G70}
\kwd[; secondary ]{49J50}
\end{keyword}

\begin{keyword}
\kwd{optimal prediction}
\kwd{regular variation}
\kwd{homogeneous functions}
\kwd{imbalanced classification}
\kwd{universal consistency}
\end{keyword}

\end{frontmatter}


\section{Introduction}

 In many scientific fields one has to predict rare events of the type $\{Y>y_0\}$, 
where an unobserved response random variable $Y$ exceeds a large threshold $y_0$.  Assuming that one observes
a vector of covariates $X = (X_i)_{i=1}^d$, such binary predictors can always be written in the form 
$\{h(X)> h_0\}$, for some measurable function $h$.  The optimal predictors, in a certain natural sense,
can be explicitly characterized via density ratios, as in Neyman-Pearson's Lemma  (see Section \ref{sec:NP}).  
Nevertheless, this is the beginning rather than the end of the story.  Except in a handful of cases where closed-form 
solutions exist (Examples \ref{ex:linear-paper}--\ref{ex:bivariate-max-stable}), the optimal predictors need to be 
estimated.  While the estimation of density ratios has received some attention \citep[][]{sugiyama2012dens} it remains
a methodologically and practically challenging problem \citep[][]{nguyen2010estimating,tong2013plugin}.  In this paper, we are 
interested in the regime of extremes, where the events $\{Y>y_0\}$ are rare or {\em extremely rare}.  An equivalent perspective 
to this problem is from the point of view of classification, where there is an {\em extreme imbalance} between the two classes 
of labels $\{Y\le y_0\}$ and $\{Y>y_0\}$ in the training data.  Imbalanced classification problems arise naturally and are 
ubiquitous in science, medicine, engineering, etc \citep[][]{bobra2015sola,irvin2019chexpert,valavi2022predictive}. 
Imbalanced classification has received considerable theoretical attention 
\citep[see, e.g.][]{rigollet2011neyman,tong2013plugin,tong2020parametrics}.
Nevertheless, the theoretical study of the optimal {\em extreme event} prediction or, equivalently, binary classification in 
the {\em extremely imbalanced} regime has received relatively less attention.  To the best of our knowledge, 
\cite[][]{JSsS2018} is the first paper that has addressed this problem by arguing that the traditional empirical risk minimization 
approaches do not enjoy provable guarantees in the regime of extremes. They proposed a principled 
solution based on extreme value theory by considering risk minimization after conditioning on asymptotically extreme events,
which has led to an active area of research \citep[see, e.g.][and the references therein]{clemencon2023concentration,aghbalou2024sharp,clemencon2025regression,decarvalho2025cascading,clemencon2026weak}. 
The recent work of \cite{Legrand03072025} provides a framework for the statistical analysis of the extreme event 
classifiers in the setting of multivariate as well as hidden regular variation, which is related to our context; \cite{decarvalho2025cascading} introduce a probability of cascade formulation that quantifies probabilities of
extremes as a function of covariates and proposed a Kolmogorov-Arnold-Network inference framework; and last but not least, 
the very recent work in \cite{leroux2026oodquantile} develops a general conditional empirical risk minimization framework, 
in the regime of extremes, which can be used to address the extreme event prediction problem. For a comprehensive account 
of the state-of-the-art in the field, see \cite{clemencon2026weak} and the references therein.\\

In this paper, however, our main focus on understanding and characterizing the {\em optimal extreme event prediction} 
rather than on the study of specific imbalanced classifiers.  While an agnostic (conditional) 
empirical risk minimization approach \citep[as in][]{JSsS2018} to optimal extreme event prediction is possible 
and powerful, we focus here on the {\em exact} characterization of optimal predictors in certain general settings when 
$Y$ and $X$ are jointly regularly varying. These characterizations lead to novel inference methodologies, which 
have the potential to be more efficient since they aim to estimate optimal predictors. Our implementations of the resulting
estimators indeed show near-optimal performance comparable to the available oracles in a number of settings.  This is the 
main methodological distinction of our work in comparison with the existing approaches that rely on powerful but general 
statistical learning ideas.  \\

An important starting point of our investigation is the framing of the notion of optimality. Colloquially, 
prediction of the unobservable event $\{Y>y_0\}$ via the observed covariates amounts to raising an 
{\em alarm} when an event $\{h(X)> h_0\}$ occurs.  Let the thresholds $y_0:=F_Y^\leftarrow(p)$ and $h_0:=F_{h(X)}^\leftarrow(q)$ 
be generalized quantiles of the distributions of $Y$ and $h(X)$, respectively, for some $p,q\in (0,1)$.  Then, under mild 
regularity conditions, the {\em alarm rate} equals $1-q = \P[h(X)> F_{h(X)}^\leftarrow(q)]$ and the {\em event rate} is 
$1-p = \P[Y>F_Y^\leftarrow(p)]$. Given certain alarm and event rates, the {\em optimal  predictors} are the ones that maximize 
the {\em precision:}
$$
\P[ Y> F_Y^\leftarrow(p)\, |\, h(X)> F_{h(X)}^\leftarrow(p)].
$$
It is often natural to seek {\em calibrated predictors} where $p=q$, i.e., the alarm and event rates coincide.
Then, in the regime of extremes, as $p\uparrow 1$, the {\em asymptotic precision} is precisely the tail-dependence 
coefficient 
\begin{equation}\label{e:lambda-intro}
\lambda(Y,h(X)) = \lim_{p\uparrow 1} \P[ Y> F_Y^\leftarrow(p)\, |\, h(X)> F_{h(X)}^\leftarrow(p)],
\end{equation}
which is a well-known quantity in extreme value theory \citep[see e.g.][]{joe2015dependence}. 

In the general setting when $Y$ and $X$ are {\em jointly regularly varying}, and the predictors $h$ are continuous and
positive homogeneous functions, the tail dependence coefficient $\lambda(Y,h(X))$ is well-defined, and it can be expressed
as an integral functional with respect to the angular measure (cf Propositions \ref{p:rv-via-h} and \ref{p:lambda-via-U-Theta}).
Specifically,
\begin{equation}\label{e:lambda-expression-intro}
\lambda(Y,h(X)) = \frac{1}{\E[U]} \E [ U \wedge (1-U) h(\Theta) ],
\end{equation}
where the random variable $U\in [0,1]$ and the random vector $\Theta = (\Theta_i)_{i=1}^d$ characterize the joint extremal
behavior of $Y$ and $X$ as follows:
$$
\frac{(Y,X)}{\tau(Y,X)} \, | \, \{\tau(Y,X)>t\} \stackrel{d}{\to} (U,(1-U)\Theta),\ \ \mbox{ as }t\to\infty.
$$
Here $\tau(y,x):= y_+ + \tau_X(x)$ is a positive $1$-homogeneous gauge function codifying the notion 
of multivariate regular variation (Definition \ref{d:tau-rv}) and $\Theta$ is a random vector taking values in the generalized
unit sphere $S_{\tau_X} =\{x \in \R^d\, :\, \tau_X(x) = 1\}$.

In view of \eqref{e:lambda-intro}, we seek homogeneous predictors that maximize  
the tail-dependence coefficient $\lambda(Y,h(X))$ and refer to them as {\em asymptotically optimal}. 
Given the joint distribution of $(U,\Theta)$, Relation \eqref{e:lambda-expression-intro} shows that finding 
asymptotically optimal homogeneous predictors amounts to solving a variational problem.  Namely, finding a homogeneous 
function $h$ in an infinite-dimensional class of functions, that maximizes the integral functional in 
\eqref{e:lambda-expression-intro}, under the {\em asymptotic calibration constraints}:
$$
\P[ Y>t ] \sim \P[h(X)>t],\ \ \mbox{ as }t\to\infty.
$$
Here and throughout the paper by $a_t\sim b_t$ we mean asymptotic equivalence in the sense that $a_t/b_t \to 1$ as $t\to\infty$.

The first main contribution of the paper is a solution of the aforementioned variational problem (Theorem \ref{thm:final}).  Then,
in Section \ref{sec:Breiman-models}, we demonstrate how this solution yields asymptotically optimal homogeneous predictors in 
the context of the general class of Breiman type models.  It turns out that homogeneous optimal predictors can be expressed in terms of 
the homogenous function
$$
g^{\rm (opt)}(x):= \tau_X(x) \cdot \frac{q_\alpha(x/\tau_X(x))}{ 1- q_\alpha(x/\tau_X(x))},
$$
where $\theta \mapsto q_\alpha(\theta)$ is the conditional $\alpha$-level quantile of the tilted conditional
probability distribution $b(\theta)^{-1} (1-u) p_{U|\Theta}(du|\theta)$, where $p_{U|\Theta}(du |\theta)$ is 
the regular conditional distribution of $U$ given $\Theta$, and $b(\theta)$ is a normalization constant.

All homogeneous functions of the above form are solutions to the optimal variational problems with constraints
governed by the parameter $\alpha\in (0,1)$ (Theorem \ref{thm:final}).  Thus, to find an optimal and calibrated predictor, in practice,
one merely needs to estimate the quantile functions $q_\alpha$ {\em and} find $\alpha$ so that 
$\P[g^{\rm (opt)}(X)>t] \sim \P[Y>t],\ t\to\infty$.
Inference theory for the optimal homogeneous predictors in the peaks-over-threshold framework developed in 
Section \ref{sec:inference}.  In this setting, by using a contiguity argument, we established the universal consistency of these
optimal homogeneous predictors in Theorem \ref{thm:univ_consitency} under mild regularity conditions.  This result may be
viewed as a counterpart of the Stone universal consistency theory in the context of extremes \citep[][]{devroye1996probabilistic}.

In Section \ref{sec:methodology}, we discuss the implementation of the estimators and illustrate their performance 
in comparison with optimal oracle predictors, both in the context of discrete and continuous angular measures.  A general-purpose,
practical, and nearly tuning free implementation of the optimal homogeneous predictor methodology was built by using powerful 
{\em quantile regression forests} \citep[][]{meinshausen2006quantile,beirlant:goegebeur:teugels:segers:2004}, which were
used to estimate the quantile functions $q_\alpha$.   The resulting predictors nearly match the performance of the oracle predictors in 
a variety of simulated models (Figures \ref{fig:boxplot_discrete} and \ref{fig:boxplot_spec_continuous}).   Furthermore, their 
application to the open challenge of predicting extreme X-class solar flare events achieves state-of-the-art results 
without sophisticated feature engineering and tuning (see Section \ref{sec:solar-flares}). 
R code and R Shiny Apps illustrating the methodology are freely available 
on \cite{optXpred:2026,optXpred_shiny:2026}.

The paper contains a variety of additional contributions of independent interest. First, in Proposition \ref{p:archimedean} we provide a
characterization of the optimal (non-asymptotic) predictors for Archimedean copula models; A general contiguity argument 
justifying the peaks-over-threshold based inference was developed (Section \ref{sec:PoT-and-Contiguity}); A flexible family of
Pareto-Breiman models was studied where regular variation holds in a stronger, total-variation sense (Proposition \ref{p:total-Breiman}).
The case of Pareto-Dirichlet models has a particularly elegant solution (Proposition \ref{p:Pareto-Dirichlet}), while a large 
class of spectrally discrete models offer insights to further phenomena and applications (Section \ref{sec:numerical-examples}).  
A curious phenomenon of {\em perfect} asymptotic precision arises for certain spectrally discrete models, 
when all underlying latent factors influencing $Y$ are involved in forming the covariates $X$ (see Section \ref{sec:explicit_examples}). \\

{\em The rest of the paper is structured as follows.} In Section \ref{sec:Neyman-Pearson}, we begin with the formulation of
the optimal prediction framework and present a general characterization results of optimal predictors in terms of density ratios
(Theorem \ref{thm:general-opt}). This naturally leads to several examples where closed-form optimal (non-asymptotic) 
predictors can be derived (i.e., Sections \ref{sec:NP} and \ref{sec:archimedean-statements}). 
In Section \ref{sec:extremal-precision}, we conclude the preliminaries by relating the notion of asymptotic optimality 
and tail dependence.

In Section \ref{sec:opt_ext_pred}, we present the formulation and solution to the variational problem on optimal homogeneous prediction,
where the main result is Theorem \ref{thm:final}.  Section \ref{sec:inference} presents an inference theory for these optimal 
predictors in the context of peaks-over-threshold, where we carefully treat the distribution shift problem via contiguity argument.
The main result is Theorem \ref{thm:univ_consitency}, which shows that the so-constructed estimators achieve the asymptotically optimal
precision.  This can be seem as a counterpart to the celebrated universal consistency results of Stone.  Section \ref{sec:methodology}
briefly discusses methodological aspects and illustrates the non-asymptotic, practical performance of the optimal homogeneous predictors 
with simulations.  Further technical details and some lengthy proofs are given in the Supplement.

\section{Preliminaries}\label{sec:Neyman-Pearson}

In this section, we introduce the notion of optimal event prediction considered in this paper.  
A general characterization result and several examples of closed-form optimal predictors are discussed, which may be
of independent interest and can serve as benchmarks.  The {\em extremal} counterpart to optimal event prediction is 
then formulated and related to asymptotic tail dependence.  This sets the stage for the problem formulation 
discussed in Section \ref{sec:problem}.

\subsection{Optimal event prediction: A Neyman-Pearson perspective}\label{sec:NP}

Consider a vector of covariates $X= (X_i)_{i=1}^d$ taking values in $\R^d$ and an 
unobserved positive response random variable $Y$.  We would like to optimally predict the 
event $\{Y > y_0\}$ in terms of $X$.  All such predictors are of the form $\{h(X)>h_0\}$,
for some Borel measurable function $h$ and a suitable calibration threshold $h_0\in \R$.  

\begin{definition} {\em (i)} We say that the predictor $\{h(X)>h_0\}$ for $\{Y>y_0\}$, is 
calibrated at level $q$, or $q$-calibrated, if
$$
\P[ h(X)>h_0] = 1-q,\ \ \mbox{ for some }q\in (0,1).
$$
The predictor is said to be balanced if it is calibrated at the same level as the 
target event, i.e., $\P[ Y >y_0] = \P[h(X)> h_0].$

{\em (ii)} A $q$-calibrated predictor 
$\{h(X)>h_0\}$ of $\{Y>y_0\}$ is said to be optimal, if
$$
 \P[Y>y_0 | h(X)>h_0] \ge \P[ Y > y_0 | g(X) >g_0 ],
$$
for all other $q$-calibrated predictors $\{g(X)> g_0\}$. 

The conditional probability $\P[Y>y_0| h(X)>h_0]$ will be referred to as the {\em precision} of the event predictor. 
Thus, the optimal $q$-calibrated predictors have maximal precision among all other $q$-calibrated predictors.
\end{definition}

The following result is a restatement of Theorem 2.1 in \cite{verma:stoev:chen:2024}.  It 
provides a general characterization of the optimal calibrated predictors.

\begin{theorem}\label{thm:general-opt}  Let $\nu(dx)$ be a $\sigma$-finite Borel measure on 
$\R^d$. Assume that $X|Y> y_0$ and $X|Y\le y_0$ have densities $f_0(x)$ and $f_1(x)$,
respectively, with respect to $\nu(dx)$.  Consider the density ratio:
\begin{equation}\label{e:thm:general-opt}
 r(x):= \frac{f_0(x)}{f_1(x)},
\end{equation}
where $r(x)$ is interpreted as $\infty$ if $f_1(x)=0$. Let $F_{r(X)}(x)
= \P[ r(X)\le x]$ be the distribution function of $r(X)$ and 
$F_{r(X)}^\leftarrow(q):=\inf\{ x\, :\, F_{r(X)}(x) \ge q\}$. For all $q\in F_{r(X)}(\R)$,
$\{r(X) > F_{r(X)}^{\leftarrow}(q)\}$ is an optimal $q$-calibrated predictor of $\{Y>y_0\}$. 
\end{theorem}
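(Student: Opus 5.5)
The plan is to reduce precision maximization under a calibration constraint to a Neyman--Pearson-type problem, and then run the classical rearrangement inequality. Write $\pi:=\P[Y>y_0]$, so that the law of $X$ is
$$
P_X(dx) = \big(\pi f_0(x) + (1-\pi) f_1(x)\big)\nu(dx).
$$
For any $q$-calibrated predictor $A=\{g(X)>g_0\}$, Bayes' rule gives
$$
\P[Y>y_0\,|\,g(X)>g_0] = \frac{\pi \int_{\{g>g_0\}} f_0\,d\nu}{1-q}.
$$
The denominator is fixed by calibration, so we must show that the set $\{r>c\}$, with $c:=F_{r(X)}^\leftarrow(q)$, maximizes $\int_B f_0\,d\nu$ over all Borel sets $B$ with $P_X(B)=1-q$.

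\textbf{Calibration of the proposed predictor.} Since $q\in F_{r(X)}(\R)$, there is some $x$ with $F_{r(X)}(x)=q$. Then $c\le x$ and $F_{r(X)}(c)\ge q$ by right-continuity, while $F_{r(X)}(c)\le F_{r(X)}(x)=q$. Hence $F_{r(X)}(c)=q$, i.e. $\P[r(X)>c]=1-q$, and $\{r(X)>c\}$ is $q$-calibrated. If $c=\infty$ the statement is trivial, so we treat $c<\infty$. Points where $f_1=0$ have $r=\infty>c$ and therefore lie in $A^*:=\{r>c\}$.

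\textbf{Exchange inequality.} Let $B$ be another set with $P_X(B)=P_X(A^*)$. We write
$$
\int_{A^*} f_0\,d\nu-\int_B f_0\,d\nu=\int_{A^*\setminus B} f_0\,d\nu-\int_{B\setminus A^*} f_0\,d\nu .
$$
On $A^*\setminus B$ we have $f_0\ge c f_1$, with $f_0>0$ possible where $f_1=0$. On $B\setminus A^*$ we have $f_0\le c f_1$. So the difference is at least
$$
c\Big(\int_{A^*\setminus B}f_1\,d\nu-\int_{B\setminus A^*}f_1\,d\nu\Big).
$$
It remains to check that this is nonnegative, using the equal-mass constraint $P_X(A^*\setminus B)=P_X(B\setminus A^*)$. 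The cleanest route is to use the density $p:=\pi f_0+(1-\pi)f_1$ rather than $f_1$. On $\{r>c\}$ the posterior ratio satisfies
$$
\frac{\pi f_0}{p}>\frac{\pi c}{\pi c+1-\pi}=:\kappa,
$$
and on $\{r\le c\}$ it satisfies $\pi f_0/p\le\kappa$. This is immediate because $s\mapsto \pi s/(\pi s+1-\pi)$ is increasing, and it holds also at $s=\infty$. Therefore
$$
\pi\int_{A^*\setminus B} f_0\,d\nu-\pi\int_{B\setminus A^*}f_0\,d\nu\ge \kappa\big(P_X(A^*\setminus B)-P_X(B\setminus A^*)\big)=0,
$$
which is exactly the optimality claim.

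\textbf{Main obstacle.} The only delicate points are the bookkeeping at $f_1=0$ (the value $r=\infty$) and ties at level $c$. Working with the posterior $\pi f_0/p\in[0,1]$, which is a monotone transform of $r$, handles both cleanly. The hypothesis $q\in F_{r(X)}(\R)$ is exactly what excludes atoms that would otherwise force randomization.
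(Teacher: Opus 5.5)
Your proof is correct. The paper gives no proof of this statement; it quotes it as a restatement of Theorem~2.1 of \cite{verma:stoev:chen:2024}. The ``Neyman--Pearson perspective'' framing suggests the intended route is a reduction to the classical Neyman--Pearson lemma, roughly as follows. Write $P_j$ for the law with density $f_j$ and $\pi=\P[Y>y_0]$. The calibration constraint $\pi P_0(B)+(1-\pi)P_1(B)=1-q$ makes maximizing $P_0(B)$ equivalent to minimizing $P_1(B)$. A set beating $\{r>c\}$ in power would then have strictly smaller size, contradicting the lemma.

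You instead re-derive the exchange inequality directly under the mixture constraint, by thresholding the posterior $\pi f_0/p$ at $\kappa$. This means re-proving a classical fact. In return, the argument is self-contained and avoids translating the mixture constraint into a size constraint. It also makes explicit the role of the convention $r=\infty$ on $\{f_1=0\}$, and of the hypothesis $q\in F_{r(X)}(\R)$, which gives exact calibration without randomization.

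Two small points.

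First, $\pi f_0/p$ is $0/0$ on $\{f_0=f_1=0\}$. It is cleaner to use the identity $\pi f_0-\kappa p=\frac{\pi(1-\pi)}{\pi c+1-\pi}\,(f_0-cf_1)$. This is $\ge 0$ on $\{r>c\}$ and $\le 0$ off it, so you can integrate it over $A^*\setminus B$ and $B\setminus A^*$.

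Second, your aside in the first attempt is wrong as stated. Set $D_j:=\int_{A^*\setminus B}f_j\,d\nu-\int_{B\setminus A^*}f_j\,d\nu$ for $j=0,1$. Equal $P_X$-mass gives $\pi D_0+(1-\pi)D_1=0$. So whenever the conclusion $D_0\ge 0$ holds, $D_1\le 0$, and the quantity $cD_1$ you said ``remains to check'' is nonnegative is in general negative. That route is still rescued in one line: substitute $D_1=-\pi D_0/(1-\pi)$ into $D_0\ge cD_1$ to get $D_0\,(1-\pi+c\pi)\ge 0$, hence $D_0\ge 0$. This is the same computation as your posterior argument.

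Also, the case $c=\infty$ you set aside cannot occur, since $c\le x<\infty$.
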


Theorem \ref{thm:general-opt} leads to closed form optimal predictors in a handful but important
cases.

\begin{example}[Additive heteroskedastic models]  \label{ex:linear-paper} 
 Let $X=(X_i)_{i=1}^d$ and $\epsilon$ be independent, where $\epsilon$ has a density with respect to the Lebesgue measure 
 on $\R$. Define
 \begin{equation}\label{e:ex:linear-paper}
 Y = g(X) + \sigma(X) \epsilon,
 \end{equation}
 for some deterministic measurable functions $g$ and $\sigma$ such that
 $\sigma(X)>0$ almost surely.  Then, by Theorem 2.2 in \cite{verma:stoev:chen:2024},
 for all $y_0$ and $\tau$, 
 $$
 \{ h_{y_0} (X) >\tau \} := \Big\{ \frac{g(X) - y_0}{\sigma(X)}>\tau\Big\} 
 $$
 is a $q$-calibrated optimal predictor of $\{Y > y_0\}$, where $1-q = \P[ h(X)>\tau]$.  
 \end{example}

The above example leads to explicit characterizations of the optimal event-predictors
as well as their precisions in the general class of linear models \citep[][]{verma:stoev:chen:2024}.
In particular, since Gaussian vectors can always be cast in the form of \eqref{e:ex:linear-paper}, 
where $g$ is a linear function and $\sigma$ is non-random, Example \ref{ex:linear-paper} immediately
implies  the following.

 \begin{example}[Gaussian copula] \label{ex:Gaussian-paper}
 Let $(Y,X_1,\cdots,X_d)$ follow a Gaussian copula model with 
 continuous marginal distribution functions $F_Y$ and $F_{X_i},\ i=1,\cdots,d$. Then, 
 for all  $p,q\in (0,1)$, we have 
 that $\{h(X) > F_{h(X)}^{\leftarrow}(q)\}$ is a $q$-calibrated optimal predictor for 
 $\{Y>F_Y^\leftarrow(p)\}$, where
 $
  h(X) = \sum_{i=1}^d a_i \Phi^{-1}(F_{X_i}(X_i)),
 $
 and $\Phi$ stands for the standard normal cdf \citep[Corollary 2.1 in][]{verma:stoev:chen:2024}.
 \end{example}

\begin{example}[bivariate max-stable models]\label{ex:bivariate-max-stable} 
Let now $d=1$ and $(Y,X)$ follow a bivariate extreme value distribution having a 
joint density with respect to the Lebesgue measure in $\R^2$.  Using Theorem 
\ref{thm:general-opt}, in Proposition \ref{p:max-stable} below, it is shown that 
$\{X>F_X^\leftarrow(q)\}$ is a $q-$calibrated 
optimal predictor of the event $\{Y>F_Y^\leftarrow(p)\}$, for all $p,q\in (0,1)$. 
\end{example}

The above example is rather natural since max-stable laws are positively dependent
and hence the extremes of $Y$ go with the extremes of $X$.  See also Example 
\ref{ex:bivariate-max-stable-contd}, below.

\subsection{Further examples: Archimedean copula} \label{sec:archimedean-statements}

The closed-form expressions of optimal predictors as in the previous examples are rare. 
In this section, we present such an expression for the broad class of Archimedean copula 
with completely monotone generators. It can be of independent interest 
and shall serve as a benchmark for our optimal prediction inference methodology.

Suppose that the random vector
 $
  (Y,X)=(Y,X_1,\cdots,X_d)
 $ 
 has continuous marginal cdf's $F_Y(t) = \P[Y\le t]$ and $F_{X_i}(t):= \P[X_i\le t],\ t\in \R,\ i=1,\cdots,d$ and the copula
 $$
  C(y,x):= \P[F_Y(Y)\le y, F_{X_i}(X_i)\le x_i,\ i=1,\cdots,d],\ \ y\in (0,1), x=(x_i)_{i=1}^d\in (0,1)^d,
 $$
 The copula $C$ is said to be Archimedean if
 \begin{equation}\label{e:archimedean}
   C(y,x) = \psi\Big( \psi^{-1}(y)+ \psi^{-1}(x_1)+\cdots+\psi^{-1}(x_d) \Big),
 \end{equation}
 where  $\psi:(0,\infty) \to (0,1)$ is a decreasing function referred to as the {\em generator} of the copula.  
 
 It is known that $C$ is a valid copula provided $\psi$ is a  $(d+1)$-monotone function \citep[see, e.g.][]{mcneil:neslehova:2009}. 
 %
 %
 Here, we focus on the important particular case of Archimedean copula, where the generator $\psi$ is completely 
 monotone and hence $(d+1)$-monotone for all $d$.  Such functions have the representation
 \begin{equation}\label{e:Bernstein}
  \psi(u) = \int_0^\infty e^{- ux } \mu(dx),\ \ u>0
 \end{equation}
 for some unique Borel measure $\mu$ on $[0,\infty)$ \citep[see e.g., Theorem 1.4 in ][]{schilling2012bern}.
 Since we deal with copulas, the measure $\mu$ will not charge $\{0\}$. 

 \begin{proposition}\label{p:archimedean} 
 Let $(Y,X_1,\cdots,X_d)$ be a random vector in $\R^{1+d}$ with continuous marginal cdfs $F_Y$ and $F_{X_i},\ i=1,\cdots,d$, respectively,
 and the Archimedean copula $C$ as in \eqref{e:archimedean}, where the generator $\psi$ is completely monotone, i.e., as in \eqref{e:Bernstein}.  
 Then, for all $\tau\in (0,1)$ and $y_0\in \R$, the events
 $$
  \Big\{ \psi \Big(  \psi^{-1}(F_{X_1}(X_1))+\cdots + \psi^{-1}(F_{X_d}(X_d))  \Big) > \tau \Big\}
 $$
 are optimal predictors of the events $\{Y>y_0\}$ in terms of $X$.     
 \end{proposition}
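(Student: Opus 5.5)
We reduce to uniform margins and apply Theorem \ref{thm:general-opt}. Both events $\{Y>y_0\}$ and $\{h(X)>\tau\}$ are invariant under the strictly increasing marginal transforms $Y\mapsto F_Y(Y)$ and $X_i\mapsto F_{X_i}(X_i)$, since the margins are continuous. So we may assume $(V,W):=(F_Y(Y),(F_{X_i}(X_i))_{i=1}^d)$ has copula distribution $C$, and write the target event as $\{V>v_0\}$ with $v_0=F_Y(y_0)$. Our aim is to show that the density ratio $r(w)$ of Theorem \ref{thm:general-opt} (with $\nu$ Lebesgue measure on $(0,1)^d$) is a nondecreasing function of
$$
s(w):=\psi^{-1}(w_1)+\cdots+\psi^{-1}(w_d),
$$
hence a nonincreasing function of $\psi(s(w))$, which is the predictor in the statement. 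Here $\psi$ is decreasing, so $\psi^{-1}$ is decreasing and $s(w)$ is small exactly when the $w_i$ are large. We then need to reconcile the monotonicity direction and the calibration.

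The natural tool is the frailty (Marshall--Olkin) representation coming from \eqref{e:Bernstein}. Let $M\sim\mu$, which is a probability measure because $\psi(0+)=1$ for a copula generator. Conditionally on $M=m$, let $E_0,E_1,\dots,E_d$ be i.i.d. standard exponential, and set $V=\psi(E_0/M)$ and $W_i=\psi(E_i/M)$. Then
$$
\P[V\le v,W\le w]=\E\big[e^{-M(\psi^{-1}(v)+\sum_i\psi^{-1}(w_i))}\big]=C(v,w).
$$
Hence $V$ and $W$ are conditionally independent given $M$. Writing $T_i=\psi^{-1}(W_i)=E_i/M$, the joint density of $T=(T_1,\dots,T_d)$ is $\int m^d e^{-m s}\mu(dm)$, which depends on $T$ only through $s=\sum_i T_i$. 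Moreover,
$$
\P[V>v_0\mid M=m]=1-e^{-m\,\psi^{-1}(v_0)}=:\phi(m),
$$
and $\phi$ is nondecreasing in $m$.

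With this, the density ratio becomes
$$
r(w)\propto\frac{\int\phi(m)\,m^de^{-ms}\,\mu(dm)}{\int(1-\phi(m))\,m^de^{-ms}\,\mu(dm)}
$$
(Jacobian factors cancel). Define the family of probability measures $\pi_s(dm)\propto m^de^{-ms}\mu(dm)$. Then $r=\E_{\pi_s}[\phi]/(1-\E_{\pi_s}[\phi])$. As $s$ increases, $\pi_s$ decreases in the monotone likelihood ratio order, because $d\pi_{s'}/d\pi_s\propto e^{-m(s'-s)}$ is decreasing in $m$. So $\E_{\pi_s}[\phi]$ is nonincreasing in $s$, and therefore $r$ is a nonincreasing function of $s$, i.e. nondecreasing in $\psi(s(W))$. (This is the correct direction: large covariates give small $s$ and a high ratio.) Consequently $\{r(W)>c\}$ coincides with $\{\psi(s(W))>\tau\}$ for suitable matching thresholds, and Theorem \ref{thm:general-opt} gives optimality.

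The main obstacle is this last step. Where $r$ is constant on intervals of $s$, the level sets of $r$ and of $\psi(s)$ need not match for every $\tau$. We would handle it by noting that optimality in Theorem \ref{thm:general-opt} really follows from the Neyman--Pearson argument: any set of the form $\{r>c\}\subseteq A\subseteq\{r\ge c\}$ with the prescribed mass is optimal. Every superlevel set of the monotone function $\psi(s)$ is of this form, so it is optimal at its own calibration level $q$. We would also treat separately the case where $\mu$ is degenerate, in which $r$ is constant. If $\mu$ is a point mass then $V$ and $W$ are independent, and every predictor is trivially optimal.
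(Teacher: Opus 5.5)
Your proof is correct, and it reaches the paper's key monotonicity by a different route.

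\emph{The paper's route.} It differentiates the copula $d+1$ times and integrates out $y$, with boundary evaluations $\varphi(1)=0$, $\varphi(0)=\infty$, $\psi^{(d)}(\infty)=0$. This gives $r(x)\propto g(c)-1$, where $g(c)=\psi_d(c)/\psi_d(c+\varphi(y_0))$, $\psi_d=(-1)^d\psi^{(d)}$ and $c=\sum_i\psi^{-1}(x_i)$. It then shows $g$ is nonincreasing because $\psi_d$ is completely monotone and hence log-convex (Lemma \ref{l:Bernstein-log-convex}, via H\"older).

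\emph{Your route.} The frailty representation gives the two conditional densities with no differentiation at all. In your notation $\E_{\pi_s}[1-\phi]=\psi_d(s+a)/\psi_d(s)$ with $a=\psi^{-1}(v_0)$. So your likelihood-ratio-ordering step is a probabilistic proof of exactly the log-convexity the paper gets from H\"older; equivalently, $\frac{d}{ds}\log\psi_d(s)=-\E_{\pi_s}[M]$ is nondecreasing in $s$. Your version makes the mechanism transparent: $V$ and $W$ are conditionally independent given the frailty $M$, and $\phi$ is increasing in $M$. The paper's version is a short, self-contained calculus computation.

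\emph{The sandwich remark.} This is worth keeping. The paper only establishes $\frac{d}{dc}\log g(c)\le 0$ and then invokes Theorem \ref{thm:general-opt}, which as stated covers only sets of the form $\{r(X)>F^{\leftarrow}_{r(X)}(q)\}$. Your Neyman--Pearson formulation closes this point: any $A$ with $\{r>c\}\subseteq A\subseteq\{r\ge c\}$ and the prescribed mass is optimal. It also makes your separate treatment of a degenerate $\mu$ unnecessary. Because this goes beyond Theorem \ref{thm:general-opt} as stated, write out the few lines: combine $\int(1_A-1_B)(f_0-cf_1)\,d\nu\ge 0$ with the calibration constraint $\P[W\in A]=\P[W\in B]$ to get $\P[W\in A\,|\,V>v_0]\ge\P[W\in B\,|\,V>v_0]$.

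\emph{One slip.} Your opening paragraph states the goal as $r$ being nondecreasing in $s(w)$, hence nonincreasing in $\psi(s(w))$. That is the wrong direction. The body correctly proves $r$ is nonincreasing in $s$, so only the first paragraph needs fixing.
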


 The proof is given in Section \ref{sec:archimedean}, below.
 This result furnishes a complete characterization of optimal event prediction 
 in an interesting family of copula models.  We will illustrate it further with an important
 Archimedean copula arising in extreme value theory.

\begin{example}[Gumbel copula]\label{ex:gumbel} Consider the function
$$
\psi(u) = e^{-u^{1/\beta}} = \int_{0}^\infty e^{-ux}f_{1/\beta}(x)dx,\ \ \beta > 1
$$
where $f_{1/\beta}(x)$ is the probability density of a $(1/\beta)-$stable subordinator.  Notice
that $\psi$ is completely monotone since it is the Laplace transform of a probability distribution on $[0,\infty)$
for $\beta \in (1,\infty)$. (The case $\beta =1$ is also trivially covered since the corresponding 
copula is the independent one.)  The copula $C$ in \eqref{e:archimedean} then becomes the so-called 
{\em Gumbel copula}:
$$
C(v,u) = \exp\Big\{ -\Big(\log(1/v)^\beta + \log(1/u_1)^\beta + \cdots + \log(1/u_d)^\beta  \Big)^{1/\beta} \Big\},
$$
for $v\in (0,1)$ and $u = (u_i)_{i=1}^d \in (0,1)^d$. By attaching
standard $1$-Fr\'echet marginals to the resulting Archimedean copula, we 
obtain the so-called multivariate logistic max-stable distribution model for
the random vector $(Y,X_1,\cdots,X_d)$:
\begin{equation}\label{e:logistic-old}
     \P( Y\le y, X\le x) = \exp\Big\{ -\Big( \frac{1}{y^\beta} + \frac{1}{x_1^\beta} +\cdots + \frac{1}{x_d^\beta}  \Big)^{1/\beta} \Big\},
\end{equation}
$x_i>0,\ y>0$. Proposition \ref{p:archimedean} implies that 
for all $p,q\in (0,1)$, an optimal predictor of $\{Y>F_Y^{-1}(p)\}$ is of the form:
$\{h(X) > F_{h(X)}^{-1}(q)\}$, where 
$
 h(X) =(\sum_{i=1}^d 1/X_i^{\beta})^{-1/\beta}.
$
Its precision can be obtained numerically. In the following section, we will consider the notion of 
asymptotically optimal precision and obtain a formula for it for these copula.
\end{example}

\subsection{Tail dependence and optimal extreme event prediction} \label{sec:extremal-precision}

In this paper, we are primarily interested in predicting {\em extreme events} of the form 
$$
\{Y> F_Y^\leftarrow(p)\},
$$
as the level $p$ becomes extreme, i.e., as $p\uparrow 1$.  
For simplicity, assume that $p\in F_Y(\R) = \{F_Y(y),\ y\in \R\}$
and focus on the natural case of  {\em balanced predictors}, where one has
\begin{equation}\label{e:balanced-predictors}
 1-p = \P[ Y> F_Y^\leftarrow(p) ] = \P [ h(X) > F_{h(X)}^\leftarrow(p) ].
\end{equation}

In this case, the conditional probability of a true positive alarm, referred to as the
{\em precision} of the predictor, is:
\begin{equation}\label{e:lambda-p}
\lambda_p(Y,h(X))  :=
\P[Y>F_Y^\leftarrow(p)\, |\, h(X)>F_{h(X)}^\leftarrow (p)].
\end{equation}
Observe that, as the level $p$ becomes extreme, i.e., $p\uparrow 1$, the above
precision recovers the well-known notion of {\em tail-dependence coefficient}:
\begin{equation}\label{e:lambda-tdep-def}
\lambda(Y, h(X)) := \lim_{p\uparrow 1} \P[Y>F_Y^\leftarrow(p)\, |\, h(X)>F_{h(X)}^\leftarrow (p)],
\end{equation}
whenever the limit exists \citep[see, e.g.,][]{joe2015dependence,jansen:neblung:stoev:2023}. 
The random variables $Y$ and $h(X)$ will be referred to as 
{\em tail-independent} if $\lambda(Y, h(X))=0$; otherwise we will sometimes call them tail-dependent.

Relation \eqref{e:lambda-tdep-def} motivates the following notion of
extremal optimality \citep[Definition 2.3 in][]{verma:stoev:chen:2024}.

\begin{definition}\label{d:opt-pred}
Let ${\cal C}_p(X)$ denote the class of all Borel functions $g$
such that $\P[ g(X) > F_{g(X)}^\leftarrow(p)] = 1-p$.  That is, ${\cal C}_p(X)$ is 
the set of functions $g$ that can serve as $p$-calibrated predictors.\\

{\em (i)} The {\em optimal extremal precision} for predicting $Y$ in terms of $X$ is defined as:
\begin{equation}\label{e:opt-ext-prec}
\lambda^{{\rm (opt)}} (Y,X):= \limsup_{p\uparrow 1}\sup_{g\in {\cal C}_p(X)} \Big\{
\lambda_p(Y,g(X))
\Big\}. 
\end{equation}

{\em (ii)} A random variable $h(X)$ is said to be an optimal extremal predictor
for $Y$, if $h\in {\cal C}_p(X)$ and its asymptotic precision attains the optimal extremal precision in  \eqref{e:opt-ext-prec}.  That is,
if 
$$
\lambda^{\rm (opt)} (Y,X) = \limsup_{p\uparrow 1} \lambda_p(Y,h(X)) 
$$
In particular, if the tail-dependence $\lambda(Y,h(X))$ coefficient 
exists, we also obtain $\lambda^{{\rm (opt)}}(Y,X) = \lambda(Y, h(X)).$
\end{definition}

Relation \eqref{e:opt-ext-prec} indicates that $\lambda^{{\rm (opt)}}(Y,X)$ is 
the best asymptotic precision that can be achieved among all balanced 
calibrated predictors of the extreme events $\{Y> F_Y^\leftarrow(p)\}$.  
Theorem \ref{thm:general-opt} implies that 
the density ratios in \eqref{e:thm:general-opt} yield  
fixed-$p$ as well as asymptotically optimal predictors. This is the gold-standard in
extreme event prediction, which can in fact be achieved in a number of interesting models
(see the Examples \ref{ex:linear-paper}-\ref{ex:gumbel} above and their follow-ups in 
\ref{ex:Gaussian-contd}--\ref{ex:logistic-contd} below).  

In general, however, the density ratios are hard to compute and estimate 
\citep[][]{sugiyama2012dens}. Thus, it is challenging to characterize the optimal extremal 
predictors and their extremal precisions in full generality.  Therefore, it is of interest 
to limit the notion of extremal optimality to a class of predictors.  


\begin{definition} Consider a class ${\cal G}$ of measurable functions $g:\R^d\to\R$. 
We shall say that $h$ is a ${\cal G}$-optimal extremal predictor for $Y$ if the 
tail-dependence coefficient $\lambda(Y,h(X))$ exists and 
$$
\lambda(Y,h(X)) = \lambda_{\cal G}^{{\rm (opt)}}(Y,X)
:=\sup_{g\in {\cal G}} \Big( \limsup_{p\uparrow 1} \lambda_p(Y,g(X))\Big), 
$$
where $\lambda_p$ is as in \eqref{e:lambda-p}.
\end{definition}


 We conclude this section by following-up on Examples \ref{ex:linear-paper} -- \ref{ex:gumbel},
 where the optimal extremal predictors and their precisions can be derived.  
 The next section focuses on ${\cal G}$-optimal prediction for the class of 
 homogeneous functions under the framework of joint regular variation for $Y$ and $X$.


 \begin{example}\label{ex:Gaussian-contd}
 When $(Y,X) = (Y,X_1,\cdots,X_d)$ follow a Gaussian copula such that $Y$ is not a deterministic function of 
 $X$, then the covariates in $X$ have vanishing precision in predicting the extremes of $Y$, i.e., 
 $\lambda^{\rm (opt)}(Y,X) = 0$. Indeed, upon applying increasing component-wise deterministic 
 transformations to $Y$ and $X_i,\ i=1,\cdots,d$, we may assume that $(Y,X)$ is Gaussian with standard normal 
 marginals.  Hence, for some $a=(a_i)_{i=1}^d\in \R^d$, we have $Y =  a^\top X +  Z$, where 
 $Z\sim {\cal N}(0,\sigma^2),\ (\sigma>0)$ is independent 
 from $X$.  By a classical result of Sibuya \citep[Theorem 3][]{sibuya:1960}, it follows that
 the jointly Gaussian variables $a^\top X$ and $Y$ are asymptotically independent,
 and hence by Example \ref{ex:Gaussian-paper},  
 $
 \lambda^{\rm (opt)}(Y,X) = \lambda(Y, a^\top X) = 0.
 $
 \end{example}

\begin{example}\label{ex:bivariate-max-stable-contd} Let $(Y,X)$ be a bivariate 
max-stable vector having a Lebesgue density.  Then, by 
Example \ref{ex:bivariate-max-stable},
$
\lambda^{\rm (opt)}(Y,X) = \lambda(Y,X),
$
i.e., the optimal extremal precision is precisely the tail-dependence coefficient.
\end{example}

 \begin{example}[Multivariate logistic max-stable law]\label{ex:logistic-contd} Recall Example \ref{ex:gumbel},
 where the joint distribution of $(Y,X)$ is multivariate max-stable with standard $1$-Fr\'echet marginals
 and the Gumbel copula in \eqref{e:logistic-old}.  We know that $h(x):= (\sum_{i=1}^d x_i^{-\beta} )^{1/\beta}$ provides
 an optimal prediction function for this model.  As shown in Corollary \ref{c:logistic}, 
 the optimal extremal precision is given by:
 \begin{equation}\label{e:logistic-fla}
 \lambda^{\rm (opt)}(Y,X) = \lambda(Y,h(X)) = \E\Big[ \min\Big\{ \Gamma_1^{-1/\beta}/c_{1,\beta},\ \Gamma_d^{-1/\beta}/c_{d,\beta} \Big\} \Big], 
 \end{equation}
 where $\Gamma_1\sim {\rm Gamma}(1,1)$ and $\Gamma_d \sim {\rm Gamma}(d,1)$ are 
 independent Gamma-distributed random variables and where $c_{d,\beta} = 
 \Gamma(d-1/\beta)/\Gamma(d)$.
 \end{example}

While elegant, the above examples are the only a handful of cases we know of, where the optimal (extremal) predictors
and their (asymptotic) precisions can be obtained in closed form.  This motivates the main pursuit of the
present paper on characterizing optimal extremal predictors over a smaller but still very 
rich class of homogeneous predictors.

\section{A characterization of the optimal homogeneous predictors}
\label{sec:opt_ext_pred}

In this section, we present the core contribution of this paper. We start with the main framework that 
relies on an assumption of joint regular variation
between the response and the predictors.  In this context, we cast the main problem of 
finding asymptotically optimal homogeneous predictors as a calculus of variations problem 
relative to the angular measure. The general solution of this problem is presented in 
Theorem \ref{thm:final}. The section concludes with a rejoinder, where the solutions to the 
general variational problem are related to optimal homogeneous predictors in the context of the 
generalized Breiman models.

\subsection{Joint regular variation and problem formulation} \label{sec:joint-RV} \label{sec:problem}

 Let $Z$ be a random vector taking values in $\R^k$ and let 
$\tau :\R^k\to \R_+ :=[0,\infty)$ be a non-negative continuous 
$1$-homogeneous function, 
i.e., $\tau(c\cdot x) = c\tau(x),\ \forall c\ge 0,\ x\in \R^k$.

\begin{definition}[$\tau$-regular variation]\label{d:tau-rv} We say that $Z$ is 
$\tau$-regularly varying if, there exists a 
positive sequence $\{a_n\}$, and constants $c_Z>0$ and $\alpha>0$, such that
for all $t>0$,
\begin{equation}\label{e:d:tau-rv}
n\P[ \tau(Z) > t a_n ] \To c_Z t^{-\alpha},\ (n\to\infty)
\ \mbox{ and }\ 
\frac{Z}{\tau(Z)} \, \vert\, \tau(Z)>r \stackrel{d}{\To} \Theta_Z,\ \ (r\to\infty)
\end{equation}
for some random variable $\Theta_Z$ taking values in 
$$
S_\tau:= \{ x\in\R^k\, :\, \tau(x) = 1\}.
$$
In this case, we shall write $Z\in {\rm RV}_\alpha(\R^k,\{a_n\},\ c_Z, \tau, \sigma)$,
where $\sigma$ is the probability distribution of $\Theta_Z$.
\end{definition}

For more details and an equivalent definition, see Section \ref{sec:RV} below. In particular, the first convergence in
Relation \eqref{e:d:tau-rv} holds if and only if $x\mapsto \P[\tau(Z) > x]$ is a regularly varying function at infinity
with exponent $-\alpha$.  Therefore, \eqref{e:d:tau-rv} is equivalent to
\begin{equation}\label{e:d:tau-rv-b(t)}
b(t)\P[ \tau(Z) > t ] \To c_Z t^{-\alpha},\ (n\to\infty)
\ \mbox{ and }\ 
\frac{Z}{\tau(Z)} \, \vert\, \tau(Z)>r \stackrel{d}{\To} \Theta_Z,\ \ (r\to\infty)
\end{equation}
where $b(t)\sim L(t)t^\alpha$, as $t\to\infty$, for some slowly varying function $L(\cdot)$ such that
$b(a_n)\sim n$, as $n\to\infty$. Thus, for an $X$ as in Definition \ref{d:tau-rv}, 
the function $\{b(t)\}$ in \eqref{e:d:tau-rv-b(t)} is unique modulo asymptotic equivalence
\citep[see, e.g. Propositions 1.1.8 and 1.2.2 in][]{kulik2020heav}.
When we need \eqref{e:d:tau-rv-b(t)}, we shall equivalently  write $X\in {\rm RV}_\alpha(\R^k,\{a_n\},b(\cdot),\ c_Z,\tau,\sigma)$.

\begin{remark}
The notion of $\tau$-regular variation is equivalent to the notion of regular variation on the 
cone $\R^k \setminus\{\tau =0 \}$ \citep[see e.g.][]{scheffler:stoev:2017,dombry:ribatet:2015}.
It only focuses on the extremal behavior of $Z$ over the open cone 
$\{\tau>0\}$, where {\em extremeness} is measured in terms of the positive homogeneous 
loss function $\tau$. For a complete and general treatment, see the recent monograph of
\cite{basrak:milincevic:molchanov:2025}.  The ability to choose different  loss-functions $\tau$ 
allows one to be more flexible and study the dependence in the extremes relative
to the relevant loss somewhat in the spirit of {\em hidden regular variation}
\citep[][]{resnick:2024}. 
\end{remark}

The following is a key result that will allow us to formulate the optimal extreme 
event prediction problem in terms of a calculus of variations problem for the angular 
measure. See also Proposition 2.5 in \cite{jansen:neblung:stoev:2023} and Theorem 2.1 
in \cite{Dyszewski2020}, for similar results.  

\begin{proposition}\label{p:rv-via-h} Let $Z \in RV_\alpha(\R^k,\{a_n\},b(\cdot),c_Z,\tau,\sigma)$. Let 
also ${\cal H}_+(\R^k,\tau)$ denote the class of all non-negative
continuous $1$-homogeneous functions $h:\R^d\to \R_+$  such that $\{h>0\} \subset \{\tau>0\}$. 
For all $h\in {\cal H}_+(\R^k,\tau)$ such that
\begin{equation}\label{e:p:rv-via-h-assumption}
\|h\|_{\infty,S_\tau} := \sup_{\theta\in S_\tau} |h(\theta)| <\infty,
\end{equation}
we have
\begin{equation}\label{e:p:rv-via-h}
\lim_{n\to\infty} n\P[ h(Z) > a_n ]  = \lim_{t\to\infty} b(t) \P[h(Z)>t] = c_Z \sigma(h),
\end{equation}
where 
$$
\sigma(h) := \E[h(\Theta_Z)^\alpha] = \int_{S_\tau} h^\alpha(\theta) \sigma(d\theta).
$$
\end{proposition}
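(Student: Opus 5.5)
The plan is to reduce everything to the polar decomposition $Z=\tau(Z)\cdot \Theta$ with $\Theta:=Z/\tau(Z)$ on $\{\tau(Z)>0\}$. Since $\{h>0\}\subset\{\tau>0\}$, on the event $\{h(Z)>t\}$ we have $\tau(Z)>0$, and $1$-homogeneity gives $h(Z)=\tau(Z)\,h(\Theta)$. Hence
$$
\P[h(Z)>t]=\P[\tau(Z)\,h(\Theta)>t,\ \tau(Z)>0].
$$
Write $M:=\|h\|_{\infty,S_\tau}<\infty$. If $M=0$ the claim is trivial, so assume $M>0$. Because $h(\Theta)\le M$, the event $\{h(Z)>t\}$ is contained in $\{\tau(Z)>t/M\}$. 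We therefore condition on $\{\tau(Z)>\epsilon t\}$ for a fixed $\epsilon\le 1/M$:
$$
b(t)\P[h(Z)>t]=b(t)\P[\tau(Z)>\epsilon t]\cdot \P\Big[\tfrac{\tau(Z)}{\epsilon t}\,h(\Theta)>\tfrac1\epsilon\ \Big|\ \tau(Z)>\epsilon t\Big].
$$

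The first factor converges to $c_Z(\epsilon)^{-\alpha}$ by \eqref{e:d:tau-rv-b(t)}. For the second factor, the key step is to show joint convergence
$$
\Big(\tfrac{\tau(Z)}{r},\Theta\Big)\,\Big|\,\tau(Z)>r\ \stackrel{d}{\To}\ (P,\Theta_Z),
$$
where $P$ is Pareto($\alpha$), independent of $\Theta_Z$. Marginal convergence of $\tau(Z)/r$ follows from regular variation of $\P[\tau(Z)>\cdot]$, and $\Theta$ converges by assumption. Independence in the limit is not literally part of Definition \ref{d:tau-rv} as stated, so I would derive it as follows. For $s\ge1$ and a continuity set $A$ of $\sigma$,
$$
\P[\tau(Z)>rs,\ \Theta\in A\mid \tau(Z)>r]=\frac{\P[\tau(Z)>rs]}{\P[\tau(Z)>r]}\,\P[\Theta\in A\mid \tau(Z)>rs],
$$
and applying the second part of \eqref{e:d:tau-rv} at level $rs\to\infty$ sends this to $s^{-\alpha}\sigma(A)$. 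This step is the main subtlety, and it is resolved by this simple factorization. Alternatively, one can invoke the equivalent definition in Section \ref{sec:RV}.

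With the joint convergence in hand, the map $(p,\theta)\mapsto p\,h(\theta)$ is continuous because $h$ is continuous on $S_\tau$. Moreover, the boundary $\{P\,h(\Theta_Z)=1/\epsilon\}$ has probability zero, since $P$ has a continuous distribution independent of $\Theta_Z$; on $\{h(\Theta_Z)=0\}$ the product is $0\ne 1/\epsilon$. By the continuous mapping theorem the second factor therefore tends to
$$
\P[P\,h(\Theta_Z)>1/\epsilon]=\E\big[\min\{1,(\epsilon h(\Theta_Z))^\alpha\}\big]=\epsilon^\alpha\,\E[h(\Theta_Z)^\alpha],
$$
using $\epsilon h\le 1$. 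Multiplying the two limits gives $c_Z\sigma(h)$.

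Finally, the sequence version $n\P[h(Z)>a_n]\to c_Z\sigma(h)$ follows by taking $t=a_n$ and using $b(a_n)\sim n$. Equivalently, one can rerun the same argument with \eqref{e:d:tau-rv}.
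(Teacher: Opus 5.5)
Your proof is correct and follows the paper's argument: you restrict to $\{\tau(Z)>t/\|h\|_{\infty,S_\tau}\}$, use the joint convergence of $(\tau(Z)/r,\,Z/\tau(Z))$ given $\{\tau(Z)>r\}$ to an independent Pareto$(\alpha)$ radius and $\Theta_Z$, and then compute $\P[P\,h(\Theta_Z)>c]$ explicitly. The differences are cosmetic: you derive the asymptotic independence directly from the definition via the factorization, whereas the paper cites Proposition \ref{p:rv-via-Pareto-Theta}; and you work in the continuous parameter $t$ and then specialize to $t=a_n$, which is the cleaner direction for identifying the two limits.
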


\noindent For the sake of completeness, the proof of Proposition \ref{p:rv-via-h} is given in Section \ref{sec:proofs:sec:joint-RV}.

\medskip
In the rest of the section, we will specialize the above regular variation 
concept to the setting considered in this paper by considering a specific $\tau$.
Namely, let
$$
Z = (Y,X),
$$
where $Y$ is a positive response variable and $X = (X_i)_{i=1}^d$ is a $d$-dimensional vector of 
covariates. We will assume that $Y$ and $X$ are {\em jointly regularly varying}, which will be 
made precise next.  For simplicity and without loss of generality, we will assume the exponent of regular variation equals one, i.e.: 
$$\alpha=1.$$ 

\begin{assumption}[joint regular variation] \label{a:X-Y-joint} Let $\tau_X:\R^d \to \R_+$ be a 
continuous $1$-homogeneous function, $\tau_Y:\R \to \R_+$ be the function
$\tau_Y(y):= y_+:= \max\{0,y\}$, and let
$$
\tau(y,x):= \tau_Y(y) + \tau_X(x)  = y_++\tau_X(x).
$$
We shall say that $Y$ and $X$ are jointly regularly varying if
$$
Z = (Y,X) \in RV_1(\R_+\times \R^{d}, \{a_n\}, 1, \tau, \sigma).
$$ 
and
\begin{equation}\label{e:a:X-Y-joint}
\sigma(\{1\}\times\{\tau_X=0\}) < 1 \ \ \mbox{ and } \ \ \sigma(\{0\} \times \{\tau_X = 1\}) <1.
\end{equation}
\end{assumption}

\begin{remark}\label{r:joint-RV} The joint regular variation assumption for $Y$ and $X$ amounts essentially to 
assuming that the vector $Z=(Y,X)$ is $\tau$-regularly varying.  The additional conditions in  \eqref{e:a:X-Y-joint}
mean that both $Y$ and $X$ are {\em marginally} regularly varying with the same normalizing 
sequence $\{a_n\}$ in the sense that, for all $t>0$,
$$
n\P[ Y > a_n t]\to_{n\to\infty} c_Y t^{-1}\ \ \mbox{ and }\ \ X \in RV_1(\R^d, \{a_n\},c_X,\tau_X,\sigma_X).
$$
for some $c_Y>0,\ c_X>0$ and a probability measure $\sigma_X$ supported on 
$S_{\tau_X}=\{ x\in\R^d\, :\, \tau_X(x) = 1\}$.  
For more details, see Section \ref{sec:RV} and the references therein.
\end{remark}

\begin{remark} In many applications the radial function $\tau$ is simply a norm in $\R^k$.  In this case, $S_\tau=\{\tau=1\}$ is compact, 
and the condition \eqref{e:p:rv-via-h-assumption} is fulfilled for all continuous and homogeneous $h$. Moreover those assumptions still are fulfilled for radial functions with weighted or thresholded directions which is commonly use in portfolio optimization. 
\end{remark}
\begin{remark} Note that \eqref{e:p:rv-via-h} holds for Breiman models with any non-negative and integrable $h$. Namely if $Z = \xi W$, with $b(t) \P[\xi>tx] \to x^{-1}$, for $x>0$, then for all $W$ such that $\E[h(W)^{1+\epsilon}] <\infty$,
\[
b(t) \P[ h(Z)>t ] =b(t) \P[ \xi h(W) >t ] \to \E[h(W)],\ \ t\to\infty.
\]
See also Section \ref{sec:Breiman-models}.
\end{remark}

\medskip
Suppose now that $X$ and $Y$ are jointly regularly varying in the sense of 
Assumption \ref{a:X-Y-joint} and let
\begin{equation}\label{e:tilde-U-and-tilde-Theta}
\wt U := \frac{Y}{\tau(Y,X)},\ \ \mbox{ and }\ \ \wt \Theta:= \frac{X}{\tau_X(X)}.
\end{equation}
Observe that with the chosen polar coordinate parameterization $\tau(Y,X) = Y+\tau_X(X)$,
we have: 
\begin{equation}\label{e:Z/tau(Z)-via-tilde-U-and-tilde-Theta}
\frac{(Y,X)}{\tau(Y,X)} = \Big( \wt U, (1-\wt U) \wt \Theta\Big).
\end{equation}
Hence, in view of Definition \ref{d:tau-rv}, we obtain that for all $t>0$, as $n\to\infty$ and $r\to\infty$:
$
n\P[ \tau(Y,X) > ta_n] \To t^{-1},
$
and
\begin{equation}\label{e:U-THETA}  
(\wt U, (1-\wt U) \wt\Theta)\, | \, \tau(Y,X)>r \stackrel{d}\To \Theta_{Y,X}=:(U,(1-U)\Theta).
\end{equation}

In the sequel, we shall use the $(U,(1-U)\Theta)$ parameterization of the angular vector 
$\Theta_{Y,X}$ to formulate the optimal homogeneous prediction problem.  Observe that if 
$U<1$, then $\Theta=W/(1-U)$ can be identified from 
$(U,W) := \Theta_{Y,X}.$  Whenever $U=1$, however,
$\Theta$ is undefined, though unimportant since $(1-U)\Theta = 0$. In this case, by convention,
we shall define $\Theta := \theta_*$, for some arbitrary fixed $\theta_*\in S_{\tau_X}$
such that $\sigma_X(\{\theta_*\}) = 0$, where $\sigma_X$ is the angular measure of $X$ (cf Remark \ref{r:joint-RV}).

\begin{definition} \label{def:asymp-calibration} 
Let $(Y,X)$ be jointly $\tau$-regularly varying in the sense of 
Assumption \ref{a:X-Y-joint} and let ${\cal G}({\tau_X})$ 
be a class of continuous, non-negative $1$-homogeneous functions
fulfilling the following three conditions: \\

{(i)} ($\tau$-support-dominance) For all $g\in \mathcal{G}(\tau_X)$, $\{g>0\} \subset \{\tau_X>0\}$.\\

{(ii)} (boundedness) For all $g\in \mathcal{G}(\tau_X)$,
$
\|g\|_{\infty,\SX}:= \sup_{\theta\in \SX} |g(\theta)| <\infty.
$\\

{(iii)} (asymptotic calibration) For all $g\in \mathcal{G}(\tau_X)$,
\begin{equation}\label{e:g-calibration-lemma}
\P[ g(X) > t ] \sim \P[ Y>t], \ \ \mbox{ as }t\to\infty.
\end{equation}
  The predictors $g(X)$ that satisfy Relation \eqref{e:g-calibration-lemma} will be referred to as asymptotically calibrated extremal predictors for $Y$.
\end{definition}

Note that (ii) is a technical condition, which is always 
satisfied if $\SX$ is compact.

\begin{lemma}\label{lem:nonameidea} Let $(Y,X)$ and ${\cal G}(\tau_X)$ be as in Definition \ref{def:asymp-calibration}.
Then, for all $g\in {\cal G}(\tau_X)$ the bivariate tail dependence coefficient 
$\lambda(Y,g(X))$ defined in \eqref{e:lambda-tdep-def} exists and
$$
\lambda(g(X),Y) = \lim_{t\uparrow \infty} \P[ Y > t |  g(X) > t ] = \lim_{t\uparrow \infty} \P[ g(X) > t | Y > t ].
$$
\end{lemma}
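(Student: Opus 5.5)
Our plan is to reduce everything to the joint tail of the pair $(Y,g(X))$ via Proposition \ref{p:rv-via-h}, applied to $Z=(Y,X)$ with $\tau(y,x)=y_++\tau_X(x)$ and $\alpha=1$.

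\emph{Step 1: marginal tails.} Write $h_Y(y,x):=y_+$, $h_g(y,x):=g(x)$ and $h_{\min}(y,x):=\min\{y_+,g(x)\}$. All three are continuous, non-negative and $1$-homogeneous on $\R\times\R^d$. Their supports lie in $\{\tau>0\}$: $\{y_+>0\}\subset\{\tau>0\}$ trivially, and $\{g>0\}\subset\{\tau_X>0\}\subset\{\tau>0\}$ by condition (i). On $S_\tau$ they are bounded: $y_+\le 1$ there, and $g(x)=\tau_X(x)g(x/\tau_X(x))\le\|g\|_{\infty,\SX}$ whenever $\tau_X(x)>0$ (and $g(x)=0$ otherwise), using (ii). Proposition \ref{p:rv-via-h} then gives, as $t\to\infty$,
$$
b(t)\P[Y>t]\to \E[U],\qquad b(t)\P[g(X)>t]\to \E[(1-U)g(\Theta)],
$$
$$
b(t)\P[Y>t,\,g(X)>t]=b(t)\P[h_{\min}(Z)>t]\to \E[U\wedge (1-U)g(\Theta)].
$$
Here we use that $\Theta_{Y,X}=(U,(1-U)\Theta)$ and the homogeneity $g((1-U)\Theta)=(1-U)g(\Theta)$. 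Assumption \eqref{e:a:X-Y-joint} guarantees $\E[U]>0$, and calibration (iii) forces $\E[(1-U)g(\Theta)]=\E[U]>0$.

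\emph{Step 2: the two conditional probabilities.} Dividing the limits from Step 1 gives
$$
\P[Y>t\mid g(X)>t]\to \E[U\wedge(1-U)g(\Theta)]/\E[U],
$$
and the same limit for $\P[g(X)>t\mid Y>t]$, because the two denominators are asymptotically equivalent by (iii). This proves that both limits in the statement exist and coincide.

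\emph{Step 3: identification with $\lambda(Y,g(X))$.} The coefficient $\lambda$ in \eqref{e:lambda-tdep-def} is defined with the quantile thresholds $y_p=F_Y^\leftarrow(p)$ and $h_p=F_{g(X)}^\leftarrow(p)$, not a common level $t$. We plan to pass from level $t$ to these thresholds as follows.

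Since $\P[Y>t]$ is regularly varying with index $-1$, and hence continuous in the limit sense, we have $\P[Y>y_p]\sim 1-p$ as $p\uparrow1$. The same holds for $g(X)$, so $\P[g(X)>h_p]\sim 1-p$. Combining this with the calibration (iii) and the regular variation of the common tail, we get $y_p\sim h_p$.

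Next we sandwich. For any $\epsilon>0$ and $p$ close to $1$, $h_p\in[(1-\epsilon)y_p,(1+\epsilon)y_p]$. Hence
$$
\P[Y>y_p,\ g(X)>(1+\epsilon)y_p]\le \P[Y>y_p,\ g(X)>h_p]\le \P[Y>y_p,\ g(X)>(1-\epsilon)y_p].
$$
Applying Proposition \ref{p:rv-via-h} to $\min\{y_+,g(x)/c\}$ with $c=1\pm\epsilon$, we obtain
$$
b(y_p)\P[Y>y_p,\ g(X)>c\,y_p]\to \E[U\wedge (1-U)g(\Theta)/c].
$$
We then divide by $b(y_p)\P[g(X)>h_p]\to\E[U]$ and let $\epsilon\downarrow0$, using dominated convergence in $c$.

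The main obstacle is precisely this step: the quantile thresholds may have atoms or jumps, so $\P[Y>y_p]$ need not equal $1-p$. We will handle this by only using $\P[Y>y_p]\sim 1-p$. This follows from regular variation, because $\P[Y\ge y_p]/\P[Y>y_p]\to1$ and $\P[Y>y_p]\le 1-p\le\P[Y\ge y_p]$.
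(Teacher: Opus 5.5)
Your proof is correct, and its skeleton matches the paper's: first obtain level-$t$ limits, then convert to the quantile thresholds of \eqref{e:lambda-tdep-def} through $y_p\sim h_p$ and a sandwich. The difference is the tool you use for the level-$t$ limits.

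The paper pushes the exponent measure of $(Y,X)$ forward to the pair $(g(X),Y)$ via the transfer-of-regular-variation theorem. It then has to verify, by a homogeneity/countability argument, that the rectangles $[t,\infty)\times[s,\infty)$ are $\nu$-continuity sets. You instead apply Proposition \ref{p:rv-via-h} directly to the continuous homogeneous functionals $y_+$, $g(x)$ and $y_+\wedge g(x)$. This sidesteps the continuity-set check and gives the explicit value $\E[U\wedge(1-U)g(\Theta)]/\E[U]$ at once. In effect you prove this lemma and Proposition \ref{p:lambda-via-U-Theta} together: the paper's proof of that proposition is exactly your Steps 1--2, but it appeals back to this lemma to identify the limit with $\lambda$.

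For the quantile step, the paper isolates a univariate fact, Lemma \ref{l:xi-eta-tdep}. Its sandwich rescales both thresholds, via $\P[\xi\wedge\eta>(1\pm\epsilon)t_n]$. It therefore needs only the existence of the level-$t$ limit and regular variation of one marginal. This is what makes it reusable in Corollary \ref{c:Pareto-Breiman}, where $h_g$ may be discontinuous and only a Breiman-type level-$t$ limit is available.

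Your sandwich rescales only the $g(X)$ threshold. It therefore needs the one-parameter family of limits for $y_+\wedge g(x)/c$ plus dominated convergence in $c$. That is valid here, but the symmetric bound $\P[Y\wedge g(X)>(1\pm\epsilon)y_p]$ would let you skip the extra application of Proposition \ref{p:rv-via-h}.

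Finally, since you establish $\P[Y>y_p]\sim\P[g(X)>h_p]\sim 1-p$, dividing by either denominator gives the same limit. State this explicitly, because it is what covers the ordering $\lambda(g(X),Y)$ written in the statement as well as $\lambda(Y,g(X))$.
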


The proof of this lemma is given in Section \ref{sec:proofs:sec:joint-RV}.  The next proposition leads us to the 
formulation of the optimal homogeneous prediction problem as a calculus of variations problem.

\begin{proposition}\label{p:lambda-via-U-Theta} Let $(Y,X)$ be jointly $\tau$-regularly varying
in the sense of Assumption \ref{a:X-Y-joint}.  Let ${\cal G}(\tau_X)$ 
be the class of predictors in Definition \ref{def:asymp-calibration}. Then, for all 
$g\in {\cal G}(\tau_X)$, and $(U,\Theta)$ as in \eqref{e:U-THETA}, 
we have
\begin{equation}\label{e:constraint-via-U-Theta}
\E[U] = \E[(1-U)g(\Theta)] =c>0
\end{equation}
and
\begin{equation}\label{e:lambda-via-U-Theta}
\lambda(Y,g(X)) = \frac{1}{c} \E[ U \wedge (1-U) g(\Theta)]
\end{equation}
\end{proposition}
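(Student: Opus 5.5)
The plan is to express all the relevant tail probabilities as angular integrals via Proposition \ref{p:rv-via-h}, applied to $Z=(Y,X)$ with $\tau(y,x)=y_++\tau_X(x)$ and $\alpha=1$. Three continuous, nonnegative, $1$-homogeneous functions of $(y,x)$ are needed:
$$
h_1(y,x):=y_+,\qquad h_2(y,x):=g(x),\qquad h_3(y,x):=y_+\wedge g(x).
$$
Each belongs to ${\cal H}_+(\R^{1+d},\tau)$. The support inclusion $\{h_i>0\}\subset\{\tau>0\}$ is immediate, using $\{g>0\}\subset\{\tau_X>0\}$ from Definition \ref{def:asymp-calibration}(i).

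Next I check boundedness on $S_\tau$. A point of $S_\tau$ has the form $(u,(1-u)\theta)$ with $u\in[0,1]$ and $\theta\in S_{\tau_X}$, or $\theta$ is irrelevant when $u=1$. By homogeneity, $h_2(u,(1-u)\theta)=(1-u)g(\theta)\le \|g\|_{\infty,\SX}$ by condition (ii). Clearly $h_1\le 1$ and $h_3\le h_1$. One small point to verify is that $g(0)=0$, which holds by homogeneity; so the convention $\Theta=\theta_*$ when $U=1$ is harmless.

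Proposition \ref{p:rv-via-h}, with $c_Z=1$, then gives, as $t\to\infty$,
$$
b(t)\P[Y>t]\to\E[U],\quad b(t)\P[g(X)>t]\to\E[(1-U)g(\Theta)],\quad b(t)\P[Y\wedge g(X)>t]\to\E[U\wedge(1-U)g(\Theta)].
$$
Here I use the representation $\Theta_{Y,X}=(U,(1-U)\Theta)$ from \eqref{e:U-THETA}.

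For \eqref{e:constraint-via-U-Theta}: asymptotic calibration \eqref{e:g-calibration-lemma} forces the first two limits to coincide; call the common value $c$. To see $c>0$, note that Assumption \ref{a:X-Y-joint}, via \eqref{e:a:X-Y-joint}, ensures $\P[U=0]<1$, since $\sigma(\{0\}\times\{\tau_X=1\})<1$. Hence $\E[U]>0$, which is also consistent with the marginal regular variation of $Y$ noted in Remark \ref{r:joint-RV}.

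For \eqref{e:lambda-via-U-Theta}: by Lemma \ref{lem:nonameidea}, $\lambda(Y,g(X))=\lim_t \P[Y>t\mid g(X)>t]$. Write this as
$$
\frac{b(t)\P[Y>t,\,g(X)>t]}{b(t)\P[g(X)>t]},
$$
and observe that $\{Y>t,\,g(X)>t\}=\{Y\wedge g(X)>t\}$. The ratio converges to $c^{-1}\E[U\wedge(1-U)g(\Theta)]$.

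The main obstacle is making sure Proposition \ref{p:rv-via-h} genuinely applies to $h_3$ and $h_1$ despite their nonsmoothness and the $y_+$ truncation. Continuity and homogeneity are fine, so the delicate part is the identification of the limit at boundary angles, in particular $U=1$, where $\Theta$ is conventional. This reduces to noting that all three $h_i$ depend on the angle only through $(U,(1-U)\Theta)$, which is well defined.
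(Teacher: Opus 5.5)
Your proof is correct and is essentially the paper's own argument: the same three homogeneous functions $y_+$, $g(x)$ and $y_+\wedge g(x)$ are fed into Proposition \ref{p:rv-via-h}, calibration identifies $c$, the condition $\P[U=0]<1$ gives $c>0$, and Lemma \ref{lem:nonameidea} passes to $\lambda(Y,g(X))$; conditioning on $\{g(X)>t\}$ with $b(t)$, rather than on $\{Y>a_n\}$ as the paper does, is immaterial because both conditional limits coincide. One small fix to your boundedness check: at $u=1$ a point of $S_\tau$ is $(1,x)$ with $\tau_X(x)=0$ but possibly $x\neq 0$, so what you need there is that $g$ vanishes on all of $\{\tau_X=0\}$, which is the support-dominance condition of Definition \ref{def:asymp-calibration}(i), rather than just $g(0)=0$.
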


\medskip
\noindent
The claim follows from Proposition \ref{p:rv-via-h} and Lemma \ref{lem:nonameidea}, but its proof is given
Section \ref{sec:proofs:sec:joint-RV}, for the sake of completeness.\\


Observe that every non-negative homogeneous function $h:\R^d\to\R_+$, support-dominated 
by $\tau_X$, is uniquely determined by its values on the unit sphere $S_{\tau_X}$, since for all
$\tau_X(x)>0$, we have $h(x) = \tau_X(x) \cdot h(\theta)$, where $\theta:= x/\tau_X(x)$. 
Thus, Proposition \ref{p:lambda-via-U-Theta} above entails that the optimal
extremal predictors via {\em homogeneous functions} can be obtained by the 
functions $g:\SX \to \R_+$ that minimize \eqref{e:lambda-via-U-Theta} under the 
constraints \eqref{e:constraint-via-U-Theta}.  This leads us to the following optimization problem.   

\begin{problem}[maximal precision] \label{p:opt-lambda} Consider the admissible set of 
measurable predictors, 
$$
D(c) := \Big\{ g: \SX \to \R_+\,\ :\, \E[ (1-U)g(\Theta)] = c\Big\},
$$
where $c>0$ and the expectations are relative to the probability measure $\sigma$ in Assumption \ref{a:X-Y-joint}. 

Introduce {\em precision functional} 
\begin{equation}\label{e:Lambda-functional}
\Lambda(g):=\frac{1}{\E[U]} \E[ U \wedge (1-U)g(\Theta) ],
\end{equation}
defined for all non-negative Borel measurable functions $g:\SX \to \R_+$.

The problem is to:
$$
\mathop{{\rm maximize}}\, \Lambda(g) ,\ \ \mbox{ subject to }\ 
 g \in D(\sigma_Y),\ \ \mbox{ where }\sigma_Y:= \E[U].
$$
\end{problem} 

\noindent Using the identity $|a-b| = a+b -2(a\wedge b)$, valid  for all $a,b\in \R$, 
Problem \ref{p:opt-lambda} can be equivalently formulated as a minimization problem:

\begin{problem}[minimal $L^1$-loss] \label{p:opt-lambda-minimization}  For a measurable $g : S_{\tau_X} \to \R_+$, define the functional
\begin{equation}\label{e:I(g)-via-U}
I(g):= \E[| U  -  (1-U)g(\Theta)| ].
\end{equation}
The problem is to 
\begin{equation}\label{e:L1-proj}
{\rm minimize}\, I(g)  ,\ \ \mbox{ subject to }\ \ 
 g \in D(\sigma_Y).
 \end{equation}
\end{problem}

In the following sub-section we provide a general solution to 
Problem \ref{p:opt-lambda-minimization} (equivalently,  \ref{p:opt-lambda})
for all possible spectral measures arising under Assumption \ref{a:X-Y-joint}.

\begin{remark}[The domain of optimization]
The natural domain for Problems \ref{p:opt-lambda} and \ref{p:opt-lambda-minimization} 
is the convex set $D(\sigma_Y)$, which is closed in a certain $L^1$-sense (cf Section \ref{sec:solution}). 
As shown in Theorem \ref{thm:final} below, over this domain, these two problems always have a solution.  
Naturally, the question arises as to what is the connection between a {\em discontinuous} solution 
$g^{\rm (opt)}$ and optimal extreme event prediction.  This is discussed in the following remarks as well as 
in Section \ref{sec:Breiman-models}. 
\end{remark}

\begin{remark}[Optimal prediction]\label{r:circling-back-to-the-optimal-h}   
If $g^{(\rm opt)}$ is a solution of Problems \ref{p:opt-lambda} and \ref{p:opt-lambda-minimization}, then 
one can define an extremal predictor function 
$$
h^{(\rm opt)}(x):= \tau_X(x) g^{\rm (opt)}(x/\tau_X(x)) 1_{\{\tau_X(x)>0\}}.
$$
If $\theta \mapsto g^{\rm (opt)}(\theta)$ happens to be continuous and bounded on $\SX$,
then so is $h^{\rm (opt)}(\cdot)$, and in view of Proposition \ref{p:lambda-via-U-Theta}, we obtain 
\begin{equation}\label{e:Lambda=lambda}
 \lambda(Y,h^{\rm (opt)}(X)) = \Lambda(g^{\rm (opt)}(\Theta)).
 \end{equation}
That is, $h^{(\rm opt)}$ is an optimal extremal predictor over the class ${\cal H}_+$.   
\end{remark}

\begin{remark}[On the correspondence between $\Lambda$ and the tail-dependence coefficient] \label{r:subtlety}
 The solution $\theta\mapsto g^{\rm (opt)}(\theta)$ may sometimes be {\em discontinuous}. In such cases, the functional 
 $\Lambda(g)$ remains well-defined, while the tail-dependence functional for $Y$ and $h^{\rm (opt)}(X)$, may not exist for 
 certain pathological jointly regularly varying $(Y,X)$ models.  In Section \ref{sec:Breiman-models}, however, we shall 
 demonstrate that over the broad class of Breiman-type models, which encompasses all possible angular distributions, the 
 general (and possibly discontinuous) solutions to Problems \ref{p:opt-lambda} and \ref{p:opt-lambda-minimization} 
 {\em always yield} asymptotically optimal predictors.  More precisely, for such models the tail dependence coefficient 
 between $Y$ and $h^{\rm (opt)}(X)$ {\em always exists} and \eqref{e:Lambda=lambda} holds (cf Corollary \ref{c:Pareto-Breiman} 
 and Remark \ref{r:subtlety-response}, below).
\end{remark}

\subsection{The general solution to the variational problems} \label{sec:solution}

In this section, we will focus on the abstract optimization Problem \ref{p:opt-lambda-minimization} in the
general case of non-negative measurable functions $g$, where the pair
$(U,\Theta)$ takes values in $[0,1]\times \SX$ and has probability distribution $p_{U,\Theta}$. 
To exclude trivialities, we will assume that 
\begin{equation}\label{e:condition-on-U}
    \P[U=0]<1\ \ \mbox{ and }\ \ \P[U=1]<1,
\end{equation}
which corresponds precisely to \eqref{e:a:X-Y-joint}
in the joint regular variation condition on $Y$ and $X$. For the purposes of this section, however, the variables 
$Y$ and $X$, will play no role.

The joint distribution $p_{U,\Theta}(dud\theta)$ can always be written as
\begin{equation}\label{e:p_U,Theta}
p_{U,\Theta}(dud\theta) = p(du|\theta) p_\Theta(d\theta),
\end{equation}
where $p(du|\theta)$ is a {\em regular conditional probability} i.e., 
$p(du|\theta)$ is a probability transition kernel \citep[see e.g. Theorem 10.2.2 in][]{dudley:1989}.\\

{\em We proceed with some convenient notation.}  Let
\begin{equation}\label{e:b(theta)}
b(\theta):= \int_{[0,1]} (1-u)p(du |\theta)
\end{equation}
and observe that $p_\Theta( \{0<b(\theta)<1\} ) = 1$, by  \eqref{e:condition-on-U}. 

The functional $I(\cdot)$ in \eqref{e:I(g)} can be written as:
\begin{equation}\label{e:I(g)}
I(g) := \int_{\SX} \Big\{ \int_{[0,1)} | u - (1-u)g(\theta)| p(du|\theta) \Big\} p_{\Theta}(d\theta). 
\end{equation}
In view of \eqref{e:b(theta)}, by an application of the triangle inequality and Fubini's theorem, we see that $I(h)<\infty$,
for all measurable $h$, such that 
\begin{equation}\label{e:C(g)}
C(h) :=  \E[ (1-U) |h(\Theta)|] = \int_{\SX} |h(\theta)| b(\theta) p_\Theta(d\theta)<\infty.
\end{equation}
We will denote this class of functions by $L^1(b\cdot p_\Theta)$ and the non-negative such functions by $L_+^1(b\cdot p_\Theta)$.

The functional $C(\cdot)$ in \eqref{e:C(g)} will be referred to as the {\em constraint functional}.  This terminology is
motivated by Problem \ref{p:opt-lambda-minimization}, which amounts to minimizing $I(g)$ over all 
$g\in L_+^1(b\cdot p_\Theta)$ such that $C(g) = \E[U] = \sigma_Y$. Note also that the constraint functional $C(g)$ is 
precisely the {\em weighted} $L^1$-norm of $g$ in $L^1(b\cdot p_\Theta)$, so that for $C(g)<\infty$ is equivalent to
$g\in L^1(b\cdot p_\Theta)$.\\

Consider now the family of cumulative probability distribution functions 
 indexed by $\theta\in \SX$:
\begin{equation}\label{e:F-theta}
  F_\theta(t ):= \frac{1}{b(\theta)} \int_{ \{0\le u\le t\}}  (1-u)p(du |\theta),\ \  t\in [0,1].
 \end{equation}
 Observe that the $F_\theta$'s are the distribution functions of probability measures concentrated
 on $[0,1)$.  Notably, these measures, also denoted by $F_\theta$, have no atoms at $1$, i.e., 
 $F_\theta(1) = F_\theta(t-) = 1$ and
 \begin{equation}\label{e:no-atom-at-1}
  F_\theta(\{1\}) = F_\theta(1) -F_\theta(1-) =0,\ \ \mbox{ for all }\theta\in \SX.     
 \end{equation}
The following lemma establishes a precise formula for the G\^ateaux directional derivative of the functional $I$, 
which will be key.  Its proof is given in Section \ref{sec:proofs:main}.

 \begin{lemma}[G\^ateaux derivative] \label{l:gateaux-general} Let $I(\cdot)$ be as in \eqref{e:I(g)}, the $b(\theta)$'s and $F_\theta(t)$'s be as in 
 \eqref{e:b(theta)} and \eqref{e:F-theta}, respectively.  Then, for all $g\ge 0$ and $g,\, h\in L^1(b\cdot p_\Theta)$, we have
 \begin{align}\label{e:lower-bound-via-F}
&\lim_{\delta\downarrow 0} \frac{I(g+\delta h)-I(g)}{\delta} \nonumber\\
&\quad \quad= 2\int_{\SX}   b(\theta)  \Big\{ F_\theta(q(\theta)) h(\theta) +  F_\theta(\{q(\theta)\}) h(\theta)_-\Big\}  p_\Theta(d\theta) 
 - \int_\SX b(\theta) h(\theta) p_\Theta(d\theta),
 \end{align}
 where $F_\theta(\{t\}) = F_\theta(t) - F_\theta(t-)$ and
$$
 q(\theta):= \frac{g(\theta)}{1+g(\theta)}\ \ \  \mbox{ and }\ \ \ h(\theta)_- = \max\{0,-h(\theta)\}.
$$
 \end{lemma}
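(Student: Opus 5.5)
The plan is to compute the one-sided derivative pointwise in $(u,\theta)$ and then pass the limit through the integral in \eqref{e:I(g)} by dominated convergence. Fix $\theta$ with $g(\theta),h(\theta)$ finite; this holds $p_\Theta$-a.e. because $g,h\in L^1(b\cdot p_\Theta)$ and $b>0$ a.e. Fix also $u\in[0,1]$, and set
$$
\phi(\delta):=|u-(1-u)(g(\theta)+\delta h(\theta))|.
$$

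\emph{Pointwise derivative.} The function $\delta\mapsto\phi(\delta)$ is convex and Lipschitz with constant $(1-u)|h(\theta)|$. Hence the difference quotients $(\phi(\delta)-\phi(0))/\delta$ are monotone in $\delta$ and bounded in absolute value by $(1-u)|h(\theta)|$, so their limit as $\delta\downarrow 0$ exists.

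The key observation is that, since $g(\theta)\ge 0$ is finite, $q(\theta)=g/(1+g)\in[0,1)$ and
$$
(1-u)g(\theta)-u>0 \iff u<q(\theta), \qquad (1-u)g(\theta)=u \iff u=q(\theta).
$$
So the right derivative $\phi'(0+)$ takes three values:
\begin{itemize}
\item $(1-u)h(\theta)$ on $\{u<q(\theta)\}$;
\item $-(1-u)h(\theta)$ on $\{u>q(\theta)\}$;
\item $(1-u)|h(\theta)|$ on the tie set $\{u=q(\theta)\}$.
\end{itemize}
The point $u=1$ is harmless, since there $\phi\equiv 1$, consistent with the factor $(1-u)$.

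\emph{Rewriting the derivative.} Use the identities
$$
1_{\{u<q\}}-1_{\{u>q\}}=2\cdot 1_{\{u<q\}}+1_{\{u=q\}}-1, \qquad |h|=h+2h_-.
$$
These give, for all $u\in[0,1]$,
$$
\phi'(0+)=(1-u)\Big\{2h(\theta)1_{\{u\le q(\theta)\}}+2h(\theta)_-1_{\{u=q(\theta)\}}-h(\theta)\Big\}.
$$

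\emph{Passing to the limit.} The bound $(1-u)|h(\theta)|$ is integrable with respect to $p(du|\theta)p_\Theta(d\theta)$, with integral $C(h)<\infty$ by \eqref{e:C(g)}. Dominated convergence therefore lets us exchange $\lim_{\delta\downarrow0}$ with the double integral defining $I$.

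It remains to integrate $\phi'(0+)$ in $u$ against $p(du|\theta)$, using the definitions \eqref{e:b(theta)} and \eqref{e:F-theta}:
$$
\int_{[0,1]}(1-u)1_{\{u\le q\}}\,p(du|\theta)=b(\theta)F_\theta(q(\theta)),
$$
$$
\int_{[0,1]}(1-u)1_{\{u=q\}}\,p(du|\theta)=b(\theta)F_\theta(\{q(\theta)\}),
$$
$$
\int_{[0,1]}(1-u)\,p(du|\theta)=b(\theta).
$$
Integrating over $p_\Theta(d\theta)$ then yields exactly \eqref{e:lower-bound-via-F}.

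\emph{Main obstacle.} The only delicate point is the bookkeeping on the tie set $\{u=q(\theta)\}$, where the absolute value is not differentiable. There one must take the one-sided derivative $(1-u)|h|$, and the identity $|h|=h+2h_-$ is what turns this contribution into the atom term $F_\theta(\{q(\theta)\})h(\theta)_-$. Also, $q(\theta)<1$ together with \eqref{e:no-atom-at-1} guarantees that no mass at $u=1$ interferes with this computation.
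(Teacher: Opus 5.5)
Your proposal is correct and follows essentially the same route as the paper: a three-case pointwise computation of the one-sided derivative, the rewriting via indicator identities and $|h|=h+2h_-$, dominated convergence, and identification of the $u$-integrals with $b(\theta)F_\theta(q(\theta))$, $b(\theta)F_\theta(\{q(\theta)\})$ and $b(\theta)$. The only minor difference is in the domination step: you use the majorant $(1-u)|h(\theta)|$, whose integral is $C(h)<\infty$, in a single application of dominated convergence, which is exactly what the hypothesis $h\in L^1(b\cdot p_\Theta)$ supports; the paper instead applies dominated convergence twice with the cruder majorant $|h(\theta)|$.
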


 \medskip
 {\em We proceed with some further notation and auxiliary results needed to formulate and prove the main result.} 
 It turns out one can obtain an essentially explicit solution to  Problem \ref{p:opt-lambda-minimization} in terms of 
 the quantiles of the weighted conditional probability distribution 
 $F_\theta$ in \eqref{e:F-theta}. Let
 \begin{equation}\label{e:q-alpha}
  q_\alpha(\theta):= \inf\Big\{ t \ge 0\, :\, F_\theta(t )  \ge \alpha \Big\},\ \ \alpha\in [0,1]
 \end{equation}
 be the left-continuous generalized inverse of $F_\theta(\cdot)$, with the small modification that above 
 infimum is restricted to $t\in [0,\infty)$ since the distributions $F_\theta$ are supported on $[0,1)$.  This implies 
 that $q_0(\theta) = 0$. We shall also need the {\em right-continuous} generalized inverse
 \begin{equation}\label{e:q-alpha-plus}
  q_{\alpha + }(\theta) = \inf\Big\{ t \ge 0\, :\, F_\theta(t )  > \alpha \Big\},
 \end{equation}
 where by convention we set $q_{1+}(\theta) = q_1(\theta) =\sup\{ t \in \R\, :\, F_\theta(t) <1\}\le 1$.  Note that
 $$
  q_{\alpha+}(\theta) = \lim_{\beta>\alpha,\ \beta\downarrow \alpha} q_{\beta}(\theta),\ \ \mbox{ for all $0\le \alpha <1$.}
 $$ 
 The fact that the distributions $F_\theta$'s have no atoms at $1$ (cf \eqref{e:no-atom-at-1}), 
 implies the important observation that
\begin{equation}\label{e:q-alpha-+<1}
   q_\alpha(\theta) \le q_{\alpha+}(\theta) < 1,\ \ \mbox{ for all $0\le \alpha<1$ and $\theta\in \SX$.}
\end{equation}

 The following general result will be useful.  Its proof is given in Section \ref{sec:proofs:main}.
 
 \begin{lemma}\label{l:CDF-inverses} Let $F$ be the CDF of a probability distribution on $[0,1]$ and let
 $\alpha \mapsto q_\alpha$ and $\alpha \mapsto q_{\alpha+}$ be its left- and right-continuous generalized inverses, defined as in 
 \eqref{e:q-alpha} and \eqref{e:q-alpha-plus}, respectively.  The following statements hold:

 {\em (i)} For all $0\le \alpha \le 1$, we have
 \begin{equation}\label{e:l:CDF-inverses-1}
   F(q_\alpha -) \le F(q_{\alpha+}-) \le \alpha \le F(q_\alpha)\le F(q_{\alpha+}),
 \end{equation}
 where $F(t-):= \lim_{s<t,\ s\to t} F(s)$ denotes the left-limit of $F$ at $t$.

 {\em (ii)} For all $0\le \alpha\le 1$, such that $q_\alpha<q_{\alpha+}$, we have
 \begin{equation}\label{e:l:CDF-inverses-2}
   F(t) = F(q_\alpha),\ \ \mbox{ for all $ q_\alpha \le t < q_{\alpha+}$.}
  \end{equation}
 \end{lemma}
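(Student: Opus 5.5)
We work directly from the definitions $q_\alpha=\inf\{t\ge0: F(t)\ge\alpha\}$ and $q_{\alpha+}=\inf\{t\ge 0: F(t)>\alpha\}$, using only that $F$ is non-decreasing and right-continuous on $[0,1]$ with $F(1)=1$. First note $q_\alpha\le q_{\alpha+}$, since $\{F>\alpha\}\subset\{F\ge\alpha\}$. For $\alpha\le 1$ the set $\{t\ge 0:F(t)\ge\alpha\}$ contains $1$, so it is nonempty and $q_\alpha\in[0,1]$. The set $\{F>\alpha\}$ is also nonempty when $\alpha<1$. The case $\alpha=1$ is covered by the stated convention $q_{1+}=q_1$, and $\alpha=0$ is handled by the same arguments, noting $F(0-)=0$ as a convention.

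\textbf{Part (i).} We prove the four inequalities separately.
\begin{itemize}
\item \emph{$\alpha\le F(q_\alpha)$.} The set $\{F\ge\alpha\}$ is an interval $[q_\alpha,\infty)$ intersected with the domain, because $F$ is monotone and right-continuous. Taking $t_n\downarrow q_\alpha$ with $F(t_n)\ge\alpha$, right-continuity gives $F(q_\alpha)\ge\alpha$. If $q_\alpha=0$ the infimum is attained trivially.
\item \emph{$F(q_{\alpha+}-)\le\alpha$.} For any $s<q_{\alpha+}$ we have $F(s)\le\alpha$ by definition of the infimum. Letting $s\uparrow q_{\alpha+}$ gives the claim. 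If $q_{\alpha+}=0$, the left limit is read as $F(0-)=0\le\alpha$.
\item \emph{The outer inequalities.} $F(q_\alpha-)\le F(q_{\alpha+}-)$ and $F(q_\alpha)\le F(q_{\alpha+})$ both follow from monotonicity of $F$ and of its left limits, together with $q_\alpha\le q_{\alpha+}$.
\end{itemize}
In the convention case $\alpha=1$, where $q_{1+}=q_1$, we need $F(q_1-)\le 1$, which is trivial.

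\textbf{Part (ii).} Suppose $q_\alpha<q_{\alpha+}$ and take $t$ with $q_\alpha\le t<q_{\alpha+}$.
\begin{itemize}
\item Since $t<q_{\alpha+}$, the definition of the infimum gives $F(t)\le\alpha$.
\item Monotonicity and part (i) give $F(t)\ge F(q_\alpha)\ge\alpha$.
\end{itemize}
Hence $F(t)=\alpha=F(q_\alpha)$, which is \eqref{e:l:CDF-inverses-2}, and moreover the common value equals $\alpha$. For $\alpha=1$ the hypothesis $q_1<q_{1+}$ cannot hold under the convention, so nothing needs checking there.

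\textbf{Main obstacle.} There is no real analytic difficulty. The only care needed is with the boundary conventions: restricting the infimum to $t\ge0$, interpreting the left limit at $0$, and the convention $q_{1+}=q_1$. These need to be checked to make sure no inequality fails at $\alpha\in\{0,1\}$ or when $q_\alpha=0$.
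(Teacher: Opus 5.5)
Your proof is correct and takes essentially the same route as the paper. For part (i) you use right-continuity along $t_n\downarrow q_\alpha$, the defining property of the infimum $q_{\alpha+}$ to get $F(q_{\alpha+}-)\le\alpha$ (you argue directly where the paper argues by contradiction), and monotonicity for the outer inequalities; for part (ii) you use the same sandwich $\alpha\le F(q_\alpha)\le F(t)\le\alpha$, which additionally shows the common value is exactly $\alpha$.
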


\medskip
\noindent  The next lemma provides a technical ingredient needed for our first result.
  
 \begin{lemma}\label{l:q_alpha-optimiality} For all $0 \le \alpha \le 1$ and functions $\theta\mapsto h(\theta)$, we have:
 \begin{align}\label{e:l:q_alpha-optimiality}
     F_\theta(t) h(\theta) + F_\theta(\{t\}) h(\theta)_- &\ge \alpha h(\theta),\ \ \mbox{ for all } \theta\in \SX
     \mbox{ and }t\in [q_\alpha(\theta),q_{\alpha+}(\theta)],
 \end{align}
 where $x_\pm := \max\{0,\pm x\}$.
 \end{lemma}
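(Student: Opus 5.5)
Fix $\theta$ and $t\in[q_\alpha(\theta),q_{\alpha+}(\theta)]$, and write $F=F_\theta$ and $h=h(\theta)$. The claim is pointwise in $\theta$, so this reduces to a scalar inequality about one CDF $F$ on $[0,1]$ and one real number $h$. The plan is to split on the sign of $h$ and use only the ordering inequalities of Lemma \ref{l:CDF-inverses}(i), together with the monotonicity of $F$.

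\emph{Case $h\ge 0$.} Here $h_-=0$, and the claim reduces to $F(t)h\ge \alpha h$, i.e.\ $F(t)\ge\alpha$ whenever $h>0$. Since $t\ge q_\alpha$ and $F$ is nondecreasing, $F(t)\ge F(q_\alpha)\ge \alpha$ by \eqref{e:l:CDF-inverses-1}. Multiplying by $h\ge0$ gives the claim.

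\emph{Case $h<0$.} Here $h_-=-h=|h|$. The left-hand side equals
$$F(t)h-F(\{t\})h=h\bigl(F(t)-F(\{t\})\bigr)=h\,F(t-).$$
Dividing by $h<0$, the claim becomes $F(t-)\le\alpha$. Since $t\le q_{\alpha+}$ and $s\mapsto F(s-)$ is nondecreasing, $F(t-)\le F(q_{\alpha+}-)\le\alpha$, again by \eqref{e:l:CDF-inverses-1}.

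The only points needing care are the endpoint conventions. When $\alpha=1$, we use $q_{1+}=q_1$ and $F(q_1-)\le 1$, which holds trivially. When $t=q_{\alpha+}$, we need the left limit at that exact point, and this is what the inequality $F(q_{\alpha+}-)\le\alpha$ of Lemma \ref{l:CDF-inverses}(i) provides. There is no real obstacle. The key observation is the rewriting $F(t)-F(\{t\})=F(t-)$, which turns the negative part into a left-limit bound.
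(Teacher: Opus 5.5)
Your proof is correct and follows essentially the same route as the paper's: you split on the sign of $h(\theta)$, use $F_\theta(t)\ge F_\theta(q_\alpha(\theta))\ge\alpha$ when $h(\theta)\ge 0$, and rewrite the left-hand side as $F_\theta(t-)h(\theta)$ when $h(\theta)<0$, bounding it via $F_\theta(t-)\le F_\theta(q_{\alpha+}(\theta)-)\le\alpha$ from Lemma \ref{l:CDF-inverses}(i). Your remarks on the endpoint conventions ($\alpha=1$ and $t=q_{\alpha+}$) are a correct, slightly more explicit version of what the paper leaves implicit.
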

 \begin{proof} If $h(\theta)\ge 0$, then the bound in \eqref{e:l:q_alpha-optimiality} holds 
 since $\alpha \le F_\theta(q_\alpha(\theta)) \le F_\theta(t),\ t\ge q_\alpha(\theta)$ (cf \eqref{e:l:CDF-inverses-1}) and
 since the term $F_\theta(t) h(\theta)_-$ vanishes.
  
  On the other hand, if $h(\theta)<0$, we have $h(\theta) = - h(\theta)_-$ and by 
  writing 
  $$
  F_\theta(t) = F_\theta(t-) + F_\theta(\{t\}),
  $$
  we obtain that the left-hand side of \eqref{e:l:q_alpha-optimiality} equals
  $$
   F_\theta(t) h(\theta) + F_\theta(\{t\}) h(\theta)_- = F_\theta(t-) h(\theta). 
  $$
  By \eqref{e:l:CDF-inverses-1} and the monotonicity of $F_\theta$, however, for all 
  $t\in [q_\alpha(\theta),q_{\alpha+}(\theta)]$, 
  \[
   F_\theta(t-) \le F_\theta(q_{\alpha+}(\theta)-) \le \alpha.
  \]
   Hence by multiplying the above inequalities with the negative number $h(\theta)$,  
   we obtain $F_\theta(t-) h(\theta) \ge \alpha h(\theta)$, which completes the proof of 
   \eqref{e:l:q_alpha-optimiality}.
   \end{proof}

The following proposition characterizes a broad class of solutions to Problem \ref{p:opt-lambda-minimization} under 
arbitrary positive values of the constraint functional.  It shows that the solutions can be obtained from functions
sandwiched between the left- and right-continuous quantile functions of the $F_\theta$ distributions.

\begin{proposition}\label{p:optim-general} Let $g \in L_+^1(b\cdot p_\Theta)$ and define $q(\theta):= g(\theta)/(1+g(\theta))$. 
Consider the following two conditions:

{\em (i)} For some fixed $0\le \alpha<1$, we have
\begin{equation}\label{e:p:optim-general-i}
 q_\alpha(\theta) \le q(\theta) \le q_{\alpha+}(\theta),\ \ \ \mbox{ for almost all $\theta$, modulo $b(\theta) p_\Theta(d\theta)$}.
\end{equation}

{\em (ii)} We have
\begin{equation}\label{e:p:optim-general-ii}
 q_{1}(\theta) \le q(\theta),\ \ \ \mbox{ for almost all $\theta$, modulo $b(\theta) p_\Theta(d\theta)$}.
\end{equation}

If either {\em (i)} or {\em (ii)} holds, then
$$
I(g) = \inf_{g' \in D(C(g))} I(g').
$$     
\end{proposition}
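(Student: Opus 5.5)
The plan is to use convexity of $I$ together with the Gâteaux derivative formula of Lemma \ref{l:gateaux-general}. The functional $I(g)=\E|U-(1-U)g(\Theta)|$ is convex in $g$, since $g\mapsto |u-(1-u)g|$ is convex pointwise, and it is finite on $L^1(b\cdot p_\Theta)$. Hence for any $g'\in D(C(g))$ we may set $h:=g'-g\in L^1(b\cdot p_\Theta)$. Convexity gives that $\delta\mapsto (I(g+\delta h)-I(g))/\delta$ is nondecreasing on $(0,1]$, so
$$
I(g')-I(g)\ \ge\ \lim_{\delta\downarrow 0}\frac{I(g+\delta h)-I(g)}{\delta}.
$$
It therefore suffices to show that the right-hand side of \eqref{e:lower-bound-via-F} is non-negative for every such $h$.

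\emph{Case (i).} Let $q(\theta)=g(\theta)/(1+g(\theta))$. For $b\,p_\Theta$-almost every $\theta$ we have $q(\theta)\in[q_\alpha(\theta),q_{\alpha+}(\theta)]$, so Lemma \ref{l:q_alpha-optimiality} with $t=q(\theta)$ gives
$$
F_\theta(q(\theta))h(\theta)+F_\theta(\{q(\theta)\})h(\theta)_-\ \ge\ \alpha h(\theta).
$$
Points where $b(\theta)=0$ do not matter, since the integrand carries the factor $b(\theta)$. Integrating against $b\,p_\Theta$, the derivative is bounded below by $(2\alpha-1)\int b h\,dp_\Theta$. Because $C(g')=C(g)$ and both functions are non-negative,
$$
\int b h\,dp_\Theta=C(g')-C(g)=0.
$$
The lower bound is therefore $0$, and $I(g')\ge I(g)$.

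\emph{Case (ii).} Here we use $q_1(\theta)\le q(\theta)$. By the definition of $q_1$ as $\sup\{t: F_\theta(t)<1\}$ and right-continuity, $F_\theta(q(\theta))=1$. The integrand is then $h+F_\theta(\{q\})h_-\ge h$, because $h_-\ge 0$. So the derivative is at least $2\int bh\,dp_\Theta-\int bh\,dp_\Theta=0$.

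One subtlety: $q(\theta)<1$ always, since $g$ is finite, while $q_1(\theta)$ may equal $1$. On such $\theta$, condition (ii) cannot hold, so it only forces those $\theta$ to be $b\,p_\Theta$-null, which is harmless. Equivalently, this is Lemma \ref{l:q_alpha-optimiality} with $\alpha=1$ and $t=q(\theta)\ge q_1=q_{1+}$, using $F_\theta(t)=1$ there.

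\emph{Main obstacle.} The delicate step is the convexity passage from the one-sided directional derivative to a global inequality. In particular, we need that $I(g+\delta h)$ is finite and convex along the segment $\delta\in[0,1]$. This holds because $g+\delta h=(1-\delta)g+\delta g'\ge 0$ stays in $L^1_+(b\cdot p_\Theta)$, which is exactly where Lemma \ref{l:gateaux-general} applies. The remaining steps are direct applications of the earlier lemmas and the constraint equality.
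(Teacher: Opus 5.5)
Your proposal is correct and follows the paper's proof essentially step for step. You apply Lemma \ref{l:gateaux-general} with $h=g'-g$ and kill the linear term using $\int b h\,dp_\Theta=C(g')-C(g)=0$; you then bound the integrand below by $\alpha h$ via Lemma \ref{l:q_alpha-optimiality} in case (i), and by $h$ via $F_\theta(q(\theta))=1$ in case (ii); finally, convexity of $\delta\mapsto I(g+\delta(g'-g))$ turns a non-negative right derivative into $I(g')\ge I(g)$. Your monotone-difference-quotient argument is a slightly leaner form of the paper's convexity-plus-continuity step, and your remark that (ii) forces $\{q_1=1\}$ to be $b\,p_\Theta$-null is a correct but inessential refinement.
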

\begin{proof}
    Let $g' \in D(C(g))$ be arbitrary. Since $C(g) = C(g')$, with $C(\cdot)$ as in \eqref{e:C(g)}, we obtain
   \begin{align}\label{e:p:optim-general-1}
     C(g') - C(g) = \int_{\SX}  (g'(\theta) -g(\theta)) b(\theta) p_\Theta(d\theta) =: \int_{\SX} h(\theta) b(\theta) p_\Theta(d\theta) = 0.
   \end{align}
   
   Thus, by Lemma \ref{l:gateaux-general} applied with $h:=g'-g$, we obtain:
  \begin{align}\label{e:p:optim-general-2}
  \lim_{\delta \downarrow 0 }\frac{I( g + \delta\cdot(g'-g))- I(g)}{\delta} =
   2\int_{\SX}   b(\theta)  \Big\{ F_\theta(q(\theta)) h(\theta) +  F_\theta(\{q(\theta)\}) h(\theta)_-\Big\}  p_\Theta(d\theta),
   \end{align}
   where the second integral in \eqref{e:lower-bound-via-F} vanishes in view of \eqref{e:p:optim-general-1}.

  Now, focus on the integrand in \eqref{e:p:optim-general-2}.  If \eqref{e:p:optim-general-i} holds, by 
  Lemma \ref{l:q_alpha-optimiality} applied with $t:=q(\theta)$, we obtain 
  $F_\theta(q(\theta)) h(\theta) +  F_\theta(\{q(\theta)\}) h(\theta)_-\ge \alpha h(\theta)$.  This,
  in view of \eqref{e:p:optim-general-2} and \eqref{e:p:optim-general-1}, implies
  \begin{align}\label{e:p:optim-general-3}
 \lim_{\delta \downarrow 0 }\frac{I( g + \delta\cdot(g'-g))- I(g)}{\delta} \ge 
  2 \alpha \int_{\SX}   b(\theta) h(\theta) p_\Theta(d\theta) = 0.
 \end{align}

 On the other hand, if \eqref{e:p:optim-general-ii} holds, we have $F_\theta(q(\theta))=1$ and 
 by dropping the non-negative term $F_\theta(\{q(\theta)\}) h(\theta)_-$, we obtain 
 $F_\theta(q(\theta)) h(\theta) +  F_\theta(\{q(\theta)\}) h(\theta)_- \ge h(\theta)$.  Thus, 
 in view of \eqref{e:p:optim-general-2}, we obtain that \eqref{e:p:optim-general-3} holds with $\alpha:=1$.

  Note that the function $\varphi(\delta) := I( g + \delta\cdot(g'-g))- I(g)$ is convex on $\delta \in [0,\infty)$. By the Lebesgue
  DCT it readily follows that $\delta \mapsto \varphi(\delta)$ is also continuous. We have shown that Relation
  \eqref{e:p:optim-general-3} holds under either one of the conditions 
  \eqref{e:p:optim-general-i} or \eqref{e:p:optim-general-ii}.  This means that the right-derivative of $\varphi(\cdot)$ at 
  $0$ is non-negative, which, in view of the convexity of $\varphi$, implies that $\varphi(\delta)\ge 0 =\varphi(0)$, for 
  all $\delta\ge 0$.  By taking $\delta:=1$, we obtain
  $$ 
   I(g)   \le I(g'),
  $$
  which, since $g'\in D(C(g))$ was arbitrary,  completes the proof of the proposition.
\end{proof}

  Proposition \ref{p:optim-general} provides a rich family of solutions to 
   Problem \ref{p:opt-lambda-minimization}, where the constraint $\sigma_Y$ is replaced by a potentially 
   arbitrary finite value $C(g)$. Thus, to complete the solution to Problem \ref{p:opt-lambda-minimization},
   it remains to demonstrate that it is always possible to choose such a function $g$ that meets the constraint
   $
     \sigma_Y = \E[U] = C(g).
   $
   To this end, the conditional quantile functions $q_\alpha(\theta)$ and $q_{\alpha+}(\cdot)$ will play an important role.
   
   For all $0<\alpha<1$ and $\lambda \in [0,1]$, introduce the functions
   \begin{equation}\label{e:g-alpha-lambda}
     g_\alpha^{(\lambda)} (\theta):= \frac{q_\alpha^{(\lambda)}(\theta)}{1-  q_\alpha^{(\lambda)}(\theta)},
   \end{equation}
   where
   \begin{equation}\label{e:q-alpha-lambda}
    q_\alpha^{(\lambda)}(\theta):= (1-\lambda) q_\alpha(\theta) + \lambda q_{\alpha+}(\theta).
   \end{equation}
   Recall that by \eqref{e:q-alpha-+<1}, we have $q_{\alpha+}^{(\lambda)}(\theta)<1$ and hence 
   $g_\alpha^{(\lambda)}(\theta) <\infty$, for all $0\le \alpha <1$ and $\lambda \in [0,1]$. Note, however, that
   it is often possible to have $q_1(\theta) = q_{1+}(\theta) = 1$ in which case $g_1(\theta)=\infty$, by convention.

  The following lemma, proved in Section \ref{sec:proofs:main} below, is essential. 
  It establishes that for all $\alpha \in [0,1)$, the predictors 
  $g_\alpha^{(\lambda)}(\cdot) ,\ 0\le \lambda\le 1$ belong to $L_+^1(b\cdot p_\Theta)$ and shows other 
  useful properties of the constraint functional.

   \begin{lemma}\label{l:C(g)-properties} Let $g_\alpha^{(\lambda)}$ be as in \eqref{e:g-alpha-lambda}.  Then:\\

    {\em (i)} We have that $C(g_\alpha^{(\lambda)})<\infty$,  for all $0\le \alpha <1$ and $0\le \lambda \le 1$.\\
    
    {\em (ii)} The function $\lambda \mapsto C(g_\alpha^{(\lambda)})$ is monotone non-decreasing and continuous in $\lambda \in [0,1]$.\\

    {\em (iii)} The function $\alpha \mapsto C(g_\alpha)$ in monotone non-decreasing and left-continuous in $\alpha \in (0,1]$.\\

    {\em (iv)} For all $0\le \alpha < 1$, we have that $C(g_{\alpha+}) = \lim_{\beta\downarrow \alpha} C(g_\beta).$
   \end{lemma}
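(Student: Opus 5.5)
The plan is to work pointwise in $\theta$ with the map $\phi(t):=t/(1-t)$, which is increasing and convex on $[0,1)$, so that $g_\alpha^{(\lambda)}=\phi(q_\alpha^{(\lambda)})$. Then
$$C(g_\alpha^{(\lambda)})=\int_{\SX} b(\theta)\,\phi\big(q_\alpha^{(\lambda)}(\theta)\big)\,p_\Theta(d\theta).$$
Everything reduces to monotone or dominated convergence once we have an integrable dominating function.

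For (i), the aim is a bound of the form $b(\theta)\phi(q_{\alpha+}(\theta))\le \mbox{const}\cdot \int u\,p(du|\theta)$; integrating this gives at most a constant times $\E[U]\le 1$. To get such a bound, fix $\beta\in(\alpha,1)$ and use $q_\alpha^{(\lambda)}\le q_{\alpha+}\le q_\beta$. Since $F_\theta(q_\beta-)\le\beta$ by Lemma \ref{l:CDF-inverses}, the $F_\theta$-mass of $[q_\beta(\theta),1)$ is at least $1-\beta$. Unwinding \eqref{e:F-theta}, this reads
$$\int_{[q_\beta,1)}(1-u)\,p(du|\theta)\ge (1-\beta)\,b(\theta).$$
On that set we have $u/(1-u)\ge \phi(q_\beta)$, since $\phi$ is increasing. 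Multiplying gives
$$\int u\,p(du|\theta)\ \ge\ \int_{[q_\beta,1)}\phi(u)(1-u)\,p(du|\theta)\ \ge\ \phi(q_\beta(\theta))(1-\beta)\,b(\theta).$$
Hence $b\,\phi(q_\alpha^{(\lambda)})\le (1-\beta)^{-1}\int u\,p(du|\theta)$, and integrating over $p_\Theta$ yields $C(g_\alpha^{(\lambda)})\le \E[U]/(1-\beta)<\infty$. The case $\alpha=0$ is covered as well, and for the extreme choice $\lambda=1$ this is the key step.

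For (ii), monotonicity in $\lambda$ follows because $q_\alpha^{(\lambda)}$ is nondecreasing in $\lambda$ and $\phi$ is increasing. Continuity follows from the dominated convergence theorem: pointwise, $\lambda\mapsto\phi(q_\alpha^{(\lambda)}(\theta))$ is continuous (both endpoints are $<1$ by \eqref{e:q-alpha-+<1}), and the dominating function is the integrable $b\,\phi(q_{\alpha+})$ from (i).

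For (iii), monotonicity holds because $\alpha\mapsto q_\alpha(\theta)$ is nondecreasing. Left-continuity at $\alpha\in(0,1]$ comes from the left-continuity of the generalized inverse, $q_\beta(\theta)\uparrow q_\alpha(\theta)$ as $\beta\uparrow\alpha$, together with the monotone convergence theorem applied to the increasing nonnegative integrands $b\,\phi(q_\beta)$. This works even when the limit is infinite, e.g. $C(g_1)=\infty$ with $g_1=\infty$ on $\{q_1=1\}$.

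For (iv), we use $q_\beta(\theta)\downarrow q_{\alpha+}(\theta)$ as $\beta\downarrow\alpha$, which the paper states. Fixing some $\beta_0\in(\alpha,1)$, the integrands for $\beta\le\beta_0$ are dominated by $b\,\phi(q_{\beta_0})$, which is integrable by (i). Dominated convergence then gives $C(g_\beta)\to C(g_{\alpha+})$.

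The main obstacle is the integrability bound in (i), since $\phi$ blows up near $1$ and $q_{\alpha+}$ may approach $1$. The tail-mass argument is what I would try first. One should check it carefully for atoms at $q_\beta$, which is why the closed set $[q_\beta,1)$ and the inequality $F_\theta(q_\beta-)\le\beta$ are used, and on the null set where $b(\theta)\in\{0,1\}$.
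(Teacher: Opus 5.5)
Your proposal is correct. Parts (ii)--(iv) follow the paper's argument essentially verbatim: monotonicity of $q_\alpha^{(\lambda)}$ in $\lambda$ and in $\alpha$, dominated convergence with an integrable dominating function $b\,\phi(q_{\beta_0})$ or $b\,\phi(q_{\alpha+})$, and monotone convergence for left-continuity.

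The genuine difference is in (i).

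\emph{The paper's route} randomizes the level. With $V\sim{\rm Uniform}(0,1)$ independent of $\Theta$, the quantile transform gives $q_V(\theta)\sim F_\theta$, so $\E[b(\Theta)/(1-q_V(\Theta))\mid\Theta]=p([0,1)\mid\Theta)\le 1$. Fubini then yields $\int_{\SX} b/(1-q_\alpha)\,dp_\Theta<\infty$ only for Lebesgue-a.e.\ $\alpha$. Monotonicity in $\alpha$ is then needed to extend this to every $\alpha<1$.

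\emph{Your route} fixes the level and applies a Markov-type inequality to the tilted law. You use $F_\theta([q_\beta,1))\ge 1-\beta$, which follows from $F_\theta(q_\beta-)\le\beta$ and the absence of an atom at $1$, together with the identity $(1-u)\phi(u)=u$. This gives the explicit bound $C(g_\alpha^{(\lambda)})\le C(g_\beta)\le \E[U]/(1-\beta)$ for every $\beta\in(\alpha,1)$.

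Both proofs rely on the same cancellation between the tilt weight $(1-u)$ and the blow-up of $u/(1-u)$ near $1$. Yours avoids the quantile-transform fact and the a.e.-in-$\alpha$ step. It also gives a quantitative growth rate for $C(g_\beta)$ as $\beta\uparrow 1$, expressed through the constraint value $\E[U]$. The paper's version is a one-line identity that, on average over $\alpha$, controls the slightly larger quantity $\int b/(1-q_\alpha)\,dp_\Theta$.
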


\medskip
    \noindent Now, we are ready to formulate and prove our solution to the
     equivalent Problems \ref{p:opt-lambda} \& \ref{p:opt-lambda-minimization}.

     \begin{theorem}\label{thm:final} Let $(U,\Theta)$ be a random vector taking values in $[0,1]\times \SX$ and having a
     probability distribution $p_{U,\Theta}(du,d\theta)$ written as in \eqref{e:p_U,Theta}. Assume $\P[ U=0]<1$ and $\P[U=1]<1$.\\

     {\em (i)} If $U = u(\Theta)$, almost surely, for some Borel function $u(\cdot)$, then
     \begin{equation}\label{e:thm:final-i-g_opt}
      g^{(\rm opt)} (\theta) := \frac{\E[U]}{\E[U\cdot 1\{U<1\}]} \cdot \frac{u(\theta)}{(1-u(\theta))} 1(\{ u(\theta) <1\}) 
    \end{equation}
     is a solution to Problem \ref{p:opt-lambda} and 
     \begin{equation}\label{e:thm:final-i}
      \Lambda(g^{\rm (opt)}) = \frac{\E[U\cdot 1\{U<1\}]}{\E[U]} = 1- \frac{\P[U=1]}{\E[U]}. 
      \end{equation}
   
   \noindent  In the next two cases, suppose that $U$ is not a deterministic function of $\Theta$.\\
    
    {\em (ii)}  If $\E[U] < C(g_1)$, then there exist an $\alpha \in [0,1)$ and $\lambda \in [0,1]$ such that for
    $g^{\rm (opt)} := g_\alpha^{(\lambda)}$, we have 
    $\E[U]=C(g^{\rm (opt)})$ and $g^{\rm (opt)}$ is a solution to Problem \ref{p:opt-lambda}.\\
   
     {\em (iii)} If $ C(g_1)\le \E[U]$, then $\theta\mapsto g^{\rm (opt)}(\theta) := (\E[U]/C(g_1)) \cdot g_1 (\theta)$ is a solution to 
     Problem \ref{p:opt-lambda} and  $\Lambda(g^{\rm (opt)})$ is as in \eqref{e:thm:final-i}.
     \end{theorem}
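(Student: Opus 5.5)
The proof reduces everything to Proposition \ref{p:optim-general} plus a matching of the constraint $C(g)=\E[U]$, handled separately in the three cases. Throughout I use the equivalence of Problems \ref{p:opt-lambda} and \ref{p:opt-lambda-minimization}: on $D(\sigma_Y)$ we have $I(g)=\E[U]+C(g)-2\E[U\wedge(1-U)g(\Theta)]=2\sigma_Y-2\sigma_Y\Lambda(g)$. So minimizing $I$ is the same as maximizing $\Lambda$.

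\emph{Case (i).} Here $p(du|\theta)=\delta_{u(\theta)}$. Hence $b(\theta)=1-u(\theta)$, and $F_\theta$ is the point mass at $u(\theta)$ when $u(\theta)<1$. The set $\{u(\theta)=1\}$ is $b\,p_\Theta$-null and can be ignored.

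\begin{itemize}
\item I would argue directly. On $\{U<1\}$ we have $U\wedge(1-U)g(\Theta)\le U$. On $\{U=1\}$ the term $(1-U)g$ vanishes. Hence $\Lambda(g)\le \E[U1\{U<1\}]/\E[U]$ for every $g$.
\item The proposed $g^{\rm (opt)}$ equals $c\,u/(1-u)$ with $c=\E[U]/\E[U1\{U<1\}]\ge1$. Then $(1-U)g^{\rm (opt)}(\Theta)=cU\ge U$ on $\{U<1\}$, so the bound is attained.
\item Also $C(g^{\rm (opt)})=c\,\E[U1\{U<1\}]=\E[U]$, so the constraint holds. Note $\E[U1\{U<1\}]>0$ is needed; this requires $\P[0<U<1]>0$. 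If instead $\E[U1\{U<1\}]=0$, then $\Lambda\equiv0$ and everything is trivial.
\item Finally, $\E[U1\{U<1\}]=\E[U]-\P[U=1]$ gives \eqref{e:thm:final-i}.
\end{itemize}

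\emph{Case (ii).} This is an intermediate-value argument using Lemma \ref{l:C(g)-properties}. The map $\alpha\mapsto C(g_\alpha)$ is non-decreasing and left-continuous, with right limits $C(g_{\alpha+})$. Also $C(g_0)=0<\E[U]$, since $q_0=0$.

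\begin{itemize}
\item Set $\alpha^*:=\sup\{\alpha\in[0,1]: C(g_\alpha)\le \E[U]\}$.
\item If $\alpha^*=1$, left-continuity gives $C(g_1)\le\E[U]$, contradicting the hypothesis. So $\alpha^*<1$.
\item By left-continuity, $C(g_{\alpha^*})\le \E[U]$. By (iv), $C(g_{\alpha^*+})=\lim_{\beta\downarrow\alpha^*}C(g_\beta)\ge \E[U]$.
\item Since $g_{\alpha}^{(0)}=g_\alpha$ and $g_\alpha^{(1)}=g_{\alpha+}$, part (ii) of the lemma (continuity in $\lambda$) yields $\lambda\in[0,1]$ with $C(g_{\alpha^*}^{(\lambda)})=\E[U]$.
\item Then $q^{(\lambda)}_{\alpha^*}$ lies between $q_{\alpha^*}$ and $q_{\alpha^*+}$, so condition (i) of Proposition \ref{p:optim-general} holds. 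Hence $g_{\alpha^*}^{(\lambda)}$ minimizes $I$ over $D(\E[U])$, i.e.\ it solves Problem \ref{p:opt-lambda}.
\end{itemize}

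\emph{Case (iii).} Here $C(g_1)\le\E[U]$, and the main obstacle is $C(g_1)=0$ or $g_1$ infinite on a set.

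\begin{itemize}
\item \textbf{Finiteness of $g_1$.} Since $C(g_1)\le\E[U]<\infty$, we have $g_1<\infty$ a.e.\ modulo $b\,p_\Theta$.
\item \textbf{Positivity of $C(g_1)$.} If $C(g_1)=0$, then $q_1=0$ a.e., so $U\in\{0,1\}$ a.s.\ given $\Theta$, which must be ruled out or handled. In that situation any $g$ gives $\Lambda(g)=\E[U1\{0<U<1\}]/\E[U]=0$, consistent with \eqref{e:thm:final-i}. I would treat it as a degenerate sub-case.
\item \textbf{Optimality.} Otherwise $\kappa:=\E[U]/C(g_1)\ge1$. Because $t\mapsto t/(1+t)$ is increasing, $q(\theta)=\kappa g_1/(1+\kappa g_1)\ge q_1(\theta)$. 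So condition (ii) of Proposition \ref{p:optim-general} holds, $C(\kappa g_1)=\E[U]$, and optimality follows.
\item \textbf{The value $\Lambda$.} Since $q\ge q_1$ means $(1-u)g^{\rm (opt)}(\theta)\ge u$ for $p(\cdot|\theta)$-a.e.\ $u<1$, we get $U\wedge(1-U)g^{\rm (opt)}(\Theta)=U1\{U<1\}$ a.s. This gives \eqref{e:thm:final-i}.
\end{itemize}
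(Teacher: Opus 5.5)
Your proof is correct and follows the paper's argument essentially step for step. In (i) you use the direct upper bound $\Lambda(g)\le \E[U1\{U<1\}]/\E[U]$, attained by $g^{\rm (opt)}$; in (ii) the supremum/intermediate-value construction via Lemma \ref{l:C(g)-properties} followed by Proposition \ref{p:optim-general}(i); and in (iii) rescaling $g_1$ and invoking Proposition \ref{p:optim-general}(ii). You also flag the degenerate situation $U\in\{0,1\}$ a.s., where $\E[U1\{U<1\}]=0$ or $C(g_1)=0$ leaves the stated formulas undefined but $\Lambda\equiv 0$, so every admissible $g$ is optimal with the value in \eqref{e:thm:final-i}; this correctly patches a small gap the paper leaves implicit.
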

     \begin{proof}
   {\em Part (i):} We will show $g^{\rm (opt)}$ solves the equivalent Problem \ref{p:opt-lambda}.
    
   Observe first that by \eqref{e:thm:final-i-g_opt}, we have
   \begin{align*}
   C(g^{\rm (opt)}) =\E[(1-u(\Theta))g^{\rm (opt)}(\Theta)] 
   = \frac{\E[U]}{\E[U\cdot 1\{U<1\}]} \cdot \E [U\cdot 1\{U<1\}] = \E[U],
   \end{align*}
   which shows that $g^{\rm (opt)}$ satisfies the constraint.  Now, for every 
   $g' \in D(\E[U])$, we necessarily have 
   $$
   \Lambda(g') = \frac{1}{\E[U]} \E[ U \wedge (1-U) g'(\theta) ] \le  \frac{1}{\E[U]} \E[U \cdot 1\{U<1\}] = 1- \frac{\P[U=1]}{\E[U]}.
   $$
   On the other hand, with $g^{\rm (opt)}$ as in \eqref{e:thm:final-i-g_opt}, over the event $\{U<1\}$, we have 
   $$
    U = u(\Theta) \le (1-u(\Theta)) g^{\rm (opt)}(\Theta) = \frac{\E[U]}{\E[U\cdot 1\{U<1\}]} u(\Theta), 
   $$
   since $1\le \E[U]/\E[U\cdot 1\{U<1\}]$. Thus,
   $$
    \Lambda(g^{\rm (opt)})=\frac{1}{\E[U]} \E[ U \wedge (1-U)g^{\rm (opt)}(\Theta)] = \frac{1}{\E[U]}\E[U \cdot 1\{U<1\}] = 1-\frac{\P[U=1]}{\E[U]},
   $$
   which shows that $\Lambda(g^{\rm(opt)})\le \Lambda(g')$, for all $g'\in D(\E[U])$, completing the proof of (i).\\
   
   {\em Part (ii):} Let 
   \begin{equation}\label{e:thm:final-0}
   \wt\alpha := \sup\{ \alpha \ge 0\ :\,  C(g_\alpha) \le \E[U] \}
   \end{equation}
   and observe that since $C(g_0)=0$ the above supremum is over a non-empty set, and hence $\wt\alpha \ge 0$.  
   We will show next that:
   \begin{equation}\label{e:thm:final-1}
    \wt\alpha<1\ \ \mbox{ and }\ \  C(g_{\wt\alpha}) \le \E[U] \le C(g_{\wt\alpha + })<\infty.
   \end{equation}
  Indeed, since the function $\alpha \mapsto C(g_\alpha)$ is monotone non-decreasing 
   and left-continuous (cf part (iii) of Lemma \ref{l:C(g)-properties}), we obtain from \eqref{e:thm:final-0} that
   $$
     C(g_{\wt\alpha}) \le \E[U].
   $$
   On the other hand, since $\E[U] < C(g_1)$, we necessarily have $\wt \alpha<1$ and hence by 
   Lemma \ref{l:C(g)-properties}, it follows that $C(g_{\wt \alpha+})<\infty$.  To show \eqref{e:thm:final-1}, it remains
   to prove that $\E[U] \le C(g_{\wt\alpha+})$.
   
   Suppose that $C(g_{\wt\alpha+}) < \E[U]$. Since $\wt \alpha <1$, by part {\em (iv)} of Lemma \ref{l:C(g)-properties}, it follows that
   $$
    C(g_{\wt\alpha+}) = \lim_{\beta \downarrow \wt\alpha} C(g_{\beta}) <\E[U].
   $$
   Thus, there exists a $\beta$ such that $\wt\alpha<\beta<1$ and $\E[C(g_\beta)] < \E[U]$.  This, however, 
   contradicts the definition of $\wt\alpha$ in \eqref{e:thm:final-0} and shows that $ \E[U]\le C(g_{\wt\alpha+})$, completing 
   the proof of \eqref{e:thm:final-1}.

   Recall that $g_\alpha^{(\lambda)},\ \lambda\in [0,1]$ monotonically and continuously interpolates between 
   $g_\alpha =g_\alpha^{(0)}$ and $g_{\alpha+}= g_\alpha^{(1)}$ (cf \eqref{e:g-alpha-lambda}).  By part (ii) of Lemma \ref{l:C(g)-properties}, moreover, the function $\lambda \mapsto C(g_{\wt \alpha}^{(\lambda)})$ is continuous, and hence in 
   view of \eqref{e:thm:final-1}, it follows that 
   $C(g_{\wt\alpha}^{(\wt\lambda)})=\E[U]$, for some $\wt \lambda \in [0,1]$.  Note also that 
   $q(\theta):= g_{\wt\alpha}^{(\wt\lambda)}(\theta)/(1+g_{\wt\alpha}^{(\wt\lambda)}(\theta))$ 
   satisfies the condition \eqref{e:p:optim-general-i} with $\alpha$ replaced by $\wt\alpha$.  Thus,
   Proposition \ref{p:optim-general} implies that $g^{\rm (opt)}(\theta):=g_{\wt\alpha}^{(\wt\lambda)}(\theta),\ \theta\in \SX$
   is a solution to Problem \ref{p:opt-lambda-minimization}, as we have already established that $g^{\rm (opt)}$ satisfies the
   desired constraint $C(g^{\rm(opt)}) = \E[U]$.\\

   {\em Part (iii):} Since $c:=\E[U]/C(g_1)\ge 1$, we have that 
   $q(\theta):=  c \cdot g_1 (\theta)/ (1+c\cdot  g_1(\theta)) \ge q_1(\theta) = g_1(\theta)/(1+g_1(\theta))$. 
   Thus, by observing that $C(g^{\rm (opt)}) = C( c \cdot g_1) =\E[U]$, 
   part (ii) of Proposition \ref{p:optim-general} implies that $g^{\rm (opt)}$ is a solution to Problem \ref{p:opt-lambda-minimization}.
   It remains to show that $\Lambda(g^{\rm (opt)})$ is as in \eqref{e:thm:final-i}.

   Conditionally on $\Theta$, over the event $\{U<1\}$, we have
   $$
   (1-U) g^{\rm (opt)}(\Theta) = c \cdot (1-U) \frac{q_1(\Theta)}{1-q_1(\Theta)} \ge c q_1(\Theta) \ge c U\ge U,
   $$
   almost surely, since $c\ge 1$ and since $q_1(\Theta)\ge U$, almost surely.  This implies that
   $$
   \Lambda(g^{\rm (opt)}) = \frac{1}{\E[U]} \E[ U \wedge (1-U)  g^{\rm (opt)}(\Theta)] =\frac{1}{\E[U]} \E[ U\cdot 1\{U<1\}]
   = 1 -\frac{\P[U=1]}{\E[U]},
   $$
   which completes the proof of part {\em (iii)} and the theorem.
     \end{proof}

 \subsection{The generalized Breiman models}\label{sec:Breiman-models}

  In this section, we discuss the broad class of so-called Breiman's models, which realize all possible 
  tail dependence measures.  We will demonstrate that in the context of these models, the solutions to the 
  variational problems in Theorem \ref{thm:final} yield asymptotically optimal homogeneous predictors. 
  We begin with a sharp version of the classical Breiman's lemma, which is a reformulation of 
  Lemma 2.3 in \cite{davis:mikosch:2008}.

  \begin{lemma}[Lemma 2.3 in \cite{davis:mikosch:2008}] \label{l:Breiman} Let $\xi$ and $\eta$ be independent non-negative random variables, where
  $\xi$ has regularly varying right tail with index $\alpha>0$ and $\E[\eta^\alpha]<\infty$.
  Consider the following two conditions.
  \begin{enumerate}
       \item[(i)] For some $c_\xi>0$, we have
        \begin{equation}\label{e:l:Breiman-i}
          t^{\alpha} \P[ \xi > t] \to c_\xi\in (0,\infty), \mbox{ as }t\to\infty.
          \end{equation}
       \item[(ii)] For some $\delta>0$, we have $\E[\eta^{\alpha+\delta}] <\infty$.
    \end{enumerate}
  If either (i) or (ii) holds, then
  \begin{equation}\label{e:l:Breiman}
 \frac{\P[\xi\eta >t]}{\P[\xi>t]} \to \E[\eta^\alpha],\ \ \mbox{ as }t\to\infty.
  \end{equation}
  \end{lemma}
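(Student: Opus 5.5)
We need to prove Breiman's lemma under (i) or (ii). The plan is to reduce the ratio to an integral over the law of $\eta$ and justify exchanging limit and integral by dominated convergence.

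\textbf{Reduction.} By independence,
\[
\frac{\P[\xi\eta>t]}{\P[\xi>t]}=\int_{(0,\infty)} \frac{\P[\xi>t/y]}{\P[\xi>t]}\,P_\eta(dy),
\]
where the atom of $\eta$ at $0$ contributes nothing. For each fixed $y>0$, regular variation of the tail of $\xi$ gives that the integrand tends to $y^\alpha$. It therefore suffices to produce an integrable dominating function, uniformly for large $t$. Split the range of $y$ into $y\le 1$ and $y>1$ (the latter refined to $1<y\le t/x_0$ and $y>t/x_0$ below).

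\textbf{The region $y\le 1$.} Here monotonicity gives $\P[\xi>t/y]\le \P[\xi>t]$, so the integrand is bounded by $1$. Dominated convergence then yields $\int_{(0,1]} y^\alpha\,P_\eta(dy)$.

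\textbf{The region $y>1$ under (ii).} Potter's bounds give, for fixed $\epsilon\in(0,\delta]$, a constant $C$ and an $x_0$ such that
\[
\frac{\P[\xi>t/y]}{\P[\xi>t]}\le C y^{\alpha+\epsilon}\qquad\text{whenever } t/y\ge x_0,\ y\ge 1.
\]
On $1<y\le t/x_0$ the integrand is thus dominated by $Cy^{\alpha+\epsilon}$, which is integrable because $\E[\eta^{\alpha+\delta}]<\infty$. On $y>t/x_0$ the integrand is at most $1/\P[\xi>t]$, so this piece is bounded by
\[
\frac{\P[\eta>t/x_0]}{\P[\xi>t]}\le \frac{\E[\eta^{\alpha+\delta}]\,(x_0/t)^{\alpha+\delta}}{\P[\xi>t]}\to 0,
\]
since $\P[\xi>t]=t^{-\alpha}L(t)$ with $L$ slowly varying, so $t^{-\delta}/L(t)\to0$. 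Combining both regions gives the limit $\E[\eta^\alpha]$.

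\textbf{The region $y>1$ under (i).} This is the main obstacle, because only the $\alpha$-th moment is available and Potter's bound with an extra $\epsilon$ is not integrable. Condition (i) removes the slowly varying factor: there is $x_0$ such that $(c_\xi/2)s^{-\alpha}\le \P[\xi>s]\le 2c_\xi s^{-\alpha}$ for all $s\ge x_0$.

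\emph{Moderate $y$.} For $1<y\le t/x_0$ this gives the integrand bound $4y^\alpha$, which is integrable since $\E[\eta^\alpha]<\infty$.

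\emph{Large $y$.} For $y>t/x_0$, bound the integrand by $1/\P[\xi>t]\le 2t^\alpha/c_\xi$. This piece is then at most
\[
\frac{2t^\alpha}{c_\xi}\,\P[\eta>t/x_0]\le \frac{2x_0^\alpha}{c_\xi}\,\E\bigl[\eta^\alpha 1\{\eta>t/x_0\}\bigr]\to0,
\]
by Markov's inequality applied to the truncated moment and dominated convergence.

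With all pieces controlled, dominated convergence on the moderate range combined with the vanishing large-$y$ tail yields \eqref{e:l:Breiman} in both cases.
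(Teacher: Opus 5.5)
Your proof is correct. You split the $\eta$-integral at $y=1$ and $y=t/x_0$. On the moderate range you dominate by Potter's bound $Cy^{\alpha+\epsilon}$ under (ii), or by the two-sided Pareto bound giving $4y^{\alpha}$ under (i). The far range vanishes by Markov's inequality, using the truncated moment $\E[\eta^\alpha 1\{\eta>t/x_0\}]\to 0$ in case (i). This is the standard dominated-convergence proof of Breiman's lemma. The paper does not prove this lemma itself; it only cites Lemma 2.3 of \cite{davis:mikosch:2008}, whose result is of the same type, so there is no alternative argument in the paper to compare against.
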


  \begin{remark} In contrast to the classical Breiman's lemma, 
  when the variable $\xi$ has Pareto-type tails, i.e., \eqref{e:l:Breiman-i} holds, one only needs $\E[\eta^\alpha]<\infty$ to 
  conclude \eqref{e:l:Breiman}.
  \end{remark}

  In the case when $\eta$ in \eqref{e:l:Breiman} is replaced by a random vector, one obtains a rich family of
  multivariate regularly varying models \citep[see, e.g.][]{basrak:davis:mikosch:2002,kulik2020heav}, which realize
  all possible asymptotic tail dependence regimes.  
  
  The next result shows a curious fact that the multivariate Breiman-type models are, in fact, regularly varying in a 
  stronger {\em total variation} sense.  This fact is of independent interest, which we will revisit in Section 
  \ref{sec:inference}.
  
  \begin{proposition}[The total regular variation of Breiman models]
  \label{p:total-Breiman}
  Let $Z:= \xi \eta,$
  where $\xi$ and $\eta = (\eta_i)_{i=1}^k$ are independent and $\xi$ is non-negative with regularly varying 
  right tail with index $\alpha>0$.  Let also $\tau:\R^k\to \R_+$ be a non-negative 1-homogeneous function and
  suppose that $0< \E[\tau(\eta)^\alpha] <\infty$.
  
  If either (i) Relation \eqref{e:l:Breiman-i} holds; or (ii) $\E[\tau(\eta)^{\alpha+\delta}]<\infty$, for some $\delta>0$, then
  \begin{equation}\label{e:p:total-Breiman-i}
  \frac{\P[ \tau(Z) > t ]}{\P[\xi>t]}  \longrightarrow  \E[\tau(\eta)^\alpha],\ \ \mbox{ as }t\to\infty,
  \end{equation}
   and 
   \begin{equation}\label{e:p:total-Breiman-ii}
       \frac{Z}{\tau(Z)} \, \Big\vert\, \{\tau(Z)>t\} \stackrel{TV}{\longrightarrow} \Theta_Z,\ \ \mbox{ as }t\to\infty,
   \end{equation}
 where $\stackrel{TV}{\longrightarrow}$ denotes convergence of the probability laws in total variation, and $\Theta_Z$ has the probability distribution 
 $\sigma$ on $S_\tau$ such that
 \begin{equation}\label{e:p:total-Breiman-ii-sigma(A)}
    \sigma(A) = \frac{1}{\E[\tau(\eta)^\alpha]} \E [ 1_A(\eta/\tau(\eta)) \tau(\eta)^\alpha],\ \ A\in {\cal B}(S_\tau).
 \end{equation}
In particular, $Z \in RV_\alpha(\R^k,\{a_n\},c_Z,\tau,\sigma)$, where $a_n\P[\xi>n] \sim 1$ and $c_Z =  \E[\tau(\eta)^\alpha]$.
  \end{proposition}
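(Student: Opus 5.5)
The plan is to reduce everything to Breiman's lemma (Lemma \ref{l:Breiman}) applied to the scalar $\xi\,\phi(\eta)$ for suitable non-negative $1$-homogeneous (or merely measurable, scaled) functionals $\phi$. First, since $\tau$ is $1$-homogeneous, $\tau(Z) = \xi\tau(\eta)$. Applying Lemma \ref{l:Breiman} with $\eta$ replaced by $\tau(\eta)$ gives \eqref{e:p:total-Breiman-i} directly: under (i) we only need $\E[\tau(\eta)^\alpha]<\infty$, and under (ii) the moment condition is exactly assumption (ii).

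For \eqref{e:p:total-Breiman-ii}, observe that on $\{\tau(Z)>0\}=\{\xi>0,\tau(\eta)>0\}$ we have $Z/\tau(Z) = \eta/\tau(\eta)$, which does not depend on $\xi$. Hence for any Borel $A\subset S_\tau$,
\begin{equation*}
\P\Big[\frac{Z}{\tau(Z)}\in A,\ \tau(Z)>t\Big] = \P\big[\xi\,\tau(\eta)1_A(\eta/\tau(\eta)) > t\big].
\end{equation*}
Writing $\eta_A := \tau(\eta)1_A(\eta/\tau(\eta))$, which is independent of $\xi$ and dominated by $\tau(\eta)$, Lemma \ref{l:Breiman} yields
\begin{equation*}
\P[\xi\eta_A>t]/\P[\xi>t]\to \E[\tau(\eta)^\alpha 1_A(\eta/\tau(\eta))].
\end{equation*}
The moment hypotheses transfer because $\eta_A\le\tau(\eta)$. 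If $\E[\eta_A^\alpha]=0$, we instead use the trivial bound $\P[\xi\eta_A>t]\le$ the corresponding quantity with the complementary set. Dividing by \eqref{e:p:total-Breiman-i} gives setwise convergence $\P[Z/\tau(Z)\in A\mid\tau(Z)>t]\to\sigma(A)$ for every Borel $A$, with $\sigma$ as in \eqref{e:p:total-Breiman-ii-sigma(A)}.

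The main obstacle is upgrading setwise convergence to total variation convergence, i.e.\ uniformity over $A$. I would handle this with a Scheffé-type argument. Let $\mu$ be the law of $\eta/\tau(\eta)$ on $\{\tau(\eta)>0\}$. The conditional law $P_t$ of $Z/\tau(Z)$ given $\tau(Z)>t$ has density
\begin{equation*}
f_t(\theta) = \E\big[\P[\xi\tau(\eta)>t\mid \eta]\,\big|\,\eta/\tau(\eta)=\theta\big]\big/\P[\tau(Z)>t]
\end{equation*}
with respect to $\mu$. More directly, I would write $P_t(A)=\E[1_A(\Theta_\eta)\,w_t(\tau(\eta))]$ with $w_t(r)=\P[\xi r>t]/\P[\tau(Z)>t]$, where $\Theta_\eta:=\eta/\tau(\eta)$, and compare it with $\sigma(A)=\E[1_A(\Theta_\eta)\tau(\eta)^\alpha]/\E[\tau(\eta)^\alpha]$. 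Then
\begin{equation*}
\sup_A|P_t(A)-\sigma(A)|\le \E\big|w_t(\tau(\eta)) - \tau(\eta)^\alpha/\E[\tau(\eta)^\alpha]\big|.
\end{equation*}
Pointwise in $r>0$, regular variation and \eqref{e:p:total-Breiman-i} give $w_t(r)\to r^\alpha/\E[\tau(\eta)^\alpha]$. Both integrands are non-negative with total integrals $1$ (the first exactly, the second by construction), so Scheffé's lemma gives $L^1$ convergence, hence convergence in total variation.

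Finally, $Z\in RV_\alpha$ follows: the radial part is handled by \eqref{e:p:total-Breiman-i} with $a_n$ chosen so that $n\P[\xi>a_n]\to1$, giving $n\P[\tau(Z)>ta_n]\to \E[\tau(\eta)^\alpha]t^{-\alpha}$, and the angular part holds because total variation convergence implies weak convergence.
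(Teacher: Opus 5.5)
Your proof is correct and follows the same route as the paper. The paper also applies Breiman's lemma to $\xi\tau(\eta)$ for the radial tail, and then applies Scheff\'e's lemma to the densities $w_t(\tau(\eta))$ with respect to the law of $\eta$; these are exactly the paper's $p_t(w)$, converging pointwise to $\tau(w)^\alpha/\E[\tau(\eta)^\alpha]$. Your preliminary setwise step via $\eta_A$ is superseded by the Scheff\'e argument and can be dropped, along with the complementary-set remark: when $\E[\eta_A^\alpha]=0$, $\eta_A=0$ a.s. and the claim is trivial.
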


\noindent The proof of Proposition \ref{p:total-Breiman} is given in Section \ref{sec:proofs_breiman_model}.\\

{\bf Pareto-Breiman models.} In the rest of this section, we return to the optimal prediction context.  We 
shall consider the important special case of the Breiman models, where the heavy-tailed factor has asymptotically 
Pareto tails, i.e., \eqref{e:l:Breiman-i} holds and for simplicity $\alpha=1$.

Specifically, we let
\begin{equation}\label{e:Y_X_Breiman}
Z = (Y,X) = \xi\cdot (V,W),
\end{equation}
be given by a Breiman-type model as in Proposition \ref{p:total-Breiman} such that $V\ge 0$, $(V,W) = (V,W_1,\cdots,W_d)$ and $\xi$ are independent.
We assume, moreover that $\xi$ satisfies \eqref{e:l:Breiman-i} with $\alpha = 1$.

\begin{proposition}[Pareto-Breiman models] \label{p:Pareto-Breiman} Let $Y$ and $X$ be as in \eqref{e:Y_X_Breiman} with $\xi$ as in \eqref{e:l:Breiman-i} with
$\alpha=1$. Let also $\tau(y,x):= y_+ + \tau_X(x)$, be a non-negative, continuous $1$-homogeneous such that
$0<\E[V]<\infty$ and $0<\E[\tau_X(W)]<\infty$. Then:\\

{\em (i)} $Y$ and $X$ are jointly regularly varying in the sense of Assumption \ref{a:X-Y-joint} and
\begin{equation}\label{e:p:Pareto-Breiman-i}
\frac{(Y,X)}{\tau(Y,X)} \, \vert\, \{\tau(Y,X)>t\} \stackrel{TV}{\longrightarrow} \Theta_Z =:(U , (1-U) \Theta),\ \ \mbox{ as }t\to\infty.
\end{equation}
We have moreover that for all $B\in {\cal B}(\SX)$,
\begin{equation}\label{e:p:Pareto-Breiman-i-Theta}
\P[ \Theta \in B\, |\, U<1] =\frac{1}{\E[ 1_{\{\tau_X(W)>0\}}\cdot (V + \tau_X(W))]}\E\Big[1_B\Big(\frac{W}{\tau_X(W)}\Big) \cdot 1_{\{\tau_X(W)>0\}} (V+\tau_X(W)) \Big].
\end{equation}

{\em (ii)} For all non-negative, measurable homogeneous $h:\R^{1+d}\to\R_+$, such that $\E[ h(V,W) ]<\infty$, we have
\begin{equation}\label{e:p:Pareto-Breiman-ii}
 t\P[ h(Y,X) >t] \to c_\xi \E[h(V,W)],\ \ \mbox{ as }t\to\infty.
\end{equation}

{\em (iii)} If $h$ is as in part {\em (ii)} and such that $\{h>0\}\subset \{\tau >0\}$, then 
\begin{equation}\label{e:p:Pareto-Breiman-iii}
\E [ h(U,(1-U)\Theta) ]  = \frac{1}{\E[\tau(V,W)]} \E[h(V,W)].
\end{equation}
\end{proposition}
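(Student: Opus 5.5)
The plan is to derive all three parts from Proposition \ref{p:total-Breiman} and Lemma \ref{l:Breiman}, applied to the vector $\eta := (V,W)\in\R^{1+d}$ with the gauge $\tau(y,x)=y_++\tau_X(x)$. Since $V\ge 0$, we have $\tau(V,W) = V+\tau_X(W)$, so $0<\E[\tau(V,W)]<\infty$ by the assumptions $0<\E[V]<\infty$ and $0<\E[\tau_X(W)]<\infty$. Condition (i) of Proposition \ref{p:total-Breiman} holds because $\xi$ satisfies \eqref{e:l:Breiman-i} with $\alpha=1$.

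\emph{Part (ii).} I will prove this first, since parts (i) and (iii) both use it. Homogeneity gives $h(Y,X) = \xi\, h(V,W)$, with $\xi$ independent of $h(V,W)\ge 0$ and $\E[h(V,W)]<\infty$. Lemma \ref{l:Breiman} under condition (i) needs only the first moment, so
\[
t\P[h(Y,X)>t] = \frac{\P[\xi h(V,W)>t]}{\P[\xi>t]}\; t\P[\xi>t]\to \E[h(V,W)]\, c_\xi .
\]
If $\E[h(V,W)]=0$, the claim is trivial, since $h(Y,X)=0$ almost surely.

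\emph{Part (i).} Proposition \ref{p:total-Breiman} yields \eqref{e:p:total-Breiman-ii} for $Z=(Y,X)$, which is \eqref{e:p:Pareto-Breiman-i} after the polar parametrization \eqref{e:U-THETA}. It also shows $Z\in RV_1$ with angular law $\sigma(A) = \E[1_A(\eta/\tau(\eta))\tau(\eta)]/\E[\tau(\eta)]$. It remains to check the non-degeneracy conditions \eqref{e:a:X-Y-joint}. Using this formula for $\sigma$, we get
\[
\sigma(\{1\}\times\{\tau_X=0\}) = \frac{\E[V 1_{\{\tau_X(W)=0\}}]}{\E[V+\tau_X(W)]},
\]
which is $<1$ because $\E[\tau_X(W)]>0$. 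Symmetrically, $\sigma(\{0\}\times\{\tau_X=1\}) = \E[\tau_X(W)1_{\{V=0\}}]/\E[\tau(\eta)]<1$ because $\E[V]>0$. For \eqref{e:p:Pareto-Breiman-i-Theta}, the event $\{U<1\}$ corresponds to $\{\tau_X(W)>0\}$ under the $\sigma$-representation. On this event the second block of $\eta/\tau(\eta)$ is $W/(V+\tau_X(W))$, so $\Theta = W/\tau_X(W)$. Writing
\[
\P[\Theta\in B,\, U<1] = \frac{\E\bigl[1_B(W/\tau_X(W))1_{\{\tau_X(W)>0\}}\tau(\eta)\bigr]}{\E[\tau(\eta)]}
\]
and dividing by the same expression with $B=\SX$ gives the formula.

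\emph{Part (iii).} Let $A_h=\{z\in S_\tau:\,h(z)\ldots\}$. More directly, I use the $\sigma$-formula extended from indicators to non-negative measurable functions by the usual monotone class and monotone convergence argument:
\[
\E[f(\Theta_Z)] = \frac{\E[f(\eta/\tau(\eta))\tau(\eta)]}{\E[\tau(\eta)]}
\]
for all $f\ge 0$ on $S_\tau$. Take $f=h|_{S_\tau}$. On $\{\tau(\eta)>0\}$, homogeneity gives $h(\eta/\tau(\eta))\tau(\eta) = h(\eta)$. On $\{\tau(\eta)=0\}$ we have $h(\eta)=0$ by the support condition $\{h>0\}\subset\{\tau>0\}$, so both sides agree there as well (the left side being set to $0$ by convention). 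Hence $\E[h(U,(1-U)\Theta)] = \E[h(V,W)]/\E[\tau(V,W)]$.

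The main obstacle is the extension of \eqref{e:p:total-Breiman-ii-sigma(A)} from sets to unbounded $h$, together with the bookkeeping on the null set $\{\tau(\eta)=0\}$. Monotone convergence handles the first point, and the support condition handles the second.
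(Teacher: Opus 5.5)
Your proof is correct and follows essentially the same route as the paper. Part (i) uses Proposition~\ref{p:total-Breiman} for the total-variation convergence, and checks non-degeneracy and computes the conditional law of $\Theta$ via \eqref{e:p:total-Breiman-ii-sigma(A)}; part (ii) applies Breiman's Lemma~\ref{l:Breiman} to $\xi\, h(V,W)$ (slightly cleaner than the paper's appeal to \eqref{e:p:total-Breiman-i}); and part (iii) extends \eqref{e:p:total-Breiman-ii-sigma(A)} from indicators to non-negative functions by monotone convergence. Only cosmetic fixes remain: delete the unfinished sentence defining $A_h$, and drop the claim that parts (i) and (iii) rely on part (ii), since they do not.
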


The proof of this result is given in Section \ref{sec:proofs_breiman_model}.  We shall comment next on the subtle difference between the
distribution of $\Theta$ in \eqref{e:p:Pareto-Breiman-i} the {\em marginal} angular distribution of $X$.

\begin{remark} In the context of Proposition \ref{p:Pareto-Breiman}, we have
\begin{equation}\label{e:X-angular}
\frac{X}{\tau_X(X)}\, |\, \{\tau_X(X)>t\} \stackrel{TV}{\longrightarrow} \Theta_X,\ \ \mbox{ as }t\to\infty,
\end{equation}
where (cf Proposition \ref{p:total-Breiman} with $Z:=X$) the distribution of $\Theta_X$ is given by:
$$
\sigma_X(B) = \P(\Theta_X\in B) = \frac{1}{\E[\tau_X(W)]} \E[ 1_B(W/\tau_X(W)) \tau_X(W) ],\ \ B\in {\cal B}(\SX).
$$
Relation \eqref{e:p:Pareto-Breiman-i-Theta} shows that the distribution of $\Theta|\{U<1\}$ is a {\em tilted} version of the
distribution of $\Theta_X$.  That is, even in the case when $\P[U<1]=1$ and $\Theta$ is always uniquely defined, the distributions of $\Theta$
and $\Theta_X$ are in general {\em different}. Indeed, this is naturally attributed to the fact in \eqref{e:p:Pareto-Breiman-i} and \eqref{e:X-angular} 
one conditions on different extreme events $\{Y+\tau_X(X)>t\}$ and $\{\tau_X(X)>t\}$, respectively.    
\end{remark}

The following corollary shows that in the context of the Pareto-Breiman models, 
Theorem \ref{thm:final} yields optimal homogeneous predictors over the entire class of all $\tau_X$-support dominated and asymptotically 
calibrated predictors.

\begin{corollary}\label{c:Pareto-Breiman} Let $Y$ and $X$ be as in 
Proposition \ref{p:Pareto-Breiman}.  Suppose that $g= g^{(\rm opt)}$ is as in Theorem \ref{thm:final}.  That is,
$g\in L_+^1(b\, \cdot\,  p_\Theta)$, with $b(\theta)$ and $p_\Theta$ as in \eqref{e:b(theta)} and \eqref{e:p_U,Theta}, 
$C(g) = \E[(1-U)g(\Theta)] = \E[U]$, and 
$$
\Lambda(g) = \sup_{f\in L_+^1(b\, \cdot\,  p_\Theta),\ C(f) = \E[U]} \Lambda(f).
$$

Define the homogeneous extension of $g$:
$$
 h_g(x):= \tau_X(x) g(x/\tau_X(x)),\ \  x\in \R^d,
$$
where by convention $h_g(x) = 0$ whenever $\tau_X(x)=0$.  Then:\\

{\em (i)} We have
\begin{equation}\label{e:c:Pareto-Breiman-i}
\lim_{t\to\infty} t\P[Y>t] =  \lim_{t\to\infty}  t\P[h_g(X)>t] = c_\xi \E[\tau(V,W)] C(g).
\end{equation}

{\em (ii)} For every non-negative, homogeneous and Borel measurable function $h:\R^d\to \R_+$ such that $\{h>0\} \subset\{\tau_X>0\}$ and
$\P[Y>t] \sim \P[ h(X)>t],\ t\to\infty$, we have $h\vert_{\SX}\in L_+^1(b\, \cdot\,  p_\Theta)$ and
\begin{equation}\label{e:c:Pareto-Breiman-ii}
\Lambda(h) = \lambda(Y,h(X)) = \lim_{t\to\infty} \P[Y>t | h(X) >t] \le \Lambda(g) = \lambda(Y,h_g(X)). 
\end{equation}
where the above limits exist and $\Lambda$ is as in \eqref{e:Lambda-functional}. 
\end{corollary}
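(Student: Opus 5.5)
The whole argument runs through Proposition \ref{p:Pareto-Breiman}(ii)--(iii). These two parts turn tail probabilities of arbitrary measurable homogeneous functionals of $(Y,X)$ into expectations of the same functionals of $(V,W)$, and then of $(U,(1-U)\Theta)$. No continuity or boundedness of $h$ is needed; only integrability is. This matters because $g$ may be discontinuous (Remark \ref{r:subtlety}).

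\emph{Part (i).} First, $y\mapsto y_+$ is homogeneous with $\E[V]<\infty$. So \eqref{e:p:Pareto-Breiman-ii} gives $t\P[Y>t]\to c_\xi \E[V]$, and \eqref{e:p:Pareto-Breiman-iii} with $h(y,x)=y_+$ gives $\E[V]=\E[\tau(V,W)]\E[U]$. Next, set $\tilde h(y,x):=h_g(x)$. This is measurable, homogeneous, non-negative, and vanishes where $\tau_X=0$, so $\{\tilde h>0\}\subset\{\tau>0\}$. Applying \eqref{e:p:Pareto-Breiman-iii} formally yields
\[
\E[h_g(W)] = \E[\tau(V,W)]\,\E[h_g((1-U)\Theta)] = \E[\tau(V,W)]\,\E[(1-U)g(\Theta)] = \E[\tau(V,W)]\,C(g).
\]
Here $h_g((1-U)\Theta)=(1-U)g(\Theta)$ by homogeneity, and at $U=1$ both sides are $0$. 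The finiteness hypothesis $\E[h_g(W)]<\infty$ needed to invoke (ii)--(iii) is obtained by applying \eqref{e:p:Pareto-Breiman-iii} first to the truncations $h_g\wedge (n\tau)$, which are clearly integrable, and then passing to the limit by monotone convergence; this uses $C(g)<\infty$. Then \eqref{e:p:Pareto-Breiman-ii} gives the limit of $t\P[h_g(X)>t]$, and the constraint $C(g)=\E[U]$ makes the two limits agree.

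\emph{Part (ii).} Apply the same truncation and monotone convergence argument to the given $h$. Calibration says $t\P[h(X)>t]\to c_\xi\E[V]<\infty$. Since $t\P[h(X)>t]\ge t\P[(h\wedge n\tau_X)(X)>t]\to c_\xi\E[\tau(V,W)]\,C(h\vert_{\SX}\wedge n)$, we get $C(h\vert_\SX)<\infty$, so $h\vert_\SX\in L^1_+(b\cdot p_\Theta)$. Then (ii)--(iii) apply to $h$ directly and give $C(h\vert_\SX)=\E[U]$. For the tail dependence, use the homogeneous functional $m(y,x):=y_+\wedge h(x)$. We have $\P[Y>t,h(X)>t]=\P[m(Y,X)>t]$ and $m\le y_+$ is integrable. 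So
\[
t\P[Y>t,h(X)>t]\to c_\xi \E[m(V,W)]= c_\xi\E[\tau(V,W)]\,\E[U\wedge(1-U)h(\Theta)].
\]
Dividing by $t\P[h(X)>t]\to c_\xi\E[\tau(V,W)]\E[U]$ shows that the limit exists and equals $\Lambda(h)$. The same computation for $h_g$, which is legitimate by part (i), gives $\lambda(Y,h_g(X))=\Lambda(g)$. Finally, $\Lambda(h)\le\Lambda(g)$ holds because $h\vert_\SX$ is feasible, i.e. $C(h\vert_\SX)=\E[U]$, and $g$ maximizes $\Lambda$ over the feasible set.

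The main obstacle is the careful use of Proposition \ref{p:Pareto-Breiman}(iii) for possibly unbounded, discontinuous $h$. It requires verifying $\E[h(V,W)]<\infty$ before the proposition may be applied, and that is exactly what the truncation/monotone convergence step handles. A secondary point is the atom $\{U=1\}$ together with the convention for $\Theta$ there; it is harmless because every functional involved carries the factor $(1-U)$.
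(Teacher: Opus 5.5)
Your proposal is correct and follows essentially the same route as the paper. Both apply Proposition \ref{p:Pareto-Breiman}(ii)--(iii) to $y_+$, to $h_g(x)$ (resp.\ $h(x)$) and to $y_+\wedge h(x)$, use a truncation plus monotone convergence step to get $h\vert_{\SX}\in L_+^1(b\cdot p_\Theta)$, and finish with the optimality of $g$; your truncation $h\wedge n\tau_X$ in place of the paper's $h^{(M)}$ is an immaterial difference. The one step you gloss over is passing from the common-threshold limit $\lim_{t\to\infty}\P[Y>t\mid h(X)>t]$ to the quantile-based coefficient $\lambda(Y,h(X))$ of \eqref{e:lambda-tdep-def}, which the paper handles via Lemma \ref{l:xi-eta-tdep} using the tail equivalence and regular variation of $Y$ and $h(X)$ you have already established.
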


\begin{remark}\label{r:subtlety-response} Corollary \ref{c:Pareto-Breiman} clarifies the subtlety highlighted in 
Remark \ref{r:subtlety}.  It shows that for a broad class of regularly varying models, the asymptotic tail 
dependence coefficient $\lambda(Y,h_g(X))$ is always well-defined so long as 
$g\in L_+^1(b\cdot p_\Theta)$.  Moreover, for all $\tau_X$-support dominated homogeneous predictors $h$ that are asymptotically
calibrated, we have $h\vert_\SX \in L_+^1(b\cdot p_\Theta)$. Thus, the domain $L_+^1(b\cdot p_\Theta)$ of the abstract 
optimization problems in Theorem \ref{thm:final} corresponds precisely to all possible calibrated measurable homogeneous predictors.
\end{remark}

\section{Statistical inference} \label{sec:inference}

In this section, the focus is on the statistical inference of the optimal predictors.  We will 
work in the established peaks-over-threshold framework (Section \ref{sec:PoT-and-Contiguity}) and 
obtain general results for the {\em universal extremal consistency} for homogeneous predictors 
constructed in this framework (Section \ref{sec:universal_consistency}). 

\subsection{Setup and main ideas} \label{sec:setup} 
Throughout this section, we let $(Y,X)$  be a jointly regularly random vector in $\R_+\times \R^{d}$ 
in the sense of Assumption \ref{a:X-Y-joint}, where for simplicity, we suppose that the radial function is
$$
 \tau(y,x):= y + \|x\|_1 = y + \sum_{i=1}^d |x_i|, \ \ (y,x)\in \R_+\times \R^d,
$$
with $\|\cdot\|_1$ denoting the $\ell_1$-norm.  In this case, with the 
notation in \eqref{e:tilde-U-and-tilde-Theta} and \eqref{e:Z/tau(Z)-via-tilde-U-and-tilde-Theta}, we have:
\begin{equation}\label{e:wt-U-and-wt-Theta-setup}
\wt U = \frac{Y}{ Y + \|X\|_1},\quad \mbox{ and }\quad (1-\wt U) \wt \Theta = \frac{X}{ Y + \|X\|_1 },\quad \mbox{ where } \wt \Theta:= \frac{X}{\|X\|_1}.
\end{equation}

For the purpose of conditional quantile inference, we shall assume that the convergence of the angular distributions 
in \eqref{e:U-THETA} holds in a stronger, total variation norm sense, i.e.,
\begin{equation} \label{e:wt-U-wt-Theta-convergence}
(\wt U, (1-\wt U)\wt\Theta) \mid \{\tau(Y,X)>t\}\overset{TV}{\longrightarrow}\Theta_{(Y,X)}=:(U,(1-U)\Theta),\quad as \ t\rightarrow\infty.
\end{equation}

Finally, we shall suppose that the joint limit distribution $P_{(U,\Theta)}$ of $U$ and $\Theta$ can be
written as
\begin{equation} \label{e:P_U,Theta-TV}
P_{(U,\Theta)}(du,d\theta) = f_{U|\Theta}(u|\theta) du p_\Theta(d\theta),
\end{equation}
where the Lebesgue density of the conditional distribution $U|\Theta$, is assumed to be
{\em positive}, i.e., 
\begin{equation}\label{e:f_U|Theta>0}
 f_{U|\Theta}(u|\theta) >0\quad \mbox{modulo}\ \ dup_\Theta(d\theta).
 \end{equation}
 
Recall that $q_\alpha(\theta)$, defined in \eqref{e:q-alpha}, stands for the left-continuous 
$\alpha$-quantile of the {\em weighted} conditional density $\propto (1-u)f_{U|\Theta}(u|\theta)$.  
Then, \eqref{e:f_U|Theta>0} implies that $\alpha \mapsto q_\alpha(\theta) \in (0,1)$
is {\em continuous} and {\em strictly increasing} in $\alpha\in (0,1)$ modulo $p_\Theta(d\theta)$.  In view of 
Theorem \ref{thm:final}, we then have that for all $0<\alpha<1$,
\begin{equation}\label{e:g_alpha-optimal}
g_\alpha(x):= \|x\| \cdot \frac{q_\alpha(x/\|x\|)}{1-q_\alpha(x/\|x\|)},\ \ x\in \R^d,
\end{equation}
yields an optimal homogeneous predictor.  Specifically,
\begin{equation}\label{e:Lambda-optimal}
 \Lambda(g_\alpha)  = \sup_{g \in D(g_\alpha)} \Lambda(g),
\end{equation}
where $\Lambda(\cdot)$ is as in \eqref{e:Lambda-functional}. Recall that the optimization
domain $D(g_\alpha)$ consists of all measurable positive homogeneous functions such that 
$$
C(g) = \E[ (1-U) g(\Theta) ] = C(g_\alpha),
$$
where $C(\cdot)$ stands for the constraint functional in \eqref{e:C(g)}. Finally, notice that the optimal value
is achieved by a {\em continuous} predictor $(\alpha,x) \mapsto g_\alpha(x)$.

\medskip
\noindent{\em  Summary of the main ideas.}  Given the above discussion, obtaining an asymptotically optimal homogeneous predictor of 
$\{Y>t\}$ amounts to  estimating the quantile functions $q_\alpha(\cdot)$, and finding the value of 
$\alpha \in (0,1)$ such that the predictor in \eqref{e:g_alpha-optimal} is calibrated.  This, in view of the continuity of $g_\alpha$, amounts to having 
\begin{equation}\label{e:C(g_alpha*)}
    C(g_{\alpha^*}) = \E[U],
\end{equation}
because $\P[g_\alpha(X)>t]/\P[Y>t] \to C(g_\alpha)/\E[U],$ as $t\to\infty$, by Proposition \ref{p:rv-via-h}.  This is the main idea
behind the optimal predictor inference methodology described in Sections \ref{sec:methodology} and \ref{sec:algorithm}. 
Theorem \ref{thm:univ_consitency} (below) shows that this methodology will yield predictors with asymptotically optimal precision.
To make this precise, however, one needs to address several details.

If one had iid samples $(U_i,\Theta_i),\ i=1,\cdots,n$ from the limiting angular distribution $P_\infty:= P_{(U,\Theta)}$, then $q_\alpha(\cdot)$
can be consistently estimated (as $n\to\infty$) in many ways. For example, one can use classical quantile 
regression with a weighted loss or by using weighted quantile random forest estimators \cite{meinshausen2006quantile,EMD2022qf} (cf Section \ref{sec:methodology}).

In practice, however, one does not have iid samples from the limiting angular distribution, 
but instead a triangular array of samples $(\wt U_i,\wt \Theta_i)$'s coming from the distribution 
$P_{t_n}$ of $(\wt U,\wt \Theta)\, |\, \{\tau(Y,X)>t_n\}$ indexed
by a threshold $t_n\to\infty$. Thus, to show that the quantile estimates $\wh q_\alpha$ based on these data are asymptotically 
consistent, one needs an additional step that controls the rate at which $P_{t_n}$ converges to $P_\infty$.  This will be done 
by appealing to a Le Cam-type contiguity argument in Section \ref{sec:PoT-and-Contiguity}.  This step will subsequently allow us
to establish general results that can be thought of as extremal counterparts to universal consistency (Theorem \ref{thm:univ_consitency} in 
Section \ref{sec:universal_consistency}).

\subsection{Peaks-over-threshold: A contiguity perspective}  \label{sec:PoT-and-Contiguity}


Let $\{(Y_i,X_i)\}_{i=1}^n$ be independent copies of $(Y,X)$ and define the radii
\[
R_i := \tau(Y_i,X_i) = Y_i + \|X_i\|_1,
\qquad i=1,\ldots,n.
\]
Given a threshold $t=t_n\to \infty$,  consider the associated index set of exceedances:
\[
\mathcal{I}_n := \{ i \in \{1,\ldots,n\} : R_i > t_n \}.
\]

Following \eqref{e:wt-U-and-wt-Theta-setup}, the angular components of these exceedance observations are denoted as
\begin{equation}\label{e:wt-U_i-wt-Theta_i}
\wt U_i := \frac{Y_i}{R_i},\quad \mbox{ and }\ \ \wt \Theta_i := \frac{X_i}{\|X_i\|_1}, \quad i \in \mathcal{I}_n.
\end{equation}
The {\em Peaks-over-Threshold (PoT)} methodology stipulates that by Relation \eqref{e:wt-U-wt-Theta-convergence}, as $t\to\infty$, 
one can regard the $(\wt U_i, \wt \Theta_i),\ i\in {\cal I}_n$ as independent samples coming from $P_{\infty}:= P_{(U,\Theta)}$.
Thus, if $t=t_n\to\infty$ and at the same time the number of exceedances
$$ 
   K(n):= |{\cal I}_n| \to\infty, \ \ \mbox{ as } n\to\infty,
$$ 
one expects to be able to consistently estimate functionals of $P_\infty$ with statistics based on the sample
$(\wt U_i,\wt \Theta_i),\ i\in {\cal I}_n$.  To formalize this idea, one needs to address two issues: (i) The sample size $K(n)$ is random;
(ii) The conditional distribution $P_{t_n}$ of $(\wt U,\wt \Theta)$ given $\tau(Y,X)>t_n$ may approach $P_\infty$ at 
a slow rate.  The first issue is elegantly addressed via the so-called {\em d\`ecoupage de L\'evy}, while the second by 
Proposition \ref{p:contiguity-restatement} using the ideas of contiguity.

\begin{proposition}[D\`ecoupage de L\'evy]\label{p:decpupage} Let $P_t$ be the conditional distribution
$(\wt U, \wt\Theta) | \tau(Y,X)>t$.  Let also $(\wt U_i,\wt \Theta_i),\ i\in {\cal I}_n$ be as in \eqref{e:wt-U_i-wt-Theta_i} and
independently, let $(U_j^*,\Theta_j^*),\ j=1,2,\dots$ be independent realizations from $P_t$.  Then,
$$
\{ (\wt U_i,\wt \Theta_i),\ i\in {\cal I}_n\} \stackrel{d}{=} \{ (U_j^*,  \Theta_j^*),\ j=1,\cdots, K(n)\},
$$
viewed as random point measures.
\end{proposition}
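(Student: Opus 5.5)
The plan is to prove equality of the laws of the two random point measures by comparing their Laplace functionals. Equivalently, I will show that both are binomial-type point processes with the same mixing law for the number of points and the same i.i.d. point distribution. Write $N_n := \sum_{i\in{\cal I}_n}\delta_{(\wt U_i,\wt\Theta_i)}$ and $N_n^* := \sum_{j=1}^{K(n)}\delta_{(U_j^*,\Theta_j^*)}$. In $N_n^*$ the count $K(n)$ is taken independent of the i.i.d.\ sequence $(U_j^*,\Theta_j^*)\sim P_t$, with $t=t_n$. The independence is implicit in the statement: ``independently, let\ldots''. The key observation is that the pairs $(R_i,\wt U_i,\wt\Theta_i)$, $i=1,\dots,n$, are i.i.d. Consequently the exceedance indicators $\epsilon_i:=1\{R_i>t_n\}$ are i.i.d.\ Bernoulli$(p_n)$ with $p_n=\P[\tau(Y,X)>t_n]$, and $K(n)=\sum_i\epsilon_i\sim{\rm Bin}(n,p_n)$.

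First step: fix a non-negative measurable test function $f$ on $[0,1]\times\SX$. For $N_n$, independence across $i$ gives
\begin{equation*}
\E\big[e^{-N_n(f)}\big]=\prod_{i=1}^n \E\big[e^{-\epsilon_i f(\wt U_i,\wt\Theta_i)}\big]=\Big(1-p_n+p_n\,\E\big[e^{-f(\wt U,\wt\Theta)}\,\big|\,\tau(Y,X)>t_n\big]\Big)^n .
\end{equation*}
By the definition of $P_t$, the last conditional expectation equals $\int e^{-f}\,dP_{t_n}=:\varphi(f)$. On the exceedance set $\wt\Theta_i=X_i/\|X_i\|_1$ is defined whenever $\|X_i\|_1>0$. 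I will adopt the paper's convention $\Theta:=\theta_*$ on the null/degenerate set so that the map is measurable; this convention does not affect the argument.

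Second step: for $N_n^*$, condition on $K(n)$ and use independence:
\begin{equation*}
\E\big[e^{-N_n^*(f)}\big]=\E\big[\varphi(f)^{K(n)}\big]=(1-p_n+p_n\varphi(f))^n,
\end{equation*}
which is the binomial probability generating function. The two Laplace functionals therefore coincide for all $f\ge0$. Since Laplace functionals determine the law of a random point measure on a Polish space (here $[0,1]\times\SX$ is a closed subset of $\R^{1+d}$), we get $N_n\stackrel{d}{=}N_n^*$.

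The main subtle point is the factorization in the first step. It requires that the event $\{R_i>t_n\}$ and the angular part are jointly functions of the same single observation $(Y_i,X_i)$. We then use only independence across $i$, and the conditional law given $\epsilon_i=1$ is exactly $P_{t_n}$. Beyond that the argument is routine. As an alternative, one could argue directly: conditionally on ${\cal I}_n=I$, the angular components indexed by $I$ are i.i.d.\ $P_{t_n}$, and this conditional law depends on $I$ only through $|I|$.
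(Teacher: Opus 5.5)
Your proposal is correct and follows essentially the same route as the paper's proof (Remark~\ref{rem:decoupage-de-Levy}). Like the paper, you compare Laplace functionals, factor over the i.i.d.\ observations to obtain $(1-p_n+p_n\varphi(f))^n$, and match this with $\E[\varphi(f)^{K(n)}]$, the binomial generating function of the count $K(n)$, which is independent of the i.i.d.\ draws from $P_{t_n}$.
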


This result is known \citep[see e.g.][or Remark \ref{rem:decoupage-de-Levy} in Section \ref{subsec:contiguity_argument} 
for a proof]{resnick:1987}.  Proposition \ref{p:decpupage} shows that the exceedance 
``angles'' $(\wt U_i,\wt \Theta_i)$ can be regarded as independent draws from the distribution 
$P_t$. Moreover, the random number of exceedances $K(n)$ and their values are independent, i.e., 
can be decoupled, even though they come from the common sample $\{(Y_i,X_i),\ i\in [n]\}$.\\

The following result makes the intuition behind Peaks-over-Threshold formally precise.  It uses a contiguity-type argument 
in the spirit of Le Cam's First Lemma \citep[see e.g.][]{vaart:1998}.

\begin{proposition}[Contiguity] \label{p:contiguity-restatement}
Consider the distributions $P_{\infty}:=P_{(U,\Theta)}$ and 
$P_{t}:= P_{(\wt U,\wt \Theta) | \tau(Y,X)>t}$ above defined. 
Let $(U_i,\Theta_i),\ i=1,2,\dots$ be iid from $P_\infty$ and $(\wt U_i,\wt \Theta_i),\ i=1,2,\dots$ be iid from $P_{t_n}$.
If the random integer sequence $K(n)$ is independent from these samples and such that 
\begin{equation}\label{e:p:contiguity-restatement}
  K(n)\stackrel{\P}{\to}\infty\ \ \mbox{ and }\ \  K(n)\| P_{t_n} - P_{\infty}\|_{\rm TV} 
  \stackrel{\P}{\longrightarrow} 0\quad \text{ as } n\rightarrow \infty,
 \end{equation}
 then for any measurable $T_k = T_k((U_1,\Theta_1),\cdots,(U_k,\Theta_k))$ such that 
 $$
  T_k\stackrel{\P}{\to} 0,\ \ \mbox{ as }k\to\infty
 $$
we have
 $$
 \wt T_{K(n)} := T_{K(n)} \Big((\wt U_1,\wt \Theta_1),\cdots,(\wt U_{K(n)},\wt \Theta_{K(n)})\Big)\stackrel{\P}{\to} 0,\ \ \mbox{ as }n\to\infty.
 $$
\end{proposition}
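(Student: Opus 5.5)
The plan is to couple the two samples on a common probability space, so that they agree with high probability on the first $K(n)$ coordinates. Then $\wt T_{K(n)}$ and $T_{K(n)}$ coincide on an event of probability tending to one. It remains to show that $T_{K(n)}\stackrel{\P}{\to}0$, which follows from $K(n)\stackrel{\P}{\to}\infty$ and the independence of $K(n)$ from the samples. Since the conclusion concerns only the law of $\wt T_{K(n)}$, we may freely choose this coupling.

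\textbf{Step 1 (maximal coupling).} For each $n$, by the maximal coupling lemma there exist pairs $(\bar U_i,\bar\Theta_i),(\check U_i,\check \Theta_i)$, $i=1,2,\dots$, iid in $i$ and independent of $K(n)$, such that $(\bar U_i,\bar\Theta_i)\sim P_{t_n}$, $(\check U_i,\check\Theta_i)\sim P_\infty$, and
$$
\P\big[(\bar U_i,\bar\Theta_i)\neq(\check U_i,\check\Theta_i)\big]=\|P_{t_n}-P_\infty\|_{\rm TV}.
$$
If the paper's TV norm is normalized as twice the supremum over sets, only a constant factor changes.

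\textbf{Step 2 (agreement on the first $K(n)$ coordinates).} Condition on $K(n)=k$ and use independence together with a union bound:
$$
\P\big[\exists\, i\le K(n):\ (\bar U_i,\bar\Theta_i)\neq(\check U_i,\check\Theta_i)\,\big|\,K(n)\big]\le K(n)\|P_{t_n}-P_\infty\|_{\rm TV}\wedge 1 .
$$
By \eqref{e:p:contiguity-restatement}, the right-hand side tends to $0$ in probability. Because it is bounded by $1$, dominated convergence shows that its expectation tends to $0$. Hence, with probability tending to one, $\bar T_{K(n)}:=T_{K(n)}(\bar\cdot)$ equals $\check T_{K(n)}:=T_{K(n)}(\check\cdot)$.

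\textbf{Step 3 (random index).} Fix $\varepsilon,\eta>0$. Since $T_k\stackrel{\P}{\to}0$, there is $k_0$ with $\P[|T_k|>\varepsilon]<\eta$ for all $k\ge k_0$. By independence of $K(n)$ from the $\check{\ }$-sample,
$$
\P[|\check T_{K(n)}|>\varepsilon]=\sum_k \P[K(n)=k]\,\P[|T_k|>\varepsilon]\le \eta+\P[K(n)<k_0]\to\eta .
$$
Therefore $\check T_{K(n)}\stackrel{\P}{\to}0$. Combined with Step 2, this gives $\bar T_{K(n)}\stackrel{\P}{\to}0$. Since $(\bar\cdot)$ together with $K(n)$ has the same joint law as the original $P_{t_n}$-sample together with $K(n)$, the same holds for $\wt T_{K(n)}$.

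\textbf{Main obstacle.} The main obstacle is making the coupling rigorous jointly with the random $K(n)$. This requires that the coupling be built independently of $K(n)$ and that the samples live on a standard Borel space, which holds since the angular vectors take values in $[0,1]\times\SX\subset\R^{1+d}$. The bounded-convergence step that converts convergence in probability of the conditional mismatch bound into an unconditional probability also needs care. Everything else is routine.
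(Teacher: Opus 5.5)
Your proof is correct, but it takes a different route from the paper. The paper proves a Hellinger version (Proposition~\ref{p:contiguity} in the supplement), which assumes only $K(t)H^2(P_t,P_\infty)=o_{\P}(1)$; the statement you were given follows from it via $H^2\le TV$. There, $K(t)$ is bracketed, with probability at least $1-\epsilon$, between deterministic integers $k_t^{(0)}\to\infty$ and $k_t^{(1)}\le \epsilon^2/(2H^2(P_t,P_\infty))$. For each such $k$ the paper uses $\P[|\wt T_k|>\epsilon]\le \P[|T_k|>\epsilon]+TV(P_t^{\otimes k},P_\infty^{\otimes k})$. It then combines the tensorization $H^2(P^{\otimes k},Q^{\otimes k})\le kH^2(P,Q)$ with $TV\le\sqrt2\,H$ to get $TV(P_t^{\otimes k},P_\infty^{\otimes k})\le (2kH^2(P_t,P_\infty))^{1/2}\le\epsilon$.

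Your coordinatewise maximal coupling with a union bound amounts to the cruder estimate $TV(P_{t_n}^{\otimes k},P_\infty^{\otimes k})\le 1-(1-\|P_{t_n}-P_\infty\|_{\rm TV})^k\le k\|P_{t_n}-P_\infty\|_{\rm TV}$. Under the paper's normalization $TV(P,Q)=\sup_A|P(A)-Q(A)|$, your mismatch probability is exactly $TV$, so no factor is lost. Your conditioning-plus-bounded-convergence step replaces the paper's deterministic bracketing.

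Your route buys elementarity: no Hellinger machinery, and it uses exactly the TV hypothesis as stated. The paper's route buys a weaker hypothesis. Since $H^2$ lies between $TV^2/2$ and $TV$, and is often of order $TV^2$ (e.g.\ for smooth perturbations), the condition $K\,H^2\to0$ allows $K$ as large as $o(TV^{-2})$ rather than $o(TV^{-1})$, i.e.\ more exceedances and lower thresholds. A coordinatewise coupling cannot capture this, because it forces all $k$ coordinates to agree. By contrast, $TV(P^{\otimes k},Q^{\otimes k})$ can be of order $\sqrt k\,TV(P,Q)$, as for Gaussian location shifts.

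Finally, the measurability issue you list as the main obstacle is not really one. The maximal coupling can be built on any measurable space using an auxiliary Bernoulli variable that selects the overlap branch $P\wedge Q$. The event that this branch is not selected can then replace $\{(\bar U_i,\bar\Theta_i)\neq(\check U_i,\check\Theta_i)\}$ in your union bound.
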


The proof can be found in Section \ref{subsec:contiguity_argument} (cf Proposition \ref{p:contiguity}).  This result
shows that provided \eqref{e:p:contiguity-restatement} holds, any statistic based on the sample
$\{(\wt U_i,\wt \Theta_i),\ i\in {\cal I}_n\}$ has the same consistency behavior as the same statistic computed from 
$\{(U_i,\Theta_i)\}_{i=1}^{K(n)}$ --  an iid sample from the asymptotic angular distribution $P_\infty = P_{(U,\Theta)}$.
This general contiguity argument allows us to seamlessly extend consistency results for iid samples from the unattainable 
asymptotic distribution $P_\infty$ to the triangular array type setting of peaks-over-threshold. 

\begin{remark} The appearance of the total variation norm in \eqref{e:p:contiguity-restatement} may appear stringent but 
it is in fact rather mild.  Indeed, as shown in Proposition \ref{p:total-Breiman}, for 
the broad class of Breiman-type models we automatically have $\| P_{t_n} - P_{\infty}\|_{\rm TV} \to 0$, as $t_n\to\infty$ and
then ensuring \eqref{e:p:contiguity-restatement} is a matter of choosing the threshold in the latter convergence. Consequently, for every
Breiman-type model one can choose $t_n\to\infty$ so that \eqref{e:p:contiguity-restatement} is holds.    
\end{remark}
 
\begin{remark} By Proposition 1.1 in \cite{bobbia:dombry:varron:2025}, we have
\[
(\wt U_i,\wt \Theta_i)_{i\in \mathcal{I}_n}\overset{d}{=}(U_i^*,\Theta_i^*)_{i=1}^k,
\]
where $(U_i^*,\Theta_i^*)_{i=1}^k$ are iid from $P_{t_n}$, where $t_n$ is a {\em random threshold} given by the
largest $k(n)$-th order statistic of the $R_i$'s. Therefore, the results in this section can be shown to hold in the
case where the number of exceedances $|{\cal I}_n|=k(n)$ is deterministic, while the threshold 
is $t_n$ is random.  
\end{remark}

\subsection{On the universal consistency in optimal homogeneous prediction}\label{sec:universal_consistency}

In this section, we provide a general consistency result for the optimal homogeneous predictors under rather mild conditions.  In 
Section \ref{sec:methodology}, we demonstrate that these conditions are fulfilled for a class of random forest-based 
predictors.  These general consistency results can be viewed as an extreme value counterpart to the classical universal consistency 
theory of Stone \citep[see e.g. Ch.\ 6 in][]{devroye1996probabilistic}. 

 To be precise, the focus here is on showing convergence to the optimal precision (equivalently, loss) rather than on 
 the consistency of the predictors themselves.  Namely, consistency results are stated through the precision functional 
 $\Lambda$ defined in equation \eqref{e:Lambda-functional} for functions $g$ as 
$$
\Lambda(g) = \frac{1}{\E[U]} \E[U \wedge (1-U)g(\Theta)] = \lim_{t\to\infty} \frac{\P[Y>t, g(X)>t]}{\P[Y>t]}; 
$$
Observe that if $g(X)$ is calibrated then we have $\Lambda(g) = \lambda(Y,g(X))$. Hence the aim is to provide asymptotic calibration conditions
and consistency of $\Lambda(\hat g_n)$ for a suitable sequence of predictors $(\hat g_n)_{n\geq 1}$ based on a growing training data set.

 \begin{assumption}\label{assum:conv_TV_RV} Adopt the setup of Section \ref{sec:setup}. We suppose that 
 \eqref{e:wt-U-wt-Theta-convergence} holds, that is, 
 $$
  \|P_t - P_\infty\|_{\rm TV}\longrightarrow 0,\ \ \mbox{ as }t\to\infty,
 $$
 where $P_t = P_{(\wt U,\wt \Theta) | \tau(Y,X)>t}$ and $P_\infty = P_{(U,\Theta)}$.
 Suppose moreover that \eqref{e:P_U,Theta-TV}, and \eqref{e:f_U|Theta>0} hold.
 \end{assumption}

 \begin{assumption}\label{assum:conv_quantile_estim} Adopt the notation of Section \ref{sec:setup} and recall
  $q_\alpha(\theta)$ stands for the quantile of the weighted conditional density $\propto (1-u)f_{U|\Theta}(u|\theta)$,
 (cf \eqref{e:q-alpha}).  Assume that $(\alpha,\theta)\mapsto q_\alpha(\theta)$ is defined and continuous on the domain 
 $(0,1)\times S_\Theta$ for some compact set 
 $S_\Theta \subseteq S_X$ such that $p_\Theta(S_\Theta) = 1$ (e.g., $S_\Theta  = {\rm supp}(p_\Theta)$).
 
 Let $\{(U_i,\Theta_i)\}_{i=1}^k$ be independent samples from the distribution $P_\infty= P_{(U,\Theta)}$.  Assume there exists 
 an estimator $\wh q_{\alpha,k}(\theta)$ of $q_\alpha(\theta)$, based on the sample $\{(U_i,\Theta_i)\}_{i=1}^k$, such that: 
 (i) for all $\alpha\in (0,1)$ and $k\in\N$, the function $\theta \mapsto \wh q_{\alpha,k}(\theta)$ is defined and continuous 
 on the entire unit sphere $\theta\in S_X$; (ii) We have
 $$
 0< \wh q_\alpha(\theta) <1,\ \ \mbox{ for all } \alpha\in (0,1)\ \mbox{ and }\ \theta\in S_X\mbox{;}
 $$
 (iii) For all $0<\alpha_0<\alpha_1<1$:
\[
\sup_{\alpha \in [\alpha_0,\alpha_1] } \|\wh q_{\alpha,k}(\cdot) -q_\alpha(\cdot)\|_{\infty,S_\Theta} :=
\sup_{\alpha \in [\alpha_0,\alpha_1] } \sup_{\theta\in S_\Theta} |\wh q_{\alpha,k}(\theta) -q_\alpha(\theta)|
\stackrel{\mathbb{P}}{\longrightarrow} 0,\quad \mbox{ as }k\to\infty.
\]
 \end{assumption}

 Given the quantile estimators in Assumption \ref{assum:conv_quantile_estim}, define
 \begin{equation}\label{e:wh_g_alpha,k}
 \wh g_{\alpha, k} (x) := \|x\| \cdot \frac{\wh q_{\alpha,k}(x/\|x\|)}{1-\wh q_{\alpha,k}(x/\|x\|)},\ \ x\in \R^d,
 \end{equation}
 where $ \wh g_{\alpha,k} (x):= 0$.

 Now, consider the Peaks-over-Threshold methodology of Section \ref{sec:PoT-and-Contiguity}.  That is, let
 $(Y_i,X_i),\ i\in [n]$ be independent copies of a jointly regularly varying random vector $(Y,X)$ in $\R_+\times \R^d$
 and focus on the random sample of exceedance-based angles $\{(\wt U_i,\wt \Theta_i),\ i\in {\cal I}_n\}$ in 
 \eqref{e:wt-U_i-wt-Theta_i}, for a sequence of thresholds $t=t_n\to\infty$.  Let also $\alpha_n$ be a sequence of 
 estimators for $\alpha^*$ in \eqref{e:C(g_alpha*)} and define the predictors 
 \begin{equation}\label{e:wh_g_n}
 \wh g_n(x):= \wh g_{\alpha_n, K(n)}(x),
 \end{equation}
 computed with the $(U_i,\Theta_i)$'s replaced by the the $(\wt U_i,\wt \Theta_i)$'s, where $K(n) = |{\cal I}_n|$.
 
 The following theorem shows the {\em universal extremal consistency} 
 of the predictors $\{\wh g_{K(n)}(X)>t\}$ for the events $\{Y>t\}$, as $t\to\infty$ under mild conditions. That is,
 we will show that these event predictors are asymptotically calibrated and that their precision is asymptotically optimal.

\begin{theorem} \label{thm:univ_consitency} Let  $(Y,X)$ and $(Y_i,X_i),\ i=1,2,\cdots,n$ be iid.
Adopt Assumptions \ref{assum:conv_TV_RV} and \ref{assum:conv_quantile_estim} and let $\wh g_n(\cdot)=\wh g_{\alpha_n, K(n) }(\cdot)$ 
be as in \eqref{e:wh_g_n}.  

If $\alpha_n \stackrel{\P}{\to} \alpha^*$ with $\alpha^*$ as in \eqref{e:C(g_alpha*)}, and the thresholds 
$t_n\to\infty$ are such 
that
\begin{equation}\label{eq:CV_rate_UC_theorem-new}
n\mathbb{P}(\tau(Y,X)>t_n)\| P_{t_n} - P_\infty\|_{\rm TV} \longrightarrow 0\quad \text{ as } n\rightarrow \infty,
\end{equation}
then:

{\em (i)} We have that:
\begin{equation} \label{e:thm:univ_consitency-i} 
       \wh c_n:= \lim_{t\to\infty} \frac{\P[\wh g_n(X)>t | \wh g_n]}{\P[Y>t]} \stackrel{\P}{\longrightarrow} 1,\ \ \mbox{ as }
       n\to\infty,
\end{equation}
and consequently, for $\wh c_n>0$, 
\begin{equation} \label{e:thm:univ_consitency-i-lambda} 
 \lambda(Y,\wh g_n(X) | \wh g_n) = \lim_{t\to\infty} \P[ \wh g_n(X) > \wh c_n t | Y>t,\ \wh g_n] =  \Lambda(\wh g_n(\cdot)/\wh c_n).
\end{equation}

{\em (ii)} The predictors $\wh g_n (\cdot)$ and $ \wh g_n(\cdot)/\wh c_n$ are both precision-optimal:
\begin{align}\label{e:thm:univ_consitency-ii} 
   \mathop{\P\lim}_{n\to\infty} \Lambda(\wh g_n/\wh c_n) = \mathop{\P\lim}_{n\to\infty} \Lambda(\wh g_n)
   =\Lambda (g_{\alpha^*}),
\end{align}
where $\P\lim$ denotes the limit in probability.

{\em (iii)} If, moreover, for some sequence of non-negative, continuous, and homogeneous functions $\wh h_n(\cdot)$, independent from $(Y,X)$, 
we have that
\begin{equation} \label{e:thm:univ_consitency-iii} 
\wh d_n := \lim_{t\to\infty} \frac{\P[\wh h_n(X)>t | \wh h_n (\cdot) ]}{\P[Y>t]} \stackrel{\P}{\longrightarrow} 1,\ \ \mbox{ as }n\to\infty,
\end{equation}
then for all $\epsilon>0$, we have
\begin{equation}\label{e:thm:univ_consitency-iii-claim} 
\lim_{n\to\infty} \P[  \Lambda(\wh h_n) \le  \Lambda(\wh g_n) + \epsilon ] = 1.
\end{equation}
\end{theorem}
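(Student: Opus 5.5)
The argument has three layers. First, transfer consistency from the idealised iid setting to the PoT sample with Proposition \ref{p:contiguity-restatement}. Second, show that the constraint and precision functionals are continuous in the quantile function. Third, use the optimality in Theorem \ref{thm:final} and Proposition \ref{p:optim-general} for part (iii).

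\emph{Step 1 (reduction to iid data).} $K(n)\sim{\rm Bin}(n,\P[\tau(Y,X)>t_n])$. I choose $t_n$ so that $n\P[\tau>t_n]\to\infty$, which is implicit in any consistency statement. Then $K(n)/(n\P[\tau>t_n])\to 1$ in probability, and \eqref{eq:CV_rate_UC_theorem-new} gives $K(n)\|P_{t_n}-P_\infty\|_{\rm TV}\to 0$ in probability. By Proposition \ref{p:decpupage}, the exceedance angles are iid draws from $P_{t_n}$, independent of $K(n)$. So Proposition \ref{p:contiguity-restatement} applies to any statistic $T_k$ that tends to $0$ in probability under $P_\infty$. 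I take
\[
T_k=\sup_{\alpha\in[\alpha_0,\alpha_1]}\|\wh q_{\alpha,k}-q_\alpha\|_{\infty,S_\Theta},
\]
which converges by Assumption \ref{assum:conv_quantile_estim}(iii). Hence the same uniform convergence holds for the PoT-based $\wh q$. Since $\alpha^*\in(0,1)$, I fix $\alpha_0<\alpha^*<\alpha_1$. Combining this with $\alpha_n\to\alpha^*$ and the uniform continuity of $(\alpha,\theta)\mapsto q_\alpha(\theta)$ on $[\alpha_0,\alpha_1]\times S_\Theta$, I get $\|\wh q_{\alpha_n,K(n)}-q_{\alpha^*}\|_{\infty,S_\Theta}\to 0$ in probability. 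By compactness, $\max_{S_\Theta} q_{\alpha^*}<1$. It follows that $\wh g_n\to g_{\alpha^*}$ uniformly on $S_\Theta$ in probability, with $\wh g_n$ bounded there with probability tending to one.

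\emph{Step 2 (calibration, part (i)).} Condition on $\wh g_n$, which is independent of the new $(Y,X)$. It is continuous and homogeneous, so Proposition \ref{p:rv-via-h} gives $\wh c_n=C(\wh g_n)/\E[U]$, computed under $p_\Theta$. One caveat is that boundedness on all of $S_X$ is needed to apply the proposition. I expect it to follow from continuity of $\wh q$ together with $\wh q<1$, if $S_X$ is compact, as it is for the $\ell_1$ sphere. Since $p_\Theta(S_\Theta)=1$ and $b\le 1$, uniform convergence gives $C(\wh g_n)\to C(g_{\alpha^*})=\E[U]$, so $\wh c_n\to 1$. Relation \eqref{e:thm:univ_consitency-i-lambda} then follows from Lemma \ref{lem:nonameidea} and Proposition \ref{p:lambda-via-U-Theta}, applied to the calibrated function $\wh g_n/\wh c_n$ and using homogeneity.

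\emph{Step 3 (part (ii)).} $\Lambda$ is Lipschitz in $g$ with respect to the norm of $L^1(b\,p_\Theta)$, because $|a\wedge x-a\wedge y|\le|x-y|$. This gives
\[
|\Lambda(f)-\Lambda(g)|\le C(|f-g|)/\E[U]\le\|f-g\|_{\infty,S_\Theta}/\E[U].
\]
Both $\wh g_n$ and $\wh g_n/\wh c_n$ therefore converge to $g_{\alpha^*}$, and their $\Lambda$-values converge to $\Lambda(g_{\alpha^*})$.

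\emph{Step 4 (part (iii)), the main obstacle.} A competitor $\wh h_n$ is only asymptotically calibrated, so $C(\wh h_n)=\wh d_n\E[U]$ rather than exactly $\E[U]$. My plan is to rescale: $\wh h_n/\wh d_n\in D(\E[U])$, so Theorem \ref{thm:final}(ii) gives $\Lambda(\wh h_n/\wh d_n)\le\Lambda(g_{\alpha^*})$. Then I bound
\[
|\Lambda(\wh h_n)-\Lambda(\wh h_n/\wh d_n)|\le|1-\wh d_n^{-1}|\,C(\wh h_n)/\E[U]=|\wh d_n-1|\to 0.
\]
Combining this with Step 3 yields $\Lambda(\wh h_n)\le\Lambda(\wh g_n)+\epsilon$ with probability tending to one. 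The delicate points are the measurability of these conditional quantities, and making sure that $\alpha^*$ exists and lies strictly inside $(0,1)$ under \eqref{e:f_U|Theta>0}. For the latter I would invoke the continuity and monotonicity of $\alpha\mapsto C(g_\alpha)$ from Lemma \ref{l:C(g)-properties}.
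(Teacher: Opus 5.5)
Your proof is correct. Steps 1--3 follow the paper's argument: Proposition \ref{p:decpupage} and the contiguity Proposition \ref{p:contiguity-restatement} give $\|\wh g_n-g_{\alpha^*}\|_{\infty,S_\Theta}\to0$ in probability, and Proposition \ref{p:rv-via-h} is then applied conditionally on $\wh g_n$. You add two small points of care. Taking $T_k$ as a supremum over a fixed interval $[\alpha_0,\alpha_1]$ removes the dependence of the paper's $T_{K(n)}$ on the data-driven $\alpha_n$. You also rightly note that $n\P[\tau(Y,X)>t_n]\to\infty$ is needed for $K(n)\to\infty$, which the paper uses tacitly.

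The real difference is in part (iii). The paper finds $\wh\beta_n$ with $C(\wh h_n)=C(g_{\wh\beta_n})$ and uses the optimality of $g_{\wh\beta_n}$ at that constraint level. It then needs continuity, strict monotonicity, and the limits $0$ and $\infty$ of $\beta\mapsto C(g_\beta)$, all derived from the positive density \eqref{e:f_U|Theta>0}. On top of that it needs continuity of the inverse map and dominated convergence to get $\Lambda(g_{\wh\beta_n})\to\Lambda(g_{\alpha^*})$.

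You instead rescale the competitor to exact calibration and use that $g\mapsto\E[U\wedge(1-U)g(\Theta)]$ is $1$-Lipschitz in $L^1(b\cdot p_\Theta)$. This gives the explicit bound $\Lambda(\wh h_n)\le\Lambda(\wh g_n)+|\wh d_n-1|+\|\wh g_n-g_{\alpha^*}\|_{\infty,S_\Theta}/\E[U]$. It only requires optimality of $g_{\alpha^*}$ at the single level $\E[U]$ (Proposition \ref{p:optim-general}). In effect, your scaling argument shows that the optimal value is Lipschitz in the constraint level for every angular law. That is exactly the continuity the paper's closing remark calls the key point and proposes to recover via interpolation.

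What the paper's route buys, at the cost of more regularity, is the exact comparison $\Lambda(\wh h_n)\le\Lambda(g_{\wh\beta_n})$ against the optimal predictor at the competitor's own constraint level.
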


 We will first comment on the consequences of this result and then present its proof.
 
\begin{remark} Observe that Relations \eqref{e:thm:univ_consitency-i} and \eqref{e:thm:univ_consitency-iii} involve conditioning on 
the predictors $\wh g_n$ and/or $\wh h_n$, which are built by using the training data $\{(Y_i,X_i)\}_{i=1}^n$.  One can interpret these
results as follows.  Having a new observation $(Y,X)$, independent from $\{(Y_i,X_i)\}_{i=1}^n$, we seek a predictor $\{\wh g_n(X)>t\}$ for
$\{Y>t\}$, which is asymptotically calibrated, in the sense that \eqref{e:thm:univ_consitency-i} holds.  That is, given the data, the 
rate at which we sound an alarm $\P[\wh g_n(X)>t | \wh g_n]$ is asymptotically equivalent to the rate $\P[Y>t]$ of the event, as $t\to\infty$ and as
the sample size $n\to\infty$.

Adopting this {\em extremal perspective}, where the rate of the event $\P[Y>t]$ vanishes as $t\to\infty$, Relation 
\eqref{e:thm:univ_consitency-iii-claim} shows that the predictor $\wh g_n$ has an asymptotically optimal precision among
all continuous homogeneous predictors, calibrated in the sense of \eqref{e:thm:univ_consitency-iii}.
\end{remark}

\begin{remark} \label{rem:thm:univ_consitency-precision} In the context of Theorem \ref{thm:univ_consitency}, we have
\begin{equation} \label{e:rem:thm:univ_consitency-precision}
\lambda( Y, \wh h_n(X) | \wh h_n) \le \Lambda(g_{\alpha^*}) = 
\mathop{\P\lim}_{n\to\infty} \lambda( Y, \wh g_n(X) | \wh g_n).
\end{equation}
This  follows from the observation that
$\lambda( Y, \wh h_n(X) | \wh h_n) = \Lambda(\wh h_n(\cdot)/\wh d_n)$ and the fact that $\wh h_n(\cdot)/\wh d_n$ is calibrated,
which trivially implies the inequality in \eqref{e:rem:thm:univ_consitency-precision}. The equality therein follows 
from \eqref{e:thm:univ_consitency-i-lambda} and \eqref{e:thm:univ_consitency-ii}.

Relation \eqref{e:rem:thm:univ_consitency-precision} states that the asymptotic precision of {\em any calibrated} 
continuous homogeneous predictor $\wh h_n$, is no better than the optimal value $\Lambda(g_{\alpha^*})$.  
This, while appealing, is of limited interest in practice,
since one often does not have knowledge of the calibration constants $\wh d_n$ in \eqref{e:thm:univ_consitency-iii}.  In practice, 
one provides decisions based on whether the event $\{\wh h_n(X)>t\}$ occurs. Therefore,  for a 
finite $n$, the precision functional
$$
\Lambda(\wh h_n)= \mathop{\P\lim}_{n\to\infty} \P[ \wh h_n(X)>t |  Y> t,\ \wh h_n] 
$$
is of greater interest than the tail-dependence coefficient $\lambda(Y,\wh h_n(X)| \wh h_n)$.  This is why we stated
Theorem \ref{thm:univ_consitency} primarily in terms of the precision functionals $\Lambda$ rather than via tail-dependence 
coefficients.
\end{remark}

\begin{proof}[Proof of Theorem \ref{thm:univ_consitency}] We will argue first that
\begin{equation}\label{e:thm:univ_consitency-0}
\|\wh g_n - g_{\alpha^*} \|_{\infty,S_\Theta} \stackrel{\P}{\longrightarrow} 0,\ \ \mbox{ as }n\to\infty.
\end{equation}
Indeed, recalling \eqref{e:wh_g_n}, one can write 
\begin{align}\label{e:thm:univ_consitency-1}
\sup_{\theta\in S_\Theta} | \wh q_n(\theta) - q_{\alpha^*}(\theta) | &
  \le \sup_{\theta\in S_\Theta} | \wh q_{\alpha_n,K(n)} (\theta) - q_{\alpha_n}(\Theta)| + 
\sup_{\theta\in S_\Theta}| q_{\alpha_n}(\theta) -q_{\alpha^*}(\theta) |  \nonumber \\
&=: T_{K(n)}( (\wt U_1,\wt \Theta_1),\cdots, (\wt U_{K(n)},\wt \Theta_{K(n)})) + \| q_{\alpha_n}(\cdot) -q_{\alpha^*}(\cdot) \|_{\infty,S_\Theta},
\end{align}
for some measurable functions $T_k:((0,1)\times S_X)^{\times k} \to [0,1],\ k\in \N$.

Consider the first term in \eqref{e:thm:univ_consitency-1}.  By Assumption \ref{assum:conv_quantile_estim},  
since $\alpha_n\stackrel{\P}{\to} \alpha^* \in (0,1)$, as $n\to\infty$, we have 
$$
T_k((U_1,\Theta_1),\cdots, (U_{k}, \Theta_{k})) \stackrel{\P}{\longrightarrow} 0,\ \ \mbox{ as }k\to\infty,
$$
where the $(U_i,\Theta_i)$'s are now iid from $p_{(U,\Theta)}$.  On the other hand, 
$$ 
 K(n) = |{\cal I}_n|  = {\cal O}_\P(\E[K(n)]) =  {\cal O}_\P( n \P[ \tau(Y,X)>t_n]),\ \ \mbox{ as }n\to\infty,
$$
and therefore, the contiguity argument in Proposition \ref{p:contiguity-restatement} implies
$$
T_{K(n)}( (\wt U_1,\wt \Theta_1),\cdots, (\wt U_{K(n)},\wt \Theta_{K(n)})) \stackrel{\P}{\longrightarrow} 0,\ \ \mbox{ as } n\to\infty.
$$
This shows that the right-hand side of \eqref{e:thm:univ_consitency-1} converges in probability to $0$, as $n\to\infty$, because
$\| q_{\alpha_n} -q_{\alpha^*} \|_{\infty,S_\Theta}\stackrel{\P}{\to} 0,\ n\to\infty$, by 
the continuity of $(\alpha,\theta)\mapsto q_\alpha(\theta)$ on $(0,1)\times S_\Theta$ and since
$\alpha_n\stackrel{\P}{\to}\alpha^* \in (0,1)$.

We have thus shown that 
\begin{equation}\label{e:thm:univ_consitency-2}   
  \|\wh q_n - q_{\alpha^*}\|_{\infty,S_\Theta} \stackrel{\P}{\to} 0,\ \ \mbox{ as }n\to\infty.
\end{equation}
Now, recall
\eqref{e:wh_g_alpha,k} and observe that 
$$
 \wh g_n(\theta) := \Psi(\wh q_n)(\theta)\quad \mbox{ and }\quad g_{\alpha^*}(\theta) := \Psi(q_{\alpha^*})(\theta),
$$
where $\Psi : C(S_\Theta; (0,1)) \to C(S_\Theta; (0,\infty))$ is the functional $\Psi( q ) (\theta)  := q(\theta)/(1-q(\theta))$, and where
$C(S_\theta; (a,b))$ denotes the space of continuous functions $q:S_\Theta \to (a,b)$ defined on the compact $S_\Theta$ and taking values 
in $(a,b)$.  Observe that the functional $\Psi: C(S_\Theta; (0,1)) \to C(S_\Theta; (0,\infty)) $ is continuous in
$\|\cdot\|_{\infty,S_\Theta}$, at all functions $q\in C(S_\Theta; (0,1))$ such that $\max_{\theta\in S_\Theta}q(\theta) <1$.
Thus, by the fact that $\alpha^*<1$, we have $q_{\alpha^*}(\theta^*):=\max_{\theta\in S_\Theta} q_{\alpha^*}(\theta) < 1,$ and
the continuity of $\Psi$ at $q_{\alpha^*}(\cdot)$ combined with the established Relation \eqref{e:thm:univ_consitency-2} implies
\eqref{e:thm:univ_consitency-0}.\\

{\em Part (i).} We shall now prove \eqref{e:thm:univ_consitency-i}.   Assumption \ref{assum:conv_quantile_estim} implies that for all $\alpha_n\in (0,1)$, 
the quantile estimator $\wh q_{\alpha_n}(\theta)$ is continuous in $\theta$, positive, and less than $1$ for all 
$\theta\in S_X$.  Therefore, $\wh g_n (\theta) = \wh q_{\alpha_n}(\theta)/(1-\wh q_{\alpha_n}(\theta))$ is continuous 
positive and finite for all $\theta\in S_X$.  Hence, the homogeneous functions $(y,x) \mapsto y$ and  
$(y,x) \mapsto \wh g_n(x):=\|x\| \wh g_n(x/\|x\|),\ x\in \R^d$ satisfy the assumptions
of Proposition \ref{p:rv-via-h}, and thus
\begin{align*}\label{e:thm:univ_consistency-5-0}
\lim_{t\to\infty} b(t)\P[ \wh g_n(X) > t | \wh g_n ] &= c_Z \E[(1-U) \wh g_n(\Theta) | \wh g_n] \ \mbox{ and }\
\lim_{t\to\infty} b(t) \P[Y>t] = c_Z \E[U],
\end{align*}
which implies
\begin{equation}\label{e:thm:univ_consistency-5}
\lim_{t\to\infty} \frac{\P[ \wh g_n(X) > t | \wh g_n ]}{\P[Y>t]} = \frac{1}{\E[U]} \E [(1-U) \wh g_n(\Theta) | \wh g_n],\ \ \mbox{ as }t\to\infty.
\end{equation}

Therefore, to prove \eqref{e:thm:univ_consitency-i}, it remains to show that the right-hand 
side of \eqref{e:thm:univ_consistency-5} converges in probability to $1$, as $n\to\infty$.
By the calibration result in \eqref{e:C(g_alpha*)}, we have $\E (1-U)g_{\alpha^*}(\Theta)] = \E[U]$, and hence
\begin{align} \label{e:thm:univ_consitency-3}  
\Big|\frac{1}{\E[U]} \E[ (1-U) \wh g_n(\Theta) | \wh g_n] -1 \Big|  
&=\frac{1}{\E[U]}  \Big| \E[ (1-U) ( \wh g_n(\Theta) - g_{\alpha^*}(\Theta)) | \wh g_n] \Big| \nonumber \\
&\le \frac{1}{\E[U]}  \| \wh g_n - g_{\alpha^*}\|_{\infty,S_\Theta} \stackrel{\P}{\to} 0,
\end{align}
as $n\to\infty$, by the established Relation \eqref{e:thm:univ_consitency-0}.  This completes the proof of \eqref{e:thm:univ_consitency-i}.

To show \eqref{e:thm:univ_consitency-i-lambda}, observe that \eqref{e:thm:univ_consitency-i} implies that 
$t\mapsto \P[ \wh g_n(X) > t | \wh g_n]$ is regularly varying with tail exponent $-1$ whenever $\wh c_n>0$. Therefore,
for $\wh c_n>0$, we have
$$
\P[ \wh g_n(X) > \wh c_n t | \wh g_n ] \sim \wh c_n^{-1} \P[\wh g_n(X)> t],\ \ \mbox{ as }t\to\infty,
$$
which in view of \eqref{e:thm:univ_consitency-i}, entails that $\wh g_n(\cdot)/\wh c_n$ is asymptotically calibrated in the sense of 
\eqref{e:g-calibration-lemma}. Lemma \ref{lem:nonameidea}, then implies \eqref{e:thm:univ_consitency-i-lambda}.\\

{\em Part (ii).} The proof of \eqref{e:thm:univ_consitency-ii} is similar to that of \eqref{e:thm:univ_consitency-i}.  Indeed, conditionally 
on $\wh g_n$, by Proposition \ref{p:rv-via-h} applied to the continuous and homogeneous functions 
$(y,x)\mapsto (y\wedge \wh g_n(x))$ and $(y,x)\mapsto y$, we obtain:
\begin{equation}\label{e:thm:univ_consitency-4}  
\Lambda(\wh g_n) = \lim_{t\to \infty} \P[ \wh g_n(X)>t | Y>t,\ \wh g_n] = \frac{\E[ U \wedge (1-U) \wh g_n(\Theta) | \wh g_n]}{\E[U]}.
\end{equation}
The latter, as argued in \eqref{e:thm:univ_consitency-3}, in view of \eqref{e:thm:univ_consitency-0} and
the Dominated Convergence Theorem, converges as $n\to\infty$, to $\Lambda(g_{\alpha^*}) = \E[U\wedge (1-U) g_{\alpha^*}(\Theta)]$.
Relation \eqref{e:thm:univ_consitency-0} and the fact that $\wh c_n \stackrel{\P}{\to} 1,\ n\to\infty$, imply also that
$\| \wh g_n(\cdot) /\wh c_n - g_{\alpha^*}\|_{\infty, S_\Theta}\stackrel{\P}{\to}0$.  Thus a similar dominated convergence argument 
with $\wh g_n$ replaced by $\wh g_n(\cdot)/\wh c_n$ entails also
$\Lambda(\wh g_n(\cdot) /\wh c_n) \stackrel{\P}{\to} \Lambda(g_{\alpha^*}),$ as $n\to\infty$.\\

 {\em Proof of (iii):} Relations \eqref{e:P_U,Theta-TV} and \eqref{e:f_U|Theta>0} in Assumption \ref{assum:conv_TV_RV} imply that 
 the quantile functions
 $$
 \beta \mapsto q_\beta(\theta)
 $$
 are {\em strictly increasing} and {\em continuous}, and such that
 \begin{equation}\label{e:q_beta-limits}
 q_\beta(\theta) \downarrow 0\ \ (\beta\downarrow 0)\quad \mbox{ and }\quad q_\beta(\theta) \uparrow 1\ \ (\beta\uparrow 1),
 \end{equation}
 modulo $p_\Theta(d\theta)$.
 
 These observations, the Monotone Convergence Theorems, and the fact that $u\mapsto u/(1-u)$ is strictly increasing and continuous, entail that
 the functions
 $$
  \beta \mapsto C(g_\beta) = \E[ (1-U) g_\beta(\Theta)]\ \ \mbox{ and }\ \ \beta\mapsto \Lambda(g_\beta) = \E[ U \wedge (1-U) g_\beta(\Theta)]/\E[U],
 $$
 are continuous (see also Lemma \ref{l:C(g)-properties}).  Moreover, the function $\varphi(\beta):= C(g_\beta)$ is strictly increasing
 in $\beta$ and such that
 \begin{equation}\label{e:C(g_beta)-limits}
 C(g_\beta) \downarrow 0 \ \ (\beta\downarrow 0)\quad \mbox{ and }\quad C(g_\beta) \uparrow \infty\ \ (\beta\uparrow 1).
 \end{equation}

 We are now ready to prove \eqref{e:thm:univ_consitency-iii-claim}.  By \eqref{e:C(g_beta)-limits} and the continuity of  
 $\beta \mapsto C(g_\beta)$, it follows that
  \begin{equation}\label{e:wh-hn-constraint}
  C(\wh h_n) = C(g_{\wh \beta_n}),
 \end{equation}
 where in fact the sequence of random variables $\wh \beta_n\in (0,1)$ is unique.  Further, the fact that for all $\beta\in (0,1)$, 
 we have that $g_\beta$ is optimal in the sense of \eqref{e:Lambda-optimal} and since by \eqref{e:wh-hn-constraint}, we have $\wh h_n\in D(g_{\wh \beta_n})$,
 we obtain
 \begin{equation}\label{e:Lambda-h_n<Lambda-g_beta_n}
 \Lambda(\wh h_n) \le \Lambda (g_{\wh \beta_n}).
 \end{equation}
 Now, notice that  Proposition \ref{p:rv-via-h}, applied to the homogeneous functions $(x,y)\mapsto \wh h_n(x)$ and $(x,y)\mapsto y$ and 
 the conditional measure $\P(\cdot | \wh h_n)$, implies that 
 $$
 \lim_{t\to\infty} \frac{\P[\wh h_n(X) >t  | \wh h_n]}{\P[Y>t]} =  \frac{C(\wh h_n)}{\E[U]}.
 $$
 Thus, by \eqref{e:thm:univ_consitency-iii}, yields 
 $C(\wh h_n) = C(g_{\wh \beta_n}) \stackrel{\P}{\to} \E[U] = C(g_{\alpha^*})$, as $n\to\infty$.  
 The strict monotonicity of $\varphi(\beta)= C(g_\beta)$ in $\beta$ implies, however, that $\varphi^{-1}$ is continuous and hence
 $$
  \wh \beta_n = \varphi^{-1}\Big(C(g_{\wh \beta_n}) \Big) \stackrel{\P}{\longrightarrow} \varphi^{-1}\Big( C(g_{\alpha^*}) \Big) = \alpha^*,
  \ \ \mbox{ as }n\to\infty.
 $$
 The last convergence, the continuity of $\beta\mapsto g_\beta(\theta)$ modulo $p_\Theta(d\theta)$, and the Dominated Convergence 
 Theorem, entail
 \begin{equation}\label{e:Lambda_g_beta_n-to-Lambda_g_opt}
     \Lambda(g_{\wh \beta_n}) = \frac{1}{\E[U]} \E[ U \wedge (1-U) g_{\wh \beta_n}(\Theta) | \wh \beta_n ] \stackrel{\P}{\longrightarrow}
     \Lambda(g_{\alpha^*}),\ \ \mbox{ as }n\to\infty.  
 \end{equation}

 Relations \eqref{e:thm:univ_consitency-ii}, \eqref{e:Lambda-h_n<Lambda-g_beta_n},
 and \eqref{e:Lambda_g_beta_n-to-Lambda_g_opt} imply \eqref{e:thm:univ_consitency-iii-claim}, 
 completing the proof of the theorem.
\end{proof}

 \begin{remark} The existence of a positive density (cf  \eqref{e:P_U,Theta-TV} and \eqref{e:f_U|Theta>0}) in Assumption \ref{assum:conv_TV_RV}
 can be relaxed. The key idea behind proving \eqref{e:thm:univ_consitency-iii-claim} is the continuity of the 
 optimal value $\inf_{g'\in D(g)} \Lambda(g')$ relative to the value of the constraint $C(g)$. Thus, one can extend 
 \eqref{e:thm:univ_consitency-iii-claim} to the case of discontinuous quantile functions $q_\alpha(\cdot)$ by considering 
 \eqref{e:q-alpha-lambda} and applying the method of interpolation used in the proof of Theorem \ref{thm:final}.  
 \end{remark}

\section{Methodology and numerical examples}
  \label{sec:methodology}

   \subsection{Weighted quantile inference} 
    
\label{sec:quant_reg_generality}

Consider a random variable $\xi$ whose conditional density given
$\Theta=\theta$ is defined by
\[
f_{\xi \mid \Theta=\theta}(u)
\propto
(1-u)f_{U\mid \Theta}(u\mid \theta),
\]
and assume that Assumption~\ref{assum:conv_quantile_estim} holds.

Let
\[
\rho_\alpha(u)
=
\bigl(\alpha-\mathds{1}_{\{u\le 0\}}\bigr)u,
\qquad \alpha\in(0,1),
\]
denote the standard check loss function.

Under Assumption~\ref{assum:conv_quantile_estim}, the conditional density
$f_{\xi\mid \Theta=\theta}$ is continuous and strictly positive in a neighborhood of its $\alpha$-quantile
\[
q_{\alpha,\xi}(\theta)
:=
F^{-1}_{\xi\mid\Theta=\theta}(\alpha),
\qquad
\theta\in\mathbb S^{d-1}.
\]
It is well known (see, e.g., \cite{Portnoy_Koenker_1997}) that, for every fixed
$\theta\in\mathbb S^{d-1}$, the conditional quantile
$q_{\alpha,\xi}(\theta)$ is the unique minimizer of
\[
v
\longmapsto
\mathbb E\!\left[
\rho_\alpha(\xi-v)
\,\middle|\,
\Theta=\theta
\right].
\]

Since $q_\alpha$ is assumed to be continuous, the conditional quantile admits the representation
\begin{equation}
\label{e:q-argmin}
q_\alpha(\theta)
=
\underset{v\in(0,1)}{\operatorname{argmin}}
\,
\mathbb E\!\left[
\rho_\alpha(\xi-v)
\,\middle|\,
\Theta=\theta
\right].
\end{equation}

Observe that $q_\alpha(\theta) = {\rm argmin}_{v\in (0,1)} M_\alpha(v|\theta)$, where 
$
M_\alpha(v|\theta )
:=
\mathbb E\!\left[
\rho_\alpha(U-v)(1-U) | \Theta= \theta
\right].
$
Since $v\mapsto M_\alpha(v|\theta)$ is convex and subdifferentiable, its minimizer is characterized by the first-order optimality condition
\[
\alpha
=
\frac{
\mathbb E\!\left[
\mathds{1}_{\{U\le v\}}(1-U)
\,\middle|\,
\Theta=\theta
\right]
}{
\mathbb E\!\left[
1-U
\,\middle|\,
\Theta=\theta
\right]
}.
\]
The right-hand side is precisely the conditional distribution function of
$\xi$ given $\Theta=\theta$. This observation leads to the following weighted quantile estimator \cite{yu1998local,fan1996local}:
\begin{equation}
\label{e:q-hat}
\widehat q_\alpha(\theta)
:=
\widehat F^{-1}_{k,\xi\mid\Theta=\theta}(\alpha),
\end{equation}
where
\[
\widehat F_{k,\xi\mid\Theta=\theta}(a)
=
\sum_{i=1}^{k}
\frac{
\omega_h(\Theta_i,\theta)(1-U_i)
}{
\sum_{j=1}^{k}
\omega_h(\Theta_j,\theta)(1-U_j)
}
\,
\mathds{1}_{\{U_i\le a\}}.
\]

Here $(U_i,\Theta_i)_{i=1}^{k}$ are independent copies of $(U,\Theta)$, and
\[
\omega_h(\theta,\theta')
=
\omega\!\left(
\frac{d(\theta,\theta')}{h}
\right),
\]
with
\[
d(\theta,\theta')
=
\arccos(\theta^\top\theta')
\]
denoting the geodesic distance on the sphere $\mathbb S^{d-1}$ and $h$ a bandwidth depending on $k$.

For a fixed $\theta\in\mathbb S^{d-1}$, consistency of the estimator
$\widehat q_\alpha(\theta)$ follows, as long as $kh(k)^{d-1}$ goes to $0$ as $n$ goes to infinity, from standard arguments because the density is strictly positive in kernel estimation; see, for instance, Section~1.2 of \cite{tsybakov2009} for kernel estimator and \cite{resnick:1987} Proposition~0.1 for the convergence of inverse functions. Note that a wide range of weights functions $\omega$ can be chosen in practice. 
As an example, implementation with quantile regression forests \cite{meinshausen2006quantile} are discussed in Supplements 
\ref{sec:algorithm} and \ref{sec:solar-flares}.

However, pointwise consistency alone is not sufficient to guarantee the uniform convergence required in Assumption~\ref{assum:conv_quantile_estim}. The estimator $\widehat q_\alpha$ can nevertheless be modified to obtain a 
uniformly consistent version.

\begin{proposition}
Let $q_{n,\alpha}(\theta)$ be a consistent (in probability) estimator of
$q_\alpha(\theta)$ for all $\theta\in K$. Assume that the mapping
\[
(\alpha,\theta)
\longmapsto
q_\alpha(\theta)
\]
is continuous on $(0,1)\times K$.
Then there exist a sequence $m(n)\to\infty$ and a transformed estimator
$\widehat q_{n,\alpha}(\theta)$ such that:

\begin{enumerate}
\item[(i)]
The mapping
\[
(\alpha,\theta)
\longmapsto
\widehat q_{n,\alpha}(\theta)
\]
is continuous on $(0,1)\times {\mathbb S}^{d-1}$.\\

\item[(ii)]
The uniform convergence
\begin{equation}
\label{e:unif_conv_wh_qn}
\sup_{\theta\in K}
\left|
\widehat q_{n,\alpha}(\theta)
-
q_\alpha(\theta)
\right|
\stackrel{\mathbb P}{\longrightarrow}
0,
\qquad
n\to\infty,
\end{equation}
holds.
\end{enumerate}
\end{proposition}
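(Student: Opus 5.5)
The plan is to upgrade pointwise consistency to uniform consistency by discretizing and interpolating. The ingredients are the compactness of $K$ and of sub-intervals $[\alpha_0,\alpha_1]\subset(0,1)$, together with the uniform continuity of $(\alpha,\theta)\mapsto q_\alpha(\theta)$ on $[\alpha_0,\alpha_1]\times K$. Concretely, for each $m\in\N$ fix a finite set $A_m=\{\alpha_{m,1},\dots,\alpha_{m,L_m}\}\subset(0,1)$ that is $1/m$-dense in $[1/m,1-1/m]$. Also fix a finite $1/m$-net $\{\theta_{m,1},\dots,\theta_{m,J_m}\}$ of $\mathbb S^{d-1}$ that contains a $1/m$-net of $K$. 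On these nets, build a continuous partition of unity $\{\phi_{m,j}\}_{j}$ on $\mathbb S^{d-1}$ with each $\phi_{m,j}$ supported in a geodesic ball of radius $2/m$ around $\theta_{m,j}$, and use piecewise-linear hat functions $\{\psi_{m,l}\}_l$ in $\alpha$ on $(0,1)$ (constant beyond the extreme nodes). Define
\[
\widehat q^{(m)}_{n,\alpha}(\theta):=\sum_{l=1}^{L_m}\sum_{j=1}^{J_m}\psi_{m,l}(\alpha)\phi_{m,j}(\theta)\, q_{n,\alpha_{m,l}}(\theta_{m,j}),
\]
which is a finite combination of continuous functions. It is therefore jointly continuous on $(0,1)\times\mathbb S^{d-1}$, which gives (i). If needed, one can clip the node values into $[\epsilon_m,1-\epsilon_m]$, which keeps the estimator in $(0,1)$ as Assumption \ref{assum:conv_quantile_estim}(ii) requires.

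For (ii), split the error by the triangle inequality into two pieces. The first is a deterministic interpolation error $\sup_{\theta\in K}|\bar q^{(m)}_\alpha(\theta)-q_\alpha(\theta)|$, where $\bar q^{(m)}$ is the same interpolant built from the true values $q_{\alpha_{m,l}}(\theta_{m,j})$. The second is a stochastic error bounded by $\max_{l,j}|q_{n,\alpha_{m,l}}(\theta_{m,j})-q_{\alpha_{m,l}}(\theta_{m,j})|$, since the weights $\psi\phi$ are nonnegative and sum to one.

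There is one wrinkle: the nets must lie in $K$, or at least the node values must be near points of $K$, for pointwise consistency to apply. To handle this, take the nodes used in the stochastic term to be points of $K$. Specifically, choose the $\theta_{m,j}$ whose support meets $K$ inside $K$, via a $1/m$-net of $K$ with balls of radius $2/m$. The remaining nodes, away from $K$, contribute nothing on $K$ and can carry arbitrary (e.g.\ clipped) values.

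The interpolation error is bounded by the modulus of continuity of $q$ on the compact $[\alpha_0,\alpha_1]\times K_{4/m}$. Continuity is given only on $(0,1)\times K$, so I would argue instead that every weight active at $\theta\in K$ comes from a node in $K$ within distance $2/m$. The error is then at most $\omega(3/m)\to0$ uniformly over $\alpha\in[\alpha_0,\alpha_1]$ for $m$ large. For fixed $m$, the stochastic term is a maximum over finitely many pointwise-consistent quantities, so it tends to $0$ in probability as $n\to\infty$.

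The main obstacle is choosing $m=m(n)$, since the finite-$m$ statements must be combined into one sequence. I would use a standard diagonal argument. Let $\epsilon_m:=\max_{l,j}|q_{n,\alpha_{m,l}}(\theta_{m,j})-q_{\alpha_{m,l}}(\theta_{m,j})|$, which depends on $n$. For each $m$ there is $N_m$, chosen increasing, such that $\P[\epsilon_m>1/m]<1/m$ for all $n\ge N_m$. Set $m(n):=\max\{m: N_m\le n\}$, so $m(n)\to\infty$. For any fixed $\eta>0$ and $[\alpha_0,\alpha_1]$, once $1/m(n)<\eta$ and $\omega(3/m(n))<\eta$ we get $\P[\sup_{\alpha,\theta}|\widehat q_{n,\alpha}-q_\alpha|>2\eta]\le 1/m(n)\to0$. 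This establishes \eqref{e:unif_conv_wh_qn}, uniformly in $\alpha$ over compacts and in particular for each fixed $\alpha$.
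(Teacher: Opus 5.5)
Your proposal is correct, and its skeleton matches the paper's. The paper obtains the statement from Proposition~\ref{p:unif_cont}, which also evaluates the pointwise-consistent estimator at finitely many nodes and chooses how many by the diagonal argument of Lemma~\ref{l:diagonal}. Your choice $m(n)=\max\{m:N_m\le n\}$, with $N_m$ strictly increasing, is exactly that construction.

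The routes differ in the interpolation step and in how $\alpha$ is handled:
\begin{itemize}
\item The paper works with a single function of $\theta$. It takes the first $m_n$ points of a fixed countable dense subset of $K$, forms the discontinuous nearest-neighbour interpolant $f_n$, and restores continuity by convolving with a shrinking mollifier. Applied to each $\alpha$ separately, this does not by itself give the joint continuity on $(0,1)\times\mathbb S^{d-1}$ asserted in (i).
\item Your tensor-product interpolant (partition of unity in $\theta$, hat functions in $\alpha$) is continuous by construction and treats $(\alpha,\theta)$ jointly. It also gives uniformity over $\alpha\in[\alpha_0,\alpha_1]$, which is what Assumption~\ref{assum:conv_quantile_estim}(iii) actually needs.
\item Requiring every node that is active on $K$ to lie in $K$ is what lets you use continuity of $q$ only on $(0,1)\times K$.
\end{itemize}

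Your route also avoids a weak spot in the paper's smoothing step. There, $\wt g_n(\theta)=\int_K f_n(u)\varphi_n(\theta-u)\,du$ is taken against Lebesgue measure on $\R^d$, normalised by $\epsilon_n^{-1}$ rather than $\epsilon_n^{-d}$. Read literally, this integral vanishes when $K$ is a subset of the sphere. In general its total mass is not one near $\partial K$. It therefore needs repair, for example by extending $f_n$ to $\R^d$ by nearest neighbours and convolving over all of $\R^d$.

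In return, the paper's version is more economical and general. It uses one nested node sequence for an arbitrary compact $K\subset\R^d$, and $m_n$ depends only on the pointwise rates (Remark~\ref{rem:m(n)-explicit}). Your nets change with $m$. That is harmless, because the diagonal argument only needs a finite node set at each level.
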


\begin{proof}
The result follows directly from Proposition~\ref{p:unif_cont}.
\end{proof}

An explicit construction of the estimator $\widehat q_{n,\alpha}(\cdot)$ is provided in the proof 
of Proposition~\ref{p:unif_cont}. Note that the sequence $m(n)$ therein is not very explicit
since one only requires the pointwise consistency of $q_{n,\alpha}(\theta)$.  If one has more 
information on the rate of this consistency, then a conservative choice of $m(n)$ can be obtained. 
For more details, see Remark \ref{rem:m(n)-explicit}.

\begin{remark}
As it is typical in the PoT framework (cf Section \ref{sec:PoT-and-Contiguity}), 
the formal consistency result for $\wh q_\alpha(\cdot)$ to the asymptotic quantity 
$q_\alpha(\cdot)$, would require taking the threshold $r=r(n)\to\infty$ so that $n/r(n) \to \infty$. 

On the other hand, although we are dealing with extreme event prediction, the value of 
$\alpha$ is not necessarily extremely small or large so the quantile regression inference 
methodology  is conventional.
\end{remark}

\subsection{Examples and simulations}\label{sec:numerical-examples}

     In this section, we illustrate the performance of our optimal homogeneous predictors that
     were {\em estimated} from data in comparison with the oracle predictors in several simulated models. 
     We implemented the peaks-over-threshold methodology described in Section \ref{sec:setup}.
     While the quantile $(\theta,\alpha)\mapsto q_\alpha(\theta)$ involved therein can be estimated 
     using a variety of  methods, we opted for the versatile nonparametric method of \cite{meinshausen2006quantile} 
     known as {\em quantile regression forest}. This method works remarkably well in multiple dimensions and is easy 
     to tune. Specifically, we implemented our estimator in R \citep[][]{optXpred:2026,optXpred_shiny:2026} using a 
     state-of-the-art computationally and memory efficient implementation of quantile regression forests 
     in the R package {\tt ranger} by \cite{wright:ziegler:2017}. For more details, see Section \ref{sec:algorithm}.

We present two types of models: spectrally discrete and spectrally continuous ones, that illustrate and the general case 
described in Section~\ref{sec:inference}. For more details and proofs, see Section~\ref{sec:explicit_examples}.\\

{\bf The spectrally discrete case.} Consider the linear factor model
\begin{equation}\label{e:linear-model-preview}
  Y = \sum_{i=1}^{p} b_i \xi_i \quad \text{and} \quad X = \sum_{i=1}^{r} a_i \xi_i,\ \ (r\le p),
\end{equation}
where $\xi_1,\ldots,\xi_p$ are independent standard $1$-Pareto random variables, viewed as {\em latent factors};
$b_i \ge 0$, and $a_i \in \mathbb{R}^d$.  Then, vector $Z=(Y,X)$ is regularly varying
with a \emph{discrete} angular measure concentrated on the $p$ directions
$c_i/\|c_i\|_1$, where $c_i=(b_i,a_i)$ (see Proposition~\ref{p:spec-discrete-lambda}
in Section~\ref{sec:spec-discr_mod}).

\begin{remark} The latent factor model in \eqref{e:linear-model-preview} includes as a special case
the linear model of the form $Y = X^\top \beta + \epsilon$, where $X$ and $\epsilon \sim {\rm Pareto}(1)$ 
are independent (with $X$ as in \eqref{e:linear-model-preview}).  The latent factor formulation, however, 
is in general much richer than the simple linear model. In fact, as $p\to\infty$, the angular measures of 
such spectrally discrete models can be shown to be dense in the space of all possible angular measures for 
jointly regularly varying $(Y,X)$.
\end{remark}

\begin{remark} The linear factor models in \eqref{e:linear-model-preview} are just one instance of models
with discrete spectra. Others such as max-linear or generalized Breiman-type models can have such 
spectra and Proposition \ref{p:spec-discrete-lambda} and Corollary \ref{c:spec-discrete} hold for them.
\end{remark}

Under mild non-proportionality assumptions on the $a_i$'s
(Corollary~\ref{c:spec-discrete}), the optimal asymptotic precision takes the
explicit form
\begin{equation}\label{e:lambda-opt-discrete-preview}
  \lambda^{(\mathrm{opt})}_{\mathcal{G}}(Y,X)
  = \frac{\sum_{i=1}^{r} b_i}{\sum_{i=1}^{p} b_i},
\end{equation}
where $r \le p$ is the number of factors with $a_i \ne 0$.

\begin{figure}[t!]
  \centering
  \includegraphics[width=0.48\textwidth]{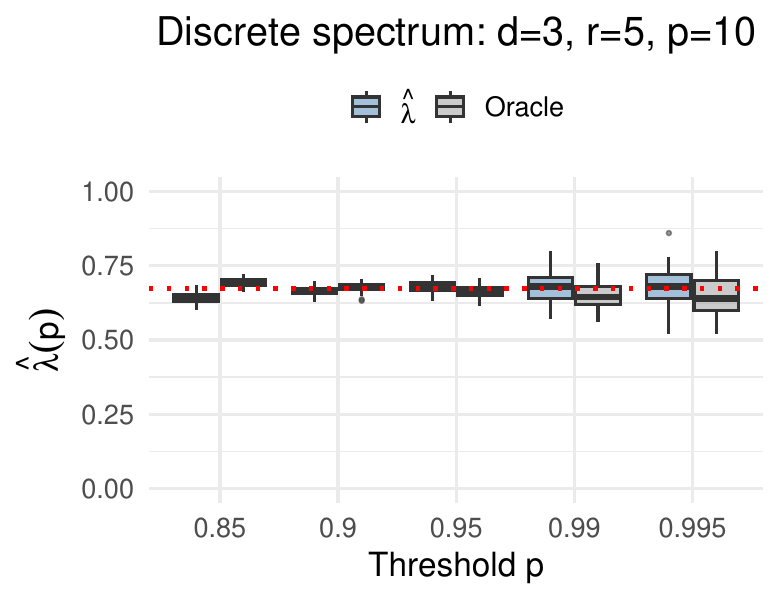}
  \hfill
  \includegraphics[width=0.48\textwidth]{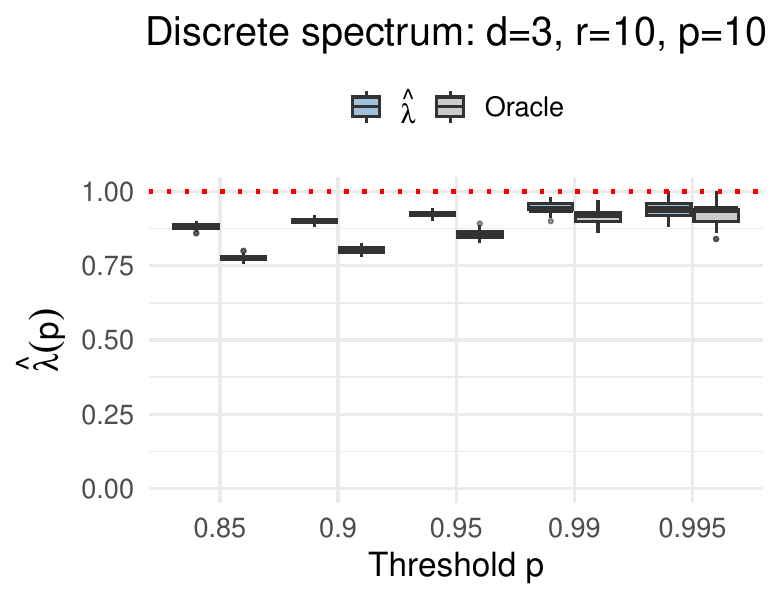}
  \caption{Pairs of boxplots of the empirical tail dependence coefficient $\wh{\lambda}(p)$ of the
  {\em estimated predictor} (left) and the oracle predictor (right) for several thresholds $p \in\{0.85, 0.9, 0.95, 0.99, 0.995\}$, based
  on $100$ independent replications of a spectrally discrete model. 
{\em Left panel:} Unobserved covariates present ($r=5<p=10$). {\em Right panel:} A perfect 
  asymptotic precision scenario ($r=p=10$). The dashed horizponline marks the optimal 
  asymptotic precision for the class of homogeneous predictors $\lambda_{\cal H}^{\mathrm{opt}}$.}
  \label{fig:boxplot_discrete}
\end{figure}

\begin{remark}[perfect asymptotic precision]
Relation \eqref{e:lambda-opt-discrete-preview} reveals a remarkable \emph{perfect asymptotic precision} 
phenomenon.  If all $p$ latent factors contribute to $X$ $(r=p)$ and no two directions 
$\{a_i/\|a_i\|_1,\ i=1,\cdots,p\}$ coincide, then \eqref{e:lambda-opt-discrete-preview} implies  $\lambda^{(\mathrm{opt})}_{\mathcal{G}}(Y,X) = 1.$ This means that, asymptotically, one can predict the 
extremes of $Y$ with {\em perfect precision}!  This surprising phenomenon can be intuitively explained 
via the \emph{single large jump heuristic} as follows. Conditionally on $\{\|X\|_1>t\}$, as 
$t\to\infty$, one and only one of the factors $\xi_i$ will dominate in the 
sum \eqref{e:linear-model-preview}. Thus, the direction $X/\|X\|_1$ identifies, asymptotically with probability 
one, the unique factor $\xi_i$ responsible for the extreme event, which in turn determines $Y$ exactly.  For more
details, see Section \ref{sec:spec-discr_mod}.
\end{remark}

Given the $b_i$'s and $a_i$'s, one can construct a continuous, non-negative, $1$-homogeneous function 
$h^{(\mathrm{opt})}$ achieving \eqref{e:lambda-opt-discrete-preview}, which we refer to as the {\em oracle predictor}
(see \eqref{e:h-opt-perfect}). In practice the $b_i$'s and $a_i$'s are unknown.  Nevertheless,
the spectral measure of $(Y,X)$ in \eqref{e:p:linear-model} can be estimated consistently
from data by using the peaks-over-threshold methodology combined with a non-parametric estimator of the quantiles
of $q_\alpha(\theta)$ the tilted conditional angular distribution (cf Algorithm \ref{algo:g_opt_hat}).

Figure~\ref{fig:boxplot_discrete} illustrates the finite-sample behavior of our nonparametric estimator 
across repeated experiments: The left panel corresponds to the under-determined case
$r < p$, where \eqref{e:lambda-opt-discrete-preview} is strictly less than one,
while the right panel shows the perfect precision case $r = p$, where 
$\lambda^{(\mathrm{opt})}_{\mathcal{G}} = 1$. The estimators are based on a training sample $\{(Y_i,X_i)\}$ 
of $n_{\rm train}= 10^4$ using a random threshold based on the $0.95$th empirical quantile. 
The empirical precision is then computed over an independently generated 
testing sample $\{(Y_i^*,X_i^*)\}_{i=1}^{n_{\rm test}}$ of size $n_{\rm test}=10^4$, where
\begin{equation}\label{e:wh-lambda(p)}
\wh \lambda(p):= \frac{1}{1-p} \sum_{i=1}^{n_{\rm test}} I\{ \wh g(X_i^*) > \wh F_{\wh g}(p) \} 
I\{ Y_i^* > \wh F_{Y}(p) \},\ \ p\in (0,1),
\end{equation}
where $\wh F_{\wh g}$ and $\wh F_Y$ are the empirical CDFs of $\{\wh g(X_i^*)\}$ and $\{Y_i^*\}$, 
respectively.  The boxplots are based on $100$ independent replications of the $\wh \lambda(p)$'s.
The coefficients $b_i$ and the components of the vectors $a_{i}$ in \eqref{e:linear-model-preview}
were fixed across different replications, but drawn independently and at random from the 
Uniform$(0,1)$ distribution, therefore ensuring that $a_i/\|a_i\| \not = a_j/\|a_j\|$, for all $
i\not = j$, with probability one.

Figure \ref{fig:boxplot_discrete} shows a close agreement of the estimated optimal 
homogeneous predictor and the oracle, and both are rather close to the optimal homogeneous extremal 
precision (dashed horizontal line).  It is remarkable that the empirical tail dependence coefficient 
is nearly one for thresholds $p$ as low as $0.85$ in the asymptotically perfect precision scenario.\\

{\bf The Pareto-Dirichlet case.} As an illustration of the spectrally \emph{continuous} setting, consider
$Z = (Y,X) = \xi\cdot(V,W)$, where $\xi$ is a standard $1$-Pareto random variable
independent of $(V,W) \sim \mathrm{Dirichlet}(\beta)$ with parameter
$\beta = (\beta_0,\beta_1,\ldots,\beta_d) \in (0,\infty)^{d+1}$.  We call this
the \emph{Pareto-Dirichlet}$(\beta)$ model (cf Proposition \ref{p:Pareto-Dirichlet}).  
It is $\tau$-regularly varying with respect to the radial function $\tau(y,x) = |y| + \|x\|_1$, 
and its angular measure is the Dirichlet distribution on the unit simplex $\Delta_d$.

The neutrality property of the Dirichlet distribution \citep[][]{ConnorMosimann1969} implies that
$U := V$ and $\Theta := W/\|W\|_1$ are independent, so the conditional
quantile $q_\alpha(\theta)$ in Theorem~\ref{thm:final} is constant in $\theta$.
This yields a remarkably simple optimal predictor
(Proposition~\ref{p:Pareto-Dirichlet} in Section~\ref{sec:explicit_examples}):
\begin{equation}\label{e:h-opt-Dirichlet-preview}
  h^{(\mathrm{opt})}(X) = c\,\|X\|_1,
  \qquad c := \frac{\beta_0}{\sum_{i=1}^{d}\beta_i},
\end{equation}
with optimal precision
\begin{equation}\label{e:lambda-opt-Dirichlet-preview}
  \lambda^{(\mathrm{opt})}_{\mathcal{G}}(Y,X)
  = \mathbb{E}\!\left[\frac{U}{\mu_U} \wedge \frac{1-U}{1-\mu_U}\right],
  \qquad U \sim \mathrm{Beta}\!\left(\beta_0,\,\textstyle\sum_{i=1}^d \beta_i\right),
  \quad \mu_U = \mathbb{E}[U].
\end{equation}
Moreover, $\lambda^{(\mathrm{opt})}_{\mathcal{G}}(Y,X) = \lambda_p(Y, h^{(\mathrm{opt})}(X))$
for all $p > p_0 := \mu_U \vee (1-\mu_U)$, so the precision is \emph{exactly constant} 
beyond a threshold.  This is clearly observed in the right panel of
Figure~\ref{fig:boxplot_spec_continuous}, where the empirical tail-dependence coefficient
$\hat\lambda_p$ boxplots for both the oracle and the nonparametric estimator plateau
at $\lambda^{(\mathrm{opt})}_{\mathcal{G}}$ across a wide range of $p$ values (see also 
Figure~\ref{fig:Spectrally Dirichlet model precision}).

\begin{figure}[t!]
  \centering
  \includegraphics[width=0.48\textwidth]{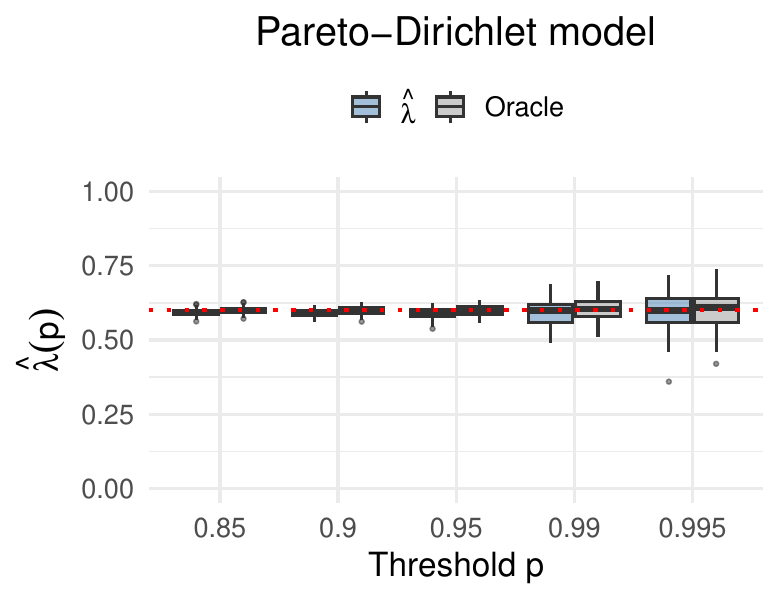}
  \hfill
  \includegraphics[width=0.48\textwidth]{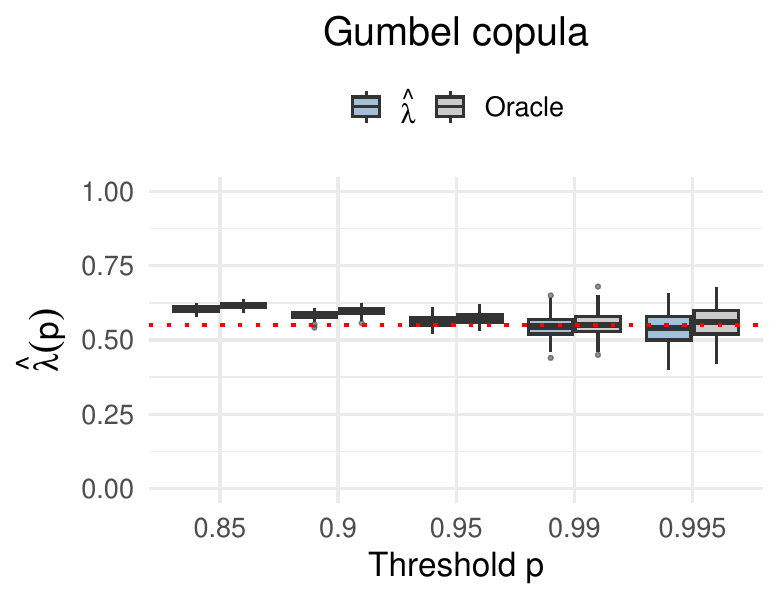}
  \caption{Pairs of boxplots of the empirical tail dependence coefficient 
  $\wh{\lambda}(p)$ of the {\em estimated predictor} (left) and the oracle predictor 
  (right) for several thresholds $p$, based on $100$ independent replications 
  for two spectrally continuous models. 
  {\em Left panel:} Pareto--Dirichlet spectral model ($d=9$).
  {\em Right panel:} Gumbel copula model ($\beta=1.5$, $d=10$).
   The horizontal dotted red line marks the theoretically optimal asymptotic precision
   $\lambda_{\cal H}^{\mathrm{opt}}$ in each case.}
  \label{fig:boxplot_spec_continuous}
\end{figure}

Figure \ref{fig:boxplot_spec_continuous} (left panel) shows boxplots of the empirical tail-dependence
coefficients as in \eqref{e:wh-lambda(p)} based on $100$ replications of an estimated optimal predictor
and the corresponding oracle. As in Figure \ref{fig:boxplot_discrete}, we have $n_{\rm train} = n_{\rm test} = 10^4$ 
and a random peaks-over-threshold of the $0.95$-th percentile.  The Pareto-Dirichlet model involves 
coefficients $\beta_0 = 1$ and $\beta_i = (i+1)/10,\ i=1,\cdots,9$.  As in the spectrally discrete case,
we have a remarkably close agreement in the precision of the estimated predictor and the oracle, which in this case is 
simply proportional to $\|X\|_1$. Both agree closely with the asymptotically optimal precision as indicated 
above.

The right panel of Figure \ref{fig:boxplot_spec_continuous} shows the same type of experiment where
$(Y,X)$ come from a Gumbel copula, where $d=10$.  By Corollary \ref{c:logistic}, in this case the optimal 
homogeneous predictor is in fact the ultimate optimal predictor for all values of $p\in (0,1)$.  In this scenario,
the non-parametric estimator is remarkably close to the oracle.

These brief simulation results illustrate the success of the optimal homogeneous prediction 
methodology. The quantile regression forest based implementation is nearly tuning--free and quite adaptive -- working 
well in both spectrally discrete and continuous settings.  We have implemented this methodology in the context of 
the challenging open problem of Solar flare prediction, where we found it to work out-of-the-box and provide 
state-of-the-art prediction scores for the most extreme X-class flares (cf Figure \ref{fig:solar-flare-prediction}).
We plan to pursue more applied and methodological directions of this research in a future work, but the preliminary 
findings are available in Section \ref{sec:solar-flares}. The code used to produce all simulation in the paper is freely 
available from the GitHub repository \cite{optXpred:2026}. An R Shiny app illustrating the methodology over a 
challenging Solar flare prediction problem is deployed on \cite{optXpred_shiny:2026}.


\section{Discussion and future work}
In this paper, we considered the fundamental problem of optimal extreme event prediction 
in terms of covariates. Starting from a general Neyman-Pearson-type characterization of 
the optimal predictors (Theorem \ref{thm:general-opt}), we formulated an optimality framework that is 
closely related to the notion of tail-dependence.  This naturally led us to explore the problem of 
optimal extremal prediction
in the context of jointly regularly varying response and covariates.  For the general case where the
predictors are positive homogeneous functions of the covariates, the problem was cast into a
variational problem -- a constrained optimization problem for a convex integral functional 
with respect to the angular measure.  Our first main contribution was an explicit solution to this 
problem (Theorem \ref{thm:final}), which can be represented in terms of the quantiles of a certain
tilted conditional distribution arising from the angular measure.  This abstract result let us to 
concrete and yet general inference methodology for the optimal homogeneous predictors. Our second
contribution was to establish the consistency for estimators of these optimal predictors in the framework
of peaks-over-threshold (Theorem \ref{thm:univ_consitency}).  This result, modulo mild regularity conditions,
may be viewed as an extremal counterpart to the celebrated Stone's Universal Consistency Theorems for 
the class of homogeneous predictors.  Last but not least, using established quantile regression forests 
tools, the estimators were implemented into a broadly applicable and nearly tuning free statistical 
software, which was shown to achieve the oracle performance over simulated examples.

The problem of extreme event prediction is fundamental and ubiquitous.  It can be seen as a series of
increasingly {\em imbalanced classification} problems, which is rather challenging. Notably, \cite{JSsS2018} 
have identified the shortcomings of the traditional empirical risk minimization approach and proposed a 
solution leading to a rapidly growing area of research \citep[see, e.g., Section 3 in][]{clemencon2026weak}.  
While motivated by similar problems, our approach is different since we focus on characterizing 
the optimal predictors {\em first} and {\em then} developing inference for them. By conditioning on the 
regime of extremes, the loss function is directly related to tail-dependence, which allowed us to take a 
different look through the lens of calculus of variations, and characterize the optimal homogeneous predictors.  

We provide a rather complete solution to the optimal extremal prediction with homogeneous functions in the
context of jointly regularly varying $Y$ and $X$.  While in many cases the  class of homogeneous predictors 
do contain the optimal extremal predictors (cf Example \ref{ex:linear-paper} and Corollary \ref{c:logistic}), 
this is not always the case.  Indeed, as shown in Example \ref{ex:suboptimality}, even under joint regular 
variation, the `extremes' of $X$ do not necessarily predict the extremes of $Y$.  To the  best of our knowledge, 
the problem of finding  {\em universally consistent predictors} in the extreme value sense, remains open both 
in terms of its theoretical formulation and solution.  

Our study was originally motivated by the scientific problem of extreme solar flare prediction 
\citep[][]{verma:stoev:chen:2024}.  In such contexts, it is natural to ask what is the ultimate optimal precision 
that {\em any estimator} can achieve in predicting a physical event using a certain set of covariates.  While far 
from a complete answer to this question, our study makes a step in this direction.  A natural follow-up step is to 
study the generalization error as well as statistical error \citep[e.g. as in][]{Legrand03072025} under perhaps 
second-order joint regular variation conditions on $Y$ and $X$.

\section*{Acknowledgments}
We thank Victor Verma for providing curated the GOES flux time series and SHARP parameter data for the Solar flare 
prediction experiment in Section \ref{sec:solar-flares}.  SS was partially supported by the 
NSF grant CNS/CSE-2319592 ``Collaborative Research: IMR: MM-1A: Scalable Statistical Methodology for 
Performance Monitoring, Anomaly Identification, and Mapping Network Accessibility from Active Measurements''.

\bibliographystyle{agsm}
\bibliography{bibliography}

@article{bobra2015sola,
	author = {M. G. Bobra and S. Couvidat},
	doi = {10.1088/0004-637X/798/2/135},
	journal = {The Astrophysical Journal},
	month = {1},
	number = {2},
	pages = {135},
	publisher = {The American Astronomical Society},
	title = {SOLAR FLARE PREDICTION USING {SDO}/{HMI} VECTOR MAGNETIC FIELD DATA WITH A MACHINE-LEARNING ALGORITHM},
	url = {https://dx.doi.org/10.1088/0004-637X/798/2/135},
	volume = {798},
	year = {2015}}

@article{bobra2014theh,
	author = {Bobra, M. G. and Sun, X. and Hoeksema, J. T. and Turmon, M. and Liu, Y. and Hayashi, K. and Barnes, G. and Leka, K. D.},
	date = {2014-09-01},
	doi = {10.1007/s11207-014-0529-3},
	id = {Bobra2014},
	isbn = {1573-093X},
	journal = {Solar Physics},
	number = {9},
	pages = {3549--3578},
	title = {The {H}elioseismic and {M}agnetic {I}mager ({HMI}) Vector Magnetic Field Pipeline: {SHARP}s --Space-Weather {HMI}
    Active Region Patches},
	url = {https://doi.org/10.1007/s11207-014-0529-3},
	volume = {289},
	year = {2014}}

@article{Bobra2021,
  author  = {Bobra, Monica G. and Wright, Paul J. and Turmon, Michael J.
             and Vertanen, John and Dissauer, Karin and Schrijver, Carolus J.
             and Cheung, Mark C. M. and Wheatland, Michael S.
             and Leka, K. D. and Barnes, Graham},
  title   = {{SMART}s and {SHARP}s: Two Solar Cycles of Active Region Data},
  journal = {The Astrophysical Journal Supplement Series},
  year    = {2021},
  volume  = {256},
  number  = {2},
  pages   = {26},
  doi     = {10.3847/1538-4365/ac1f1d},
}

@book{sugiyama2012dens,
	author = {Sugiyama, Masashi and Suzuki, Taiji and Kanamori, Takafumi},
	edition = {1},
	publisher = {Cambridge University Press},
	title = {Density {Ratio} {Estimation} in {Machine} {Learning}},
	year = {2012}}

@book{kulik2020heav,
	address = {New York, NY},
	author = {Kulik, Rafal and Soulier, Philippe},
	doi = {10.1007/978-1-0716-0737-4},
	isbn = {978-1-07-160735-0 978-1-07-160737-4},
	language = {en},
	publisher = {Springer},
	series = {Springer {Series} in {Operations} {Research} and {Financial} {Engineering}},
	title = {Heavy-{Tailed} {Time} {Series}},
	url = {http://link.springer.com/10.1007/978-1-0716-0737-4},
	urldate = {2023-08-10},
	year = {2020}}

@book{bingham1987regu, place={Cambridge}, series={Encyclopedia of Mathematics and its Applications}, title={Regular Variation}, DOI={10.1017/CBO9780511721434}, publisher={Cambridge University Press}, author={Bingham, N. H. and Goldie, C. M. and Teugels, J. L.}, year={1987}, collection={Encyclopedia of Mathematics and its Applications}}

@book{resnick2007heav,
  title={Heavy-Tail Phenomena: Probabilistic and Statistical Modeling},
  author={Resnick, S.I.},
  number={v. 10},
  isbn={9780387242729},
  lccn={2006934621},
  series={Heavy-tail phenomena: probabilistic and statistical modeling},
  url={https://books.google.com/books?id=p8uq2QFw9PUC},
  year={2007},
  publisher={Springer}
}

@article {mcneil:neslehova:2009,
    AUTHOR = {McNeil, Alexander J. and Ne\v{s}lehov\'{a}, Johanna},
     TITLE = {Multivariate {A}rchimedean copulas, {$d$}-monotone functions
              and {$l_1$}-norm symmetric distributions},
   JOURNAL = {Ann. Statist.},
  FJOURNAL = {The Annals of Statistics},
    VOLUME = {37},
      YEAR = {2009},
    NUMBER = {5B},
     PAGES = {3059--3097},
      ISSN = {0090-5364,2168-8966},
   MRCLASS = {62E10 (60E05 62H05 62H20)},
  MRNUMBER = {2541455},
MRREVIEWER = {Moshe\ Shaked},
       DOI = {10.1214/07-AOS556},
       URL = {https://doi.org/10.1214/07-AOS556},
}

@book{schilling2012bern,
	address = {Berlin, Boston},
	author = {Ren{\'e} L. Schilling and Renming Song and Zoran Vondracek},
	doi = {doi:10.1515/9783110269338},
	isbn = {9783110269338},
	lastchecked = {2024-04-19},
	publisher = {De Gruyter},
	title = {Bernstein Functions: Theory and Applications},
	url = {https://doi.org/10.1515/9783110269338},
	year = {2012}
}

@article{fougeres2009mode,
	author = {Foug{\`e}res, Anne-Laure and Nolan, John P. and Rootz{\'e}n, Holger},
	copyright = {{\copyright} 2008 Board of the Foundation of the Scandinavian Journal of Statistics},
	doi = {10.1111/j.1467-9469.2008.00613.x},
	issn = {1467-9469},
	journal = {Scandinavian Journal of Statistics},
	language = {en},
	note = {\_eprint: https://onlinelibrary.wiley.com/doi/pdf/10.1111/j.1467-9469.2008.00613.x},
	number = {1},
	pages = {42--59},
	title = {Models for {Dependent} {Extremes} {Using} {Stable} {Mixtures}},
	url = {https://onlinelibrary.wiley.com/doi/abs/10.1111/j.1467-9469.2008.00613.x},
	urldate = {2024-05-07},
	volume = {36},
	year = {2009}
}

@ARTICLE{scheffler:stoev:2017,
    AUTHOR = {Scheffler, Hans-Peter and Stoev, Stilian},
     TITLE = {Implicit extremes and implicit max-stable laws},
   JOURNAL = {Extremes},
  FJOURNAL = {Extremes. Statistical Theory and Applications in Science,
              Engineering and Economics},
    VOLUME = {20},
      YEAR = {2017},
    NUMBER = {2},
     PAGES = {265--299},
      ISSN = {1386-1999},
   MRCLASS = {60F05 (60G70 62G32)},
  MRNUMBER = {3638367},
MRREVIEWER = {Wieslaw Dziubdziela},
       URL = {https://doi.org/10.1007/s10687-016-0278-9},
}

@article {lindskog:resnick:roy:2014,
    AUTHOR = {Lindskog, Filip and Resnick, Sidney I. and Roy, Joyjit},
     TITLE = {Regularly varying measures on metric spaces: hidden regular
              variation and hidden jumps},
   JOURNAL = {Probab. Surv.},
  FJOURNAL = {Probability Surveys},
    VOLUME = {11},
      YEAR = {2014},
     PAGES = {270--314},
      ISSN = {1549-5787},
   MRCLASS = {60B05 (28A33 60G17 60G51 60G70)},
  MRNUMBER = {3271332},
       DOI = {10.1214/14-PS231},
       URL = {http://dx.doi.org/10.1214/14-PS231},
}

@book{resnick:2024,
	Address = {New York},
	Author = {Resnick, Sidney I.},
	Publisher = {Springer},
	Title = {The art of finding hidden risks},
	DOI = {\url{https://doi.org/10.1007/978-3-031-57599-0}},
	Note = {Hidden Regular Variation in the 21st Century},
	Year = 2024}

@misc{verma:stoev:chen:2024,
      title={On the optimal prediction of extreme events in heavy-tailed time series with applications to solar flare forecasting}, 
      author={Victor Verma and Stilian Stoev and Yang Chen},
      year={2024},
      eprint={2407.11887},
      archivePrefix={arXiv},
      primaryClass={math.ST},
      url={https://arxiv.org/abs/2407.11887}, 
}

@book{resnick:1987,
	Address = {New York},
	Author = {S. I. Resnick},
	Publisher = {Springer-Verlag},
	Title = {Extreme Values, Regular Variation and Point Processes},
	Year = {1987}}

@Article{wright:ziegler:2017,
    title = {{ranger}: A Fast Implementation of Random Forests for High
      Dimensional Data in {C++} and {R}},
    author = {Marvin N. Wright and Andreas Ziegler},
    journal = {Journal of Statistical Software},
    year = {2017},
    volume = {77},
    number = {1},
    pages = {1--17},
    doi = {10.18637/jss.v077.i01},
  }

@article{meinshausen2006quantile,
  title={Quantile Regression Forests},
  author={Meinshausen, Nicolai},
  journal={Journal of Machine Learning Research},
  volume={7},
  pages={983--999},
  year={2006},
  url={https://www.jmlr.org/papers/v7/meinshausen06a.html}
}

@Article{Dyszewski2020,
  author  = {Piotr Dyszewski and Thomas Mikosch},
  journal = {Ann. Appl. Probab.},
  title   = {Homogeneous mappings of regularly varying vectors},
  year    = {2020},
  issn    = {1050-5164},
  pages   = {2999--3026},
  volume  = {30},
  doi     = {10.1214/20-aap1579},
  url     = {\url{https://doi.org/10.1214/20-AAP1579}},
}

@article{jansen:neblung:stoev:2023,
	title = {Tail-dependence, exceedance sets, and metric embeddings},
	issn = {1572-915X},
	url = {https://doi.org/10.1007/s10687-023-00471-z},
	doi = {10.1007/s10687-023-00471-z},
	journal = {Extremes},
	author = {Janßen, Anja and Neblung, Sebastian and Stoev, Stilian},
	month = may,
	year = {2023},
}

@article {dombry:ribatet:2015,
    AUTHOR = {Dombry, Cl\'{e}ment and Ribatet, Mathieu},
     TITLE = {Functional regular variations, {P}areto processes and peaks
              over threshold},
   JOURNAL = {Stat. Interface},
  FJOURNAL = {Statistics and its Interface},
    VOLUME = {8},
      YEAR = {2015},
    NUMBER = {1},
     PAGES = {9--17},
      ISSN = {1938-7989},
   MRCLASS = {60G70 (62G05)},
  MRNUMBER = {3320385},
MRREVIEWER = {Wieslaw Dziubdziela},
       DOI = {10.4310/SII.2015.v8.n1.a2},
       URL = {https://doi.org/10.4310/SII.2015.v8.n1.a2},
}

@misc{NOAA_GOES_Xray,
  author       = {{NOAA Space Weather Prediction Center}},
  title        = {{GOES X-ray Flux Data}},
  year         = {2023},
  howpublished = {\url{https://www.swpc.noaa.gov/products/goes-x-ray-flux}},
  note         = {Accessed: 2025-06-13}
}

@misc{basrak:milincevic:molchanov:2025,
      title={Foundations of regular variation on topological spaces}, 
      author={Bojan Basrak and Nikolina Milinčević and Ilya Molchanov},
      year={2025},
      eprint={2503.00921},
      archivePrefix={arXiv},
      primaryClass={math.PR},
      url={https://arxiv.org/abs/2503.00921}, 
}

@article {sibuya:1960,
    AUTHOR = {Sibuya, Masaaki},
     TITLE = {Bivariate extreme statistics. {I}},
   JOURNAL = {Ann. Inst. Statist. Math. Tokyo},
  FJOURNAL = {Annals of the Institute of Statistical Mathematics},
    VOLUME = {11},
      YEAR = {1960},
     PAGES = {195--210},
      ISSN = {0020-3157,1572-9052},
   MRCLASS = {62.00},
  MRNUMBER = {115241},
MRREVIEWER = {W.\ Hoeffding},
       DOI = {10.1007/bf01682329},
       URL = {https://doi.org/10.1007/bf01682329},
}

@book {lecam:yang:2000,
    AUTHOR = {Le Cam, Lucien and Yang, Grace Lo},
     TITLE = {Asymptotics in Statistics},
    SERIES = {Springer Series in Statistics},
   EDITION = {Second},
      NOTE = {Some basic concepts},
 PUBLISHER = {Springer-Verlag, New York},
      YEAR = {2000},
     PAGES = {xiv+285},
      ISBN = {0-387-95036-2},
   MRCLASS = {62F12},
  MRNUMBER = {1784901},
       DOI = {10.1007/978-1-4612-1166-2},
       URL = {https://doi.org/10.1007/978-1-4612-1166-2},
}

@book {pollard:2002,
    AUTHOR = {Pollard, David},
     TITLE = {A user's guide to measure theoretic probability},
    SERIES = {Cambridge Series in Statistical and Probabilistic Mathematics},
    VOLUME = {8},
 PUBLISHER = {Cambridge University Press, Cambridge},
      YEAR = {2002},
     PAGES = {xiv+351},
      ISBN = {0-521-80242-3; 0-521-00289-3},
   MRCLASS = {60-01 (60Axx 60E05 60F15 60G42 60G44 60J65)},
  MRNUMBER = {1873379},
MRREVIEWER = {Beloslav\ Rie\v can},
}

@book{vaart:1998,
	Address = {Cambridge},
	Author = {van der Vaart, A. W.},
	Isbn = {0-521-49603-9; 0-521-78450-6},
	Mrclass = {62-02 (62E20 62F05 62F12 62G07 62G09 62G20)},
	Mrnumber = {MR1652247 (2000c:62003)},
	Mrreviewer = {Nancy Reid},
	Pages = {xvi+443},
	Publisher = {Cambridge University Press},
	Series = {Cambridge Series in Statistical and Probabilistic Mathematics},
	Title = {Asymptotic statistics},
	Volume = 3,
	Year = 1998}

@inproceedings{JSsS2018,
 author = {JALALZAI, Hamid and Cl\'{e}men\c{c}on, Stephan and Sabourin, Anne},
 booktitle = {Advances in Neural Information Processing Systems},
 editor = {S. Bengio and H. Wallach and H. Larochelle and K. Grauman and N. Cesa-Bianchi and R. Garnett},
 pages = {},
 publisher = {Curran Associates, Inc.},
 title = {On Binary Classification in Extreme Regions},
 url = {\url{https://proceedings.neurips.cc/paper_files/paper/2018/file/0ebcc77dc72360d0eb8e9504c78d38bd-Paper.pdf}},
 volume = {31},
 year = {2018}
}

@article {hult:lindskog:2006,
    AUTHOR = {Hult, Henrik and Lindskog, Filip},
     TITLE = {Regular variation for measures on metric spaces},
   JOURNAL = {Publ. Inst. Math. (Beograd) (N.S.)},
  FJOURNAL = {Institut Math\'ematique. Publications. Nouvelle S\'erie},
    VOLUME = {80(94)},
      YEAR = {2006},
     PAGES = {121--140},
      ISSN = {0350-1302},
   MRCLASS = {28A33 (28C15)},
  MRNUMBER = {2281910 (2008g:28016)},
MRREVIEWER = {Paul Raynaud de Fitte},
       DOI = {10.2298/PIM0694121H},
       URL = {http://dx.doi.org/10.2298/PIM0694121H},
}

@unpublished{meinguet:segers:2012,
   author = {Meinguet, Thomas and Segers, Johan},
   title = {Regularly varying time series in {B}anach spaces},
   note = {Preprint available from arXiv:1001.3262},
   url = {http://arxiv.org/pdf/1001.3262v1},
   year = 2012}

@book{dudley:1989,
	Author = {R. M. Dudley},
	Publisher = {Wadsworth and Brook/Cole},
	Title = {Real Analysis and Probability},
	Year = {1989}}

@book {resnick:1999book,
    AUTHOR = {Resnick, Sidney I.},
     TITLE = {A probability path},
 PUBLISHER = {Birkh\"auser Boston Inc.},
   ADDRESS = {Boston, MA},
      YEAR = {1999},
     PAGES = {xii+453},
      ISBN = {0-8176-4055-X},
   MRCLASS = {60-01},
  MRNUMBER = {1664717 (2000b:60002)},
MRREVIEWER = {Lajos Horv{\'a}th},
}

@article{EMD2022qf,
  title={Random forest estimation of conditional distribution functions and conditional quantiles},
  author={Elie-Dit-Cosaque, Kevin and Maume-Deschamps, V{\'e}ronique},
  journal={Electronic Journal of Statistics},
  volume={16},
  number={2},
  pages={6553--6583},
  year={2022},
  publisher={The Institute of Mathematical Statistics and the Bernoulli Society}
}

@article{Portnoy_Koenker_1997,
author = {Stephen Portnoy and Roger Koenker},
title = {{The Gaussian hare and the Laplacian tortoise: computability of squared-error versus absolute-error estimators}},
volume = {12},
journal = {Statistical Science},
number = {4},
publisher = {Institute of Mathematical Statistics},
pages = {279 -- 300},
year = {1997},
doi = {10.1214/ss/1030037960},
URL = {https://doi.org/10.1214/ss/1030037960}
}

@article {davis:mikosch:2008,
    AUTHOR = {Davis, Richard A. and Mikosch, Thomas},
     TITLE = {Extreme value theory for space-time processes with
              heavy-tailed distributions},
   JOURNAL = {Stochastic Process. Appl.},
  FJOURNAL = {Stochastic Processes and their Applications},
    VOLUME = {118},
      YEAR = {2008},
    NUMBER = {4},
     PAGES = {560--584},
      ISSN = {0304-4149},
     CODEN = {STOPB7},
   MRCLASS = {62G32 (62M10 62M30)},
  MRNUMBER = {2394763 (2009b:62112)},
MRREVIEWER = {El Sayed M. Nigm},
       DOI = {10.1016/j.spa.2007.06.001},
       URL = {http://dx.doi.org/10.1016/j.spa.2007.06.001},
}

@article{basrak:davis:mikosch:2002,
	Author = {Basrak, Bojan and Davis, Richard A. and Mikosch, Thomas},
	Doi = {10.1214/aoap/1031863174},
	Fjournal = {The Annals of Applied Probability},
	Issn = {1050-5164},
	Journal = {Ann. Appl. Probab.},
	Mrclass = {60E05 (60G10 60G55 60G70 62M10)},
	Mrnumber = {MR1925445 (2003h:60022)},
	Mrreviewer = {M. Sreehari},
	Number = 3,
	Pages = {908--920},
	Title = {A characterization of multivariate regular variation},
	Url = {http://dx.doi.org/10.1214/aoap/1031863174},
	Volume = 12,
	Year = 2002}

@misc{optXpred:2026,
  author       = {Stoev, Stilian},
  title        = {opt{X}pred: R code for optimal extreme event prediction via homogoneous functions},
  year         = {2026},
  publisher    = {GitHub},
  version      = {v1.0.0},
  url          = {https://github.com/cctoeb/optXpred.git},
  note         = {Private.  Ask the author for access. Provides R code and a Shiny app for 
  optimal extreme event prediction via homogoneous functions}
}

@misc{optXpred_shiny:2026,
  author       = {Stoev, Stilian},
  title        = {An {R} {S}hiny app illustrating the opt{X}pred software for optimal extreme 
  event prediction via homogeneous functions},
  year         = {2026},
  publisher    = {},
  version      = {v1.0.0},
  url          = {https://rada.stat.lsa.umich.edu/shiny/sstoev/optXpred/}
}

@article{BoldiDavison2007,
  author = {Boldi, Marc-Olivier and Davison, Anthony C.},
  title = {A Mixture Model for Multivariate Extremes},
  journal = {Journal of the Royal Statistical Society: Series B (Statistical Methodology)},
  volume = {69},
  number = {2},
  pages = {217--229},
  year = {2007},
  doi = {10.1111/j.1467-9868.2007.00585.x}
}

@article{SabourinNaveau2014,
  author = {Sabourin, Anne and Naveau, Philippe},
  title = {Bayesian {D}irichlet Mixture Model for Multivariate Extremes: A Re-parametrization},
  journal = {Computational Statistics \& Data Analysis},
  volume = {71},
  pages = {542--567},
  year = {2014},
  doi = {10.1016/j.csda.2013.04.021}
}

@article{CorradiniStrokorb2024,
  author = {Corradini, Michela and Strokorb, Kirstin},
  title = {Stochastic Ordering in Multivariate Extremes},
  journal = {Extremes},
  volume = {27},
  pages = {357--396},
  year = {2024},
  doi = {10.1007/s10687-024-00486-0}
}

@book{devroye1996probabilistic,
  title     = {A Probabilistic Theory of Pattern Recognition},
  author    = {Devroye, Luc and Gy{\"o}rfi, L{\'a}szl{\'o} and Lugosi, G{\'a}bor},
  year      = {1996},
  publisher = {Springer},
  address   = {New York},
  series    = {Stochastic Modelling and Applied Probability},
  volume    = {31},
  isbn      = {978-0-387-94618-4}
}

@article{bobbia:dombry:varron:2025,
author = {Bobbia, B. and Dombry, C. and Varron, D.},
title = {A Donsker and Glivenko-Cantelli theorem for random measures linked to extreme value theory},
journal = {Scandinavian Journal of Statistics},
volume = {52},
number = {4},
pages = {1708-1734},
doi = {https://doi.org/10.1111/sjos.70007},
url = {https://onlinelibrary.wiley.com/doi/abs/10.1111/sjos.70007},
eprint = {https://onlinelibrary.wiley.com/doi/pdf/10.1111/sjos.70007},
year = {2025}
}

@article{ConnorMosimann1969,
  author  = {Connor, Robert J. and Mosimann, James E.},
  title   = {Concepts of Independence for Proportions with a Generalization
             of the {D}irichlet Distribution},
  journal = {Journal of the American Statistical Association},
  year    = {1969},
  volume  = {64},
  number  = {325},
  pages   = {194--206}
}

@book{samorodnitsky:taqqu:1994book,
	Address = {New York, London},
	Author = {G. Samorodnitsky and M. S. Taqqu},
	Publisher = {Chapman and Hall},
	Title = {{\it {S}table {N}on-{G}aussian {P}rocesses: {S}tochastic {M}odels with {I}nfinite {V}ariance}},
	Year = {1994}}

@book {beirlant:goegebeur:teugels:segers:2004,
    AUTHOR = {Beirlant, Jan and Goegebeur, Yuri and Teugels, Jozef and
              Segers, Johan},
     TITLE = {Statistics of extremes},
    SERIES = {Wiley Series in Probability and Statistics},
      NOTE = {Theory and applications,
              With contributions from Daniel De Waal and Chris Ferro},
 PUBLISHER = {John Wiley \& Sons, Ltd., Chichester},
      YEAR = {2004},
     PAGES = {xiv+490},
      ISBN = {0-471-97647-4},
   MRCLASS = {62-01 (60G70 62G32)},
  MRNUMBER = {2108013 (2005j:62002)},
MRREVIEWER = {L{\'a}szl{\'o} Viharos},
       DOI = {10.1002/0470012382},
       URL = {http://dx.doi.org/10.1002/0470012382},
}

@book{joe2015dependence,
  author    = {Joe, Harry},
  title     = {Dependence Modeling with Copulas},
  series    = {Chapman \& Hall/CRC Monographs on Statistics \& Applied Probability},
  publisher = {CRC Press},
  year      = {2015},
  address   = {Boca Raton, FL},
  pages     = {480},
  isbn      = {978-1-4665-8322-1},
}

@article{yu1998local,
  author    = {Yu, Keming and Jones, M. C.},
  title     = {Local Linear Quantile Regression},
  journal   = {Journal of the American Statistical Association},
  year      = {1998},
  volume    = {93},
  number    = {441},
  pages     = {228--237},
  doi       = {10.1080/01621459.1998.10474104}
}

@book{fan1996local,
  author    = {Fan, Jianqing and Gijbels, Ir{\`e}ne},
  title     = {Local Polynomial Modelling and Its Applications},
  publisher = {Chapman \& Hall},
  address   = {London},
  year      = {1996},
  series    = {Monographs on Statistics and Applied Probability},
  volume    = {66}
}

@book{Tsybakov2009,
      author        = "Tsybakov, Alexandre B",
      title         = "{Introduction to Nonparametric Estimation}",
      publisher     = "Springer",
      address       = "Dordrecht",
      series        = "Springer series in statistics",
      year          = "2009",
      url           = "https://cds.cern.ch/record/1315296",
      doi           = "10.1007/b13794",
}

@article{rigollet2011neyman,
  author  = {Rigollet, Philippe and Tong, Xin},
  title   = {Neyman--{P}earson Classification, Convexity and
             Stochastic Constraints},
  journal = {Journal of Machine Learning Research},
  year    = {2011},
  volume  = {12},
  pages   = {2831--2855},
  url     = {https://jmlr.org/papers/v12/rigollet11a.html}
}

@article{valavi2022predictive,
  author  = {Valavi, Roozbeh and Guillera-Arroita, Gurutzeta and
             Lahoz-Monfort, Jos\'{e} J. and Elith, Jane},
  title   = {Predictive Performance of Presence-Only Species
             Distribution Models: A Benchmark Study with
             Reproducible Code},
  journal = {Ecological Monographs},
  year    = {2022},
  volume  = {92},
  number  = {1},
  pages   = {e01486},
  doi     = {10.1002/ecm.1486}
}

@inproceedings{irvin2019chexpert,
  author    = {Irvin, Jeremy and Rajpurkar, Pranav and Ko, Michael and
               Yu, Yifan and Ciurea-Ilcus, Silviana and Chute, Chris and
               Marklund, Henrik and Haghgoo, Behzad and Ball, Robyn and
               Shpanskaya, Katie and Lungren, Matthew P. and Ng, Andrew Y.},
  title     = {{CheXpert}: A Large Chest Radiograph Dataset with Uncertainty
               Labels and Expert Comparison},
  booktitle = {Proceedings of the AAAI Conference on Artificial Intelligence},
  year      = {2019},
  volume    = {33},
  pages     = {590--597},
  doi       = {10.1609/aaai.v33i01.3301590}
}

@article{tong2013plugin,
  author  = {Tong, Xin},
  title   = {A Plug-in Approach to {N}eyman--{P}earson Classification},
  journal = {Journal of Machine Learning Research},
  year    = {2013},
  volume  = {14},
  number  = {92},
  pages   = {3011--3040},
  url     = {https://jmlr.org/papers/v14/tong13a.html}
}

@article{tong2020parametrics,
  author  = {Tong, Xin and Xia, Lucy and Wang, Jiacheng
             and Feng, Yang},
  title   = {Neyman--{P}earson Classification: Parametrics
             and Sample Size Requirement},
  journal = {Journal of Machine Learning Research},
  year    = {2020},
  volume  = {21},
  number  = {12},
  pages   = {1--48},
  url     = {https://jmlr.org/papers/v21/18-577.html}
}

@article{nguyen2010estimating,
  author  = {Nguyen, XuanLong and Wainwright, Martin J.
             and Jordan, Michael I.},
  title   = {Estimating Divergence Functionals and the Likelihood
             Ratio by Convex Risk Minimization},
  journal = {IEEE Transactions on Information Theory},
  year    = {2010},
  volume  = {56},
  number  = {11},
  pages   = {5847--5861},
  doi     = {10.1109/TIT.2010.2068870}
}

@article{clemencon2025regression,
  author  = {Cl\'{e}men\c{c}on, St\'{e}phan and Huet, Nathan
             and Sabourin, Anne},
  title   = {On Regression in Extreme Regions},
  journal = {Electronic Journal of Statistics},
  year    = {2025},
  volume  = {19},
  number  = {2},
  pages   = {4784--4828},
  doi     = {10.1214/25-EJS2441}
}

@article{clemencon2023concentration,
  author  = {Cl\'{e}men\c{c}on, St\'{e}phan and Jalalzai, Hamid
             and Lhaut, St\'{e}phane and Sabourin, Anne
             and Segers, Johan},
  title   = {Concentration Bounds for the Empirical Angular Measure
             with Statistical Learning Applications},
  journal = {Bernoulli},
  year    = {2023},
  volume  = {29},
  number  = {4},
  pages   = {2797--2827},
  doi     = {10.3150/22-BEJ1562}
}

@inproceedings{aghbalou2024sharp,
  author    = {Aghbalou, Anass and Portier, Fran\c{c}ois
               and Sabourin, Anne},
  title     = {Sharp Error Bounds for Imbalanced Classification:
               How Many Examples in the Minority Class?},
  booktitle = {Proceedings of The 27th International Conference
               on Artificial Intelligence and Statistics},
  year      = {2024},
  volume    = {238},
  series    = {Proceedings of Machine Learning Research},
  pages     = {838--846},
  publisher = {PMLR},
  url       = {https://proceedings.mlr.press/v238/aghbalou24a.html}
}

@article{clemencon2026weak,
  author  = {Cl\'{e}men\c{c}on, St\'{e}phan and Sabourin, Anne},
  title   = {Weak Signals and Heavy Tails: Learning Theory
             Meets Extreme Value Analysis},
  journal = {Extremes},
  year    = {2026},
  doi     = {10.1007/s10687-025-00519-8}
}

@article{Legrand03072025,
author = {Juliette Legrand and Philippe Naveau and Marco Oesting},
title = {Evaluation of Binary Classifiers for Asymptotically Dependent and Independent Extremes},
journal = {Journal of the American Statistical Association},
volume = {120},
number = {551},
pages = {1558--1568},
year = {2025},
publisher = {Taylor \& Francis},
doi = {10.1080/01621459.2025.2529024},
URL = {https://doi.org/10.1080/01621459.2025.2529024},
eprint = {https://doi.org/10.1080/01621459.2025.2529024}
}

@misc{leroux2026oodquantile,
  title         = {Out-of-Distribution Generalization of Quantile Regression
                   with Heavy Tailed Inputs: An {SVM} Approach},
  author        = {Leroux, Baptiste and Dombry, Cl{\'e}ment and Sabourin, Anne},
  year          = {2026},
  eprint        = {2606.00265},
  archivePrefix = {arXiv},
  primaryClass  = {stat.ML},
  url           = {https://arxiv.org/abs/2606.00265},
  note          = {48 pages, 5 figures}
}

@misc{decarvalho2025cascading,
  title         = {A {Kolmogorov}--{Arnold} Neural Model for Cascading Extremes},
  author        = {de Carvalho, Miguel and Ferrer, Clemente and Vallejos, Ronny},
  year          = {2025},
  eprint        = {2505.13370},
  archivePrefix = {arXiv},
  primaryClass  = {stat.ME},
  url           = {https://arxiv.org/abs/2505.13370},
}

\appendix

\section{Examples}\label{sec:examples}

\subsection{Examples of optimal predictors} \label{sec:examples-strong-optimality}

In this section, we elaborate on the closed-form optimal predictors discussed in Section \ref{sec:Neyman-Pearson}.
Specifically, we prove the stated characterizations of the optimal predictors in the bi-variate max-stable 
and Archimedean copula models with completely monotone generators.

 \subsubsection{Bivariate extreme value distributions}
 \label{sec:bivariate-max-stable}
 Suppose that $(Y,X)$ follow the a bivariate extreme value distribution 
 $G$, where without loss of generality the marginal distributions of $X$ and $Y$ are 
 standard unit Fr\'echet. That is,
 $$
  F_X(u) =  \P[X\le u] =  F_Y(u) = \P[Y\le u] = \exp\{-1/u\},\ u>0.
 $$
 It is known that \cite[see, e.g., Ch.\ 5 in][]{resnick:1987}, for all $x,y\ge 0,$
  \begin{equation}\label{e:G-EVD}
  G(x,y) = e^{-\mu(A_{x,y})},
 \end{equation}
 where $A_{x,y} = [0,\infty)^2 \setminus ([0,x]\times[0,y])$ and $\mu$ is a Borel measure on $[0,\infty)^2\setminus \{(0,0)\}$, such
 that $\mu(A_{x,y})<\infty$ provided $x>0$ or $y>0$.  It can be shown that $\mu(t A) = t^{-1} \mu(A)$, for all $t>0$ and 
 Borel sets $A\subset  [0,\infty)^2\setminus \{(0,0)\}$ and hence for all $x,y \ge 0$
 \begin{equation}\label{e:G-EVD-spectral}
  G(x,y) = \exp\Big\{ - V(1/x,1/y) \Big\},\ \ \mbox{ where }V(a,b):= \int_0^1 \max\{ w a, (1-w) b)\} \sigma(dw)
 \end{equation}
 where $\sigma$ is a finite measure on $[0,1]$ and by convention 
 $1/\infty = 0$ and $1/0 = \infty$.   The function $V$ is referred to as the 
 stable tail-dependence function of the bivariate extreme value distribution.

 The following result is rather natural and it shows that the events $\{X>x_0\}$ are optimal predictors of $\{Y>y_0\}$.
 
 \begin{proposition}\label{p:max-stable} Let $(X,Y)$ be a bivariate max-stable random vector with joint distribution function $G$ as in \eqref{e:G-EVD}.
 Suppose that $G(x,y)$ is continuously differentiable in $x$ and $y$, so that $g(x,y):= \partial^2_{x,y}G(x,y)$ is the joint density of $(X,Y)$ 
 with respect to the Lebesgue measure. Then, for all $y_0>0$, the predictor $\{X> F_X^{-1}(q)\}$ is an optimal predictor of $\{Y>y_0\}$, calibrated 
 at level $q$.
 \end{proposition}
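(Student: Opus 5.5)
By Theorem \ref{thm:general-opt}, it suffices to show that the density ratio $r(x)=f_0(x)/f_1(x)$ of $X|\{Y>y_0\}$ to $X|\{Y\le y_0\}$ is a non-decreasing function of $x$. Then $\{r(X)>F^{\leftarrow}_{r(X)}(q)\}$ coincides, up to null sets, with an upper set $\{X>x_q\}$ of probability $1-q$. Since $F_X$ is continuous, this upper set is $\{X>F_X^{-1}(q)\}$.

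Writing $f_X$ for the Fréchet density of $X$, we have
$$
f_1(x)=\frac{\partial_x G(x,y_0)}{G(\infty,y_0)} ,\qquad f_0(x)=\frac{f_X(x)-\partial_x G(x,y_0)}{1-G(\infty,y_0)} .
$$
Hence $r(x)$ is an increasing affine function of $f_X(x)/\partial_xG(x,y_0)$. Equivalently, $r$ is non-decreasing iff
$$
x\mapsto \P[Y\le y_0\mid X=x]=\frac{\partial_x G(x,y_0)}{f_X(x)}
$$
is non-increasing.

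I will verify this monotonicity directly from \eqref{e:G-EVD-spectral}. Set $a=1/x$, $b=1/y_0$, and $\ell(a,b)=V(a,b)$, which is convex and $1$-homogeneous. Then
$$
\partial_xG(x,y_0)=G(x,y_0)\,a^2\,\partial_a V(a,b),\qquad f_X(x)=a^2e^{-a},
$$
using $V(a,0)=a$ for unit Fréchet margins. So
$$
\P[Y\le y_0\mid X=x]=\exp\{-(V(a,b)-a)\}\,\partial_aV(a,b).
$$
As $x$ increases, $a$ decreases, so I need $\phi(a):=e^{-(V(a,b)-a)}\partial_aV(a,b)$ to be non-decreasing in $a$. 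Two facts give this.

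\textbf{Second factor.} By convexity of $V$ in $a$, $\partial_aV$ is non-decreasing. Differentiability is granted by the $C^1$ assumption on $G$.

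\textbf{First factor.} We have $\partial_a(V-a)=\partial_aV-1\le 0$. Indeed, $\partial_aV(a,b)=\int w\,1\{wa>(1-w)b\}\sigma(dw)\le\int w\,\sigma(dw)=V(1,0)=1$. Hence $V(a,b)-a$ is non-increasing in $a$, so $e^{-(V-a)}$ is non-decreasing.

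The product of two non-negative non-decreasing functions is non-decreasing, so $\phi$ is non-decreasing and $r$ is non-decreasing in $x$.

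The main technical point is translating monotonicity of $r$ into the predictor form. Theorem \ref{thm:general-opt} gives optimality of $\{r(X)>F^{\leftarrow}_{r(X)}(q)\}$. If $r$ is strictly increasing, this set is exactly $\{X>F_X^{-1}(q)\}$. If $r$ has flat pieces, I will argue via the Neyman–Pearson structure used in the proof of Theorem \ref{thm:general-opt}. Any $q$-calibrated set $A$ with $\{r>c\}\subset A\subset\{r\ge c\}$ is also optimal, since its precision is computed from the same integral of $f_0$. The upper set $\{X>F_X^{-1}(q)\}$ is such a sandwich set because $r$ is monotone and $F_X$ is continuous.

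One further care point is justifying differentiation under the integral in $V$ and the a.e. identities above. I will handle this with the assumed smoothness of $G$, which makes $\partial_aV$ exist, combined with the subgradient description of a convex homogeneous function.
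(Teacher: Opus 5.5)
Your proof is correct and follows essentially the same route as the paper: reduce via Theorem~\ref{thm:general-opt} to monotonicity of $r$, write the relevant ratio as $\exp\{\pm(V(1/x,1/y_0)-V(1/x,0))\}$ times $\partial_aV(1/x,1/y_0)^{\mp1}$, and use convexity of $V$ for the second factor. The differences are minor: you work with the reciprocal $\P[Y\le y_0\mid X=x]$, which avoids dividing by $\partial_xG(x,y_0)$ when it vanishes, and you control the exponential factor via $\partial_aV\le\int w\,\sigma(dw)=1$ instead of the growth of $\mu([0,x]\times(y_0,\infty))$ in $x$. Your Neyman--Pearson sandwich treatment of flat pieces of $r$ is more careful than the paper's identification of $\{r(X)>\tau\}$ with $\{X>\tau'\}$, which is only literal when $r$ is strictly increasing (under independence $r$ is constant); however, equal precision across sandwich sets does not by itself give optimality, since Theorem~\ref{thm:general-opt} only covers $q\in F_{r(X)}(\R)$, so prove it directly from $\int(1_A-1_B)(f_0-c f_1)\,d\nu\ge 0$.
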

 \begin{proof} 
  In view of Theorem \ref{thm:general-opt}, an optimal predictor of $\{Y>y_0\}$ is of the form $\{r(X)>\tau\}$, where 
  $r(x) = f_0(x)/f_1(x)$, where $f_0$ and $f_1$ are the conditional densities of $X|\{Y>y_0\}$ and $X|\{Y\le y_0\}$, respectively. 
  Let $p:= \P[ Y>y_0]$ and observe that for all $x>0$:
 $$
  f_0(x) = \frac{1}{1-p} \int_{y_0}^\infty g(x,y) dy = \frac{1}{1-p} \Big( \partial_x G(x,\infty) - \partial_x G(x,y_0) \Big)
$$
Similarly,
$$
f_1(x) = \frac{1}{p} \int_{0}^{y_0} g(x,y) dy = \frac{1}{p} \Big( \partial_x G(x,y_0) - \partial_x G(x,0) \Big) = \frac{1}{p}  \partial_x G(x,y_0),
 $$
 where we used that $\partial_x G(x,0)  = 0$.

 This shows that 
 $$
 r(x) = \frac{f_0(x)}{f_1(x)} = \frac{p}{(1-p)}\cdot \frac{\partial_x G(x,\infty)}{ \partial_x G(x,y_0)} - \frac{p}{1-p}.
 $$
 We will show that the function 
 \begin{equation}\label{e:p:max-stable-0}
  x\mapsto \frac{\partial_x G(x,\infty)}{ \partial_x G(x,y_0)}
 \end{equation}
 is increasing in $x$.  This would complete the proof since it will show that the events $\{r(X)>\tau\}$ and $\{ X> \tau'\}$ are the same, 
 when calibrated at the same level $q$.

 To this end, since $G(x,y) = \exp\{ -V(1/x,1/y)\}$, by the chain rule, we obtain
 $$
 \partial_x G(x,y) = x^{-2} \partial_a V(1/x,1/y) e^{-V(1/x,1/y)}. 
 $$
 Note, however, $\partial_a V(a,0) = 1$ since $V(a,0) = a$.  Thus, $\partial_x G(x,\infty) = x^{-2} \exp\{ -V(1/x,0)\}$ and
 \begin{align}\label{e:p:max-stable-1}
     \frac{\partial_x G(x,\infty)}{ \partial_x G(x,y_0)} = 
     \frac{\exp\{ V(1/x,1/y_0) - V(1/x,0)\}}{\partial_a V(1/x,1/y_0)} = \frac{\exp\{ \mu(A_{x,y_0}) - \mu(A_{x,\infty}) \}} {\partial_a V(1/x,1/y_0)},
 \end{align}
 where we used the fact that $V(1/x,1/y) = \mu(A_{x,y})$ with $A_{x,y} = [0,\infty)^2 \setminus ([0,x]\times [0,y]),\ x,y\in (0,\infty)$,
 and $A_{x,\infty} = [0,\infty)^2 \setminus ([0,x]\times [0,\infty])$.  

Now, for the numerator in the right-hand side of \eqref{e:p:max-stable-1}, we obtain
$$
\exp\{ \mu(A_{x,y}) - \mu(A_{x,\infty}) \} = \exp\{ \mu( [0,x]\times (y_0,\infty) ) \},
$$
which is a monotone increasing function of $x$ since the sets $[0,x] \times (y_0,\infty)$ grow as $x$ grows.  On the other hand, $a\mapsto \partial_a V(a,b)$ is a 
monotone non-decreasing function of $a$.  Indeed, in view of \eqref{e:G-EVD-spectral}, we have that $a\mapsto V(a,b)$ is a convex function, and therefore its partial 
derivative is non-decreasing in $a$.  This implies that the denominator $x \mapsto \partial_a V(1/x,1/y_0)$ in the right-hand side of \eqref{e:p:max-stable-1} is a 
monotone non-increasing function of $x.$  This, combined with the established monotonicity of the numerator implies that the function 
in \eqref{e:p:max-stable-0} is 
monotone non-decreasing in $x$, which  completes the proof of the proposition.
\end{proof}

\begin{remark} Let $(Y,X)$ be as in Proposition \ref{p:max-stable}.  This result
implies the optimal extremal precision $\lambda^{\rm(opt)}(Y,X)$ equals the 
tail-dependence coefficient $\lambda(Y,X)$ (see also Example 
\ref{ex:bivariate-max-stable-contd}).
\end{remark}

\subsubsection{Archimedean copula: Proofs for Section \ref{sec:archimedean-statements}}
\label{sec:archimedean}

We start with the following auxiliary result.

 \begin{lemma}\label{l:Bernstein-log-convex} The positive completely monotone 
 functions are log-convex.
 \end{lemma}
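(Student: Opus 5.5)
We use the Bernstein representation \eqref{e:Bernstein}, $\psi(u)=\int_0^\infty e^{-ux}\mu(dx)$, and apply the Cauchy--Schwarz inequality to it. Fix $u,v>0$ and $s\in(0,1)$. The plan is to show
$$
\psi(su+(1-s)v)\le \psi(u)^s\psi(v)^{1-s},
$$
which is exactly convexity of $\log\psi$ on $(0,\infty)$. Positivity of $\psi$ (since $\mu\neq 0$) makes $\log\psi$ well defined.

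First we write
$$
\psi(su+(1-s)v)=\int_0^\infty \big(e^{-ux}\big)^s\big(e^{-vx}\big)^{1-s}\mu(dx).
$$
We then apply H\"older's inequality with exponents $p=1/s$ and $q=1/(1-s)$ with respect to $\mu$. This bounds the integral by
$$
\Big(\int e^{-ux}\mu(dx)\Big)^s\Big(\int e^{-vx}\mu(dx)\Big)^{1-s}=\psi(u)^s\psi(v)^{1-s}.
$$
Finiteness of these integrals is part of the representation for $u,v>0$. Taking logarithms gives convexity.

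Alternatively, for $s=1/2$ this is just Cauchy--Schwarz, i.e.\ midpoint log-convexity. Since $\psi$ is continuous, being monotone and given by a Laplace transform, midpoint convexity of $\log\psi$ upgrades to full convexity.

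The only point needing care is to invoke Bernstein's theorem (Theorem 1.4 in \cite{schilling2012bern}), so that every positive completely monotone function admits such a representation with a nonzero measure $\mu$. Once that is in place, there is no real obstacle.
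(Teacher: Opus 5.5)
Your proof is correct and is essentially the paper's argument: the paper also writes $\psi(\lambda u+(1-\lambda)v)=\int_0^\infty e^{-\lambda ux}e^{-(1-\lambda)vx}\mu(dx)$ and applies H\"older's inequality with $p=1/\lambda$, $q=1/(1-\lambda)$ to bound it by $\psi(u)^\lambda\psi(v)^{1-\lambda}$. Your midpoint-convexity alternative is a harmless aside, but note that the continuity of $\psi$ comes from the Laplace-transform representation, not from monotonicity.
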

 \begin{proof}
    Let $\psi$ be as in \eqref{e:Bernstein}. Recall that the function $\psi:(0,\infty)\to (0,\infty)$ is log-convex if $\log(\psi(\cdot))$ is convex, or, equivalently:
    \begin{align}\label{e:l:Bernstein-log-convex-0}
    \psi(\lambda u + (1-\lambda)v) \le \psi(u)^\lambda \psi(v)^{1-\lambda},
    \end{align}
    for all $u,v >0$ and $\lambda \in (0,1)$. In view of \eqref{e:Bernstein}, for all $\lambda \in (0,1)$,
    \begin{align}\label{e:l:Bernstein-log-convex}
    \psi(\lambda u + (1-\lambda)v)  &= \int_0^\infty e^{- \lambda ux } e^{-(1-\lambda)vx} \mu(dx)\nonumber\\
    & \le \Big(\int_0^\infty e^{- ux }\mu(dx)\Big)^{1/p} \Big(\int_0^\infty e^{- vx }\mu(dx)\Big)^{1/q}, 
    \end{align}
    where the last bound follows from the H\"older inequality applied to the functions
    $f(x):=e^{-\lambda ux}\in L^p(\mu(dx))$ and $g(x):= e^{-(1-\lambda)vx} \in L^q(\mu(dx))$, with  $p:=1/\lambda$ and $q:=1/(1-\lambda)$.  This completes the proof since the right-hand sides of \eqref{e:l:Bernstein-log-convex-0} and \eqref{e:l:Bernstein-log-convex} are the same.
    \end{proof}

 \begin{proposition}[Statement of Proposition \ref{p:archimedean}]\label{p:archimedean-statement} 
 Let $(Y,X_1,\cdots,X_d)$ be a random vector in $\R^{1+d}$ with continuous marginal cdfs $F_Y$ and $F_{X_i},\ i=1,\cdots,d$, respectively,
 and the Archimedean copula $C$ as in \eqref{e:archimedean}, where $\psi$ is a completely monotone
 function as in \eqref{e:Bernstein}.  Then, for all $\tau>0$, the events
 $$
  \Big\{ \psi \Big(  \psi^{-1}(F_{X_1}(X_1))+\cdots + \psi^{-1}(F_{X_d}(X_d))  \Big) > \tau \Big\}
 $$
 are optimal predictors of the events $\{Y>y_0\}$ in terms of $X$.  
 \end{proposition}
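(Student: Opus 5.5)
Working on the copula scale, we put $V:=F_Y(Y)$ and $U_i:=F_{X_i}(X_i)$. Since the marginals are continuous, $\{Y>y_0\}=\{V>v_0\}$ almost surely with $v_0:=F_Y(y_0)$. It therefore suffices to predict $\{V>v_0\}$ in terms of $U=(U_i)_{i=1}^d$. By Theorem \ref{thm:general-opt}, applied with $\nu$ equal to Lebesgue measure on $(0,1)^d$, it is enough to show that the density ratio $r(u)=f_0(u)/f_1(u)$ is a non-decreasing function of
$$
S(u):=\sum_{i=1}^d \psi^{-1}(u_i).
$$
Since $\psi$ is decreasing, $\psi(S)$ is then a monotone (reversed) function of $S$ as well. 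A monotone function of $S$ produces the same family of upper level sets as $r$. Hence every event $\{\psi(S(U))>\tau\}$ coincides with $\{r(U)>F_{r(U)}^\leftarrow(q)\}$ for the corresponding calibration level $q$, and is optimal. (Any ties are handled by noting that a monotone relabeling preserves the level sets, modulo null sets.)

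To compute the densities, we use the Marshall--Olkin frailty representation implied by \eqref{e:Bernstein}. Let $M\sim\mu$ (a probability measure, since $\psi(0+)=1$) and, conditionally on $M=m$, let the coordinates be iid with cdf $\exp\{-m\psi^{-1}(\cdot)\}$. Then
$$
\P[V\le v, U\le u]=\E\big[e^{-M(\psi^{-1}(v)+S(u))}\big]=C(v,u).
$$
Writing $\phi:=\psi^{-1}$, the joint density of $U$ restricted to $\{V\le v_0\}$ is
$$
\partial_{u_1}\cdots\partial_{u_d} C(v_0,u)=(-1)^d\psi^{(d)}\big(\phi(v_0)+S(u)\big)\prod_i\phi'(u_i),
$$
and the unrestricted density of $U$ is $(-1)^d\psi^{(d)}(S(u))\prod_i\phi'(u_i)$. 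Taking the difference and forming the ratio, the Jacobian factors cancel, and we obtain
$$
r(u)=\frac{P(V\le v_0)}{P(V>v_0)}\Big(\frac{\psi_d(S)}{\psi_d(c+S)}-1\Big),\qquad \psi_d(s):=(-1)^d\psi^{(d)}(s)=\int_0^\infty x^d e^{-sx}\mu(dx),
$$
where $c:=\phi(v_0)>0$.

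The remaining step, which I expect to be the main point, is to show that $s\mapsto \psi_d(s)/\psi_d(s+c)$ is monotone. The function $\psi_d$ is itself completely monotone, being the Laplace transform of the measure $x^d\mu(dx)$. By Lemma \ref{l:Bernstein-log-convex} it is therefore log-convex, so $\log\psi_d(s+c)-\log\psi_d(s)$ is non-decreasing in $s$. Consequently $\psi_d(S)/\psi_d(S+c)$ is non-increasing in $S$, and $r$ is a non-increasing function of $S$, i.e. a non-decreasing function of $\psi(S)$. This gives exactly the claimed form $\{\psi(S(U))>\tau\}$.

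Two technical points need care. First, we must justify differentiating under the integral and ensure that the densities are finite on $(0,1)^d$, which holds for $s>0$ by dominated convergence. Second, when $\mu$ is degenerate the ratio is constant (the independence copula). In that case every calibrated predictor is trivially optimal.
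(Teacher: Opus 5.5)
Your proposal is correct and follows essentially the paper's route: reduce to uniform marginals, apply Theorem \ref{thm:general-opt}, write the density ratio as a constant times $\psi_d(S)/\psi_d(S+\psi^{-1}(v_0))-1$, and conclude from the log-convexity of the completely monotone $\psi_d$ given by Lemma \ref{l:Bernstein-log-convex}. The paper obtains the two sub-densities by integrating the $(d+1)$-variate density in $y$ rather than by differentiating $C(v_0,\cdot)$, and the frailty representation is not needed. There are two harmless slips: the factor $(-1)^d$ in your sub-density formulas should not be there (it cancels in the ratio), and your opening sentence should say that $r$ is non-\emph{increasing} in $S$, as your own computation later shows.
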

 \begin{proof}[Proof of Proposition \ref{p:archimedean}.]
 Without loss of generality, assume that the marginals of $(Y,X)$ are uniform, i.e., the joint cdf of $(Y,X)$ is 
 the copula $C$ in \eqref{e:archimedean}.  Let for convenience $\varphi(t):=\psi^{-1}(t)$ and
 observe that the joint density of $(X,Y)$ with respect to the Lebesgue measure is:
 $$
  f_{Y,X}(y,x) = \partial_{y,x}^{d+1}C(y,x) 
  = \psi^{(d+1)}\Big(\varphi(y)+\varphi(x_1)+\cdots+\varphi(x_d) \Big)  \varphi'(y)
  \prod_{i=1}^d \varphi'(x_i),
 $$
 where $x =(x_i)_{i=1}^d \in (0,1)^d,\ y\in (0,1)$.
 Therefore, with $p:= \P[Y >y_0] \in (0,1)$, for the conditional densities $f_0$ and $f_1$
 of $X|\{Y>y_0\}$ and $X|\{Y\le y_0\}$ in Theorem \ref{thm:general-opt}, we obtain
 \begin{align}\label{p:archimedean-1}
  f_0(x) & = \frac{\prod_{i=1}^d \varphi'(x_i)}{(1-p)} \int_{y_0}^1 \psi^{(d+1)}(\varphi(x_1)+\cdots+\varphi(x_d) + \varphi(y))  \varphi'(y) dy\nonumber \\
  & = \frac{\prod_{i=1}^d \varphi'(x_i)}{(1-p)} \int_{y_0}^1 h'( c + \varphi(y)) \varphi'(y) dy,
 \end{align}
 where for clarity, we let $h(t):= \psi^{(d)}(t)$ and $c:= \varphi(x_1)+\cdots+\varphi(x_d)$.  The fundamental
 theorem of calculus applied to the integral in the right-hand side of \eqref{p:archimedean-1} 
 then implies
 \begin{align} \label{e:p:archimedean-f0}
  f_0(x) & = \frac{\prod_{i=1}^d \varphi'(x_i)}{(1-p)} \Big( h(c) - h(c+\varphi(y_0))\Big),
\end{align}
where we used that $\varphi(1) = 0$.
Similarly, for $f_1$, we obtain
\begin{align}\label{e:p:archimedean-f1}
 f_1(x) & = \frac{\prod_{i=1}^d \varphi'(x_i)}{p} \int_{0}^{y_0} h'( c + \varphi(y)) \varphi'(y) dy \nonumber\\
 & =  \frac{\prod_{i=1}^d \varphi'(x_i)}{p}  \Big(h(c+\varphi(y_0)) - h(\infty)\Big),
\end{align}
where we used that $\varphi(0) = \infty.$ In view of \eqref{e:Bernstein}, we have
$$
 h(u) = \psi^{(d)}(u) = (-1)^d \int_0^\infty e^{-ux} x^d \mu(dx)
$$
and hence $h(\infty) = \lim_{u\to\infty} h(u) = 0$.  Thus, for the ratio $r(x) = f_0(x)/f_1(x)$, in view of
\eqref{e:p:archimedean-f0} and \eqref{e:p:archimedean-f1}, we obtain:
\begin{equation}\label{e:p:archimedean-r}
r(x) = \frac{f_0(x)}{f_1(x)} \propto \frac{\psi^{(d)}( c )} { \psi^{(d)}( c  + \varphi(y_0))} -1,
\end{equation}
where recall that $c = c(x) := \psi^{-1}(x_1) +\cdots+\psi^{-1}(x_d)$.  In view of 
Theorem \ref{thm:general-opt}, the events $\{r(X)>\tau\}$ are optimal predictors of $\{Y>y_0\}$. 
Thus, in view of \eqref{e:p:archimedean-r}, to complete the proof it is enough to establish that 
the function
$$
 c := \frac{\psi^{(d)}( c )} { \psi^{(d)}( \varphi(y_0)+c )} 
$$
is decreasing in $c$.  We will show that this is the case by using the log-convexity of 
the positive completely monotone functions (Lemma \ref{l:Bernstein-log-convex}).  

Indeed, observe that 
$$
 g(c)= \frac{\psi_d ( c )} { \psi_d(  \varphi(y_0) + c  )},
$$
where 
$
\psi_d(u) := (-1)^d \psi^{(d)}(u) = \int_0^\infty e^{-ux} x^d \mu(dx).
$
Notice that the function $\psi_d$ is also completely monotone and hence it is log-convex. 

We have that
\begin{equation}\label{e:p:archimedean-1}
\frac{d}{dc}\log( g (c)) = 
\frac{\psi_d'(c)}{\psi_d(c)} - \frac{\psi_d'( \varphi(y_0)+c)}{\psi_d( \varphi(y_0)+c)}.
\end{equation}
Since $\psi_d$ is log-convex, we have that 
$$
c\mapsto \frac{\psi_d'(c)}{\psi_d(c)} = \frac{d}{dc}\log(\psi_d(c))
$$
is an increasing function.  Since $\varphi(y_0)\ge 0$, this implies that the right-hand 
side of \eqref{e:p:archimedean-1} is non-positive, $d/dc ( \log(g(c)) \le 0$ and this establishes 
that the function $c\mapsto g(c)$ is decreasing and the proof is complete.
\end{proof}

Proposition \ref{p:archimedean} implies the following result, which is of independent interest since it concerns an important 
class of multivariate max-stable models.

\begin{corollary}\label{c:logistic} Assume that $(Y,X)$ follow the multivariate 
logistic max-stable distribution in \eqref{e:logistic-old}, with parameter $\beta>1$.
Then, for all $p,q\in (0,1)$, an optimal 
predictor of $\{Y>F_Y^{-1}(p)\}$ is of the form:
$$
\{h(X) > F_{h(X)}^{-1}(q)\},\ \ \mbox{ where }\ h(X) =\Big(\sum_{i=1}^d \frac{1}{X_i^\beta} \Big)^{-1/\beta}.
$$

Moreover, the optimal extremal precision is given by:
\begin{equation}\label{e:c:logistic}
\lambda^{\rm (opt)}(Y,X) = \lambda(Y,h(X)) = \E\Big[ \min\Big\{ \Gamma_1^{-1/\beta}/c_{1,\beta},\ \Gamma_d^{-1/\beta}/c_{d,\beta} \Big\} \Big], 
\end{equation}
where $\Gamma_1\sim {\rm Gamma}(1,1)$ and $\Gamma_d \sim {\rm Gamma}(d,1)$ are 
independent Gamma-distributed random variables and where $c_{d,\beta} = 
\Gamma(d-1/\beta)/\Gamma(d)$.
\end{corollary}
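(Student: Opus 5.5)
The corollary has two parts: optimality of $h$ at every fixed level, and the value of the limiting precision. The first part will follow from Proposition \ref{p:archimedean}, and the second from a direct computation of the tail-dependence coefficient of $Y$ and $h(X)$.

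\emph{Optimality.} The logistic model \eqref{e:logistic-old} has standard $1$-Fr\'echet marginals, $F(x)=e^{-1/x}$, and the Gumbel copula with generator $\psi(u)=e^{-u^{1/\beta}}$. This generator is completely monotone (Example \ref{ex:gumbel}). Since $\psi^{-1}(t)=(\log(1/t))^\beta$, we get $\psi^{-1}(F_{X_i}(X_i))=X_i^{-\beta}$. Hence the statistic in Proposition \ref{p:archimedean} is
$$
\psi\Big(\sum_{i} X_i^{-\beta}\Big)=\exp\{-1/h(X)\},
$$
which is a strictly increasing transform of $h(X)$. The events $\{\exp\{-1/h(X)\}>\tau\}$ therefore coincide with $\{h(X)>F_{h(X)}^{-1}(q)\}$ at matching calibration levels, and these are optimal for every $q$. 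Optimality at each fixed $p$ passes to the limsup in Definition \ref{d:opt-pred}. It follows that $\lambda^{\rm (opt)}(Y,X)=\lambda(Y,h(X))$, provided the latter limit exists, which the computation below will show.

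\emph{Precision formula.} First, $h(X)$ has the law $\P[h(X)\le x]=\P[X\le (x,\dots,x)]=\exp\{-d^{1/\beta}/x\}$. So $h(X)$ is $1$-Fr\'echet with scale $d^{1/\beta}$, and $(Y,h(X))$ is regularly varying with index $1$. Second, since $h$ is continuous, homogeneous and bounded on the simplex, Proposition \ref{p:rv-via-h} gives
$$
\lambda(Y,h(X))=\frac{\E[Y'\wedge h(X')/d^{1/\beta}]}{\E[Y']},
$$
for a suitably normalized spectral representation $(Y',X')$. To make this concrete, I would write the logistic max-stable law through its spectral representation. Its exponent measure has spectral functions $W_j=\Gamma_j$-type variables, namely i.i.d.\ Fr\'echet$(\beta)$ components scaled by $1/\Gamma(1-1/\beta)$. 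Thus $\mu$ is the law of $r\cdot(W_0,W_1,\dots,W_d)$ under $r^{-2}dr$, with $W_j=E_j^{-1/\beta}/\Gamma(1-1/\beta)$ and $E_j$ i.i.d.\ standard exponential.

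Then
$$
\lambda(Y,h(X))=\E\Big[W_0\wedge \frac{h(W_1,\dots,W_d)}{d^{1/\beta}}\Big]\Big/\E[W_0],
$$
where $\E[W_0]=1$. Since $h(W)=(\sum_i E_i)^{-1/\beta}/\Gamma(1-1/\beta)$ and $\sum_{i=1}^d E_i=\Gamma_d$, this becomes $\E[\min\{\Gamma_1^{-1/\beta},\Gamma_d^{-1/\beta}/d^{1/\beta}\}]/\Gamma(1-1/\beta)$.

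\emph{Main obstacle: normalizing constants.} The hard part is reconciling these constants with $c_{d,\beta}=\Gamma(d-1/\beta)/\Gamma(d)$. The constant $c_{d,\beta}$ is exactly $\E[\Gamma_d^{-1/\beta}]$, so the formula says each term is normalized by its own mean. The cleaner route is therefore to calibrate via $\E[\Gamma_d^{-1/\beta}]$ rather than via $d^{1/\beta}$. This is the same thing, because the marginal normalization is $\E[h(W)]$ scaled so that the total mass matches, giving $\E[\Gamma_d^{-1/\beta}]\,\cdot$ const. I would verify that $d^{1/\beta}\Gamma(1-1/\beta)=\Gamma(d-1/\beta)/\Gamma(d)\cdot$(the same factor) is \emph{not} required. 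The correct calibration comes from Proposition \ref{p:rv-via-h}, which yields $\P[h(X)>t]\sim \E[h(W)]/t$, giving the divisor $c_{d,\beta}$; the factor $\Gamma(1-1/\beta)$ cancels against $c_{1,\beta}$.

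Carrying this out carefully, with the spectral measure normalized so that each marginal exponent is $1/t$, produces \eqref{e:c:logistic}.
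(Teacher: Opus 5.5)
\textbf{The gap is the calibration constant in the precision formula.} You claim $\P[h(X)\le x]=\P[X\le(x,\dots,x)]=\exp\{-d^{1/\beta}/x\}$, but that is the law of $\max_i X_i$, not of $h(X)$. In fact $h(X)=(\sum_i X_i^{-\beta})^{-1/\beta}\le \min_i X_i$, so $t\P[h(X)>t]\le t\P[X_1>t]\to 1<d^{1/\beta}$ for $d\ge 2$; and $h(X)$ is not even Fr\'echet when $d\ge 2$.

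Consequently two things you wrote are wrong: your first display with $h(X')/d^{1/\beta}$, and the expression $\E[\min\{\Gamma_1^{-1/\beta},\Gamma_d^{-1/\beta}/d^{1/\beta}\}]/\Gamma(1-1/\beta)$. Your later assertion that calibrating by $d^{1/\beta}$ and by $\E[\Gamma_d^{-1/\beta}]$ ``is the same thing'' is also false. For example, with $d=\beta=2$ one constant is $\sqrt2$ while the correct one is $1/2$. As written, the proposal contains contradictory claims and does not derive \eqref{e:c:logistic}.

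\textbf{The fix is the one you gesture at in your last paragraph, and it should be carried out explicitly.} Your spectral representation is correct: $\max_j \eta_j/x_j$ is $\beta$-Fr\'echet with scale $(\sum_j x_j^{-\beta})^{1/\beta}$, so $\E[\max_j W_j/x_j]=(\sum_j x_j^{-\beta})^{1/\beta}$. Applying Proposition \ref{p:rv-via-h} to $x\mapsto h(x)$ then gives $\P[h(X)>t]\sim \sigma_{d,\beta}/t$ with
$\sigma_{d,\beta}:=\E[h(W)]/\E[W_0]=\Gamma(d-1/\beta)/(\Gamma(d)\Gamma(1-1/\beta))$.
Since $\lambda(Y,h(X))=\lambda(Y,h(X)/\sigma_{d,\beta})$, Lemma \ref{lem:nonameidea} yields $\lambda=\E[W_0\wedge h(W)/\sigma_{d,\beta}]$. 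Here $W_0=\Gamma_1^{-1/\beta}/c_{1,\beta}$ and $h(W)/\sigma_{d,\beta}=\Gamma_d^{-1/\beta}/c_{d,\beta}$, which is exactly \eqref{e:c:logistic}. Nothing ``cancels'': $\Gamma(1-1/\beta)$ simply is $c_{1,\beta}$, and $\Gamma(1-1/\beta)\,\sigma_{d,\beta}=c_{d,\beta}$.

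Your optimality step via Proposition \ref{p:archimedean} is fine as written. Once repaired, your exponent-measure computation is the same calculation as the paper's, just in different language. The paper uses the mixture $(X,Y)=Z^{1/\beta}(\xi,\eta)$ with $Z$ positive $(1/\beta)$-stable, $\P[Z^{1/\beta}>x]\sim 1/(\Gamma(1-1/\beta)x)$, and Breiman's lemma to obtain the same constant $\sigma_{d,\beta}$.
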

\begin{proof}[Proof of Corollary \ref{c:logistic}]
Let $\beta >1$ and $Z$ a $1/\beta$-stable subordinator.  That is,
a positive random variable with the Laplace transform
$$
\psi_Z(t) = \E[ e^{-tZ} ] = e^{-t^{1/\beta}},\ t\ge 0.
$$
Let also $\xi_i,\ i=1,\cdots,d$ and $\eta$ be independent standard $\beta-$Fr\'echet random 
variables, i.e., such that
$$
\P[\xi_i\le x] = \P[\eta \le x] = e^{-x^{-\beta}},\ x>0.
$$
Assume that $Z$ and $\xi = (\xi_i)_{i=1}^d$ and $\eta$ are independent.
Then, the multivariate logistic max-stable random vector $(X,Y)$ in \eqref{e:logistic-old} 
has the following stochastic representation:
$$
(X,Y) = Z^{1/\beta}\cdot( \xi,\eta),
$$
where $\xi = (\xi_i)_{i=1}^d$ (see e.g., \cite{fougeres2009mode}).

Proposition \ref{p:archimedean} shows that the optimal predictor of 
$\{Y>y_0\}$ is of the form
$\{\|1/X\|_\beta^{-1}>\tau\}$, where $\|1/X\|_\beta:= (\sum_{i=1}^d X_i^{-\beta})^{1/\beta}$.
We will derive next the optimal extremal precision 
$$
\lambda^{\rm (opt)}(Y,X) = \lambda (Y, 1/\|1/X\|_\beta).
$$
We have that $1/\eta^\beta$ and $1/\xi_i^\beta =:E_i \sim {\rm Exponential}(1)$ are standard Exponentially distributed.  Therefore,
$$
1/\|1/X\|_\beta = Z^{1/\beta} \Big(\sum_{i=1}^d E_i\Big)^{-1/\beta} =: Z^{1/\beta} \frac{1}{\Gamma_d^{1/\beta}},
$$
where $\Gamma_d = E_1+\cdots+E_d\sim{\rm Gamma}(d,1)$. Similarly,
$$
Y = Z^{1/\beta} \eta =: Z^{1/\beta} \frac{1}{\Gamma_1^{1/\beta}},
$$
where $\Gamma_1:= \eta^{-\beta}\sim {\rm Gamma}(1,1)$.

Now, we have that
$$
\P[Z^{1/\beta}>x] \sim c_\beta^{-1} \frac{1}{x}, \ \ \mbox{ as }x\to\infty,
$$
where in fact $c_\beta = \Gamma(1-1/\beta)$ 
\citep[see e.g.\ Proposition 1.2.12 and Property 1.2.15 in][]{samorodnitsky:taqqu:1994book}.  Therefore, by Breiman's Lemma, we have
$$
\P[Y>x] = \P[Z^{1/\beta} \eta > x] \sim c_\beta^{-1} \frac{\E[\eta] }{x} = \frac{1}{x},\ \ x\to\infty,
$$
since $\E[\eta] = \int_0^\infty x^{1} d e^{-x^{-\beta}}=\Gamma(1-1/\beta) = c_\beta$.
By Breiman's lemma, we also have
$$
\P[1/\|1/X\|_\beta > x] = \P[ Z^{1/\beta} \Gamma_d^{-1/\beta}>x] \sim c_\beta^{-1} \E[\Gamma_d^{-1/\beta}] \frac{1}{x}
= \frac{\Gamma(d-1/\beta)}{\Gamma(d)c_\beta} \frac{1}{x} =: \sigma_{d,\beta} \frac{1}{x}.
$$
Thus, the random variable $\|1/X\|_\beta^{-1}/\sigma_{d,\beta}$ has unit asymptotic scale, i.e., it is asymptotically calibrated:
$$
\P [\|1/X\|_\beta^{-1}/\sigma_{d,\beta} > x ] \sim \P[Y> x] \sim \frac{1}{x},\ \ x\to\infty.
$$
This, implies that
$$
\P[ \|1/X\|_\beta^{-1}/\sigma_{d,\beta} \wedge Y > x] \sim  \lambda(Y, \|1/X\|_\beta^{-1}) \frac{1}{x} 
= c_\beta^{-1}
\E[ \Gamma_d^{-1/\beta}/\sigma_{d,\beta} \wedge \Gamma_1^{-1/\beta}] \frac{1}{x},
$$
by another application of Breiman's lemma.  Therefore, observing that $c_\beta = c_{1,\beta} = \Gamma(1-1/\beta)$ and 
$c_\beta \sigma_{d,\beta} = \Gamma(d-1/\beta)/\Gamma(d),\ d=1,2,\ldots$, we obtain 
\eqref{e:c:logistic}.
\end{proof}


\section{Proofs}

\subsection{Multivariate regular variation}\label{sec:RV}

  In this section, we provide a brief review of the theory of multivariate regular variation 
   of Borel measures on $\R^k,\ k\ge 1$. We consider the general case of $\tau$-regular variation, where 
   \begin{equation}\label{e:tau}
   \tau:\R^k \to \R_+
   \end{equation}
   is a {\em fixed} non-negative, continuous $1$-homogeneous function 
   \citep[see e.g.][]{scheffler:stoev:2017,dombry:ribatet:2015}. For an abstract and general 
   theory of regular variation with respect to the notion of a {\em bornology}, see e.g.,
   \cite{basrak:milincevic:molchanov:2025} and the references therein.

\subsection{The space $\M_\tau$ and the Tail Index Theorem}

Let $\M_\tau = \{ \mu\, :\, \mu(\tau >\epsilon)<\infty,\ \mbox{ for all }\epsilon>0\}$ 
be the collection of Borel measures $\mu$ on $\R^k\setminus\{\tau=0\}$ such that $\mu(\{\tau>\epsilon\})<\infty$, 
for all $\epsilon>0$.  Such measures assign finite masses to all 
Borel sets $A$ that are bounded away from $0$, relative to the $\tau$-loss, i.e.,
$\tau(A):=\inf_{x\in A}\tau(x)$ is positive. We shall denote this class of Borel sets by
${\cal B}_0(\tau)$ and ${\cal B}(\tau) = \sigma({\cal B}_0(\tau))$ is the $\sigma$-field of all Borel sets in 
$\R^k\setminus\{\tau=0\}$.

If $\mu_n,\mu\in \M_\tau$, then we write $\mu_n\stackrel{M_0}{\to} \mu$, as $n\to\infty$, if
\begin{equation}\label{e:M0-convergence-via-f}
\int f(x)\mu_n(dx) \to \int f(x)\mu(dx),\ \ \mbox{ as }n\to\infty,
\end{equation}
for all bounded and continuous functions $f$ such that $\{|f|>0\} \subset \{\tau>\epsilon\}$ for some
$\epsilon>0$. It can be shown that $\mu_n\stackrel{M_0}{\to} \mu$ if and only if
$$
 \mu_n(A)\to \mu(A),\ \ \mbox{ as }n\to\infty,
$$
for all $A\in {\cal B}_0(\tau)$ such that $\mu(\partial A) =0$, where 
$\partial A:=\overline{A}\setminus A^\circ$ denotes the topological boundary of the set $A$
\citep[see e.g.\ Theorem 3.1 in][]{hult:lindskog:2006}.  The above-defined
$M_0$-convergence of measures in $\M_\tau$ is generated by a metric, which renders $\M_\tau$ a complete
separable metric space \citep[see e.g. Theorem 2.3 in][]{hult:lindskog:2006}.

\begin{definition}\label{def:RV-mu} Let $\tau$ be a fixed continuous $1$-homogeneous function as in \eqref{e:tau}. 
Consider a random vector $Z$ in $\R^k,\ k\ge 1$.  If there is a positive 
sequence $\{a_n\}$ and a non-zero measure $\mu \in \M_\tau$ such that
$$
\mu_n:= n \P[Z/a_n\in \cdot] \stackrel{M_0}{\to} \mu,\ \ \mbox{ as }n\to\infty,
$$
then we say that $Z$ is $\tau$-regularly varying and write $Z\in RV(\{a_n\},\mu)$.
\end{definition}

\begin{theorem}[The tail index theorem] \label{thm:tail-index}
If $Z\in RV(\{a_n\},\mu)$ according to Definition \ref{def:RV-mu}, then:

\begin{enumerate}
    \item There exists a positive $\alpha>0$ 
    such that $a_n \sim \ell(n) n^{1/\alpha}$, for some slowly varying function $\ell(\cdot)$.

    \item The measure $\mu$ satisfies the scaling relation:
    \begin{equation}\label{e:thm:tail-index:scaling}
     \mu(t\cdot A) = t^{-\alpha} \mu(A),\ \ \mbox{ for all $t>0$ and $A\in {\cal B}(\tau)$}.
    \end{equation}
    \item If also $Z\in RV(\{b_n\},\nu)$, then 
    $$
    \frac{a_n}{b_n} \to c \in (0,\infty),\ \ \mbox{ and }\ \ \mu(A) = c^{-\alpha} \nu(A),\ \mbox{ for all $A\in {\cal B}(\tau)$}.
    $$
    Consequently, the parameter $\alpha>0$, referred to as the tail exponent of $Z$ is unique,
    and so is the measure $\mu$, up to a rescaling factor.

    \item We have that $Z\in {\rm RV}_\alpha(\R^k,\{a_n\},\ c_Z, \tau, \sigma)$ 
    according to Definition \ref{d:tau-rv}, where
    $$
     c_Z:= \mu(\{\tau>1\}) \ \ \mbox{ and } \ \ 
     \sigma(B) := \frac{1}{\mu(\{\tau>1\})}  \mu(\{ x/\tau(x) \in B\}),\ \ B\in {\cal B}(S_\tau).
    $$
    \item Conversely, if $Z\in {\rm RV}_\alpha(\R^k,\{a_n\},\ c_Z, \tau, \sigma)$ 
    according to Definition \ref{d:tau-rv}, then 
    $Z\in RV_\alpha(\{a_n\},\mu)$ according to Definition \ref{def:RV-mu}, where 
    \begin{equation}\label{e:thm:tail-index:disintegration}
     \mu\circ T_\tau^{-1}( (r,\infty)\times B) ) = c_Z r^{-\alpha} \sigma(B),\ \ \mbox{ for all } r>0,\ B\in {\cal B}(S_\tau),
    \end{equation}
    and where $T_\tau : \R^k\setminus\{\tau=0\} \to (0,\infty)\times S_\tau$ is the generalized polar coordinated
    homeomorphism defined as $T_\tau(x) = (\tau(x), x/\tau(x))$.
\end{enumerate}
\end{theorem}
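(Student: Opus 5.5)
The plan is to reduce everything to the one-dimensional tail $\bar F(x):=\P[\tau(Z)>x]$ and to the polar homeomorphism $T_\tau$, and then invoke classical univariate regular variation facts. First I would show that $\mu(\{\tau>r\})\in(0,\infty)$ for some $r$. It is finite because $\mu\in\M_\tau$. It is positive for some $r$ because $\mu$ is non-zero and $\R^k\setminus\{\tau=0\}=\bigcup_m\{\tau>1/m\}$. The map $r\mapsto\mu(\{\tau>r\})$ is monotone, so $\mu(\{\tau=r\})=0$ for all but countably many $r$, and for those $r$ the set $\{\tau>r\}\in{\cal B}_0(\tau)$ is a $\mu$-continuity set. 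The $M_0$ convergence (via the set characterization of \cite{hult:lindskog:2006}) then gives $n\bar F(a_n r)\to\mu(\{\tau>r\})$ on a dense set of $r$.

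\textbf{Items 1--3.} Fix $t>0$ and compare two sequences of measures on $\M_\tau$. The first is $[nt]\P[Z/a_{[nt]}\in\cdot]$, which converges to $\mu$ since it is a subsequence of $\mu_n$ up to an index shift. The second is $[nt]\P[Z/a_n\in\cdot]$, which converges to $t\mu$. Evaluating both on sets $\{\tau>r\}$ gives two normalizations of the monotone function $\bar F$ with non-degenerate monotone limits. The classical convergence-of-types lemma for monotone functions (inverting, as in Resnick's Proposition 0.1) then gives $a_{[nt]}/a_n\to c(t)\in(0,\infty)$, and, by applying $M_0$ convergence to the rescaled sets, $t\,\mu(A)=\mu(c(t)^{-1}A)$.

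The function $c$ is multiplicative, $c(st)=c(s)c(t)$, and it is monotone, because $\bar F$ is decreasing and one may take $a_n$ non-decreasing up to asymptotic equivalence. Hence $c(t)=t^{1/\alpha}$ for some $\alpha>0$, where $\alpha<\infty$ because $\mu$ is non-degenerate in the radial direction. This yields the scaling relation \eqref{e:thm:tail-index:scaling}. Since $a_{[nt]}/a_n\to t^{1/\alpha}$, the sequence $\{a_n\}$ is regularly varying with index $1/\alpha$, i.e. $a_n\sim\ell(n)n^{1/\alpha}$. Item 3 follows from the same convergence-of-types argument applied to $a_n$ versus $b_n$: it gives $a_n/b_n\to c$ and $\mu=c^{-\alpha}\nu$, which in turn forces the two exponents to coincide.

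\textbf{Item 4.} Scaling implies that $\mu\circ T_\tau^{-1}$ is the product measure $c_Z\,\alpha r^{-\alpha-1}dr\otimes\sigma$, with $c_Z$ and $\sigma$ as stated. In particular $\mu(\{\tau=r\})=0$ for every $r$, so $n\bar F(a_nt)\to c_Zt^{-\alpha}$ for all $t>0$. Now take $B\subset S_\tau$ with $\sigma(\partial B)=0$. The set $A_B=\{\tau>1,\ x/\tau(x)\in B\}$ has $\mu(\partial A_B)=0$, since $T_\tau$ is a homeomorphism and the boundary lies in $\{\tau=1\}\cup T_\tau^{-1}((1,\infty)\times\partial B)$, both of which are $\mu$-null. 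Therefore $\P[Z/\tau(Z)\in B\mid\tau(Z)>a_n]\to\sigma(B)$. To pass from the sequence $a_n$ to a continuous threshold $r\to\infty$, I would sandwich $a_n\le r<a_{n+1}$ and use $a_{n+1}/a_n\to1$, which follows from item 1, together with the continuity of the limit in $t$.

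\textbf{Item 5.} Conversely, define $\mu$ through \eqref{e:thm:tail-index:disintegration}. Then $n\P[Z/a_n\in T_\tau^{-1}((r,\infty)\times B)]=n\bar F(a_nr)\,\P[Z/\tau(Z)\in B\mid\tau(Z)>a_nr]\to c_Zr^{-\alpha}\sigma(B)$ whenever $\sigma(\partial B)=0$. The sets $T_\tau^{-1}((r,s]\times B)$ with $\sigma(\partial B)=0$ form a $\pi$-system of $\mu$-continuity sets, and finite unions of them approximate open sets bounded away from $\{\tau=0\}$. The Portmanteau-type criterion of \cite{hult:lindskog:2006} then upgrades this convergence to $M_0$ convergence.

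I expect the main obstacle to be item 1. The difficulty is not the formal limit identities but justifying convergence of types in $\M_\tau$ and the monotonicity of $c(\cdot)$ needed to exclude pathological solutions of the Cauchy equation. I would handle this entirely through the monotone scalar function $\bar F$ and its generalized inverse, so that only univariate results are needed.
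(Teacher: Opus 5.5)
Your proposal is correct. It differs from the paper mainly in that the paper does not prove Theorem~\ref{thm:tail-index} at all. It defers to Theorem~3.1 and Corollary~4.4 of \cite{lindskog:resnick:roy:2014}, to \cite{hult:lindskog:2006}, and to monographs such as \cite{kulik2020heav}. Those references work in the general framework of regularly varying measures on metric spaces with a scaling, which covers arbitrary closed cones, and obtain the homogeneity of the limit, the regular variation of the normalization, and the polar decomposition in that setting. You instead specialize to $\R^k\setminus\{\tau=0\}$ and route everything through the scalar tail $\bar F(x)=\P[\tau(Z)>x]$ and the homeomorphism $T_\tau$. As a result you only need univariate tools: inversion of monotone functions, convergence of types, monotone solutions of Cauchy's equation, and a Portmanteau argument on the $\pi$-system $T_\tau^{-1}((r,s]\times B)$ with $\sigma(\partial B)=0$. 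This gives a self-contained and transparent proof in the paper's concrete setting. The citation buys generality and spares the bookkeeping noted below.

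A few steps need one more line.

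\emph{Multiplicativity of $c$.} The identity $c(st)=c(s)c(t)$ does not follow from index manipulation, since $[[nt]s]\neq[nts]$. Derive it from the identity $\mu(c(t)^{-1}A)=t\,\mu(A)$ instead. Put $\phi(r)=\mu(\{\tau>r\})$. If $\phi(\kappa r)=\phi(r)$ held for all $r$ and some $\kappa\neq 1$, iterating and using $\phi(r)\to 0$ as $r\to\infty$ would force $\phi\equiv 0$. The same identity gives $\phi(r/c(t'))=t'\phi(r)>t\phi(r)=\phi(r/c(t))$ whenever $t<t'$ and $\phi(r)>0$. Hence $c$ is strictly increasing directly.

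\emph{Regular variation of $a_n$.} Where the monotone equivalent of $a_n$ is genuinely needed is in passing from $a_{[nt]}/a_n\to t^{1/\alpha}$ along the integers to $a_n\sim \ell(n)n^{1/\alpha}$ with $\ell$ slowly varying on $(0,\infty)$. The inversion you already use supplies it explicitly: $a_n\sim (1/\bar F)^{\leftarrow}(ny)/\kappa(y)$ for a suitable level $y$.

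\emph{Rescaling step.} The convergence $[nt]\P[Z/a_n\in c_nA]\to t\,\mu(cA)$, with $c_n=a_{[nt]}/a_n\to c$, is an application of the (extended) continuous mapping theorem for $M_0$-convergence under $x\mapsto x/c_n$. Say so explicitly.

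\emph{The measure $\sigma$.} Do not take ``$\sigma$ as stated'' in item 4 literally. By scaling, the cone $\{x/\tau(x)\in B\}$ has $\mu$-measure $0$ or $\infty$. The formula must therefore be read as $\sigma(B)=\mu(\{\tau>1,\ x/\tau(x)\in B\})/\mu(\{\tau>1\})$, which is exactly what your product decomposition of $\mu\circ T_\tau^{-1}$ produces.
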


    The proof of this result can be derived from many excellent treatments in the literature.  Specifically, 
    see, for example, Theorem 3.1 and Corollary 4.4 in \cite{lindskog:resnick:roy:2014}, 
    as well as \cite{hult:lindskog:2006,meinguet:segers:2012}, the monographs \cite{kulik2020heav,resnick:2024}, the 
    general theory in \cite{basrak:milincevic:molchanov:2025}, and the references therein for more details.

\subsection{Proofs for Section \ref{sec:opt_ext_pred}}

\subsubsection{Proofs for Section \ref{sec:joint-RV}}
  \label{sec:proofs:sec:joint-RV}

 For completeness, we will present the proof of Proposition \ref{p:rv-via-h} below although
 similar results can be found in Proposition 2.5 of \cite{jansen:neblung:stoev:2023} and
 Theorem 2.1 of \cite{Dyszewski2020}. The proof is a consequence of the following characterization of multivariate regular variation \citep[see e.g. Theorem 2.2.1 in][]{kulik2020heav}.
 
\begin{proposition} \label{p:rv-via-Pareto-Theta}
We have $Z \in RV_\alpha(\R^k,\{a_n\},c_Z, \tau,\sigma)$ if
and only if 
\begin{equation}\label{e:p:rv-via-Pareto-Theta}
\Big( \frac{\tau(Z)}{t}, \frac{Z}{\tau(Z)} \Big) \Big\vert \tau(Z)>t 
\stackrel{d}{\longrightarrow} (V, \Theta), \ \ (t\to\infty),
\end{equation}
where $V$ and $\Theta$ are independent and $\P[V>x]= x^{-\alpha},\ x\ge 1$, while
$\Theta\sim \sigma$.
\end{proposition}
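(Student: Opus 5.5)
The plan is to route both implications through the polar-coordinate homeomorphism $T_\tau(x)=(\tau(x),x/\tau(x))$ and the disintegration \eqref{e:thm:tail-index:disintegration} in Theorem \ref{thm:tail-index}. The key device is to rewrite the conditional law in \eqref{e:p:rv-via-Pareto-Theta} as a ratio of two measures. For $r\ge 1$ and $B\in{\cal B}(S_\tau)$ we have
\[
\P\Big[\frac{\tau(Z)}{t}>r,\ \frac{Z}{\tau(Z)}\in B\ \Big|\ \tau(Z)>t\Big]=\frac{\P[\tau(Z)>tr,\ Z/\tau(Z)\in B]}{\P[\tau(Z)>t]},
\]
and the target is $r^{-\alpha}\sigma(B)$. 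Since sets of the form $(r,\infty)\times B$ form a $\pi$-system generating the Borel sets of $[1,\infty)\times S_\tau$, convergence for all such sets with $\sigma(\partial B)=0$ will be enough, via the portmanteau theorem.

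\emph{($\Rightarrow$).} Assume $Z\in RV_\alpha(\R^k,\{a_n\},c_Z,\tau,\sigma)$. By Theorem \ref{thm:tail-index}(5), $n\P[Z/a_n\in\cdot]\stackrel{M_0}{\to}\mu$, where $\mu\circ T_\tau^{-1}$ is the product measure $c_Z\,\alpha r^{-\alpha-1}dr\otimes\sigma$. The set $A_{r,B}=\{\tau>r,\ x/\tau(x)\in B\}$ lies in ${\cal B}_0(\tau)$. Its boundary is contained in $\{\tau=r\}\cup\{x/\tau(x)\in\partial B\}$, and this has $\mu$-measure zero whenever $\sigma(\partial B)=0$, because the radial part has no atoms.

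We then replace $t$ by $a_n$. For general $t\to\infty$, choose $n(t)$ with $a_{n(t)}\le t<a_{n(t)+1}$. Regular variation of $a_n$ (Theorem \ref{thm:tail-index}(1)) gives $a_{n+1}/a_n\to1$, and monotonicity of $t\mapsto\P[\tau(Z)>tr, Z/\tau(Z)\in B]$ allows us to sandwich, using $r$ slightly perturbed. Taking the ratio yields $\mu(A_{r,B})/\mu(A_{1,S_\tau})=r^{-\alpha}\sigma(B)$. This product form is exactly the law of $(V,\Theta)$ with $V$ and $\Theta$ independent.

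\emph{($\Leftarrow$).} The angular convergence in Definition \ref{d:tau-rv} is simply the second marginal of \eqref{e:p:rv-via-Pareto-Theta}. For the radial part, the first marginal gives $\P[\tau(Z)>tx]/\P[\tau(Z)>t]\to x^{-\alpha}$ for $x\ge1$, and hence for all $x>0$ by inversion. So $\P[\tau(Z)>\cdot]$ is regularly varying with index $-\alpha$. Next choose $a_n$ with $n\P[\tau(Z)>a_n]\to c_Z$; for example, take $a_n$ to be the $(1-c_Z/n)$-quantile of $\tau(Z)$. This uses the standard fact that a regularly varying tail admits asymptotic inverses (Bingham--Goldie--Teugels). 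Then $n\P[\tau(Z)>ta_n]\to c_Z t^{-\alpha}$, which is \eqref{e:d:tau-rv}.

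The main obstacle is the passage from the sequence $a_n$ to continuous $t$ in the forward direction, together with the boundary-null argument needed to apply $M_0$-convergence to $A_{r,B}$. I would handle the first through the ratio $a_{n+1}/a_n\to1$ and monotone sandwiching, and the second through the continuity of $T_\tau$ together with the absence of atoms in $r^{-\alpha}$.
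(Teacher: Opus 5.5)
Your proposal is correct. The ($\Leftarrow$) half is the standard argument: project to the marginals, extend the radial ratio to $x<1$ by inversion, and choose $a_n$ as quantiles. In that direction the statement has to be read with $\{a_n\}$ and $c_Z$ existentially quantified, and your quantile construction handles this.

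The paper gives no proof of its own; it defers to Theorem 2.2.1 of Kulik--Soulier. There the forward direction is a one-liner that your plan bypasses. Definition \ref{d:tau-rv} is already in polar form, so for $r\ge 1$ and a $\sigma$-continuity set $B$,
\[
\frac{\P[\tau(Z)>tr,\ Z/\tau(Z)\in B]}{\P[\tau(Z)>t]}=\P\Big[\frac{Z}{\tau(Z)}\in B\,\Big|\,\tau(Z)>tr\Big]\cdot\frac{\P[\tau(Z)>tr]}{\P[\tau(Z)>t]}\longrightarrow \sigma(B)\,r^{-\alpha},
\]
because $tr\to\infty$ and the radial tail is regularly varying.

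Your route through $M_0$-convergence and Theorem \ref{thm:tail-index}(5) is valid within the paper's framework. This includes the boundary-null check for $A_{r,B}$ and the sandwich between $a_{n(t)}$ and $a_{n(t)+1}$. But these steps only solve problems the detour creates, and they are not the real obstacle. Moreover, part (5) is usually derived from exactly this polar decomposition. So as a self-contained proof, your forward direction rests on a result at least as strong as the one being proved. Even along the sequence $a_n$ you do not need (5): the same factorization gives $n\P[\tau(Z)>ra_n,\ Z/\tau(Z)\in B]\to c_Z r^{-\alpha}\sigma(B)$ directly from the definition.

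One justification in the forward direction needs repair. A $\pi$-system that merely generates the Borel $\sigma$-field is a determining class, not a convergence-determining class. So ``convergence on the sets $(r,\infty)\times B$ plus portmanteau'' is not by itself a valid inference. What you need is Billingsley's criterion: a $\pi$-system of limit-continuity sets such that every open set is a countable union of its members (or the product-space version of that criterion). Your class satisfies this once you do two things:
\begin{itemize}
\item pass, by additivity, to differences $(r,s]\times B$;
\item take $B$ to be balls in the separable space $S_\tau$ whose boundaries are $\sigma$-null, which holds for all but countably many radii.
\end{itemize}
Since the Pareto marginal has no atoms, these sets are continuity sets of $P_V\otimes\sigma$, and every open set is a countable union of them.
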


\begin{proposition}[Statement of Proposition \ref{p:rv-via-h}] Let $Z \in RV_\alpha(\R^k,\{a_n\},b(\cdot),c_Z,\tau,\sigma)$. Let 
also ${\cal H}_+(\R^k,\tau)$ denote the class of all non-negative
continuous $1$-homogeneous functions $h:\R^d\to \R_+$  such that $\{h>0\} \subset \{\tau>0\}$.  Then,
for all $h\in {\cal H}_+(\R^k,\tau)$, such that
\begin{equation}\label{e:p:rv-via-h-assumption-A} 
\|h\|_{\infty,S_\tau} := \sup_{\theta \in S_\tau} |h(\theta)| <\infty,
\end{equation}
we have
\begin{equation}\label{e:p:rv-via-h-A}
\lim_{n\to\infty} n\P[ h(Z) > a_n ] = \lim_{t\to\infty} b(t)\P[ h(Z) > t ] = c_Z \sigma(h),
\end{equation}
 where
$$
\sigma(h) := \E[h(\Theta_Z)^\alpha] = \int_{S_\tau} h^\alpha(\theta) \sigma(d\theta).
$$
\end{proposition}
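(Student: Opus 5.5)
We will derive the statement from the polar-coordinate characterization in Proposition \ref{p:rv-via-Pareto-Theta}. The key observation is that homogeneity of $h$ and the support condition $\{h>0\}\subset\{\tau>0\}$ let us write
$$
h(Z) = \tau(Z)\, h\big(Z/\tau(Z)\big)\quad\mbox{ on }\{\tau(Z)>0\},
$$
and $h(Z)=0$ on $\{\tau(Z)=0\}$. Fix $t>0$ and $M:=\|h\|_{\infty,S_\tau}<\infty$; we may assume $M>0$, since otherwise both sides of \eqref{e:p:rv-via-h-A} vanish. Since $h(Z)\le M\tau(Z)$, the event $\{h(Z)>t\}$ is contained in $\{\tau(Z)>t/M\}$. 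Hence, with $s:=t/M$,
$$
\P[h(Z)>t] = \P[\tau(Z)>s]\cdot \P\Big[ \frac{\tau(Z)}{s}\, h\Big(\frac{Z}{\tau(Z)}\Big) > M \,\Big|\, \tau(Z)>s\Big].
$$

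\textbf{Step 1 (the radial factor).} By the first relation in \eqref{e:d:tau-rv-b(t)} and regular variation of $x\mapsto\P[\tau(Z)>x]$ with index $-\alpha$, we have $b(t)\P[\tau(Z)>t/M]\to c_Z M^{\alpha}$ as $t\to\infty$.

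\textbf{Step 2 (the conditional factor).} Apply Proposition \ref{p:rv-via-Pareto-Theta}: conditionally on $\tau(Z)>s$, the pair $(\tau(Z)/s, Z/\tau(Z))$ converges in distribution to $(V,\Theta)$, where $V$ is $\alpha$-Pareto and independent of $\Theta\sim\sigma$. The map $(v,\theta)\mapsto v\,h(\theta)$ is continuous on $[1,\infty)\times S_\tau$, so the continuous mapping theorem gives convergence in distribution of $\frac{\tau(Z)}{s}h(Z/\tau(Z))$ to $V h(\Theta)$. To pass to the probability of exceeding $M$ we need $\P[Vh(\Theta)=M]=0$. This holds because, conditionally on $\Theta$, the variable $Vh(\Theta)$ is either $0$ (when $h(\Theta)=0$) or has a continuous Pareto law. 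Therefore
$$
\P\Big[\ldots>M\,\Big|\,\tau(Z)>s\Big] \to \P[Vh(\Theta)>M] = \E\big[ (h(\Theta)/M)^{\alpha}\wedge 1\big] = M^{-\alpha}\E[h(\Theta)^\alpha],
$$
where the last equality uses $h(\Theta)\le M$ almost surely.

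\textbf{Step 3 (combine and discretize).} Multiplying the limits from Steps 1 and 2 gives $b(t)\P[h(Z)>t]\to c_Z\sigma(h)$. The sequential form $n\P[h(Z)>a_n]\to c_Z\sigma(h)$ then follows by taking $t=a_n$ and using $b(a_n)\sim n$.

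The only delicate point is the boundary issue in Step 2. It is resolved by the continuity of the Pareto law together with the fact that $h$ is bounded on the sphere; boundedness is what allows the reduction to the conditioning event $\{\tau(Z)>t/M\}$ in the first place, and this is where assumption \eqref{e:p:rv-via-h-assumption-A} is used. Continuity of $h$ on $S_\tau$, used for the continuous mapping theorem, follows from continuity of $h$ on $\R^k$.
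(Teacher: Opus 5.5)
Your proposal is correct and follows essentially the same route as the paper's proof. Boundedness of $h$ on $S_\tau$ reduces everything to the event $\{\tau(Z)>t/\|h\|_{\infty,S_\tau}\}$; you then factor out the radial tail and apply Proposition~\ref{p:rv-via-Pareto-Theta} with the continuous map $(v,\theta)\mapsto v\,h(\theta)$ to get $c_Z\E[h(\Theta)^\alpha]$.

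Your version is slightly more careful than the paper's in two places. First, you note that $Vh(\Theta)$ may have an atom at $0$, so it is not a continuous random variable as the paper claims, and that only the absence of an atom at $\|h\|_{\infty,S_\tau}>0$ is needed. Second, you pass from the continuous-parameter limit to the sequence $a_n$, which is the trivial direction via $b(a_n)\sim n$, rather than the reverse direction the paper takes.
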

\begin{proof}[Proof of Proposition \ref{p:rv-via-h}] 
Without loss of generality suppose that the positive normalizing sequence $\{a_n\}$ is such that $c_Z=1$.
Observe that by the support domination condition $\{h>0\} \subset \{\tau>0\}$, 
we have $\{h(Z)>a_n\}$ implies $\{\tau(Z)>0\}$. Also, note that since by \eqref{e:p:rv-via-h-assumption-A},
$h$ is bounded on the set $S_\tau$, we have 
$$
\tau(Z) h(Z/\tau(Z)) >a_n \ \ \mbox{ implies }\ \ \tau(Z)>a_n/\|h\|_{\infty,S_\tau},
$$ 
where $\|h\|_{\infty,S_\tau}$ is as in \eqref{e:p:rv-via-h-assumption-A}.

Note that in the case $\|h\|_{\infty,S_\tau}=0$, the result trivially holds.  On the other hand 
for $c_h:= \|h\|_{\infty,S_\tau}>0$, we have
$$
\P [ h(Z)>a_n] = \P [\tau(Z) h(Z/\tau(Z)) > a_n, \tau(Z) > a_n/c_h].
$$
Hence, using that $n\P[\tau(Z)> t a_n]\to t^{-\alpha}$
\begin{align}\label{e:e:p:rv-via-Pareto-Theta-1}
n \P [ h(Z)>a_n] &= n \P[\tau(Z)>a_n/c_h] \cdot 
\P\Big[ \frac{\tau(Z)}{a_n/c_h}\cdot  h\Big( \frac{Z}{\tau(Z)} \Big) > c_h 
\Big\vert \tau(Z)>a_n/c_h \Big]\nonumber\\
& \longrightarrow c_h^\alpha \cdot \P[ V h(\Theta)  > c_h ],\ \ (a_n\to\infty)
\end{align}
where the last convergence follows from \eqref{e:p:rv-via-Pareto-Theta} and 
the fact that $V h(\Theta)$ is a continuous random variable.

It remains to observe that by the independence of $V$ and $\Theta_Z:=\Theta$, we have
$$
\P\Big[ V h(\Theta_Z)  > c_h \Big] = \E[ I(V> c_h/h(\Theta_Z)] 
=\E\Big[ \Big(c_h/h(\Theta_Z)\Big)^{-\alpha}\Big] = c_h^{-\alpha} \E[ h(\Theta_Z)^\alpha].
$$
By combining this calculation with Relation \eqref{e:e:p:rv-via-Pareto-Theta-1}, we obtain that the first limit in 
\eqref{e:p:rv-via-h} equals $c_Z\E[h^\alpha(\Theta_Z)]$.  The fact that the two limits in \eqref{e:p:rv-via-h} 
coincide follows from the equivalence $b(a_n)\sim n$ as $n\to\infty$, which completes the proof.
\end{proof}

\begin{lemma}[Statement of Lemma \ref{lem:nonameidea}] \label{l:copy:lem:nonameidea} Let $(Y,X)$ and ${\cal G}(\tau_X)$ be as in Definition \ref{def:asymp-calibration}.
Then, for all $g\in {\cal G}(\tau_X)$ the bivariate tail dependence coefficient 
$\lambda(Y,g(X))$ defined in \eqref{e:lambda-tdep-def} exists and
\begin{equation}\label{e:lem:nonameidea-restatement}
\lambda(g(X),Y) = \lim_{t\uparrow \infty} \P[ Y > t |  g(X) > t ] = \lim_{t\uparrow \infty} \P[ g(X) > t | Y > t ].
\end{equation}
\end{lemma}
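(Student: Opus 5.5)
The plan is to apply Proposition \ref{p:rv-via-h} to three homogeneous functions of $Z=(Y,X)$ with $\tau(y,x)=y_++\tau_X(x)$: namely $h_1(y,x):=y_+$, $h_2(y,x):=g(x)$, and $h_3(y,x):=y_+\wedge g(x)$. Each of these is non-negative, continuous and $1$-homogeneous. Each satisfies $\{h_j>0\}\subset\{\tau>0\}$: this is trivial for $h_1$, and for $h_2$ and $h_3$ it follows from condition (i) of Definition \ref{def:asymp-calibration}. Each is also bounded on $S_\tau$. For $h_1$ the bound is $1$. For $h_2$, take $(u,w)\in S_\tau$ with $\tau_X(w)=1-u>0$; then $g(w)=(1-u)g(w/(1-u))\le \|g\|_{\infty,\SX}$ by condition (ii), and $g(w)=0$ whenever $\tau_X(w)=0$. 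Finally $h_3\le h_2$. Proposition \ref{p:rv-via-h} with $\alpha=1$ then gives
\[
b(t)\P[Y>t]\to c_Z\E[U],\quad b(t)\P[g(X)>t]\to c_Z\E[(1-U)g(\Theta)],\quad b(t)\P[Y\wedge g(X)>t]\to c_Z\E[U\wedge(1-U)g(\Theta)].
\]

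The first limit is positive, because $\P[U=0]<1$ by \eqref{e:a:X-Y-joint}. Hence
\[
\lim_{t\to\infty}\P[g(X)>t\mid Y>t]=\frac{\E[U\wedge (1-U)g(\Theta)]}{\E[U]}.
\]
The calibration condition (iii) gives $\P[g(X)>t]\sim\P[Y>t]$, so the two denominators are interchangeable asymptotically and $\P[Y>t\mid g(X)>t]$ has the same limit. This yields the equality of the two limits in \eqref{e:lem:nonameidea-restatement}.

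It remains to identify these $t$-limits with the quantile-based $\lambda(Y,g(X))$ from \eqref{e:lambda-tdep-def}, and I expect this to be the main step. Both $\P[Y>t]$ and $\P[g(X)>t]$ are regularly varying with index $-1$ and asymptotically equivalent. The first step is to check that $y_p:=F_Y^\leftarrow(p)$ and $h_p:=F_{g(X)}^\leftarrow(p)$ satisfy $y_p\to\infty$ and $y_p/h_p\to1$ as $p\uparrow1$, which follows from inverting the asymptotically equivalent regularly varying tails. Next, for any $\epsilon>0$ and $p$ close to $1$, use the sandwich $(1-\epsilon)y_p\le h_p\le(1+\epsilon)y_p$ to bound
\[
\P[Y>y_p,\,g(X)>h_p]\ \text{between}\ \P[Y>y_p,\,g(X)>(1\pm\epsilon)y_p].
\]
Applying Proposition \ref{p:rv-via-h} to $y_+\wedge (1\pm\epsilon)^{-1}g(x)$, the latter probabilities divided by $\P[Y>y_p]$ converge to $\E[U\wedge(1-U)g(\Theta)/(1\pm\epsilon)]/\E[U]$. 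Letting $\epsilon\downarrow0$ and using dominated convergence recovers the limit. The normalization $\P[g(X)>h_p]$ is handled the same way; if atoms make $\P[Y>y_p]\neq 1-p$, regular variation still gives that the ratio to $1-p$ tends to $1$. Together these show that the limit in \eqref{e:lambda-tdep-def} exists and equals the common value above.
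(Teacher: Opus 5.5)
Your proof is correct and has the same overall architecture as the paper's. In both, regular variation gives the $t$-limits of the conditional probabilities, calibration makes the two limits coincide, and the quantile-based definition \eqref{e:lambda-tdep-def} is reached via $F_Y^\leftarrow(p)\sim F_{g(X)}^\leftarrow(p)$ and a $(1\pm\epsilon)$-sandwich.

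The difference is in the tool for the first step. The paper transfers regular variation to the bivariate vector $(g(X),Y)$ and works with its exponent measure $\nu$. It must then show, by a homogeneity argument, that the rectangles $[t,\infty)\times[s,\infty)$ are $\nu$-continuity sets. You instead apply Proposition \ref{p:rv-via-h} to $y_+$, $g(x)$ and $y_+\wedge g(x)$, which is exactly what the paper does later in proving Proposition \ref{p:lambda-via-U-Theta}. The boundary issue is then absorbed into that proposition, and you obtain the explicit value $\E[U\wedge(1-U)g(\Theta)]/\E[U]$ along the way, so you effectively prove Lemma \ref{lem:nonameidea} and Proposition \ref{p:lambda-via-U-Theta} together.

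For the last step, the paper isolates the quantile identification as Lemma \ref{l:xi-eta-tdep}. That lemma uses only the existence of $\lim_t \P[\xi>t\mid\eta>t]$ and regular variation of the tails, which gives $\lambda(1\pm\epsilon)^{-\alpha}$ directly. You instead re-apply Proposition \ref{p:rv-via-h} to the rescaled functions $y_+\wedge(1\pm\epsilon)^{-1}g(x)$ and let $\epsilon\downarrow 0$. Your version is self-contained. The paper's abstract lemma is reusable for predictors that are merely measurable, as in the proof of Corollary \ref{c:Pareto-Breiman}, where Proposition \ref{p:rv-via-h} is not available.
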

\begin{proof}[Proof of Lemma \ref{lem:nonameidea}]
By the {\em transfer of regular variation theorem}  \citep[see e.g. Proposition 2.1.12 in][]{kulik2020heav}, 
we have $(\xi,\eta) := (g(X),Y)$ is regularly varying with exponent measure $\nu := \mu\circ T_g^{-1}$, where $T_g(y,x):= (g(x),y)$.  
More precisely,
$$
n \P[ (\xi,\eta) \in a_n A] \to \nu(A),\ \ \mbox{ as }n\to\infty,
$$
for all Borel $\nu$-continuity sets.  Equivalently \citep[see, e.g. Theorem 2.1.3 in ][]{kulik2020heav}, 
\begin{equation}\label{e:lem:nonameidea-RV-via-b}
b(t) \P[ (\xi,\eta) \in t\cdot A] \to \nu(A),\ \ \mbox{ as }t\to\infty,
\end{equation}
for a suitable function $b(\cdot)$, which is regularly varying at infinity with exponent $1$.

The calibration property implies that 
\begin{equation}\label{e:tail-equivalence-xi-eta}
 \P[\xi>t]\sim \P[\eta>t],\ \ \mbox{ as }t\to\infty.
\end{equation}
Thus, by Lemma \ref{l:xi-eta-tdep}, since $\xi$ and $\eta$ have regularly varying right tails, to prove the result, it is enough to
show that 
\begin{equation}\label{e:lem:nonameidea-lambda-via-nu}
\lim_{t\to\infty} \P[\xi>t | \eta>t] = \lim_{t\to\infty} \P[\eta>t | \xi>t] = \frac{\nu(A_1\cap A_2)}{\nu(A_1)},
\end{equation}
where $A_i= \{ (u_1,u_2)\in \R^2\, :\, u_i\ge 1\}$.

To establish \eqref{e:lem:nonameidea-lambda-via-nu}, observe that for all $t,s>0$, the sets 
$A_{(t,s)}:= [t,\infty)\times [s,\infty)$ are $\nu$-continuity sets.
Indeed, this follows from the homogeneity of the measure $\nu$. For all $t,s>0$ and $\epsilon\in (0,1)$, we have that 
$A_{\epsilon\cdot(t,s)} = \cup_{u \in [\epsilon,\infty)} \partial A_{u\cdot(t,s)}$, where the latter is a union of 
{\em pairwise disjoint} sets indexed by $u$.  This, since $\nu(A_{\epsilon\cdot(t,s)})<\infty$,  implies that all but countably many of the terms
$\nu(\partial A_{u\cdot(t,s)})$ vanish. Notice, however, that by the homogeneity of $\nu$, since $\partial A_{u\cdot(t,s)} = u\cdot \partial A_{(t,s)}$, we have $\nu(\partial A_{u\cdot (t,s)}) = u^{-1} \nu(\partial A_{(t,s)}),$ for all $u>0$.  This means that
$\nu(\partial A_{u\cdot (t,s)}) = 0$, for all $u\ge \epsilon$.  Similarly, the sets $A_i,\ i=1,2$ are continuity sets.

The continuity of the sets $A_1, A_2,$ and $A_1\cap A_2 =A_{(1,1)}$ and Relation \eqref{e:lem:nonameidea-RV-via-b}
entail that, as $t\to\infty$,
\begin{equation}\label{e:lem:nonameidea-11}
  b(t)\P[\xi>t] \to \nu(A_1),\ \  b(t)\P[\eta>t]\to \nu(A_2),\ \mbox{ and }\ b(t)\P[\xi>t,\eta>t] \to \nu(A_1\cap A_2).
\end{equation}
Relation \eqref{e:tail-equivalence-xi-eta} implies $\nu(A_1) = \nu(A_2)$, which are necessarily positive, since 
$\nu$ is non-zero. Thus, the convergences in \eqref{e:lem:nonameidea-11} imply
\eqref{e:lem:nonameidea-lambda-via-nu}, which completes the proof.
 \end{proof}

\begin{lemma}\label{l:xi-eta-tdep}  Let $\xi$ and $\eta$ be random variables with regularly varying right tails such that
\begin{equation}\label{e:l:xi-eta-tdep}
\P[ \xi >t ] = L_\xi(t) t^{-\alpha}\ \ \mbox{ and }\ \ \P[\eta>t] = L_\eta(t) t^{-\alpha},\ \ t\ge 0,
\end{equation}
where $L_\xi$ and $L_\eta$ are slowly varying functions at $\infty$.  Suppose that $\xi$ and $\eta$ are
tail-equivalent, i.e.
\begin{equation}\label{e:xi-eta-tail-equiv}
\P[\xi>t] \sim \P[\eta>t],\ \ \mbox{ as }t\to\infty.
\end{equation}
If the limit 
\begin{equation}\label{e:l:xi-eta-tdep-1} 
\lambda = \lim_{t\to\infty} \P[\xi>t | \eta> t].
\end{equation}
exists, then so does the tail-dependence coefficient $\lambda(\xi,\eta)$  and
$$
 \lambda = \lambda(\xi,\eta):= \lim_{p\uparrow 1} \P[ \xi> F_\xi^{\leftarrow}(p) | \eta > F_\eta^{\leftarrow}(p)].
$$
\end{lemma}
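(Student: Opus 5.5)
Write $\bar F_\xi(t)=\P[\xi>t]$ and $\bar F_\eta(t)=\P[\eta>t]$. The plan is to replace the quantile levels $F_\xi^{\leftarrow}(p)$ and $F_\eta^{\leftarrow}(p)$ by a single common threshold. We then control the discrepancy using regular variation and tail equivalence.

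Fix $p$ close to $1$ and set $t_p:=F_\eta^{\leftarrow}(p)$ and $s_p:=F_\xi^{\leftarrow}(p)$. Both tend to $\infty$ as $p\uparrow 1$, since regularly varying tails have unbounded support.

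\textbf{Step 1: the two quantile levels are asymptotically equal.} We first show $s_p/t_p\to 1$. By regular variation with index $-\alpha<0$, we have $\bar F_\eta(t_p)\sim 1-p$. Indeed, $\bar F_\eta(t_p)\le 1-p\le \bar F_\eta(t_p-)$, and for a regularly varying tail $\bar F_\eta(t-)/\bar F_\eta(t)\to 1$, because $\bar F_\eta((1-\epsilon)t)/\bar F_\eta(t)\to(1-\epsilon)^{-\alpha}$ and we may let $\epsilon\downarrow 0$. In the same way $\bar F_\xi(s_p)\sim 1-p$. Tail equivalence \eqref{e:xi-eta-tail-equiv} then gives $\bar F_\xi(s_p)\sim\bar F_\xi(t_p)$. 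Since $\bar F_\xi$ is regularly varying with strictly negative index, the uniform convergence theorem forces $s_p/t_p\to1$. For instance, if $s_p\ge(1+\epsilon)t_p$ along a subsequence, the ratio $\bar F_\xi(s_p)/\bar F_\xi(t_p)$ would be eventually at most $(1+\epsilon)^{-\alpha/2}<1$, a contradiction; the case $s_p\le(1-\epsilon)t_p$ is symmetric.

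\textbf{Step 2: sandwich the joint probability.} For any $\epsilon>0$ and all $p$ close enough to $1$, Step 1 gives $(1-\epsilon)t_p\le s_p\le (1+\epsilon)t_p$. Hence
$$
\P[\xi>(1+\epsilon)t_p,\ \eta>t_p]\le \P[\xi>s_p,\ \eta>t_p]\le \P[\xi>(1-\epsilon)t_p,\ \eta>t_p].
$$
We divide by $\P[\eta>t_p]$ and bound the error introduced by the factor $1\pm\epsilon$:
$$
\big|\P[\xi>(1\pm\epsilon)t,\eta>t]-\P[\xi>t,\eta>t]\big|\le \P[\xi\in((1-\epsilon)t,(1+\epsilon)t]].
$$
After dividing by $\bar F_\eta(t)\sim\bar F_\xi(t)$, the right-hand side tends to $(1-\epsilon)^{-\alpha}-(1+\epsilon)^{-\alpha}$, which is small for small $\epsilon$.

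\textbf{Step 3: conclude.} By the assumed existence of the limit \eqref{e:l:xi-eta-tdep-1}, $\P[\xi>t_p,\eta>t_p]/\bar F_\eta(t_p)\to\lambda$, since $t_p\to\infty$. Finally, $\P[\eta>t_p]/(1-p)\to 1$ by Step 1, so conditioning on $\{\eta>t_p\}$ is asymptotically the same as dividing by $1-p$. Combining Steps 2 and 3, both the limsup and the liminf of $\P[\xi>s_p\mid \eta>t_p]$ lie within $(1-\epsilon)^{-\alpha}-(1+\epsilon)^{-\alpha}$ of $\lambda$. Letting $\epsilon\downarrow0$ gives $\lambda(\xi,\eta)=\lambda$.

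\textbf{Main obstacle.} The delicate point is Step 1 when the distributions have atoms, so that $\bar F(F^{\leftarrow}(p))\neq 1-p$. I would handle it using the fact that regular variation makes jumps of $\bar F$ asymptotically negligible relative to $\bar F$, as indicated above.
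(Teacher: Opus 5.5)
Your proof is correct and follows essentially the paper's route: first establish $F_\xi^{\leftarrow}(p)\sim F_\eta^{\leftarrow}(p)$, then sandwich $\P[\xi>s_p,\eta>t_p]/\P[\eta>t_p]$ using thresholds $(1\pm\epsilon)t_p$, and finally let $\epsilon\downarrow 0$. The differences are minor: you prove the quantile equivalence directly, handling atoms through $\bar F(t-)\sim\bar F(t)$, where the paper cites the asymptotic-inverse result for regularly varying functions (Bingham--Goldie--Teugels, Prop.~1.5.15); and you perturb only the $\xi$-threshold and control the error by the mass of $\xi$ in $((1-\epsilon)t_p,(1+\epsilon)t_p]$, whereas the paper perturbs both thresholds via $\xi\wedge\eta$ and uses $\P[\eta>(1\pm\epsilon)t]/\P[\eta>t]\to(1\pm\epsilon)^{-\alpha}$.
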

\begin{proof} It is enough to show that for every $p_n\uparrow 1$, we have 
\begin{equation}\label{e:l:xi-eta-tdep-2}
 \P[\xi > s_n | \eta> t_n] \to \lambda,\ \ \mbox{ as }n\to\infty,
\end{equation}
where $s_n:= F_\xi^\leftarrow(p_n)$ and $t_n:= F_\eta^\leftarrow(p_n)$. 

For the so-defined sequences, we will first show that 
\begin{equation}\label{e:lem:nonameidea-sn~tn}
s_n= F_\xi^\leftarrow(p_n) \sim t_n =  F_\eta^\leftarrow(p_n).
\end{equation}

This last fact is consequence of the classical theory on regularly varying functions.  Indeed, letting 
$G_\xi(t):= 1/(1-F_\xi(t))$, and similarly  $G_\eta(t) = 1/(1-F_\eta(t))$, with $u_n:=1/(1-p_n)$, we get
$$
 s_n = G_\xi^\leftarrow(u_n)\ \ \mbox{ and }\ \ t_n = G_\eta^\leftarrow(u_n).
$$ 
Since $G_\xi(t) = L_\xi^{-1} (t) t^\alpha$ and $G_\eta(t) = L_\eta(t)^{-1} t^{\alpha}$,
Relation \eqref{e:xi-eta-tail-equiv} entails $G_\xi(t) \sim G_\eta(t),\ t\to\infty$.  This equivalence, since $G_\xi$ and $G_\eta$ are regularly varying functions,
implies $G_\xi^\leftarrow(u) \sim G_\eta^\leftarrow(u)$ as $u\to\infty$ \citep[see, e.g.,  Proposition 1.5.15 in ][]{bingham1987regu}, which entails \eqref{e:lem:nonameidea-sn~tn}.

Now, for every $\epsilon\in (0,1)$, we have
$$
\frac{\P[\xi \wedge \eta > (1+\epsilon) t_n]}{\P[\eta>t_n]} \le \frac{\P[\xi> s_n, \eta> t_n]}{\P[\eta>t_n]} 
\le \frac{\P[\xi \wedge \eta > (1-\epsilon) t_n]}{\P[\eta>t_n]}.
$$
In view of \eqref{e:l:xi-eta-tdep-1}, taking a limit, for the lower/upper bounds above, we get
$$
\lim_{t_n\to\infty} \frac{\P[\xi \wedge \eta > (1\pm\epsilon) t_n]}{\P[\eta>t_n]} = \lambda \lim_{t_n\to\infty} \frac{\P[\eta>(1\pm \epsilon)t_n]}{\P[\eta>t_n]} =
\lambda \cdot (1\pm \epsilon)^{-\alpha},
$$
where the last relation follows from \eqref{e:l:xi-eta-tdep}.  This, since $\epsilon\in (0,1)$ can be taken arbitrarily small,
yields \eqref{e:l:xi-eta-tdep-2}.
\end{proof}

\begin{proposition}[Statement of Proposition \ref{p:lambda-via-U-Theta}]
\label{p:restatement:lambda-via-U-Theta} Let $(Y,X)$ be jointly $\tau$-regularly varying
in the sense of Assumption \ref{a:X-Y-joint}.  Let ${\cal G}(\tau_X)$ 
be the class of predictors in Definition \ref{def:asymp-calibration}. Then, for all 
$g\in {\cal G}(\tau_X)$, and $(U,\Theta)$ as in \eqref{e:U-THETA}, 
we have
\begin{equation}\label{e:constraint-via-U-Theta-repeated}
\E[U] = \E[(1-U)g(\Theta)] =c>0
\end{equation}
and
\begin{equation}\label{e:lambda-via-U-Theta-repeated}
\lambda(Y,g(X)) = \frac{1}{c} \E[ U \wedge (1-U) g(\Theta)]
\end{equation}
\end{proposition}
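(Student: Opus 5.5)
The plan is to apply Proposition \ref{p:rv-via-h} to three homogeneous functions of $Z=(Y,X)$ on the cone $\{\tau>0\}$, with $\tau(y,x)=y_++\tau_X(x)$ and $\alpha=1$. The functions are
\[
h_1(y,x):=y_+,\qquad h_2(y,x):=g(x),\qquad h_3(y,x):=y_+\wedge g(x).
\]
All three are non-negative, continuous and $1$-homogeneous. Support dominance $\{h_i>0\}\subset\{\tau>0\}$ is immediate for $h_1$ and $h_3$. For $h_2$ it follows from condition (i) of Definition \ref{def:asymp-calibration}, since $\{g>0\}\subset\{\tau_X>0\}\subset\{\tau>0\}$.

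For the boundedness condition \eqref{e:p:rv-via-h-assumption}, I need $\sup_{S_\tau}h_i<\infty$. A point of $S_\tau$ can be written $(u,(1-u)\theta)$ with $\theta\in\SX$ when $u<1$ (or $x$ with $\tau_X(x)=0$ when $u=1$). There
\[
h_1=u\le 1,\qquad h_2=(1-u)g(\theta)\le\|g\|_{\infty,\SX},
\]
using homogeneity together with $g=0$ on $\{\tau_X=0\}$. The bound for $h_3$ follows from either of these.

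Proposition \ref{p:rv-via-h} then gives, as $t\to\infty$,
\[
b(t)\P[Y>t]\to c_Z\E[U],\quad b(t)\P[g(X)>t]\to c_Z\E[(1-U)g(\Theta)],\quad b(t)\P[Y\wedge g(X)>t]\to c_Z\E[U\wedge(1-U)g(\Theta)].
\]
To evaluate $h_i$ at the angular vector $\Theta_Z=(U,(1-U)\Theta)$, I use homogeneity of $g$: $g((1-U)\Theta)=(1-U)g(\Theta)$. On $\{U=1\}$ both sides vanish, so the convention $\Theta=\theta_*$ is harmless.

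The constraint \eqref{e:constraint-via-U-Theta-repeated} comes from the asymptotic calibration \eqref{e:g-calibration-lemma}: dividing the first two limits shows $\E[U]=\E[(1-U)g(\Theta)]$. This common value is positive because $\P[U=0]<1$, by \eqref{e:a:X-Y-joint}.

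For \eqref{e:lambda-via-U-Theta-repeated}, Lemma \ref{lem:nonameidea} says $\lambda(Y,g(X))$ exists and equals $\lim_t\P[Y\wedge g(X)>t]/\P[Y>t]$. By the first and third limits above, this ratio tends to $\E[U\wedge(1-U)g(\Theta)]/\E[U]$, as claimed.

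The main obstacle is checking that Proposition \ref{p:rv-via-h} applies to these possibly degenerate $h_i$. In particular, $h_1$ is not strictly positive on $S_\tau$, and the minimum in $h_3$ requires boundedness on $S_\tau$ rather than on $\SX$. The step $\P[Vh(\Theta)>c_h]=c_h^{-1}\E[h(\Theta)]$ in that proof holds even when $h(\Theta)=0$ with positive probability, so I will rely on that. I will also note that the case $c_Z\neq1$ only rescales all three limits by the same constant, which cancels in the ratios.
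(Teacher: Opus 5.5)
Your proposal is correct and follows the paper's proof essentially step for step. You apply Proposition~\ref{p:rv-via-h} to the same three homogeneous functions $y_+$, $g(x)$ and $y_+\wedge g(x)$, get the constraint from calibration and its positivity from $\P[U=0]<1$, and use Lemma~\ref{lem:nonameidea} to identify the ratio of limits with $\lambda(Y,g(X))$. Your extra checks are sound refinements of points the paper leaves implicit: boundedness on $S_\tau$, the harmless $\{U=1\}$ convention, and the observation that $\P[Vh(\Theta)=c_h]=0$ suffices even when $h(\Theta)=0$ with positive probability.
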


\begin{proof}[Proof of Proposition \ref{p:lambda-via-U-Theta}] 
Since $Z=(Y,X)$ is regularly varying, Proposition
\ref{p:rv-via-h} implies that, for all non-negative, continuous and $1$-homogeneous functions $h$,
such that $\{h>0\} \subset \{\tau >0\}$ with $\|h\|_{\infty,S_\tau}<\infty$, we have
\begin{equation}\label{e:h-YX-relation}
n \P[ h(Y,X) > a_n ] \to \sigma_Z(h) = \E[h(U,(1-U)\Theta)],\ \ \mbox{ as }n\to\infty.
\end{equation}

Let now $g\in {\cal G}(\tau_X)$ so that \eqref{e:g-calibration-lemma} holds and
consider the $1$-homogeneous functions:
$$ 
 h_Y(y,x):= y_+,\ \  h_g(y,x):= g(x)\ \mbox{ and }\ h_{Y,g} (y,x):= y_+ \wedge g(x).
$$
Observe that these three homogeneous functions are continuous, non-negative, bounded 
on the unit sphere $S_\tau$ and support-dominated by $\tau$. That is,
$\{ h_Y>0\} \cup \{ h_g >0\} \cup \{h_{Y,g}>0\} \subset \{\tau >0\}$ and hence Relation \eqref{e:h-YX-relation}
applies with $h$ replaced by $h_Y,\ h_g,$ and $h_{Y,g}$.  This implies that, as $n\to\infty$, we have
\begin{equation}\label{e:p:lambda-via-U-Theta-1}
n\P [ Y > a_n] \to \sigma_Z(h_Y)= \E[U],\ \ \  \ n\P[g(X)>a_n] \to \sigma_Z(h_g) = \E[ g((1-U)\Theta)],
\end{equation}
and 
\begin{equation}\label{e:p:lambda-via-U-Theta-2}
n \P[Y\wedge g(X) > a_n] \to \sigma_Z(h_{Y,g}) = \E[ U\wedge g((1-U)\Theta)].
\end{equation}
Since  $g(X)$ is an asymptotically calibrated extremal predictor for $Y$
(recall \eqref{e:g-calibration-lemma}), Relations \eqref{e:p:lambda-via-U-Theta-1} imply 
that $\E[U] = \E[(1-U)\Theta] = c$.  Observe that $c>0$ since by Assumption \ref{a:X-Y-joint},
$\sigma(\{0\}\times \{\tau_X=1\}) = \P[U=0] <1$.
This proves \eqref{e:constraint-via-U-Theta-repeated}.

On the other hand, by Lemma \ref{lem:nonameidea}, we have
$$
\lambda(Y,g(X)) = \lim_{n\to\infty} \P[g(X)>a_n | Y>a_n] = \lim_{n\to\infty}
\frac{\P [ Y\wedge g(X) > a_n] }{\P[Y>a_n]}  =  \frac{\E[ U \wedge g((1-U)\Theta)]}{\E[U]},  
$$
where the last relation follows from \eqref{e:p:lambda-via-U-Theta-1} and \eqref{e:p:lambda-via-U-Theta-2}.
This shows \eqref{e:lambda-via-U-Theta-repeated} and completes the proof of the proposition.
\end{proof}








\subsubsection{Proofs for Section \ref{sec:solution}}

\label{sec:proofs:main}
 \begin{lemma}[Statement of Lemma \ref{l:gateaux-general}] \label{l:gateaux-general-statement} Let $I(\cdot)$ be as in \eqref{e:I(g)}, the $b(\theta)$'s and $F_\theta(t)$'s be as in 
 \eqref{e:b(theta)} and \eqref{e:F-theta}, respectively.  Then, for all $g\ge 0$ and $g,\, h\in L^1(b\cdot p_\Theta)$, we have
 \begin{align}\label{e:lower-bound-via-F-statement}
&\lim_{\delta\downarrow 0} \frac{I(g+\delta h)-I(g)}{\delta} \nonumber\\
&\quad \quad= 2\int_{\SX}   b(\theta)  \Big\{ F_\theta(q(\theta)) h(\theta) +  F_\theta(\{q(\theta)\}) h(\theta)_-\Big\}  p_\Theta(d\theta) 
 - \int_\SX b(\theta) h(\theta) p_\Theta(d\theta),
 \end{align}
 where $F_\theta(\{t\}) = F_\theta(t) - F_\theta(t-)$ and
$$
 q(\theta):= \frac{g(\theta)}{1+g(\theta)}\ \ \  \mbox{ and }\ \ \ h(\theta)_- = \max\{0,-h(\theta)\}.
$$
 \end{lemma}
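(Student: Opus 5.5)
The plan is to compute the directional derivative pointwise in $(u,\theta)$ and then pass the limit inside the integral by dominated convergence. For fixed $\theta$ and $u\in[0,1)$, set
$$
\phi(\delta):=|u-(1-u)(g(\theta)+\delta h(\theta))|.
$$
This is a convex function of $\delta$, so the difference quotients $(\phi(\delta)-\phi(0))/\delta$ are monotone in $\delta$. They are also bounded in absolute value by $(1-u)|h(\theta)|$, by the triangle inequality. Since $\int\!\int (1-u)|h(\theta)|\,p(du|\theta)p_\Theta(d\theta)=C(h)<\infty$, this dominating function is integrable, and dominated convergence lets us interchange $\lim_{\delta\downarrow0}$ with the double integral in \eqref{e:I(g)}. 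The point $u=1$ contributes nothing, because there the integrand equals $1$ for every $\delta$.

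Next we compute the one-sided derivative $\phi'(0+)$ pointwise. Write $a=u-(1-u)g(\theta)$. Then:
\begin{itemize}
\item if $a>0$, equivalently $u>q(\theta)$ with $q=g/(1+g)$, the derivative is $-(1-u)h(\theta)$;
\item if $a<0$, equivalently $u<q(\theta)$, it is $+(1-u)h(\theta)$;
\item if $a=0$, equivalently $u=q(\theta)$, it is $(1-u)|h(\theta)|$.
\end{itemize}
Combining the three cases,
$$
\phi'(0+)=(1-u)h(\theta)\big[1_{\{u<q\}}-1_{\{u>q\}}\big]+(1-u)|h(\theta)|1_{\{u=q\}}.
$$
Since $1_{\{u>q\}}=1-1_{\{u\le q\}}$ and $|h|=h+2h_-$, this simplifies to
$$
\phi'(0+)=(1-u)\Big\{2h\,1_{\{u\le q\}}-h+2h_-1_{\{u=q\}}\Big\}.
$$

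Integrating against $p(du|\theta)$ and using the definitions \eqref{e:b(theta)} and \eqref{e:F-theta} gives
$$
b(\theta)\big\{2F_\theta(q(\theta))h(\theta)-h(\theta)+2F_\theta(\{q(\theta)\})h(\theta)_-\big\}.
$$
This uses $\int 1_{\{u\le t\}}(1-u)\,p(du|\theta)=b(\theta)F_\theta(t)$ and the analogous identity for the atom at $t$. When $g(\theta)=\infty$ we would have $q=1$, but $g\in L^1(b\cdot p_\Theta)$ makes $g$ finite almost everywhere, so $q<1$ and the formula applies. Integrating over $p_\Theta(d\theta)$ then yields \eqref{e:lower-bound-via-F-statement}. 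The last term, $\int b\,h\,dp_\Theta$, is finite because $h\in L^1(b\cdot p_\Theta)$.

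The main obstacle is justifying the interchange of limit and integral, and this is handled by the uniform Lipschitz bound $|\phi(\delta)-\phi(0)|\le\delta(1-u)|h(\theta)|$ in $\delta$ together with the integrability provided by $C(h)<\infty$. The only other delicate point is the boundary case $u=q(\theta)$, which is exactly what produces the atom term $F_\theta(\{q\})h_-$. Measurability of $\theta\mapsto F_\theta(q(\theta))$ follows from the joint measurability of the kernel $p(du|\theta)$ and Fubini's theorem.
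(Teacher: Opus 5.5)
Your proposal is correct and follows essentially the same route as the paper: the three-case computation of the one-sided pointwise derivative of $\delta\mapsto|u-(1-u)(g(\theta)+\delta h(\theta))|$, the recombination via $1_{\{u>q\}}=1-1_{\{u\le q\}}$ and $|h|-h=2h_-$, dominated convergence with the bound $(1-u)|h(\theta)|$, and then identifying the inner integrals with $b(\theta)F_\theta(q(\theta))$ and $b(\theta)F_\theta(\{q(\theta)\})$. The only cosmetic difference is that you apply dominated convergence once on the joint measure $p(du|\theta)p_\Theta(d\theta)$, with the dominating function integrable because $C(h)<\infty$, whereas the paper uses two nested steps; your version makes the needed integrability of $b(\theta)|h(\theta)|$ explicit.
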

 \begin{proof} In view of \eqref{e:I(g)}, and the Fubini theorem for probability kernels \citep[see, e.g. Theorem 10.2.1 in ][]{dudley:1989}, 
 we have that 
 \begin{align}
    \frac{I(g+\delta h)-I(g)}{\delta} &= \int_{\SX} \Big\{\int_{u\in [0,1]}  \frac{|u - (u-1)(g(\theta) + \delta h(\theta))| - |u - (u-1)g(\theta)|}{\delta} 
     p(du|\theta) \Big\} p_{\Theta}(d\theta)\nonumber\\
     &=: \int_{\SX} \Big\{\int_{u\in [0,1]} f_\delta(u,\theta) p(du|\theta) \Big\} p_{\Theta}(d\theta), \label{e:l:gateaux-general-f-delta} 
 \end{align}
 Relation \eqref{e:lower-bound-via-F-statement} will follow from two consecutive applications of the Lebesgue Dominated Convergence Theorem (DCT).  Indeed, 
 let us first establish the point-wise limit of $f_\delta(u,\theta)$ as $\delta\downarrow 0$ by considering cases.\\

 {\em (Case 1):} Suppose that $u - (1-u)g(\theta)>0$.  Then, there exists a $\delta_{\theta,u,h}>0$ such that for all 
 $0<\delta<\delta_{\theta,u,h}$, we have $u - (1-u)(g(\theta)+\delta h(\theta))>0$ and hence
 $$
  f_\delta(u,\theta) = \frac{1}{\delta} \Big( u - (1-u)(g(\theta)+\delta h(\theta)) - (u-(1-u)g(\theta)) \Big) = -(1-u)h(\theta).
 $$

 {\em (Case 2:)} Similarly, if $u - (1-u)g(\theta)<0$, for possibly smaller $\delta_{\theta,u,h}>0$, we have $u - (1-u)(g(\theta)+\delta h(\theta))<0$,
 for all $0<\delta < \delta_{\theta,u,h}$ and hence
 $$
  f_\delta(u,\theta) = (1-u)h(\theta).
  $$

 {\em (Case 3:)} If $u - (1-u)g(\theta)=0$, we readily obtain
 $$
f_\delta(u,\theta) = \frac{1}{\delta} |(1-u)\delta h(\theta)| = (1-u) |h(\theta)|.
 $$

 By combining the above three cases, and observing that 
 $$\{u - (1-u)g(\theta)<0\}=\{u< q(\theta):= g(\theta)/(1+g(\theta))\},$$
 we get
 \begin{align}\label{e:l:gateaux-general-3} 
 & \lim_{\delta\downarrow 0} f_\delta(u,\theta) =  (1_{\{ u < q(\theta)\}} - 1_{\{ u>q(\theta)\}}) (1-u) h(\theta) + 1_{\{u=q(\theta)\}} (1-u)|h(\theta)|\nonumber\\
& =2\cdot1_{\{u \le q(\theta)\}} (1-u) h(\theta) - (1-u)h(\theta)  + 2\cdot 1_{\{u=q(\theta)\}} (1-u)(h(\theta)_-) =: f_{0}(u,\theta),
 \end{align}
 where in the last relation we used the facts that 
 $$1_{\{ u> q(\theta)\}} = 1 - 1_{\{  u\le q(\theta)\}}\ \ \mbox{ and }\ \ 
 |h(\theta)| - h(\theta) = 2 h(\theta)_-.$$

 Next, we will verify the conditions of the Lebesgue DCT.  In view of \eqref{e:l:gateaux-general-f-delta} by applying the
 inequality $||a|-|b||\le |a-b|$, we get
 \begin{equation}\label{e:l:gateaux-general-f-delta-upper}
  |f_\delta(u,\theta)| \le |(1-u) h(\theta)|\le |h(\theta)|.
 \end{equation}
 Since for all $\theta\in \SX$ the upper bound in \eqref{e:l:gateaux-general-f-delta-upper} belongs to $L^1(p(du|\theta))$, by the DCT, 
 the point-wise convergence \eqref{e:l:gateaux-general-3} implies
 \begin{equation}\label{e:l:gateaux-general-4}
  \int_{u\in [0,1]} f_\delta(u,\theta) p(du |\theta) \longrightarrow \int_{u\in [0,1]} f_0(u,\theta) p(du |\theta),\ \ \mbox{ as }\delta\downarrow 0.
 \end{equation}
 Recalling that $p(du|\theta)$ and $p_\Theta(d\theta)$ are probability measures, by using \eqref{e:l:gateaux-general-f-delta-upper}, and the
 established point-wise convergence in \eqref{e:l:gateaux-general-4}, the Lebesgue DCT over the space $L^1(p_\Theta)$, entails
 $$
  \int_{\SX} \Big\{ \int_{u\in [0,1]} f_\delta(u,\theta) p(du |\theta)\Big\} p_\Theta(d\theta) \to \int_\SX \int_{[0,1]} 
  f_0(u,\theta) p(du |\theta) p_\Theta(d\theta),
 $$
 as $\delta\downarrow 0$. In view of \eqref{e:l:gateaux-general-f-delta}, \eqref{e:l:gateaux-general-3} and
 \eqref{e:b(theta)}, the last convergence yields:
 \begin{align} \label{e:l:gateaux-general-1}
     \lim_{\delta\downarrow 0} \frac{I(g+\delta h)-I(g)}{\delta} &= 
     2\int_{\SX} \Big\{ \int_{\{u \le q(\theta)\} } (1-u) p(du|\theta)  \Big\} h(\theta) p_\theta(d\theta)  - \int_{\SX} b(\theta) h(\theta) p_\Theta(d\theta) \\
     & + 2\int_{\SX} \Big\{ \int_{\{u = q(\theta)\} } (1-u) p(du|\theta)  \Big\} (h(\theta)_-) p_\theta(d\theta), \label{e:l:gateaux-general-2}
 \end{align}
 In view of \eqref{e:F-theta}, Relations \eqref{e:l:gateaux-general-1}-\eqref{e:l:gateaux-general-2} can be written as in 
 \eqref{e:lower-bound-via-F-statement}, which completes the proof of the lemma.
 \end{proof}

 \begin{lemma}\label{l:CDF-inverses-statement} Let $F$ be the CDF of a probability distribution on $[0,1]$ and let
 $\alpha \mapsto q_\alpha$ and $\alpha \mapsto q_{\alpha+}$ be its left- and right-continuous generalized inverses, defined as in 
 \eqref{e:q-alpha} and \eqref{e:q-alpha-plus}, respectively.  The following statements hold:

 {\em (i)} For all $0\le \alpha \le 1$, we have
 \begin{equation}\label{e:l:CDF-inverses-1-statement}
   F(q_\alpha -) \le F(q_{\alpha+}-) \le \alpha \le F(q_\alpha)\le F(q_{\alpha+}),
 \end{equation}
 where $F(t-):= \lim_{s<t,\ s\to t} F(s)$ denotes the left-limit of $F$ at $t$.

 {\em (ii)} For all $0\le \alpha\le 1$, such that $q_\alpha<q_{\alpha+}$, we have
 \begin{equation}\label{e:l:CDF-inverses-2-statement}
   F(t) = F(q_\alpha),\ \ \mbox{ for all $ q_\alpha \le t < q_{\alpha+}$.}
  \end{equation}
 \end{lemma}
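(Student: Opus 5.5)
The plan is to argue directly from the definitions
\[
q_\alpha=\inf\{t\ge 0: F(t)\ge\alpha\}, \qquad q_{\alpha+}=\inf\{t\ge 0: F(t)>\alpha\},
\]
using only that $F$ is non-decreasing and right-continuous, with $F(t)=1$ for $t\ge 1$. Two facts about the sets are used throughout. First, $\{t: F(t)>\alpha\}\subseteq\{t:F(t)\ge\alpha\}$, which gives $q_\alpha\le q_{\alpha+}$. Second, both sets are up-closed intervals, and right-continuity of $F$ makes them closed on the left whenever they are nonempty.

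For part (i), the chain is handled link by link.

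\emph{Outer inequalities.} Monotonicity of $F$ together with $q_\alpha\le q_{\alpha+}$ gives $F(q_\alpha)\le F(q_{\alpha+})$ and $F(q_\alpha-)\le F(q_{\alpha+}-)$.

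\emph{The bound $\alpha\le F(q_\alpha)$.} The set $\{F\ge\alpha\}$ is nonempty, since it contains $1$ for $\alpha\le 1$. Take $t_n\downarrow q_\alpha$ with $F(t_n)\ge\alpha$; right-continuity gives $F(q_\alpha)\ge\alpha$. The edge case $q_\alpha=0$ with $\alpha=0$ is trivial.

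\emph{The bound $F(q_{\alpha+}-)\le\alpha$.} For every $s<q_{\alpha+}$ we have $s\notin\{F>\alpha\}$, so $F(s)\le\alpha$, and letting $s\uparrow q_{\alpha+}$ gives the claim. If $q_{\alpha+}=0$, read $F(0-)=0\le\alpha$.

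\emph{The convention at $\alpha=1$.} Here the set $\{F>1\}$ is empty, so the convention $q_{1+}=q_1=\sup\{t:F(t)<1\}$ applies. I still need $F(q_1-)\le 1\le F(q_1)$. The left bound is trivial. For the right bound, $F(t)=1$ for all $t>q_1$, so right-continuity gives $F(q_1)=1$. I expect this convention check, together with the boundary cases at $t=0$, to be the only fiddly point.

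For part (ii), suppose $q_\alpha<q_{\alpha+}$ and fix $t\in[q_\alpha,q_{\alpha+})$. Monotonicity gives $F(t)\ge F(q_\alpha)\ge\alpha$, using (i). Since $t<q_{\alpha+}$, $t$ lies below the infimum of $\{F>\alpha\}$, so $F(t)\le\alpha$. Hence $F(t)=\alpha$.

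Applying the same argument at $t=q_\alpha$ gives $F(q_\alpha)=\alpha$, and therefore $F(t)=F(q_\alpha)$ on the whole interval. One remark on scope: if $\alpha=1$, then $q_1=q_{1+}$ by convention, so the hypothesis $q_\alpha<q_{\alpha+}$ cannot hold and that case is vacuous.

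The main obstacle is therefore not conceptual. It is careful bookkeeping at the boundaries: the restriction of the infimum to $t\ge 0$, the empty-set convention at $\alpha=1$, and interpreting left limits at $0$. Each of these should be dispatched explicitly before running the generic argument above.
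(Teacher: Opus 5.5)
Your proposal is correct and follows essentially the same route as the paper: right-continuity along $t_n\downarrow q_\alpha$ gives $\alpha\le F(q_\alpha)$, the infimum definition of $q_{\alpha+}$ gives $F(q_{\alpha+}-)\le\alpha$, and for (ii) a value $F(t)>\alpha$ at some $t<q_{\alpha+}$ would contradict that definition; your explicit check of the $\alpha=1$ convention is a bit more careful than the paper's. One harmless slip: right-continuity does not make $\{t\ge 0: F(t)>\alpha\}$ closed on the left (for the uniform law on $[0,1]$ and $\alpha=1/2$ it is $(1/2,\infty)$), but you never use that claim, so nothing breaks.
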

 \begin{proof} Since $q_\alpha \le q_{\alpha+}$, the monotonicity of $F$ implies $F(q_\alpha -) \le F(q_{\alpha+}-)$.  
 Also, by \eqref{e:q-alpha} we have that $t_n\downarrow q_\alpha$, for some $t_n\ge q_\alpha$ such that $F(t_n)\ge \alpha$.  The right-continuity
 of $F$ then entails $F(t_n)\to F(q_\alpha)$, as $n\to\infty$, and hence $F(q_\alpha)\ge \alpha$.  To complete the proof of \eqref{e:l:CDF-inverses-1-statement},
 it remains to show that $F(q_{\alpha+}-) \le \alpha$.  

 If $q_{\alpha+} = 0$, then by convention $F(q_{\alpha+}-) = 0 \le \alpha$, for all $\alpha\in [0,1]$.  Let now $q_{\alpha+} >0$, and 
 suppose that $F(q_{\alpha+}-) > \alpha$.  Let $0<t_n < q_{\alpha+}$ be such that $t_n\uparrow q_{\alpha+}$. Then, since 
 $F(t_n) \to F(q_{\alpha+}-) >\alpha$, as $n\to\infty$, there exists a $t_n<q_{\alpha+}$ such that $F(t_n) >\alpha$.  This, however, contradicts 
 the definition of $q_{\alpha+}$ in \eqref{e:q-alpha-plus}.  We have thus shown that $F(q_{\alpha+}-) \le \alpha$, completing the proof of 
 \eqref{e:l:CDF-inverses-1-statement}.

 {\em We now prove \eqref{e:l:CDF-inverses-2-statement}.}  By the monotonicity of $F$, we have $F(t)\ge F(q_\alpha)$, for all $t\in[q_\alpha,q_{\alpha+})$. 
 Suppose now that
 \begin{equation}\label{e:l:CDF-inverses-3}
 F(t) > F(q_\alpha) \ge \alpha, \ \mbox{ for some }q_\alpha < t <  q_{\alpha+}.
 \end{equation}
 Relation \eqref{e:l:CDF-inverses-3} contradicts the definition of $q_{\alpha+}$ in \eqref{e:q-alpha-plus}.  
 This completes the proof of \eqref{e:l:CDF-inverses-2}.
 \end{proof}
 
 \begin{lemma}[Statement of Lemma \ref{l:C(g)-properties}] \label{l:C(g)-properties-statement} 
 Let $g_\alpha^{(\lambda)}$ be as in \eqref{e:g-alpha-lambda}.  Then:\\

    {\em (i)} We have that $C(g_\alpha^{(\lambda)})<\infty$,  for all $0\le \alpha <1$ and $0\le \lambda \le 1$.\\
    
    {\em (ii)} The function $\lambda \mapsto C(g_\alpha^{(\lambda)})$ is monotone non-decreasing and continuous in $\lambda \in [0,1]$.\\

    {\em (iii)} The function $\alpha \mapsto C(g_\alpha)$ in monotone non-decreasing and left-continuous in $\alpha \in (0,1]$.\\

    {\em (iv)} For all $0\le \alpha < 1$, we have that $C(g_{\alpha+}) = \lim_{\beta\downarrow \alpha} C(g_\beta).$
   \end{lemma}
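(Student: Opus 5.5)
The whole proof reduces everything to pointwise properties of the map $\phi(q):=q/(1-q)$, which is increasing, continuous and convex on $[0,1)$, followed by monotone and dominated convergence with respect to the finite measure $b(\theta)p_\Theta(d\theta)$. Here $b\le 1$ and $p_\Theta$ is a probability measure, so this measure has total mass at most one. The key pointwise bound comes from \eqref{e:q-alpha-+<1}: for fixed $\alpha<1$ and $\lambda\in[0,1]$,
$$
0\le g_\alpha^{(\lambda)}(\theta)\le g_{\alpha+}(\theta)=\phi(q_{\alpha+}(\theta))<\infty .
$$
So in every part it suffices to control $C(g_{\alpha+})$.

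For part (i) I would obtain an integrable bound on $b(\theta)g_{\alpha+}(\theta)$ via a Markov-type estimate for the tilted law $F_\theta$. Since $F_\theta(q_{\alpha+}(\theta)-)\le\alpha$ by Lemma \ref{l:CDF-inverses}(i), we get $F_\theta([q_{\alpha+}(\theta),1))\ge 1-\alpha$. Hence
$$
b(\theta)(1-\alpha)(1-q_{\alpha+}(\theta))\le \int_{[q_{\alpha+},1)}(1-u)^2p(du|\theta)\cdot\frac{1-q_{\alpha+}}{1-q_{\alpha+}}\le \ldots
$$
That direct bound is awkward, so instead I would use the weighted identity
$$
b(\theta)\,\frac{q}{1-q}\,F_\theta([q,1))=\frac{q}{1-q}\int_{[q,1)}(1-u)\,p(du|\theta)\le \int_{[q,1)} u\,p(du|\theta),
$$
which holds because $u\ge q$ implies $q(1-u)\le u(1-q)$. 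Taking $q=q_{\alpha+}(\theta)$ gives
$$
b(\theta)g_{\alpha+}(\theta)\le (1-\alpha)^{-1}\,\E[U\mid\Theta=\theta].
$$
Integrating yields $C(g_{\alpha+})\le \E[U]/(1-\alpha)<\infty$. This step is the main one; the rest is routine.

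For part (ii), monotonicity follows because $q_\alpha^{(\lambda)}$ is non-decreasing in $\lambda$ and $\phi$ is increasing. Continuity follows by dominated convergence, since $\lambda\mapsto\phi(q_\alpha^{(\lambda)}(\theta))$ is continuous pointwise and dominated by $g_{\alpha+}$, which is integrable by (i).

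For part (iii), monotonicity in $\alpha$ follows from that of $q_\alpha(\theta)$. For left-continuity, the left-continuous inverse satisfies $q_\beta(\theta)\uparrow q_\alpha(\theta)$ as $\beta\uparrow\alpha$, so $g_\beta\uparrow g_\alpha$ pointwise. This includes $\alpha=1$, where the value $+\infty$ is allowed. The monotone convergence theorem then gives $C(g_\beta)\uparrow C(g_\alpha)$, with no integrability needed.

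For part (iv), fix $\alpha<1$ and take $\beta\downarrow\alpha$ with $\beta<\beta_0<1$. Then $q_\beta(\theta)\downarrow q_{\alpha+}(\theta)$ pointwise by the stated limit relation, so $g_\beta\downarrow g_{\alpha+}$. These functions are dominated by $g_{\beta_0}\le g_{\beta_0+}$, which is integrable by (i). Dominated convergence then gives $C(g_\beta)\to C(g_{\alpha+})$.
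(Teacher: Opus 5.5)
Your proof is correct. Parts (ii)--(iv) coincide with the paper's argument: monotone or dominated convergence with respect to $b\cdot p_\Theta$, using $q_\beta\uparrow q_\alpha$ as $\beta\uparrow\alpha$ and $q_\beta\downarrow q_{\alpha+}$ as $\beta\downarrow\alpha$. Part (i), however, takes a genuinely different route.

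The paper argues by randomization. For $V\sim{\rm Uniform}(0,1)$ independent of $\Theta$, $q_V(\theta)$ has law $F_\theta$, so $\int_0^1\int_{\SX} b(\theta)(1-q_\alpha(\theta))^{-1}p_\Theta(d\theta)\,d\alpha\le 1$. Fubini then gives $C(g_\alpha)<\infty$ for Lebesgue-a.e.\ $\alpha$, and monotonicity in $\alpha$ upgrades this to all $\alpha<1$. Finally, $g_\alpha^{(\lambda)}$ is dominated by $g_\beta$ for some $\beta\in(\alpha,1)$.

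You instead prove a pointwise Markov-type bound. Because $F_\theta$ has no atom at $1$ and $F_\theta(q_{\alpha+}(\theta)-)\le\alpha$, you get $F_\theta([q_{\alpha+}(\theta),1))\ge 1-\alpha$. The inequality $q(1-u)\le u(1-q)$ for $u\ge q$ then yields $b(\theta)g_{\alpha+}(\theta)\le(1-\alpha)^{-1}\int u\,p(du|\theta)$, i.e.\ $C(g_{\alpha+})\le \E[U]/(1-\alpha)$.

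Your route is more elementary and quantitative:
\begin{itemize}
\item it holds for every $\alpha<1$ directly, with no quantile transform and no almost-everywhere-then-monotonicity step;
\item it controls the right-continuous version $g_{\alpha+}$ itself, which you then reuse as the dominating function in (ii) and (iv);
\item it ties the constraint functional explicitly to $\E[U]$.
\end{itemize}
The paper's route is a neat probabilistic identity that handles all levels $\alpha$ at once, but it is purely qualitative. Incidentally, its conditional expectation equals $p([0,1)|\Theta)\le 1$ rather than $1$, which is harmless.

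In a write-up, delete your abandoned first display in (i). Since $1-u\le 1-q_{\alpha+}$ on $[q_{\alpha+},1)$, the inequality there points the wrong way.
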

   \begin{proof} 
   
   {\em Part (i):} Note that $g_\alpha^{(0)}(\theta) = g_\alpha(\theta) = q_\alpha(\theta)/(1-q_\alpha(\theta))$.
   We begin by showing that $C(g_\alpha) = C(g_\alpha^{(0)}) <\infty,$ for all $0\le \alpha<1$.

    Let $V\sim {\rm Uniform}(0,1)$ be independent from $\Theta \sim p_\Theta$. We will show that 
    \begin{equation}\label{e:l:C(g_alpha)-finite-1}
     \E\Big[ \frac{b(\Theta)}{1-q_V(\Theta)} \Big] =\int_{0}^1  \int_{\SX}  \frac{b(\theta)}{1-q_\alpha(\theta)} p_\Theta(d\theta)  d\alpha  <\infty.
    \end{equation}
    This, in view of the Tonelli-Fubini theorem implies that 
    $$
     \E\Big[ \frac{b(\Theta)}{1-q_\alpha(\Theta)} \Big] = \int_\SX \frac{b(\theta)}{1-q_\alpha(\theta)} p_\Theta(d\theta) <\infty,
    $$
    for Lebesgue almost all $\alpha\in (0,1)$.  The monotonicity of the function $\alpha\mapsto 1/(1-q_\alpha(\theta))$ then entails that,
     for all $\alpha \in [0,1)$, we have
    \begin{equation}\label{e:l:C(g_alpha)-finite-2}
     C(g_\alpha) = \int_{\SX} b(\theta) g_\alpha(\theta) p_\Theta(d\theta)  = \int_{\SX} b(\theta) \frac{q_\alpha(\theta)}{1-q_\alpha(\theta)}
     p_\Theta(d\theta)   \le \int_\SX \frac{b(\theta)}{1-q_\alpha(\theta)} p_\Theta(d\theta) <\infty, 
    \end{equation}
    where we used that $q_\alpha(\theta)\in [0,1]$.  Thus, to show that $C(g_\alpha)<\infty,\ 0\le \alpha<1$, it remains to
    prove \eqref{e:l:C(g_alpha)-finite-1}, which we do next.
    
    Note that since $\alpha\mapsto q_\alpha(\theta)$ is the left-continuous inverse of $F_\theta$, we have that $q_V(\theta)$ has the 
    probability distribution $b(\theta)^{-1}(1-u)p(du|\theta)$ \citep[see, e.g., p. 61 in ][]{resnick:1999book}.  
    Thus, by the independence of $\Theta$ and $V$, we obtain 
    $$
    \E \Big[ \frac{b(\Theta)}{1-q_V(\Theta)} \, \vert\, \Theta \Big] = \int_{[0,1)} \frac{b(\Theta)}{(1-u)} \frac{(1-u)}{b(\Theta)} p(du |\Theta) =
    p([0,1)|\Theta) = 1.
    $$
    Hence, by taking an expectation over $\Theta$ in the last relation, we obtain 
    $\E[ b(\Theta)/(1-q_V(\Theta)) ] = 1 <\infty$, which implies
    \eqref{e:l:C(g_alpha)-finite-1} and proves that $C(g_\alpha) <\infty$ for all $0\le \alpha <1$.\\

   Now that we have shown $C(g_\alpha)<\infty$, for all $0\le \alpha<1$, we proceed with also 
   proving that $C(g_\alpha^{(\lambda)})<\infty$, for all $0\le \lambda \le 1$.  (Note that we exclude the 
   case $\alpha=1$, since in fact one often has $C(g_1) = \infty$.) Since the function $x\mapsto x/(1-x)$
   is increasing for $x\in [0,1)$, we have 
   \begin{equation}\label{e:l:C(g)-properties-1} 
    0\le g_\alpha^{(\lambda)}(\theta) = \frac{ q_\alpha^{(\lambda)}(\theta)}{ 1- q_\alpha^{(\lambda)}(\theta)} \le g_\beta(\theta),
   \end{equation}
   for all $\alpha<\beta<1$, since $q_\alpha^{(\lambda)}(\theta)\le q_{\beta}(\theta)$.  Since we have already shown that 
   $g_\beta \in L_+^1(b\cdot p_\Theta(d\theta))$,  the latter inequality implies $g_\alpha^{(\lambda)}(\theta)$, i.e.,
    $C(g_\alpha^{(\lambda)})<\infty$ and completes the proof of part (i).  Observe that $g_\alpha^{(1)} = g_{\alpha+}$ and so
    we have $C(g_{\alpha+})<\infty$, for all $\alpha \in [0,1)$.\\

Now, we prove {\em (ii)}.  Observe that the function $\lambda \mapsto q_\alpha^{(\lambda)}(\theta)$ is continuous in $\lambda$ 
for all $\theta$.  This, in view of \eqref{e:l:C(g)-properties-1}  and the Lebesgue dominated convergence theorem 
implies the continuity of $\lambda \mapsto C(g_\alpha^{(\lambda)})$ in $\lambda \in [0,1]$. 

Part {\em (iii)} is an immediate consequence of the Monotone Convergence Theorem and the observation that $\alpha \mapsto g_\alpha (\theta)$ are 
non-negative, monotone non-decreasing, and left-continuous in $\alpha$, for all $\theta\in \SX$.

To prove {\em (iv)}, observe first that for all $0\le \alpha<1$ and any fixed $\beta_0\in (\alpha, 1)$, we have that 
$$
 0\le g_{\alpha}(\theta) \le g_\beta(\theta) \le g_{\beta_0}(\theta),\ \ \mbox{ for all }\ \alpha<\beta<\beta_0.
$$
As shown in part {\em (i)}, we have $g_{\beta_0}\in L_+^1(b\cdot p_\Theta)$.  Also, by the fact that 
$q_{\alpha+}(\theta) = \lim_{\beta\downarrow\alpha } q_\beta(\theta)$ and the above dominance by an integrable function $g_{\beta_0}$, the
Lebesgue dominated convergence theorem implies the desired claim in {\em (iv)}.
   \end{proof}

\subsubsection{Proofs of Section \ref{sec:Breiman-models}}\label{sec:proofs_breiman_model}

  \begin{proof}[Proof of Proposition \ref{p:total-Breiman}.]
  The homogeneity of $\tau$ and Breiman's Lemma \ref{l:Breiman} entail
  $$
 \frac{\P[\tau(Z)>t]}{\P[\xi>t]}  = \frac{\P[\xi \tau(\eta) >t ]}{\P[\xi >t]} = \E[\tau(\eta)^\alpha],\ \ \mbox{ as }
 \to \infty,
 $$
 which proves \eqref{e:p:total-Breiman-i}.

 On the other hand, by the regular variation of $\xi$, for all $v>0$
 $$
 \frac{\P[\xi >t/v]}{\P[\xi >t ]} \to v^\alpha, \ \ \mbox{ as }t\to\infty.
 $$
 Thus, in view of \eqref{e:p:total-Breiman-i}, for all $w \in \{\tau>0\}$,
  $$
  p_t(w):= \frac{\P[ \xi > t/\tau(w)]}{\P[\tau(Z)> t]} \to p_\infty(w):= 
  \frac{\tau(w)^\alpha}{\E[\tau(w)^\alpha]},\ \ t\to\infty.
  $$
  Observe that both $p_t(w)$ and $p_\infty(w)$ are probability densities relative to the probability distribution
  $P_\eta(dw)$ of the random vector $\eta$ over the domain $\{w\,:\, \tau(w)>0\}$. Thus, as in 
  the proof of Lemma 3.11 in \cite{scheffler:stoev:2017}, the Scheff\'e theorem entails
  \begin{equation}\label{e:p:total-Breiman-1}
   \int_{\tau(w)>0} |p_t(w) -p_\infty(w)| P_\eta(dw) \to 0,\ \ t\to\infty.
  \end{equation}
  On the other hand, by the
  independence of $\xi$ and $\eta$, and the fact that $Z/\tau(Z) = \eta/\tau(\eta)$, we can write
\begin{align*}
  \P[ Z/\tau(Z) \in B | \{\tau(Z)>t\}] & = \frac{1}{\P[\tau(Z)>t]} \P[ \eta/\tau(\eta) \in B, \xi >t/\tau(\eta)]\\
  & = \int_{\{\tau(w)>0\}} 1_{B}(w/\tau(w)) p_t(w) P_\eta(dw),
 \end{align*}
 for all $B\in {\cal B}(S_\tau)$.  This fact and Relation \eqref{e:p:total-Breiman-1} yields the desired convergence 
 in total variation in \eqref{e:p:total-Breiman-ii}.
  \end{proof}

  \begin{proposition}[Statement of Proposition \ref{p:Pareto-Breiman} on the Pareto-Breiman models] \label{p:Pareto-Breiman-restatement} Let $Y$ and $X$ be as in \eqref{e:Y_X_Breiman} with $\xi$ as in \eqref{e:l:Breiman-i} with
$\alpha=1$. Let also $\tau(y,x):= y_+ + \tau_X(x)$, be a non-negative, continuous $1$-homogeneous such that
$0<\E[V]<\infty$ and $0<\E[\tau_X(W)]<\infty$. Then:\\

{\em (i)} $Y$ and $X$ are jointly regularly varying in the sense of Assumption \ref{a:X-Y-joint} and
\begin{equation}\label{e:p:Pareto-Breiman-i-statement}
\frac{(Y,X)}{\tau(Y,X)} \, \vert\, \{\tau(Y,X)>t\} \stackrel{TV}{\longrightarrow} \Theta_Z =:(U , (1-U) \Theta),\ \ \mbox{ as }t\to\infty.
\end{equation}
We have moreover that for all $B\in {\cal B}(\SX)$,
\begin{equation}\label{e:p:Pareto-Breiman-i-Theta-statement}
\P[ \Theta \in B\, |\, U<1] =\frac{1}{\E[ 1_{\{\tau_X(W)>0\}}\cdot (V + \tau_X(W))]}\E\Big[1_B\Big(\frac{W}{\tau_X(W)}\Big) \cdot 1_{\{\tau_X(W)>0\}} (V+\tau_X(W)) \Big].
\end{equation}

{\em (ii)} For all non-negative, measurable homogeneous $h:\R^{1+d}\to\R_+$, such that $\E[ h(V,W) ]<\infty$, we have
\begin{equation}\label{e:p:Pareto-Breiman-ii-statement}
 t\P[ h(Y,X) >t] \to c_\xi \E[h(V,W)],\ \ \mbox{ as }t\to\infty.
\end{equation}

{\em (iii)} If $h$ is as in part {\em (ii)} and such that $\{h>0\}\subset \{\tau >0\}$, then 
\begin{equation}\label{e:p:Pareto-Breiman-iii-statement}
\E [ h(U,(1-U)\Theta) ]  = \frac{1}{\E[\tau(V,W)]} \E[h(V,W)].
\end{equation}
\end{proposition}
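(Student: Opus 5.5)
The natural strategy is to reduce everything to Proposition \ref{p:total-Breiman} and Lemma \ref{l:Breiman}, applied to the Breiman vector $Z=\xi\cdot(V,W)$ with $\eta:=(V,W)$ and $\alpha=1$. Because \eqref{e:l:Breiman-i} holds, only first moments are needed, and $\E[\tau(V,W)]=\E[V]+\E[\tau_X(W)]\in(0,\infty)$. I would begin with part (ii), since (i) and (iii) follow from it.

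\emph{Part (ii).} Homogeneity gives $h(Y,X)=\xi\, h(V,W)$, with $\xi$ independent of $h(V,W)$. Lemma \ref{l:Breiman}(i), applied with $\eta:=h(V,W)$ (which has $\E[\eta]<\infty$), yields
\[
t\P[h(Y,X)>t]=t\P[\xi>t]\cdot\frac{\P[\xi\, h(V,W)>t]}{\P[\xi>t]}\to c_\xi\E[h(V,W)].
\]
The only point to check is that Lemma \ref{l:Breiman} applies without any continuity of $h$. This is the case, because Breiman's lemma concerns a scalar product and uses only measurability.

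\emph{Part (i).} Proposition \ref{p:total-Breiman} with $\tau(y,x)=y_++\tau_X(x)$ gives $Z\in RV_1$, the TV convergence \eqref{e:p:Pareto-Breiman-i-statement}, and the angular law $\sigma(A)=\E[1_A(\eta/\tau(\eta))\tau(\eta)]/\E[\tau(\eta)]$.

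To verify condition \eqref{e:a:X-Y-joint}, I compute the following two quantities:
\[
\sigma(\{1\}\times\{\tau_X=0\})=\frac{\E[V 1_{\{\tau_X(W)=0\}}]}{\E[\tau(V,W)]}<1,
\qquad
\sigma(\{0\}\times\{\tau_X=1\})=\frac{\E[\tau_X(W)1_{\{V=0\}}]}{\E[\tau(V,W)]}<1.
\]
The first is strictly below $1$ because $\E[\tau_X(W)]>0$, and the second because $\E[V]>0$.

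For \eqref{e:p:Pareto-Breiman-i-Theta-statement}, I identify $U=V/\tau(V,W)$ and $\Theta=W/\tau_X(W)$ under the tilted law. The event $\{U<1\}$ is exactly $\{\tau_X(W)>0\}$. Conditioning $\sigma$ on it gives the stated formula, with the tilt factor $\tau(V,W)=V+\tau_X(W)$ in both numerator and denominator.

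\emph{Part (iii).} I apply (ii) twice: once to $h$, and once to $\tau$ itself, which gives $t\P[\tau(Y,X)>t]\to c_\xi\E[\tau(V,W)]$. Taking the ratio gives $\P[h(Z)>t]/\P[\tau(Z)>t]\to\E[h(V,W)]/\E[\tau(V,W)]$.

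Separately, I write $h(Z)=\tau(Z)\,h(Z/\tau(Z))$, which is valid since $\{h>0\}\subset\{\tau>0\}$. Then
\[
\frac{\P[h(Z)>t]}{\P[\tau(Z)>t]}=\P\big[\tau(Z)h(Z/\tau(Z))>t \,\big|\, \tau(Z)>t\big]\cdot(\ldots).
\]
A cleaner route is the direct computation from \eqref{e:p:total-Breiman-ii-sigma(A)}:
\[
\E[h(\Theta_Z)]=\frac{\E\big[h\big(\eta/\tau(\eta)\big)\tau(\eta)\big]}{\E[\tau(\eta)]}=\frac{\E[h(\eta)]}{\E[\tau(\eta)]},
\]
where the last step uses homogeneity and the fact that $h(\eta)=0$ on $\{\tau(\eta)=0\}$. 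Since $\Theta_Z=(U,(1-U)\Theta)$, this is exactly \eqref{e:p:Pareto-Breiman-iii-statement}, and no limit argument is needed at all.

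\emph{Main obstacle.} The main subtlety is the convention $\Theta:=\theta_*$ on $\{U=1\}$ when identifying the law of $\Theta$ in (i). This is handled by conditioning on $\{U<1\}$, where $\Theta$ is genuinely $W/\tau_X(W)$ under the tilt.
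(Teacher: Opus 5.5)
Your proposal is correct and follows essentially the paper's proof: Breiman's lemma (via Proposition~\ref{p:total-Breiman}) gives the regular variation, the TV convergence and part (ii), and direct evaluation of the tilted angular law \eqref{e:p:total-Breiman-ii-sigma(A)} gives the two atom conditions and the conditional law of $\Theta$ given $\{U<1\}$. In part (iii), your one-line integration against the tilted law is the paper's indicator $\to$ simple function $\to$ monotone convergence argument with that standard extension left implicit, and you can simply drop the abandoned ratio-of-limits route.
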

\begin{proof}[Proof of Proposition \ref{p:Pareto-Breiman}]

{\em Part (i):} Proposition \ref{p:total-Breiman} readily implies the regular variation of $Z:=(Y,X)$ and the convergence in \eqref{e:p:Pareto-Breiman-i-statement} in 
the strong total variation sense.  It remains to argue that $Y$ and $X$ are jointly regularly varying.  To this end, it is enough to show that $\P[U=0]<1$ 
and $\P[U=1]<1$.

We shall apply \eqref{e:p:total-Breiman-ii-sigma(A)} with the set
$
  A = \{ \theta_Z=(u,\widetilde \theta_Z)\, :\, u=0\}\subset S_{\tau},
$
where $\widetilde \theta_Z = (\theta_Z(i))_{i=2}^{d+1}$.  In this case, 
$\eta:=(V,W)$ and $\tau(\eta) = \tau(V,W) = V + \tau_X(W)$, and thus we obtain
\begin{align*}
\P[U=0] = \sigma(A) &= \frac{1}{\E[ V + \tau_X(W)]} \E\Big[1_A\Big( \frac{(V,W)}{\tau(V,W)} \Big) \cdot  (V + \tau_X(W)) \Big] \\
& = \frac{1}{\E[ V + \tau_X(W)]} \E[1_{\{V = 0\}} (V + \tau_X(W)) ] \le  \frac{ \E[ \tau_X(W)]}{\E[ V]  + \E[\tau_X(W)]} < 1,
\end{align*}
where the last inequality follows from the assumption that $\E[V]>0$ and $\E[\tau_X(W)]>0$.  This shows that $\P[U=0]<1$.

Similarly, letting $A = \{(u,\widetilde \theta_Z)\, :\, u=1\} = \{(u,\widetilde \theta_Z)\, :\, \tau_X(\widetilde \theta_Z) = 0\}$, by
\eqref{e:p:total-Breiman-ii-sigma(A)}, we obtain
\begin{equation}\label{e:p:Pareto-Breiman-0}
\P[U=1] = \sigma(A) = \frac{1}{\E[ V + \tau_X(W)]} \E\Big[1_{\{\tau_X(W)=0\}}\cdot (V + \tau_X(W))\Big] \le \frac{\E[V]}{\E[V] + \E[\tau_X(W)]} <1.
\end{equation}
This completes the proof \eqref{e:a:X-Y-joint}, which establishes the joint regular variation of $Y$ and $X$.

To prove \eqref{e:p:Pareto-Breiman-i-Theta}, let now $B\in {\cal B}(\SX)$ and define
$$
A= \{ (u,(1-u)\theta)\, :\, u<1,\ \theta\in B\}.
$$
By \eqref{e:p:total-Breiman-ii-sigma(A)}, we have
\begin{align} \label{e:p:Pareto-Breiman-1}
\sigma(A)& = \P[\Theta\in B,\ U<1]  = \frac{1}{\E[V + \tau_X(W)]}\E\Big[ 1_A\Big( \frac{(V,W)}{\tau(V,W)} \Big) \cdot  (V + \tau_X(W)) \Big]\nonumber\\
&=\frac{1}{\E[V + \tau_X(W)]}\E\Big[ 1_B(W/\tau_X(W)) 1_{\{\tau_X(W)>0\}} \cdot  (V + \tau_X(W)) \Big],
\end{align}
where in the last relation we used the fact that $(v,w)/\tau(v,w) \in A$ if and only if $\tau_X(w)>0$ and 
$$
 (w/\tau(v,w))\cdot (1-v/\tau(v,w))^{-1} = \frac{w}{v+\tau_X(w)}\cdot \frac{v + \tau_X(w)}{\tau_X(w)} = \frac{w}{\tau_X(w)} \in B.
$$
By \eqref{e:p:Pareto-Breiman-1} with $B := \SX$ or in fact in view of \eqref{e:p:Pareto-Breiman-0}, we 
obtain $$
\P[U<1] = \frac{1}{\E[V+\tau_X(W)]}\E[ 1_{\{\tau_X(W)>0\}} \cdot (V+\tau_X(W) ].
$$
Taking the ratio of \eqref{e:p:Pareto-Breiman-1} with the last expression
yields \eqref{e:p:Pareto-Breiman-i-Theta}.\\

{\em Part (ii):}  By observing that $\tau(Z)= \tau(\xi \cdot( V,W)) = \xi\cdot \tau(V,W)$, we see that \eqref{e:p:Pareto-Breiman-ii-statement}
is an immediate consequence of Relations \eqref{e:l:Breiman-i} and \eqref{e:p:total-Breiman-i}.  \\

{\em Part (iii):} Let now $h$ be a non-negative homogeneous function such that $\{h>0\} \subset \{\tau>0\}$. 
If $h(z) := 1_A(z/\tau(z)) \cdot \tau(z)$, for some Borel $A\subset S_\tau$, then Relation \eqref{e:p:Pareto-Breiman-iii-statement} is precisely \eqref{e:p:total-Breiman-ii-sigma(A)},
since by definition $\sigma(A) = \E[1_A(U,(1-U)\theta)] = \E[h(U,(1-U)\Theta)]$.  Thus, Relation \eqref{e:p:Pareto-Breiman-iii-statement} immediately extends to simple homogeneous functions
$$
h(z) = \tau(z) \cdot \sum_{i=1}^n a_i 1_{A_i}(z/\tau(z)),\ \ \ (A_i\in {\cal B}(S_\tau))
$$
where $a_i\ge 0$ and by convention $\tau(z) 1_{A_i}(z/\tau(z))=0$ whenever $\tau(z) = 0$.  The claim of part (iii) follows by appealing to the Monotone 
Convergence Theorem, which applies in particular to non-negative functions $h$ such that $\E[h(V,W)] = \infty$. 
\end{proof}

\begin{corollary}[Statement of Corollary \ref{c:Pareto-Breiman}]\label{c:Pareto-Breiman-statement} Let $Y$ and $X$ be as in 
Proposition \ref{p:Pareto-Breiman}.  Suppose that $g= g^{(\rm opt)}$ is as in Theorem \ref{thm:final}.  That is,
$g\in L_+^1(b\, \cdot\,  p_\Theta)$, with $b(\theta)$ and $p_\Theta$ as in \eqref{e:b(theta)} and \eqref{e:p_U,Theta}, 
$C(g) = \E[(1-U)g(\Theta)] = \E[U]$, and 
$$
\Lambda(g) = \sup_{f\in L_+^1(b\, \cdot\,  p_\Theta),\ C(f) = \E[U]} \Lambda(f).
$$

Define the homogeneous extension of $g$:
$$
 h_g(x):= \tau_X(x) g(x/\tau_X(x)),\ \  x\in \R^d,
$$
where by convention $h_g(x) = 0$ whenever $\tau_X(x)=0$.  Then:\\

{\em (i)} We have
\begin{equation}\label{e:c:Pareto-Breiman-i-statement}
\lim_{t\to\infty} t\P[Y>t] =  \lim_{t\to\infty}  t\P[h_g(X)] = c_\xi \E[\tau(V,W)] C(g).
\end{equation}

{\em (ii)} For every non-negative, homogeneous and Borel measurable function $h:\R^d\to \R_+$ such that $\{h>0\} \subset\{\tau_X>0\}$ and
$\P[Y>t] \sim \P[ h(X)>t],\ t\to\infty$, we have $h\vert_{\SX}\in L_+^1(b\, \cdot\,  p_\Theta)$ and
\begin{equation}\label{e:c:Pareto-Breiman-ii-statement}
\Lambda(h) = \lambda(Y,h(X)) = \lim_{t\to\infty} \P[Y>t | h(X) >t] \le \Lambda(g) = \lambda(Y,h_g(X)). 
\end{equation}
where the above limits exist and $\Lambda$ is as in \eqref{e:Lambda-functional}. 
\end{corollary}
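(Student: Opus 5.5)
The plan is to reduce everything to Proposition \ref{p:Pareto-Breiman}, parts (ii) and (iii). These parts hold for \emph{measurable}, not necessarily continuous, homogeneous functions. For this reason continuity of $g$ is never needed.

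\textbf{Part (i).} Apply Proposition \ref{p:Pareto-Breiman}(ii) with $h(y,x):=y_+$. This gives $t\P[Y>t]\to c_\xi\E[V]$. By part (iii), $\E[V]=\E[\tau(V,W)]\,\E[U]$. Next take $h(y,x):=h_g(x)=\tau_X(x)g(x/\tau_X(x))$, which is measurable, homogeneous and support-dominated by $\tau$. Part (iii) yields
\[
\E[h_g(V,W)]=\E[\tau(V,W)]\,\E[(1-U)g(\Theta)]=\E[\tau(V,W)]\,C(g)<\infty .
\]
Here we use that $h_g$ evaluated at $(U,(1-U)\Theta)$ equals $(1-U)g(\Theta)$, with value $0$ when $U=1$. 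Since the right-hand side is finite, part (ii) applies and gives the limit $c_\xi\E[\tau(V,W)]C(g)$. Finally, $C(g)=\E[U]$, so the two limits coincide.

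\textbf{Part (ii).} Let $h$ be as stated. By part (iii) and monotone convergence (which is valid even when the expectation is infinite), $\E[h(W)]=\E[\tau(V,W)]\,C(h|_{\SX})$.

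We first show that $C(h|_\SX)<\infty$. Suppose instead that $\E[h(W)]=\infty$. Truncate by $h_M:=h\wedge M\tau_X$, which is homogeneous and integrable. Part (ii) gives $\liminf_t t\P[h(X)>t]\ge c_\xi\E[h_M(W)]\uparrow\infty$, which contradicts calibration with $Y$. Hence $\E[h(W)]<\infty$, and part (ii) then gives $t\P[h(X)>t]\to c_\xi\E[h(W)]$. Calibration therefore forces $C(h|_\SX)=\E[U]$, so $h|_\SX\in D(\E[U])$.

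Next, apply Proposition \ref{p:Pareto-Breiman}(ii),(iii) to the measurable homogeneous function $(y,x)\mapsto y_+\wedge h(x)$. This gives
\[
t\P[Y>t,\,h(X)>t]\to c_\xi\E[\tau(V,W)]\,\E[U\wedge(1-U)h(\Theta)] .
\]
Dividing by $t\P[h(X)>t]$, whose limit is $c_\xi\E[\tau(V,W)]\E[U]$, we get
\[
\lim_t\P[Y>t\mid h(X)>t]=\Lambda(h).
\]
By the same calculation the limit of $\P[h(X)>t\mid Y>t]$ is the same quantity. Both $Y$ and $h(X)$ have regularly varying tails (the limits above are positive) and are tail equivalent. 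Lemma \ref{l:xi-eta-tdep} therefore upgrades this limit to the quantile-based coefficient: $\lambda(Y,h(X))=\Lambda(h)$.

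The inequality $\Lambda(h)\le\Lambda(g)$ is exactly the optimality of $g$ over $L^1_+(b\cdot p_\Theta)$ with $C=\E[U]$ (Theorem \ref{thm:final}). The equality $\Lambda(g)=\lambda(Y,h_g(X))$ follows by applying the argument just given to $h:=h_g$, which is calibrated by part (i).

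\textbf{Main obstacle.} The delicate step is the finiteness and integrability argument when $\E[h(W)]$ might be infinite. Part (ii) of the proposition requires integrability, so the truncation $h\wedge M\tau_X$ and the lower bound via monotonicity must be handled carefully.
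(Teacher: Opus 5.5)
Your proposal is correct and follows the paper's proof essentially step by step. You apply Proposition \ref{p:Pareto-Breiman}(ii)--(iii) to $y_+$, $h_g(x)$ and $y_+\wedge h(x)$, use truncation to show $h\vert_{\SX}\in L_+^1(b\cdot p_\Theta)$, then invoke Lemma \ref{l:xi-eta-tdep} and the optimality of $g$. The only differences are cosmetic: your truncation is $h\wedge M\tau_X$ where the paper uses $h\,1\{h(x/\tau_X(x))\le M\}$, and you argue the integrability step by contradiction rather than as a direct monotone bound.
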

\begin{proof}[Proof of Corollary \ref{c:Pareto-Breiman}] The argument is similar to the proof of Lemma \ref{lem:nonameidea} but instead of using multivariate 
regular variation, we shall appeal to Breiman's lemma.  Observe that the non-negative homogeneous functions $h_Y(y,x):= y_+,\ h_X(y,x):=  h_g(X)$ and $h_{Y,X}(y,x):= y_+\wedge h_g(x)$ 
are by construction support-dominated by $\tau$. That is, for all $h\in \{h_Y,h_X,h_{Y,X}\}$ we have $\{h>0\}\subset \{\tau>0\}$, where $\tau(y,x)= y_++\tau_X(x)$.  
Therefore, Relations \eqref{e:p:Pareto-Breiman-ii} and \eqref{e:p:Pareto-Breiman-iii} of Proposition \ref{p:Pareto-Breiman} applied to $h = h_Y$ and $h=h_X$ 
imply \eqref{e:c:Pareto-Breiman-i-statement}.  Taking $h=h_{Y,X}$, we also obtain
\begin{align*}
\lim_{t\to\infty} t\P[ Y>t, h_g(X)>t] &= \lim_{t\to\infty} t\P[h_{Y,X}(Y,X)>t] = c_\xi \E[ V\wedge h_g(W) ]\\
& = c_\xi \E[ \tau(V,W) ] \E[ U\wedge(1-U)g(\Theta)].
\end{align*}
By taking the ratio of the last relation and the already established \eqref{e:c:Pareto-Breiman-i-statement}, we obtain
$$
 \lim_{t\to\infty} \P[ Y>t | h_g(X)>t ] = \Lambda(g),
$$
which, in view of  Lemma \ref{l:xi-eta-tdep} applied to $\xi:=Y$ and $\eta:=h_g(X)$, shows that the tail-dependence coefficient exists and 
$\lambda(Y,h_g(X)) = \Lambda(g)$.

Let now $h$ be an arbitrary homogeneous function as in part {\em (ii)}.  Since $\{h>0\}\subset \{\tau_X>0\}$ we have that
$h(x) = \tau_X(x) h(x/\tau_X(x))$, where by convention $h(x)=0$ if $\tau_X(x)=0$. That is, $h$ can be represented as the homogeneous extension 
of its restriction $f:= h\vert_{\SX}$ onto $\SX=\{\tau_X=1\}$.

Suppose for a moment that $f:=h\vert_\SX \in L_+^1(b\, \cdot\,  p_\Theta)$. Then, by the assumed tail-equivalence 
$\P[Y>t] \sim \P[h(X)>t],\ t\to\infty$, as in the first part of the proof, we obtain
\begin{equation}\label{e:c:Pareto-Breiman-1}
\lim_{t\to\infty} t\P[Y>t] = \lim_{t\to\infty} t\P[h(X)>t] = c_\xi \E[ \tau(V,W) ] C(f),
\end{equation}
and hence $C(g) = C(f)$ in view of \eqref{e:c:Pareto-Breiman-i-statement}.  Similarly, as argued above with $h_g$ replaced by $h=h_f$, we obtain
$$
\Lambda(f) = \lambda(Y,h(X)) = \lim_{t\to\infty} \P[Y>t | h(X)>t].
$$
Since, both $f$ and $g$ satisfy the constraint $C(f)=C(g) = \E[U]$, Theorem \ref{thm:final} readily implies $\Lambda(f)\le \Lambda(g)$,
which proves \eqref{e:c:Pareto-Breiman-ii-statement}.

Therefore, to complete the proof it remains to show that for every $\tau$-support dominated and asymptotically calibrated 
predictor $h(X)$, we have $f = h\vert_\SX \in L_+^1(b\, \cdot\,  p_\Theta)$.  We will do so next by the method of truncation.  

Define $h^{(M)}(x) := \tau_X(x) h(x/\tau_X(x)) 1_{\{ h(x/\tau_X(x)) \le M \}}$ where by convention $h^{(M)}(x) :=0$ whenever $\tau_X(x)=0$.  
Observe that since $\{h>0\}\subset \{\tau_X>0\}$, we have $h^{(M)}(x) \uparrow h(x),$ as $M\to\infty$.  Also, we have that $h^{(M)} (W) \le \tau_X(W) \cdot M <\infty$ 
and hence $\E[ h^{(M)} (W)] \le M \E[\tau_X(W)] <\infty$.  Thus, by Breiman's Lemma \ref{l:Breiman} 
$$
\lim_{t\to\infty} t\P[ h^{(M)}(X) > t] =\lim_{t\to\infty} t\P[ \xi h^{(M)}(W) > t]  = c_\xi \E[ h^{(M)}(W) ].
$$
On the other hand, since $0\le h^{(M)}(x)\le h(x)$ and in view of \eqref{e:c:Pareto-Breiman-1}, we obtain
$$
c_\xi \E[ h^{(M)}(W) ] = \lim_{t\to\infty} t \P[ h^{(M)}(X)>t] \le \lim_{t\to\infty}  t \P[h(X)>t] = \lim_{t\to\infty} t\P[Y>t] <\infty.
$$
Since the latter upper bound does not depend on $M$ and $c_\xi>0$, by the Monotone Convergence Theorem, we have that
$\E[ h^{(M)}(W)] \uparrow \E[ h(W)]  \le \E[ \tau(V,W) ] C(g) <\infty$, as $M\uparrow\infty$.  Relation \eqref{e:p:Pareto-Breiman-iii} applied
to the homogeneous function $(y,x)\mapsto  h(x)$ yields
$$
\E[h(W)]  =\E[ \tau (V,W)] \E[ h((1-U) \Theta) ] = \E[ \tau (V,W)]\E [(1-U) f(\Theta) ]<\infty.
$$
This shows that $f\equiv h\vert_\SX \in L_+^1(b\cdot p_\Theta)$, which completes the proof.
\end{proof}

\subsection{Proofs of Section \ref{sec:inference}}\label{sec:inference_proofs}

In this section we provide proofs and auxiliary results needed for Theorem \ref{thm:univ_consitency}. 
We start by providing a contiguity argument, which will be used to formally justify
the Peaks-over-Threshold methodology.  This argument is of independent interest since it allows one to 
effectively {\em plug-in} a sample from the limit angular distribution $\sigma$ in place of the 
triangular array coming from the exceedance distributions for a sequence of growing thresholds.

\subsubsection{More on contiguity and the d\`ecoupage de L\'evy}\label{subsec:contiguity_argument}

We begin by recalling established results and notation on Hellinger and total variation distances of 
measures. Let $P$ and $Q$ be two probability measures, defined on a common measure space $(S,{\cal S})$.
Recall that the {\em squared} Hellinger distance of $P$ and $Q$ is defined as:
 $$
 H^2(P,Q):= \frac{1}{2} \int_S \Big(\sqrt{\frac{dP}{d\nu}} - \sqrt{\frac{dQ}{d\nu}}\Big)^2 d\nu,
 $$
 where $\nu$ is any other measure dominating $P$ and $Q$, i.e., $\nu = P + Q$.

 We shall denote by $P^{\otimes k}$ the product measure on the cartesian product space $(S^k,\sigma({\cal S}^k))$.
 Note that $P^{\otimes k}$ is the probability distribution of a sample of $k$ independent elements
 taking values in $S$ and having probability distribution $P$. The following inequalities are well known 
 \citep[see e.g.][]{vaart:1998,lecam:yang:2000,pollard:2002}
 \begin{equation}\label{e:Hellinger-product-inequality}
 H^2( P^{\otimes k}, Q^{\otimes k}) \le k H^2(P,Q)
 \end{equation}
 and 
 \begin{equation}\label{e:Hellinger-TV-inequality}
  H^2(P,Q) \le TV(P,Q) \le \sqrt{2} H(P,Q),
 \end{equation}
 where $ TV(P,Q)$ stands for the total variation distance:
 \begin{equation}\label{e:TV-def}
 TV(P,Q):= \sup_{A\in {\cal F}} |P(A)- Q(A)|.
 \end{equation}

Let now $P_t$ be the conditional distribution of $Z/\tau(Z)\, |\, \tau(Z)>t$ and let
 $P_\infty$ denote its limit.  

 \begin{assumption} \label{a:Hellinger} The probability measures $P_\infty$ and $\{P_t,\ t=1,2,\ldots\}$
 are such that 
 $$
 H(P_t,P_\infty) \to 0,\ \ \mbox{ as }t\to\infty. 
 $$  
 \end{assumption}

  The next result is similar in spirit to Le Cam's First Lemma (see also Remark \ref{r:LeCam_first_lemma} below).
  
 \begin{proposition}\label{p:contiguity} Suppose $P_\infty$ and $\{P_t\}$ satisfy Assumption \ref{a:Hellinger}. 
 Let also $\Theta_i,\ i=1,\ldots $ be iid from $P_\infty$ and 
 $\Theta_i(t),\ i=1,2,\ldots$ be iid from $P_t$.  Consider:
 
 (1) A statistic  $T_k = T_k(\Theta_1,\cdots,\Theta_k)$ such that 
 $$
  T_k\stackrel{\P}{\to} 0,\ \ \mbox{ as }k\to\infty
 $$

 (2) A random integer sequence $\{K(t)\}$, independent from $\{\Theta_i(t)\}$ such that 
 $$
  K(t)\stackrel{\P}{\to}\infty\ \ \mbox{ and }\ \ K(t) H^{2}(P_t,P_\infty)= o_{\P}(1),\ \ \mbox{ as }t\to\infty.
 $$
 
 Then, (1) and (2) imply that 
 $$
 \wt T_{K(t)} := T_{K(t)} (\Theta_1(t),\cdots,\Theta_{K(t)} (t))\stackrel{\P}{\to} 0,\ \ \mbox{ as }t\to\infty.
 $$
 \end{proposition}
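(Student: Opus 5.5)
The plan is a coupling argument. Total variation controls coupling, and the bound \eqref{e:Hellinger-product-inequality} together with \eqref{e:Hellinger-TV-inequality} controls total variation of the product measures.

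For fixed $t$ and fixed $k$, combining \eqref{e:Hellinger-TV-inequality} with \eqref{e:Hellinger-product-inequality} gives
$$
TV(P_t^{\otimes k},P_\infty^{\otimes k}) \le \sqrt{2}\, H(P_t^{\otimes k},P_\infty^{\otimes k}) \le \sqrt{2k}\, H(P_t,P_\infty).
$$
Fix $\epsilon>0$. Since $K(t)$ is independent of the sample $\{\Theta_i(t)\}$, we can condition on $K(t)=k$ and write
\begin{align*}
\P[|\wt T_{K(t)}|>\epsilon] &= \sum_k \P[K(t)=k]\, P_t^{\otimes k}(|T_k|>\epsilon)\\
&\le \sum_k \P[K(t)=k]\,\Big( P_\infty^{\otimes k}(|T_k|>\epsilon) + \min\{1,\sqrt{2k}\,H(P_t,P_\infty)\}\Big).
\end{align*}
The event $\{|T_k|>\epsilon\}$ is measurable in the product space, so the total variation bound applies to it directly.

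The right-hand side splits into two expectations over $K(t)$, which we treat separately.

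\emph{First term.} Set $a_k:=P_\infty^{\otimes k}(|T_k|>\epsilon)$, so $a_k\to0$ by hypothesis (1). The first term is $\E[a_{K(t)}]$. Given $\eta>0$, choose $k_0$ with $a_k\le\eta$ for all $k\ge k_0$. Then
$$
\E[a_{K(t)}] \le \eta + \P[K(t)<k_0] \to \eta,
$$
because $K(t)\stackrel{\P}{\to}\infty$. Since $\eta$ is arbitrary, the first term vanishes.

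\emph{Second term.} This is $\E[\min\{1,\sqrt{2K(t)H^2(P_t,P_\infty)}\}]$. Hypothesis (2) says $K(t)H^2(P_t,P_\infty)\stackrel{\P}{\to}0$. The integrand is bounded by $1$ and tends to $0$ in probability, so its expectation tends to $0$ by bounded convergence.

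The main obstacle is exactly this conversion from Hellinger distance to total variation. It costs a square root, which is why the condition on $K(t)H^2$ (rather than something stronger) suffices here. It also requires $K(t)$ to be independent of the sample, so that the conditioning step is valid; this independence is supplied by the d\'ecoupage de L\'evy (Proposition \ref{p:decpupage}) in the application.

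The version stated in Proposition \ref{p:contiguity-restatement} uses the condition $K(n)\|P_{t_n}-P_\infty\|_{\rm TV}\to0$ instead. That case follows from the present one, since $H^2\le TV$ by \eqref{e:Hellinger-TV-inequality}, so $K(n)H^2(P_{t_n},P_\infty)\le K(n)\|P_{t_n}-P_\infty\|_{\rm TV}\stackrel{\P}{\to}0$.
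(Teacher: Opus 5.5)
Your proof is correct and follows the paper's argument. You condition on $K(t)=k$ using independence, bound $P_t^{\otimes k}(|T_k|>\epsilon)$ by $P_\infty^{\otimes k}(|T_k|>\epsilon)+\sqrt{2kH^2(P_t,P_\infty)}$ via \eqref{e:Hellinger-product-inequality}--\eqref{e:Hellinger-TV-inequality}, and control the two pieces separately; the only difference is bookkeeping, since the paper restricts $K(t)$ to a deterministic window $[k_t^{(0)},k_t^{(1)}]$ with $k_t^{(1)}\le \epsilon^2/(2H^2(P_t,P_\infty))$, while you average over $K(t)$ and use bounded convergence, which is slightly cleaner.
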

 
 \begin{remark} \label{r:LeCam_first_lemma} In view of \eqref{e:Hellinger-TV-inequality} and \eqref{e:TV-def}, Assumption \ref{a:Hellinger} readily implies that $P_t$ and 
 $Q_t:= P_\infty$ are contiguous \citep[see e.g. Definition 6.3 in ][]{vaart:1998}. 
 In principle, Proposition \ref{p:contiguity} can be also derived as a consequence of Le Cam's First Lemma 
 \citep[see e.g.\ Lemma 6.4 in ][]{vaart:1998}.  Nevertheless, here we elected to provide our own argument 
 since it makes the connection between the rate of convergence in the Hellinger distance and the random sample size more explicit.
 \end{remark}
 
 \begin{proof}[Proof of Proposition \ref{p:contiguity}]
 By assumption, for all $\epsilon>0$, we can find two deterministic integer sequences 
 $k_t^{(0)}\le k_t^{(1)}$ such that $k_t^{(0)}\to \infty$, as $t\to\infty$, 
 and for all $t\ge t_\epsilon$
 \begin{equation}\label{e:p:contiguity-1}
   k_t^{(1)} \le \frac{\epsilon^2}{2 H^2(P_t,P_\infty)},\ \ \ \mbox{ and }\ \ \ 
  \P[  k_t^{(0)} \le K(t) \le k_t^{(1)}] \ge 1-\epsilon.
 \end{equation}

 Thus, for all $t\ge t_\epsilon$, we have that, 
 \begin{align}\label{e:p:contiguity-2}
  \P[ |\wt T_{K(t)} (t)| >\epsilon ] &= \Big( \sum_{k< k_t^{(0)}} + \sum_{k\in [k_t^{(0)},k_t^{(1)}]} + 
  \sum_{k> k_t^{(1)}} \Big) \P[ |\wt T_{k} (t)| >\epsilon,\ K(t) = k ] \nonumber \\
  &\le \P[ K(t) \not \in [k_t^{(0)},k_t^{(1)}]] + \sum_{k=k_t^{(0)}}^{k_t^{(1)}} \P[ |\wt T_k|>\epsilon] \P[ K(t)=k]\nonumber \\
 & \le 2\epsilon + \sum_{k=k_t^{(0)}}^{k_t^{(1)}} \P[ |\wt T_k|>\epsilon] \P[ K(t)=k],
 \end{align}
 where the last two relations follow from \eqref{e:p:contiguity-1} and the independence of 
 $K(t)$ and the $\Theta_i(t)$'s.
 
 Now, by appealing to the definition of total variation distance in \eqref{e:TV-def} and using the
 inequalities \eqref{e:Hellinger-product-inequality} and \eqref{e:Hellinger-TV-inequality},
 for the $k$th term in \eqref{e:p:contiguity-2}, we obtain
 \begin{align}
 \P[ |\wt T_k|>\epsilon] & \le \P[ |T_k|>\epsilon]) + TV(P_t^{\otimes k},P_\infty^{\otimes k}) \nonumber\\
 & \le \P[ |T_k|>\epsilon]) + \sqrt{2} H(P_t^{\otimes k},P_\infty^{\otimes k}) \nonumber \\
 & \le \P[ |T_k|>\epsilon] + \Big( 2 k H^2(P_t,P_\infty) \Big)^{1/2}.
 \end{align}
 By the assumption on the sequences in \eqref{e:p:contiguity-1}, we further obtain that
 $ 2 k H^2(P_t,P_\infty)\le \epsilon^2$, for all $k\in [k_t^{(0)},k_t^{(1)}]$, and hence 
 $$
  \P[ |\wt T_k|>\epsilon] \le  \P[ |T_k|>\epsilon] + \epsilon, 
 $$
 which, in view of \eqref{e:p:contiguity-2} yields
 $$
 \P[ |\wt T_{K(t)} (t)| >\epsilon ] \le 3\epsilon + 
  \sum_{k=k_t^{(0)}}^{k_t^{(1)}} \P[ |T_k|>\epsilon] \P[ K(t)=k] \le 3 \epsilon + \sup_{k\ge k_t^{(0)}}\P[ |T_k|>\epsilon].
 $$
 However, since by assumption $T_k\stackrel{\P}{\to} 0$, as $k\to\infty$ and since $k_t^{(0)}\to\infty$,
 we have that $\sup_{k\ge k_t^{(0)}}\P[ |T_k|>\epsilon]\to 0$, as $t\to\infty$.  Hence, by possibly
 increasing the value of $t_\epsilon$, we have that 
 $$
 \P[ |\wt T_{K(t)} (t)| >\epsilon ] \le 4\epsilon, \mbox{ for all }t\ge t_\epsilon.
 $$
 This, since $\epsilon>0$ was arbitrary implies that $\wt T_{K(t)} \stackrel{\P}{\to} 0$, as $t\to\infty$.
 \end{proof}

 The following remark shows why the assumption of independence of $K(t)$ and the $\Theta_i$'s
 in Proposition \ref{p:contiguity} is not restrictive in the setting of peaks over threshold. 
 
\begin{remark}[decoupage de L\'evy, see e.g., \cite{resnick:1987}] \label{rem:decoupage-de-Levy}
 Let $Z_i,\ i=1,2,\ldots,n$ be iid and define the set of
 exceedances ${\cal I}(t) :=\{ i\in [n]\, :\, \tau(Z_i)>t \}$.  Then, letting 
 $\Theta_i:=Z_i/\tau(Z_i),\ i\in {\cal I}(t)$, we have the following equality in distribution 
 in the sense of random point measures
 \begin{equation}\label{e:Pi-Pi*}
  \Pi:= \{\Theta_i,\ i\in {\cal I}(t)\} \stackrel{d}{=} \Pi^* :=\{ \Theta^*_1,\cdots,\Theta^*_{K(t)}\},
 \end{equation}
 where $K(t):=|{\cal I}(t)|$ and $\Theta_1^*,\Theta_2^*,\ldots$ are iid and independent from $K(t)$ 
 such that $\Theta^*_i \stackrel{d}{=} Z_i/\tau(Z_i) | \tau(Z_i)>t$.
 In simple terms, the values of the $\Theta_i$'s are independent of the number of exceedances $K(t)$.
\end{remark}

\begin{proof}[Proof of equation \eqref{e:Pi-Pi*}]
 To prove \eqref{e:Pi-Pi*}, it is enough to show that the Laplace functionals of $\Pi$ and $\Pi^*$ coincide. That is, 
 we will show that for all non-negative measurable functions $f:\R^d\to \R_+$, we have 
 $\E[ e^{-\Pi(f)}] = \E[ e^{-\Pi^*(f)}]$ in standard Resnick notation \citep[][]{resnick:1987}.  We have
 \begin{align}\label{e:r:decoupage_de_Levy}
 \E[ e^{-\Pi(f)}] & = \E\Big[ \exp\Big\{ -\sum_{i\in {\cal I}(t)} f(Z_i/\tau(Z_i))\Big\} \Big] 
 = \E    \Big[ \exp\Big\{ -\sum_{i=1}^n g(Z_i) \Big\} \Big],
 \end{align}
 where $g(z) = f(z/\tau(z)),\ \tau(z)>t$ and $g(z) = 0$, if $\tau(z)\le t$.  The independence of the $Z_i$'s then entails
 \begin{align*}
\E[ e^{-\Pi(f)}] &= \Big(\E [ e^{-g(Z_1)} ]\Big)^n = \Big( \E[ e^{-f(\Theta)}] \P[ \tau(Z_1)>t] + (1-\P[\tau(Z)>t])\Big)^n \\
& = \sum_{k=0}^n {n \choose k} p^k (1-p)^{n-k} \Big(\E[ e^{-f(\Theta)}])^k,
 \end{align*}
 where $\Theta := Z_1/\tau(Z_1)\, |\, \tau(Z_1)>t$, and $p:= \P[ \tau(Z_1)>t]$.

 On the other hand, using the independence of $K(t)$ and the $\Theta_i^*$'s, we obtain
 $$
\E[ e^{-\Pi^*(f)}] = \sum_{k=0}^n \Big(\E[ e^{-f(\Theta^*_1)}]\Big)^k \P[ K(t) = k] = {n \choose k} p^k (1-p)^{n-k} \Big(\E[ e^{-f(\Theta^*_1)}]\Big)^k,
 $$
 which equals the right-hand side of \eqref{e:r:decoupage_de_Levy} completing the proof of \eqref{e:Pi-Pi*}.
\end{proof}

 \subsubsection{On the construction of uniformly consistent estimators}\label{sec:unif_consistency_for_g}

 In this section we demonstrate that a point-wise consistent estimator of a continuous function
 can be modified to obtain a uniformly consistent estimator over compacta.   Although surprising, at a first
 glance, this fact is akin to the {\em method of  sieves} and is a direct consequence of the
 {\em principle of the diagonal}. The caveat is that this modification may ultimately converge 
 at a potentially slow rate.  We were unable to find a reference for this simple fact and therefore we give 
 it here for the sake of completeness.

 \begin{proposition}\label{p:unif_cont} Let $g_n: K \to \R$ be a sequence of random measurable 
 functions defined on a compact set $K \subset \R^d$.   Suppose that  for all $\theta\in K$, we 
 have $g_n(\theta)\stackrel{\P}{\to} g(\theta)$, as $n\to\infty$,  where $g:K\to \R$ is continuous.  
 
 Then, for some $m_n\to\infty$, there exist continuous functions $\wt g_n: \R^d \to \R$ defined 
 over $\R^d$, which are measurable functions of $g_n(\theta), \theta\in K$ (but not of $g(\cdot)$), 
 such that
 \begin{equation}\label{e:p:unif_cont}
  \sup_{\theta \in K} | \wt g_n(\theta)  - g(\theta) | \stackrel{\P}{\longrightarrow} 0,\ \ \mbox{ as }n\to\infty.
 \end{equation}
 \end{proposition}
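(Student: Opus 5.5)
The plan is a sieve-plus-diagonal argument: discretize $K$ on finite nets, interpolate the point estimates continuously, and let the net refine slowly enough that pointwise consistency at finitely many points is still uniform.

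\emph{Step 1 (the nets).} Since $K$ is compact, for each $m\in\N$ I choose a finite $1/m$-net $\{\theta_{m,1},\dots,\theta_{m,N_m}\}\subset K$. I also fix a continuous partition of unity $\{\phi_{m,j}\}_{j\le N_m}$ on the neighborhood $K^{(m)}:=\{x:\ {\rm dist}(x,K)<1/m\}$, subordinate to the balls $B(\theta_{m,j},2/m)$. Concretely, $\phi_{m,j}(x)=\psi_{m,j}(x)/\sum_l\psi_{m,l}(x)$ with $\psi_{m,j}(x)=(2/m-|x-\theta_{m,j}|)_+$; the denominator is positive on $K^{(m)}$. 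I multiply by a continuous cutoff equal to $1$ on $K$ and $0$ outside $K^{(m)}$, which makes the result continuous on all of $\R^d$.

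\emph{Step 2 (the interpolant).} For a given level $m$, define
$$
\wt g_{n}^{(m)}(x):=\chi_m(x)\sum_{j=1}^{N_m}\phi_{m,j}(x)\, g_n(\theta_{m,j}),
$$
where $\chi_m$ is the cutoff. This function is continuous and is a measurable function of finitely many values $g_n(\theta)$, $\theta\in K$; it does not involve $g$. For $\theta\in K$ we have $\chi_m(\theta)=1$ and the weights $\phi_{m,j}(\theta)$ vanish unless $|\theta-\theta_{m,j}|<2/m$. Hence
$$
\sup_{\theta\in K}|\wt g_n^{(m)}(\theta)-g(\theta)|\le \max_{j\le N_m}|g_n(\theta_{m,j})-g(\theta_{m,j})| + \omega_g(2/m),
$$
where $\omega_g$ is the modulus of continuity of $g$. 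Since $g$ is uniformly continuous on the compact $K$, $\omega_g(2/m)\to0$.

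\emph{Step 3 (diagonal choice of $m_n$).} For fixed $m$, the first term $M_{n,m}:=\max_{j\le N_m}|g_n(\theta_{m,j})-g(\theta_{m,j})|$ tends to $0$ in probability as $n\to\infty$, because it is a maximum of finitely many terms each converging to $0$ in probability. Therefore, for each $m$ there is $n_m$ with $\P[M_{n,m}>1/m]\le 1/m$ for all $n\ge n_m$, and I may take $n_m$ strictly increasing. Set $m_n:=\max\{m:\ n_m\le n\}$, so that $m_n\to\infty$ and $\P[M_{n,m_n}>1/m_n]\le 1/m_n$. With $\wt g_n:=\wt g_n^{(m_n)}$, Step 2 gives, with probability at least $1-1/m_n$,
$$
\sup_K|\wt g_n-g|\le 1/m_n+\omega_g(2/m_n),
$$
and this bound tends to $0$, which proves \eqref{e:p:unif_cont}.

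The main obstacle is Step 3, the diagonal selection. The rate $m_n$ depends on the unknown convergence rates of $g_n$ at the net points, so it exists but is not explicit; this is consistent with the paper's caveat about a potentially slow rate. The point to keep in mind is that $m_n$ is deterministic, so $\wt g_n$ depends only on the values of $g_n$ and not on $g$. The continuity requirement on all of $\R^d$ is handled by the cutoff $\chi_m$, and measurability follows because $\wt g_n$ depends on finitely many coordinates through continuous operations.
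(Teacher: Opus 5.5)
Your proof is correct. It shares the paper's skeleton: finitely many evaluation points, a deterministic $m_n$ chosen by a diagonal argument, and the bound ``maximal error at the points plus the modulus of continuity of $g$ at the mesh size''. Where it differs is in how continuity is obtained. The paper takes the first $m_n$ points of one fixed dense sequence and forms the piecewise-constant nearest-neighbour interpolant $f_n(\theta)=g_n(\zeta_n(\theta))$ on $K$. It then mollifies, $\wt g_n(\theta)=\int_K f_n(u)\varphi_n(\theta-u)\,du$. You instead use a fresh $1/m$-net at each level and a continuous partition of unity with a cutoff, which gives a continuous function on $\R^d$ in one step with explicit covering radius $1/m_n$. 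Your route is actually the more robust one. The paper's estimate silently uses $\int_K\varphi_n(\theta-u)\,du=1$ for $\theta\in K$. This fails for $\theta$ near $\partial K$, and it fails completely when $K$ is Lebesgue-null, e.g.\ a compact subset of the unit sphere, which is exactly where the proposition is applied. In that case the displayed $\wt g_n$ is identically zero; the scaling should also be $\epsilon_n^{-d}$ rather than $\epsilon_n^{-1}$. Repairing the paper's argument requires extending $f_n$ to all of $\R^d$ by nearest neighbour and integrating over $\R^d$, at the cost of a modulus term $\omega_{\delta_n+2\epsilon_n}(g)$ in place of $\omega_{\epsilon_n}(g)$. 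The mollification does buy arbitrary smoothness of $\wt g_n$, but you could get that too by replacing your tent functions $\psi_{m,j}$ with smooth bumps. Your construction needs no Lebesgue structure on $K$ and works verbatim for any compact $K\subset\R^d$.
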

 
 We begin with the following 
 \begin{lemma}\label{l:diagonal} Adopt the assumptions of Proposition \ref{p:unif_cont}. Let ${\cal I} = \{\theta_i\}_{i\in \N} 
 \subset K$ be an arbitrary countable subset of $K$. Then, there is a sequence $m_n\to\infty$ such that
 \begin{equation}\label{e:l:diagonal}
   \max_{1\le j\le m_n} | g_n(\theta_j) - g(\theta_j)| \stackrel{\P}{\longrightarrow} 0,\ \ \mbox{ as }n\to\infty.
 \end{equation}
 \end{lemma}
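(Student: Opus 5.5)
The plan is a diagonal argument that works entirely with deterministic tolerances. For each fixed $j$ we have $g_n(\theta_j)\stackrel{\P}{\to} g(\theta_j)$, hence for each fixed $m$ we also have $\max_{1\le j\le m}|g_n(\theta_j)-g(\theta_j)|\stackrel{\P}{\to}0$. This holds because a finite maximum of terms each converging in probability to zero converges to zero, by a union bound
$$
\P\Big[\max_{1\le j\le m}|g_n(\theta_j)-g(\theta_j)|>\epsilon\Big]\le \sum_{j=1}^m \P\big[|g_n(\theta_j)-g(\theta_j)|>\epsilon\big].
$$
So I will set $\Delta_{n,m}:=\max_{1\le j\le m}|g_n(\theta_j)-g(\theta_j)|$ and $\pi_{n,m}:=\P[\Delta_{n,m}>1/m]$. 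For each fixed $m$, $\pi_{n,m}\to 0$ as $n\to\infty$. The task is then to let $m=m_n$ grow slowly enough that $\pi_{n,m_n}\to0$ still holds.

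Concretely, I will choose an increasing sequence of integers $N_1<N_2<\cdots$ such that $\pi_{n,m}\le 1/m$ for all $n\ge N_m$; this is possible by the fixed-$m$ convergence. Then I define $m_n:=\max\{m:\ N_m\le n\}$ for $n\ge N_1$, and $m_n:=1$ otherwise. Since each $N_m$ is finite, $m_n\to\infty$. Since $n\ge N_{m_n}$, we get $\pi_{n,m_n}\le 1/m_n$. Now fix $\epsilon>0$. For $n$ large enough that $1/m_n<\epsilon$,
$$
\P[\Delta_{n,m_n}>\epsilon]\le \P[\Delta_{n,m_n}>1/m_n]=\pi_{n,m_n}\le 1/m_n\to 0,
$$
which is exactly \eqref{e:l:diagonal}.

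I do not expect any real obstacle. The only point that needs care is that $m_n$ must be deterministic and must not depend on the data. It does depend on the unknown law through the $N_m$, but that is allowed here, since the lemma only asserts that such a sequence exists. The second point is that the tolerance $1/m$ has to shrink together with $m$, so that one diagonal sequence handles every $\epsilon>0$ at once. No continuity of $g$ is used at this stage; continuity only enters later, when $\wt g_n$ is built by interpolating the values $g_n(\theta_j)$ over a dense countable set $\{\theta_j\}$.
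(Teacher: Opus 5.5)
Your proposal is correct and is essentially the paper's own diagonal argument. The paper likewise chooses thresholds $N^*(k,m)$ beyond which $\P[\max_{1\le j\le m}|g_n(\theta_j)-g(\theta_j)|\ge 1/k]\le 1/k$; it gets them from per-coordinate bounds $1/(2^j k)$ and a union bound, rather than from the fixed-$m$ convergence of the finite maximum as you do. It then sets $m_n:=\max\{m : n\ge N^*(m,m)\}$, which is the same definition as yours, so the differences are cosmetic.
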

 \begin{proof} By the point-wise convergence in probability $g_n(\theta) \stackrel{\P}{\to}g(\theta),\ n\to\infty$,
 for all $\theta_j\in {\cal I}$, and $k\in \N$, there exist $N(k,j)<\infty$ such that 
 \begin{equation}\label{e:l:diagonal-0}
 \P\Big[ |g_n(\theta_j) - g(\theta_j)| \ge \frac{1}{k} \Big] \le \frac{1}{2^j k},\ \ \mbox{ for all }n\ge N(k,j),
 \end{equation}
 where without loss of generality, we suppose that $N(k,j)$ is strictly increasing in $j$ and $k$.  
 Then, for all $m\in\N$, we have 
 \begin{equation}\label{e:l:diagonal-1}
 \P \Big[ \max_{1\le j\le m} | g_n(\theta_j) - g(\theta_j)| \ge \frac{1}{k} \Big]\le 
\sum_{j=1}^m  \P \Big[ | g_n(\theta_j) - g(\theta_j)| \ge \frac{1}{k} \Big]\le 
 \sum_{j=1}^{m}\frac{1}{2^j k} \le \frac{1}{k},\ 
  \end{equation}
  for all $n\ge N^*(k,m):= \max_{1\le j \le m} N(k,j)$.  For all $n\ge N^*(1,1)$ define
  $$
  m_n:= \max\{ m\ge 1\, :\,  n \ge N^*(m,m)\}.
  $$
  Observe that $m_n<\infty$  since $\{ N^*(m,m) \}_{m\in\N}$ is a strictly increasing integer sequence and
  hence $N^*(m,m)\uparrow \infty$, as $m\to\infty$.  Moreover, we have
  $m_n\to \infty$ because $N^*(m,m)<\infty$, for all $m$.    This, in view of \eqref{e:l:diagonal-1}
  with $k:= m:= m_n$, implies that
  \begin{equation}\label{e:l:diagonal-2}
  \P \Big[  \max_{1\le j\le m_n} | g_n(\theta_j) - g(\theta_j)| \ge \frac{1}{m_n} \Big] \le \frac{1}{m_n},
  \end{equation}
  which entails \eqref{e:l:diagonal} since $m_n\to\infty$.
  \end{proof}

  \begin{remark}\label{rem:m(n)-explicit}  The construction of the sequence $\{m_n\}$ in Lemma \ref{l:diagonal}
  is rather implicit.  In view of \eqref{e:l:diagonal-0}, only the rate in the pointwise convergence 
  $g_n(\theta)\stackrel{\P}{\to} g(\theta)$, as $n\to\infty$, plays a role.   In this remark, we will give a conservative but
  explicit bound on $m_n$.  
  
  Recall the Ky Fan metric between for two random variables
  $$
  \rho_{KF}(\xi,\eta) := \inf\{ \epsilon>0\, :\, \P[|\xi-\eta|\ge \epsilon]\le 1/\epsilon]\}
  $$
  and suppose that
  $$
  D(n):= \sup_{\theta\in K} \rho_{KF} (g_n(\theta),g(\theta)) \to 0,\ \ \mbox{ as }n\to\infty.
  $$
  That is, $\rho(n)$ is the uniform rate of pointwise consistency in the Ky Fan metric.  Without loss of generality suppose that $\rho(n)$ is monotone non-decreasing in $n$.
  Then, Relation \eqref{e:l:diagonal-1} holds, provided $\rho(N(j,k))\le {1/2^j k}$ and in turn $\rho(n)\le 1/(2^{m_n} m_n)$ implies \eqref{e:l:diagonal-2}.
  This shows that a crude upper bound for $m_n$ can be chosen as the logarithm of the inverse of the regularly varying function $x\mapsto 1/(x\log(x))$.  
  Concretely, if $\rho(n) = {\cal O}(n^{-\gamma}),\ n\to\infty$ for some $\gamma >0$, then any sequence $m_n\to \infty$ such that
  $m_n= o(\log(n))$ will satisfy the conditions of Lemma \ref{l:diagonal}.
\end{remark}

 \begin{proof}[Proof of Proposition \ref{p:unif_cont}] Let ${\cal I}$ be a countable and dense subset of
 $K$ and let $m_n$ be the sequence obtained from Lemma \ref{l:diagonal} ensuring \eqref{e:l:diagonal} holds.

 For each $\theta\in K$, let $\zeta_n(\theta)$ be the nearest neighbor to $\theta$ among $\{\theta_i,\ i\in [m_n]\}$ in the Euclidean norm,
 for example, and define the function
 $$
  f_n(\theta):= g_n(\zeta(\theta)),\ \ \theta\in K.
 $$

First, we will argue that 
 \begin{align}\label{p:unif_cont-0}
   \|f_n - g\|_{\infty, K}\stackrel{\P}{\to} 0,\ \ \mbox{ as }n\to\infty.
 \end{align}

 Note that by the triangle inequality, we have 
 \begin{align}\label{p:unif_cont-1}
  \sup_{\theta\in K} |f_n(\theta)  - g(\theta)| &\le  \sup_{\theta\in K}  |g_n(\zeta_n(\theta)) - g(\zeta_n(\theta))| +   
  \sup_{\theta\in K} |g(\zeta_n(\theta)) - g(\theta)| \nonumber \\
  & =  \max_{1\le j\le m_n} |g_n(\theta_j) - g(\theta_j)| + \omega_{\delta_n}(g),
 \end{align}
 where 
 $$
 \omega_\delta (g) := \sup_{ \|\theta'-\theta''\|<\delta,\ \theta',\theta''\in K} |g(\theta')-g(\theta'')|
 $$ 
 stands for the  modulus of continuity of $g$ and where
 $$
  \delta_n := \sup_{\theta \in K} \min_{j\in [m_n]} \|\theta - \theta_j\|.
 $$

 By Relation \eqref{e:l:diagonal}, the first term in the right-hand side of \eqref{p:unif_cont-1} vanishes in probability, as $n\to\infty$.
 On the other hand, the fact that ${\cal I}$ is dense in the compact set $K$ implies that $\delta_n\to 0$ and hence $\omega_{\delta_n}(g)\to 0,\ n\to\infty$,
 by the uniform continuity of $g$ on the compact $K$.  This completes the proof of \eqref{p:unif_cont-0}.

 Next, to complete the proof, we will construct a {\em continuous} (which can in fact be arbitrarily smooth) modification $\wt g_n$ 
 of $f_n$ that satisfies \eqref{e:p:unif_cont}.  This will be done by simply convolving $f_n$ with a suitable compactly supported kernel, 
 whose support shrinks as $n\to\infty$.  Indeed, let $\varphi : \R^d \to [0,\infty)$ be a non-negative 
 continuous function, such that $\varphi(\theta) = 0$, for all $\|\theta\| \ge 1$ and such that $\int_{\R^d} \varphi(x) dx  = 1$.  Let
 $\epsilon_n\downarrow 0$ be arbitrary and let $\varphi_n(x):= \epsilon_n^{-1}\varphi(x/\epsilon_n).$  Define 
 $$
  \wt g_n(\theta):= \int_{K} f_n(u) \varphi_n(\theta-u) du
 $$
 and note that $\wt g_n:\R^d\to \R$ is continuous.  By the triangle inequality, for all $\theta\in K$, we obtain
  \begin{align}\label{e:p:unif_cont-2}
      |\wt g_n(\theta) - g(\theta)| &\le \int_{K} | f_n(u) - g(u) | \varphi_n(\theta-u) du + \int_{K} |g(u) - g(\theta)| \varphi_n(\theta-u) du
      \nonumber\\
      &\le \|f_n - g\|_{\infty,K} \int_{\R^d} \varphi_n(\theta-u) du + \omega_{\epsilon_n}(g) \int_{\R^d} \varphi_n(\theta-u) du 
      \nonumber\\
      &= \|f_n - g\|_{\infty,K}  + \omega_{\epsilon_n}(g),
  \end{align}
  where the last inequality follows from the facts that $\varphi_n(u-\theta) = 0$ unless $\|u-\theta\|\le \epsilon_n$ 
  and $\int_{\R^d}\varphi_n(\theta-u)du=1$. 
  In view of \eqref{p:unif_cont-0} and the fact that $\epsilon_n\downarrow 0$, the upper bound 
  in \eqref{e:p:unif_cont-2} vanishes in probability, as $n\to\infty$. Since the
  right-hand side does not depend on $\theta\in K$, the desired convergence in \eqref{e:p:unif_cont} follows.
  \end{proof}

 \begin{remark} The rate of the uniform convergence in Proposition \ref{p:unif_cont} may be arbitrarily 
 slow since the sequence $m_n$, which controls the size of the set ${\cal I}_n:=\{\theta_1,\cdots,\theta_{m_n}\}$,
 may diverge to infinity rather slowly, as $n\to\infty$.  This means that, depending on the rate of the point-wise convergence of 
 $g_n(\theta)$ to $g(\theta)$, one may not be able to have a sufficiently fine mesh of values in 
 ${\cal I}_n$ to ensure a fast rate of decay of $\delta_n$ and hence a good rate on 
 the oscillation $\omega_{\delta_n}(g)$.  Providing an optimal upper bound in \eqref{p:unif_cont-1} involves a 
 trade-off between the point-wise rate of convergence and the regularity of $g$.  
 \end{remark}

 \begin{remark} The construction of $\wt g_n$ in Proposition \ref{p:unif_cont} 
 depends on the choice of $m_n$, which in turn depends on $g$ but {\em only through} the
 rate of the marginal convergence in probability $g_n(\theta)\to^{\P} g(\theta)$ 
 (recall \eqref{e:l:diagonal-1}). 
 Thus, under minor regularity assumptions on these rates, $m_n$ can be chosen in a fashion
 completely agnostic to the knowledge of $g$.  In many cases, simple upper bounds on 
 the marginal rate of the convergence $g_n(\theta) \to^{\P} g(\theta),\ n\to\infty$ suffice for
 a conservative selection of $m_n$.  
 \end{remark}
 

\section{Examples of asymptotically optimal homogeneous predictors}\label{sec:explicit_examples}

In this section, we will illustrate the optimal homogeneous predictors of Theorem \ref{thm:final}
for two classes of spectral measures -- purely discrete and absolutely continuous.  
The spectrally discrete case can be understood in a simpler way and yet, it is interesting 
since a curious {\em perfect asymptotic precision} phenomenon emerges in this context.  On the 
other hand, the spectrally continuous case is illustrated with a simple and yet elegant 
Dirichlet-type model, which may be of independent interest.

\subsection{The spectrally discrete case: A perfect precision phenomenon}\label{sec:spec-discr_mod}

Consider the linear independent factor model:
\begin{equation}\label{e:linear-model}
Y = \sum_{i=1}^p b_i \xi_i\ \ \ \mbox{ and }\ \ \  X = \sum_{i=1}^p a_i \xi_i,
\end{equation}
where the $\xi_i$'s are independent standard $1$-Pareto random variables, i.e.,\ $\P[\xi_i> x] = 1/x,\ x\ge 1$.  

We suppose that $b_i\ge 0$ and $a_i\in \R^d,\ i=1,\cdots,p,$ where not all $b_i$'s and not all $a_i$'s are zero.
Letting $\tau(y,x):= y_+ + \|x\|$ for some (any) fixed norm $\|\cdot\|$ on $\R^d$, it follows by Proposition 7.3 in \cite{resnick2007heav} 
that $Z:= (Y,X) \in RV_1(\R_+\times \R^d,\{a_n^{(Z)}:=n\},c_Z,\tau,\sigma)$, where with $c_i:= (b_i,a_i)$, we have
$$
c_Z=\sum_{i=1}^p \tau(c_i) = \sum_{i=1}^p b_i+\|a_i\|.
$$
In this case, the angular measure $\sigma$ is {\em discrete} and takes the form:
\begin{equation}\label{e:p:linear-model}
\sigma(d\theta) = \frac{1}{c_Z} \sum_{i=1}^p \tau(c_i) \delta_{c_i/\tau(c_i)}(d\theta).
\end{equation}

\noindent We begin with a counterpart to Proposition \ref{p:lambda-via-U-Theta} and for the sake of 
completeness, we provide a proof.

\begin{proposition} \label{p:spec-discrete-lambda} 
Let $Z = (Y,X) \in  RV_1(\R_+\times \R^d,\{a_n^{(Z)}:=n\},\tau,\sigma)$, where 
the angular measure $\sigma$ is as in \eqref{e:p:linear-model}. Let also $h:\R^d\to [0,\infty)$ be a non-negative 
continuous $1$-homogeneous function. Then:\\

(i)  We have that
$$
\lim_{t\to\infty} t\P[ Y>t] = \sum_{i=1}^p b_i,\ \ \ \lim_{t\to\infty} t\P[h(X)>t] = \sum_{i=1}^p h(a_i),
$$
and consequently $h(X)$ is an asymptotically calibrated extremal predictor of $Y$ if and only if
\begin{equation}\label{e:c-discrete}
 \sigma_Y= \sum_{i=1}^p b_i = \sum_{i=1}^p h(a_i)
\end{equation}
(recall Definition \ref{def:asymp-calibration}).\\

(ii) If $h(X)$ is an asymptotically calibrated extremal predictor of $Y$, as in \eqref{e:c-discrete}, then
\begin{equation}\label{e:lambda-Y-hX-discrete}
 \lambda(Y,h(X)) = \frac{1}{\sigma_Y} \sum_{i=1}^p b_i \wedge h(a_i).
\end{equation}
\end{proposition}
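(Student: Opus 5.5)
The plan is to deduce both parts from Proposition \ref{p:rv-via-h}, applied with $\alpha=1$, $a_n=n$ and the discrete angular measure \eqref{e:p:linear-model}, using three homogeneous test functions: $h_Y(y,x):=y_+$, $h_X(y,x):=h(x)$ and $h_{Y,X}(y,x):=y_+\wedge h(x)$.

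First I check the hypotheses of Proposition \ref{p:rv-via-h}. Each test function is non-negative, continuous and $1$-homogeneous. Each is support-dominated by $\tau(y,x)=y_++\|x\|$, since $\tau(y,x)=0$ forces $y_+=0$ and $x=0$, and then $h(0)=0$ by homogeneity. Each is bounded on $S_\tau$: that set is compact because $\tau$ is a norm on $\R_+\times\R^d$. We can ignore $y<0$ since $Y\ge 0$, and in any case $h$ is continuous on the compact unit sphere.

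Next I evaluate $\sigma(f)=\E[f(\Theta_Z)]$ for the discrete measure. For any $1$-homogeneous $f$,
\[
c_Z\sigma(f)=\sum_{i=1}^p \tau(c_i)\, f\big(c_i/\tau(c_i)\big)=\sum_{i=1}^p f(c_i).
\]
Terms with $\tau(c_i)=0$ are excluded from the sum, which is harmless because then $c_i=0$ and $f(c_i)=0$. With $c_i=(b_i,a_i)$ this gives $\sum_i b_i$, $\sum_i h(a_i)$ and $\sum_i b_i\wedge h(a_i)$ for the three test functions. Proposition \ref{p:rv-via-h} then gives
\[
\lim_{n\to\infty} n\P[f(Z)>n]=\sum_{i=1}^p f(c_i).
\]
To pass from integer $n$ to continuous $t$, I use the sandwich $\lfloor t\rfloor\le t\le\lfloor t\rfloor+1$ together with monotonicity of $t\mapsto\P[f(Z)>t]$. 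Equivalently, one can note that $b(t)=t$ works in \eqref{e:d:tau-rv-b(t)}. This proves the two limits in (i).

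For the calibration criterion, note that when both limits are positive, $\P[h(X)>t]\sim\P[Y>t]$ holds exactly when the two constants coincide, which is \eqref{e:c-discrete}. The first limit is positive because not all $b_i$ are zero.

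For (ii), calibration lets me invoke Lemma \ref{lem:nonameidea} (or directly Lemma \ref{l:xi-eta-tdep}) to get
\[
\lambda(Y,h(X))=\lim_{t\to\infty}\frac{\P[Y\wedge h(X)>t]}{\P[Y>t]}.
\]
Lemma \ref{lem:nonameidea} requires the class conditions of Definition \ref{def:asymp-calibration}, and these are the hypotheses just verified. Substituting the limits computed above gives \eqref{e:lambda-Y-hX-discrete}.

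The only delicate point is legitimacy. If the membership $h\in\mathcal G(\|\cdot\|)$ is in doubt, the fallback is to route through Lemma \ref{l:xi-eta-tdep} directly. That lemma needs regular variation of $Y$ and $h(X)$ with index $1$, which follows from the limits in (i), and existence of the conditional limit, which is exactly what the computation above supplies.
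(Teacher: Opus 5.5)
Your proposal is correct and follows the paper's proof essentially step for step. You use the same three test functions $y$, $h(x)$ and $y\wedge h(x)$ in Proposition \ref{p:rv-via-h}, evaluate the angular integrals via homogeneity and the discrete form \eqref{e:p:linear-model}, and invoke Lemma \ref{lem:nonameidea} for part (ii). Your bookkeeping is, if anything, slightly more careful than the paper's: you track the factor $c_Z$ in $c_Z\sigma(f)=\sum_i f(c_i)$, pass explicitly from $n$ to continuous $t$, and check support-dominance and boundedness.
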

\begin{proof} Introduce the following three non-negative continuous $1$-homogeneous functions on $\R_+\times \R^d$:
$$
h_Y(y,x):= y,\ \ h_X(y,x):= h(x),\ \ \mbox{ and }\ \ h_{X,Y}(y,x):=y\wedge h(x).
$$
Using that $Z:= (Y,X) \in RV_1(\R_+\times \R^d,\{a_n^{(Z)}:=n\},\tau,\sigma)$, applying Proposition \ref{p:rv-via-h} to each of 
them, we obtain, as $t\to\infty$:
$$
 t\P[ Y>t ] \to \sigma(h_Y),\ \ t \P[ h(X)> t] \to \sigma(h_{X}),\ \ \mbox{ and } \ \ t \P[ Y\wedge h(X)> t]\to \sigma(h_{X,Y}).
$$
Notice that in view of \eqref{e:p:linear-model}, and the homogeneity of the functions
$$
\sigma(h_Y) = \sum_{i=1}^p h_Y(c_i) = \sum_{i=1}^p b_i,\ \ \sigma(h_X) = \sum_{i=1}^p h(a_i),\ \mbox{ and }\ \sigma(h_{X,Y}) = \sum_{i=1}^p
b_i \wedge h(a_i).
$$
Thus, $h(X)$ is an asymptotically calibrated extremal predictor of $Y$  (cf Definition \ref{def:asymp-calibration}) if and only if 
$\sigma(h_Y) =\sigma(h_X)$, which is equivalent to
$$
\sigma_Y:= \sum_{i=1}^p b_i = \sum_{i=1}^p h(a_i),
$$
which proves \eqref{e:c-discrete}.

Now, if \eqref{e:c-discrete} holds, by Lemma \ref{lem:nonameidea}, for the asymptotic precision of the predictor $h(X)$ of $Y$, we obtain:
$$
\lambda(Y,h(X)) = \lim_{t\to\infty} t\P[Y>t| h(X)>t] = \frac{1}{\sigma_Y} \sum_{i=1}^p b_i \wedge h(a_i),
$$
which completes the proof. 
\end{proof}


The following result as an immediate but rather curious consequence of the formula in 
\eqref{e:lambda-Y-hX-discrete}. 

\begin{corollary} \label{c:spec-discrete} Adopt the setting of Proposition \ref{p:spec-discrete-lambda},
where the dimension of the predictor $X$ is $d\ge 2$.  
Suppose that:
\begin{enumerate}
 \item $a_i\not =0,\ i=1,\cdots,r$, for $r\le p$ and  $a_i = 0,\ i=r+1,\cdots,p$, if $r<p$; 
 \item $a_i/\|a_i\| \not =a_j/\|a_j\|$, for all $1\le i\not=j\le r$.
\end{enumerate}

\noindent Then, we have that:

(i) The optimal homogeneous extremal precision is:
\begin{equation}\label{e:c:spec-discrete-i}
\lambda^{\rm (opt)}_{\cal G}(Y,X) =  \frac{\sum_{i=1}^r b_i}{\sum_{i=1}^p b_i}.
\end{equation}

(ii) If $\lambda^{\rm (opt)}_{\cal G}(Y,X)>0$, i.e., $\sum_{i=1}^r b_i>0$, then 
$\lambda^{\rm (opt)}_{\cal G}(Y,X) = \lambda(Y,h^{\rm (opt)}(X))$, where an optimal {\em Oracle} predictor 
$h^{\rm (opt)}:\R^d\to \R_+$ is a continuous, non-negative $1$-homogeneous function such that
\begin{equation}\label{e:h-opt-perfect}
 h^{\rm (opt)}(a_i):= (1 + \delta_r) \cdot b_i,\ i=1,\cdots,r, \mbox{ where } 1+\delta_r = \sum_{i=1}^p b_i/\sum_{i=1}^r b_i.
\end{equation}
\end{corollary}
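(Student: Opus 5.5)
The plan is to work directly with the explicit formula \eqref{e:lambda-Y-hX-discrete} of Proposition \ref{p:spec-discrete-lambda}, rather than with the abstract quantile solution of Theorem \ref{thm:final}. I would prove an upper bound on the precision of every calibrated predictor, and then exhibit the oracle $h^{\rm (opt)}$, which attains it.

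\emph{Upper bound.} Let $h$ be any non-negative continuous $1$-homogeneous function with $h(X)$ asymptotically calibrated, i.e.\ \eqref{e:c-discrete} holds. By homogeneity, $h(a_i)=h(0)=0$ for $i>r$. Hence by \eqref{e:lambda-Y-hX-discrete},
$$
\lambda(Y,h(X)) = \frac{1}{\sigma_Y}\sum_{i=1}^r b_i\wedge h(a_i) \le \frac{1}{\sigma_Y}\sum_{i=1}^r b_i .
$$
Predictors that are not calibrated but still belong to the class ${\cal G}$ should be handled through Definition \ref{def:asymp-calibration}, which builds calibration into ${\cal G}$. So the supremum over ${\cal G}$ is at most the right-hand side of \eqref{e:c:spec-discrete-i}. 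If $\sum_{i\le r}b_i=0$, this bound is $0$ and part (i) is immediate. In that case one only needs some calibrated $h$ to exist, which can be taken with $h(a_1)=\sigma_Y$ and vanishing at the other $a_i$; this uses the same construction as below.

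\emph{Attainment and construction.} Assume $\sum_{i\le r} b_i>0$ and define the target values $v_i:=(1+\delta_r)b_i\ge b_i$ for $i\le r$. Then $\sum_{i\le r} v_i=\sum_{i=1}^p b_i=\sigma_Y$, so \eqref{e:c-discrete} holds, and $b_i\wedge v_i=b_i$. Formula \eqref{e:lambda-Y-hX-discrete} then gives precision $\sum_{i\le r}b_i/\sigma_Y$, matching the upper bound. This proves (i) and (ii) together, provided a continuous non-negative $1$-homogeneous $h$ with $h(a_i)=v_i$ exists.

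The main obstacle is this existence step, and it is where assumption 2 enters. The points $\theta_i:=a_i/\|a_i\|$, $i\le r$, are distinct on the unit sphere $\{\|x\|=1\}$, and we want to prescribe $h(\theta_i)=v_i/\|a_i\|\ge 0$. A finite set of distinct points in a metric space admits a continuous interpolant. Concretely, take $\phi(\theta):=\sum_{i\le r}(v_i/\|a_i\|)\,(1-\|\theta-\theta_i\|/\rho)_+$, where $\rho$ is less than half the minimal pairwise distance between the $\theta_i$; Tietze's extension theorem would also do. Then set $h(x):=\|x\|\phi(x/\|x\|)$ with $h(0)=0$. This $h$ is continuous, non-negative and $1$-homogeneous, it is bounded on the sphere, and it satisfies $h(a_i)=\|a_i\|\phi(\theta_i)=v_i$. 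The role of $d\ge 2$ is only to ensure that the sphere is rich enough to hold distinct directions; the interpolation itself uses nothing more than distinctness.

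Without assumption 2, two factors sharing a direction would force $h(a_i)/b_i$ to be linked across those factors, and the bound could fail to be attained.
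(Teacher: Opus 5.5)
Your argument is correct and is essentially the paper's proof. You bound $\lambda(Y,h(X))=\sigma_Y^{-1}\sum_{i\le r} b_i\wedge h(a_i)$ by $\sigma_Y^{-1}\sum_{i\le r} b_i$ using \eqref{e:lambda-Y-hX-discrete} and $h(0)=0$, then attain the bound with the oracle taking the values $(1+\delta_r)b_i$ at the $a_i$. Your explicit bump-function interpolant on the unit sphere, and your check that a calibrated predictor exists even when $\sum_{i\le r}b_i=0$, fill in two details the paper only asserts or leaves implicit.
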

\begin{proof} Suppose first that $b_i=0$, for all $i=1,\cdots,r$.  Then, for all $1$-homogeneous functions $h:\R^d\to\R_+$, by
\eqref{e:lambda-Y-hX-discrete}, we have $\lambda(Y,h(X)) = 0$, since either $b_i=0$ or $h(a_i) = 0$, for all $i=1,\cdots,p$. This proves
\eqref{e:c:spec-discrete-i} in this case.

Now, if $\sum_{i=1}^r b_i>0$, by assumption, since $a_i/\|a_i\| \not = a_{i'}/\|a_{i'}\|$ for all $1\le i\not=i' \le r$, we have that one can 
define a continuous non-negative homogeneous function as in \eqref{e:h-opt-perfect}, for every choice of the $b_i$'s. 

Moreover, the choice of $\delta_r>0$ implies that $h^{\rm (opt)}(X)$ is an asymptotically calibrated extremal predictor 
of $Y$ (cf Proposition \ref{p:spec-discrete-lambda}). By \eqref{e:lambda-Y-hX-discrete}, since $b_i \wedge (1+\delta_r)b_i = b_i$, 
we also have
$$
\lambda(Y,h^{\rm (opt)}(X))  = \frac{1}{\sigma_Y} \sum_{i=1}^r b_i \wedge (1+\delta_r) b_i = \frac{1}{\sigma_Y} \sum_{i=1}^r b_i,
$$
where $\sigma_Y:=\sum_{i=1}^p b_i$. 

On the other hand, for every other asymptotically calibrated homogeneous predictor $h(X)$, by \eqref{e:lambda-Y-hX-discrete} we also 
have
$$
\lambda(Y, h(X)) = \frac{1}{\sigma_Y} \sum_{i=1}^p b_i \wedge h(a_i) =  \frac{1}{\sigma_Y} \sum_{i=1}^r b_i \wedge h(a_i) \le \frac{1}{\sigma_Y} \sum_{i=1}^r b_i = \lambda(Y,h^{\rm (opt)}(X)).
$$
This shows that $h^{\rm (opt)}$ in \eqref{e:h-opt-perfect} is indeed an optimal extremal predictor, which completes the proof of 
\eqref{e:c:spec-discrete-i} and the corollary.
\end{proof}


\begin{remark} The linear factor models in \eqref{e:linear-model} are just one instance of models
with discrete spectra. Others such as max-linear or generalized Breiman-type models can have such spectra. 
Proposition \ref{p:spec-discrete-lambda} and Corollary \ref{c:spec-discrete} apply to all cases where 
\eqref{e:p:linear-model} holds. 
\end{remark}

\begin{remark} One limitation in Corollary \ref{c:spec-discrete} is that the
$a_i$'s are assumed to be non-proportional.  Theorem \ref{thm:final} provides the solution 
optimization problem in full generality. 
\end{remark}

\begin{remark} In practice, we do not have access to the $b_i$'s and $a_i$'s so the results in 
Corollary \ref{c:spec-discrete} concern the asymptotic precision of 
the {\em oracle} predictors. Nevertheless, the spectral measure in \eqref{e:p:linear-model} can be estimated 
consistently from data. Section \ref{sec:inference} addresses the general inference methodology.
\end{remark}

\begin{remark}[The perfect precision phenomenon]\label{r:perfect_precision} 
Corollary \ref{c:spec-discrete} reveals a curious phenomenon of {\em perfect asymptotic precision}.  
Indeed, if in \eqref{e:linear-model} we have $a_i\not = 0$, for all $1\le i\le p$, 
and no two $a_i$ and $a_{i'}$ are proportional, then by \eqref{e:c:spec-discrete-i}:
$$
\lambda^{\rm (opt)}_{\cal G}(Y,X) = \lambda^{\rm (opt)} (Y,X) =1.
$$
That is, one can predict $Y$ via $X$ with an asymptotically perfect precision!  

Our estimators developed in Section \ref{sec:inference} do indeed enjoy near perfect precision in such models
(cf Figure \ref{fig:boxplot_discrete}). This surprising phenomenon can be explained in terms of the so-called 
{\em single large jump heuristic}, which means that  in \eqref{e:linear-model} the sum $X$ is extreme in norm if 
and only if {\em one and only one} of the factors $\xi_i,\ i=1,\cdots,p$ is extreme.  More precisely,
$$
\P[ \| \sum_{i=1}^p a_i \xi_i \| >t ] \sim \sum_{i=1}^p \P[ \|a_i\| \xi_i > t],\ \ \mbox{ as }t\to\infty. 
$$
Therefore, conditionally on $\{\|X\|>t\}$, as $t\to\infty$, with probability one, the direction $X/\|X\|$ is 
asymptotically associated with the direction $a_i$ of the unique factor $\xi_i$ responsible for the extremes of $Y$.
This leads to asymptotically perfect identification of $Y$.
\end{remark}


\subsection{The spectrally continuous case}

In this section, we illustrate Theorem \ref{thm:final} in the so-called {\em spectrally continuous}
case where the conditional distribution $p(du |\theta)$ has a density.  Inference
methodology for the optimal predictors  in this rather general regime will be developed in Section \ref{sec:inference}.

We consider here {\em non-negative} random vectors $Z=(Y,X)$ that are $\tau$-regularly varying with
$$
\tau(y,x) := y_+ + \sum_{i=1}^d (x_i)_+.
$$
In this case the unit sphere $S_\tau$ becomes unit simplex:
$$
S_\tau = \Delta_d:= \Big\{ w = (w_i)_{i=0}^d\, :\, \sum_{i=0}^d w_i= 1,\ w_i\ge 0,\ i=0,\cdots,d\Big\}.
$$
A natural family of probability distributions on $\Delta_d$ are the Dirichlet models.
Recall that a random vector $W = (W_0,W_1,\cdots,W_d)$
has the Dirichlet distribution with parameter vector 
$\beta = (\beta)_{i=0}^d \in (0,\infty)^{d+1}$
if $W$ takes values in $\Delta_d$ and the joint density of $W_1,\cdots,W_d$ is given by:
$$
f_{W_1,\cdots,W_{d}}(x_1,\cdots,x_{d}) =
\frac{\Gamma(\sum_{i=0}^d\beta_i)}{\prod_{i=0}^d \Gamma(\beta_i)}\cdot \Big(1-\sum_{i=1}^d x_i\Big)^{\beta_0-1}\times
\prod_{i=1}^{d} x_i^{\beta_i-1},\ \ x_i \ge 0,\ \sum_{i=1}^d x_i\le 1,
$$
In this case, we shall write $W\sim {\rm Dirichlet}(\beta)$.

Thus, in this context, it is natural to consider $\tau$-regularly varying vectors $Z$ with Dirichlet 
angular distribution \citep[see e.g. ][]{BoldiDavison2007,scheffler:stoev:2017,SabourinNaveau2014,CorradiniStrokorb2024}.

\begin{definition} A $\tau$-regularly varying random vector $Z=(Y,X)\in RV(\R_+^{d+1},\{a_n\},\tau,\sigma_Z)$
is said to be spectrally Dirichlet 
if
$$
 \frac{Z}{\tau(Z)} \, | \, \{\tau(Z) > t\} \stackrel{d}{\to} \Theta_Z \sim {\rm Dirichlet}(\beta),
$$
for some $\beta = (\beta_i)_{i=0}^{d} \in (0,\infty)^{d+1}$.
\end{definition}

The optimal homogeneous predictors in the class of spectrally Dirichlet model are particularly elegant.

\begin{proposition} Let $Z=(Y,X)=(Y,X_1,\cdots,X_d)$ be spectrally Dirichlet with parameter 
$\beta\in (0,\infty)^{d+1}$. Then:

{\em (i)} $h^{\rm (opt)}(X):= c\|X\|_1,$ is an asymptotically calibrated  (cf \eqref{e:g-calibration-lemma}) 
and optimal  homogeneous predictor for $Y$, where $c:= {\beta_0}/{(\sum_{i=1}^d \beta_i)}$.
  
{\em (ii)} The optimal asymptotic precision is given by 
\begin{equation}\label{e:p:Spec-Dirichlet}
\lambda^{\rm (opt)}_{{\cal G}} (Y,X) = \lambda(Y,h^{\rm (opt)}(X)) = 
\E\Big[ \frac{U}{\mu_U} \wedge \frac{(1-U)}{(1-\mu_U)}\Big] 
\end{equation}
where $U\sim {\rm Beta}(\beta_0, \sum_{i=1}^d \beta_i)$ and $\mu_U = \E[U] = \beta_0/(\sum_{i=0}^d \beta_i)$.
\end{proposition}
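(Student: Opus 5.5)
The plan is to reduce everything to Theorem \ref{thm:final} by first identifying the angular pair $(U,\Theta)$ of \eqref{e:U-THETA}. With $\tau(y,x)=y_++\|x\|_1$ on $\R_+^{d+1}$, the angular vector is $\Theta_Z=(W_0,W_1,\dots,W_d)\sim{\rm Dirichlet}(\beta)$. This gives $U=W_0\sim{\rm Beta}(\beta_0,\sum_{i=1}^d\beta_i)$ and $(1-U)\Theta=(W_1,\dots,W_d)$, so $\Theta=(W_1,\dots,W_d)/(1-W_0)$. By the neutrality property of the Dirichlet law \citep{ConnorMosimann1969}, $U$ and $\Theta$ are independent, and $\Theta\sim{\rm Dirichlet}(\beta_1,\dots,\beta_d)$. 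We also have $\P[U=0]=\P[U=1]=0$, so \eqref{e:condition-on-U} holds.

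\textbf{Step 1: apply Theorem \ref{thm:final}.} By independence, $p(du|\theta)$ does not depend on $\theta$. Hence $b(\theta)\equiv\E[1-U]$, and the tilted law $F_\theta$ is free of $\theta$. Since $(1-u)u^{\beta_0-1}(1-u)^{B-1}$ with $B=\sum_{i\ge1}\beta_i$ is a ${\rm Beta}(\beta_0,B+1)$ density, $F_\theta$ is that law. It has a positive density, so $q_\alpha\equiv q_\alpha^*$ is a constant, continuous and strictly increasing in $\alpha$, and $g_\alpha\equiv q_\alpha^*/(1-q_\alpha^*)$ is constant. Also $C(g_1)=\infty$ because $q_1=1$, and $U$ is not a function of $\Theta$. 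Part (ii) of Theorem \ref{thm:final} therefore gives an optimal $g^{\rm (opt)}$ which is a constant $c$ fixed by the constraint $c\,\E[1-U]=\E[U]$, that is, $c=\mu_U/(1-\mu_U)=\beta_0/\sum_{i=1}^d\beta_i$. The homogeneous extension is $h^{\rm (opt)}(x)=c\|x\|_1$, which is continuous and bounded on $\SX$.

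\textbf{Step 2: calibration and the link to tail dependence.} Apply Proposition \ref{p:rv-via-h} to $h_Y(y,x)=y$ and to $h(y,x)=c\|x\|_1$. The limits are proportional to $\E[U]$ and to $c\,\E[1-U]$, which are equal, so $h^{\rm (opt)}(X)$ is asymptotically calibrated in the sense of \eqref{e:g-calibration-lemma}. Proposition \ref{p:lambda-via-U-Theta} then gives
\[
\lambda(Y,h^{\rm (opt)}(X))=\frac{1}{\mu_U}\E\Big[U\wedge \tfrac{\mu_U}{1-\mu_U}(1-U)\Big],
\]
which is \eqref{e:p:Spec-Dirichlet}. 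Optimality over ${\cal G}(\tau_X)$ also follows from Proposition \ref{p:lambda-via-U-Theta}. Every calibrated $g\in{\cal G}(\tau_X)$ satisfies $C(g|_\SX)=\E[U]$ and has $\lambda(Y,g(X))=\Lambda(g|_\SX)$. This is at most $\Lambda(g^{\rm (opt)})$ by Theorem \ref{thm:final}, so $\lambda^{\rm (opt)}_{\cal G}$ equals the displayed value.

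\textbf{Main obstacle.} The hard step is the Dirichlet identification: checking that $(U,\Theta)$ under $\tau=y_++\|x\|_1$ is exactly $(W_0,\widetilde W/(1-W_0))$ and that the two coordinates are independent. I would verify this directly from the density by the change of variables $(w_1,\dots,w_d)=(1-u)\theta$, whose Jacobian is $(1-u)^{d-1}$. The joint density then factors as $u^{\beta_0-1}(1-u)^{B-1}\prod_{i\ge1}\theta_i^{\beta_i-1}$, which is a product of a function of $u$ and a function of $\theta$, giving independence directly. The remaining steps are bookkeeping with results already established in the paper.
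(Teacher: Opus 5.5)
Your proposal is correct and follows essentially the same route as the paper: Dirichlet neutrality makes the tilted law $F_\theta$ free of $\theta$, so Theorem \ref{thm:final} yields a constant $g^{\rm (opt)}\equiv c$ fixed by $c\,\E[1-U]=\E[U]$, and the precision formula is then just $\Lambda(c)$. The only minor difference is how you link $\Lambda$ to $\lambda(Y,h^{\rm (opt)}(X))$ and to $\lambda^{\rm (opt)}_{\cal G}$. You do this through Propositions \ref{p:rv-via-h} and \ref{p:lambda-via-U-Theta}, whereas the paper reduces to the Breiman-model results (Corollary \ref{c:Pareto-Breiman}, Proposition \ref{p:Pareto-Dirichlet}). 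Your route avoids that reduction and works for any jointly regularly varying $(Y,X)$ with Dirichlet angular law, since $c\|x\|_1$ is continuous and bounded on $\SX$.
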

\begin{proof} The spectrally Dirichlet models are a special case of generalized Breiman models treated in Section \ref{sec:Breiman-models}.
In this case, the key idea is to observe that 
$$
\Theta_Z := (U, (1-U)\Theta) \sim {\rm Dirichlet}(\beta_0,\beta_1,\cdots,\beta_d).
$$
The neutrality property of the Dirichlet distribution \citep[see, e.g.][]{ConnorMosimann1969},
implies that $\Theta \in {\rm Dirichlet}(\beta_1,\cdots,\beta_d)$ and
$U$ are {\em independent}.  Therefore, $p_{U|\Theta}(u) = p_U(u)$ and the quantile 
$q_\alpha(\theta)={\rm const}$ in Theorem \ref{thm:final} does not depend on $\theta$, which implies claim (i).

Claim (ii) is a standard calculation, treated in more detail in Proposition \ref{p:Pareto-Dirichlet}, below.
\end{proof}

\medskip
\noindent {\bf Spectrally-Dirichlet models.}  Recall that a non-negative 
random vector 
$$
(V,W) = (V,W_1,\cdots,W_d),\ d\ge 1
$$ 
has the multivariate Dirichlet distribution with parameter vector 
$\beta = (\beta_0,\beta_1,\cdots, \beta_d)\in (0,\infty)^{d+1}$
if: (i) $V + \sum_{i=1}^d W_i = 1$ and
(ii) the joint probability density of $W_1,\cdots,W_d$ is given by:
$$
f_{W_1,\cdots,W_{d}}(x_1,\cdots,x_{d}) = \frac{\Gamma(\sum_{i=0}^d\beta_i)}{\prod_{i=0}^d \Gamma(\beta_i)}\cdot \Big(1-\sum_{i=1}^d x_i\Big)^{\beta_0-1}\times
\prod_{i=1}^{d} x_i^{\beta_i-1},\ \ x_i \ge 0,\ \sum_{i=1}^d x_i\le 1,
$$
In this case, we shall write $(V,W)\sim {\rm Dirichlet}(\beta)$.

Since the Dirichlet distribution is a probability distribution on the positive unit
simplex, it is natural to consider polar coordinates with respect to the $L^1$-norm:
$$
\tau(y,x) :=  |y| + \|x\|_1,
$$
and define a version of the Pareto-Breiman models in Section \ref{sec:Breiman-models}.

\begin{definition} The special case of Pareto-Breiman models $Z:=(Y,X):= \xi \cdot (V,W)$ in \eqref{e:Y_X_Breiman},
where $(V,W)\sim {\rm Dirichlet}(\beta)$, will be referred to as Pareto-Dirichlet$(\beta)$, for $\beta\in (0,\infty)^{d+1}$.  
\end{definition}

 The Pareto-Dirichlet models and their spectral mixtures have been considered
 due to the tractable analytical form of its spectral distribution 
 \citep[see e.g. ][]{BoldiDavison2007,scheffler:stoev:2017,SabourinNaveau2014,CorradiniStrokorb2024}.
 The following result shows that the optimal homogeneous predictor takes
 a particularly simple form for the Pareto-Dirichlet model.

\begin{proposition} \label{p:Pareto-Dirichlet} Let 
$(Y,X) = (Y,X_1,\cdots,X_d)\sim \mbox{{\rm Pareto-Dirichlet}}(\beta)$. Then:\\

{\em (i)} $h^{\rm (opt)}(X):= c\|X\|_1,$ is an asymptotically calibrated  (cf \eqref{e:g-calibration-lemma}) 
and optimal  homogeneous predictor for $Y$, where $c:= {\beta_0}/{(\sum_{i=1}^d \beta_i)}$.\\
  
{\em (ii)} The optimal asymptotic precision is given by 
\begin{equation}\label{e:p:Pareto-Dirichlet}
\lambda^{\rm (opt)}_{{\cal G}} (Y,X) = \lambda(Y,h^{\rm (opt)}(X)) = 
\E\Big[ \frac{U}{\mu_U} \wedge \frac{(1-U)}{(1-\mu_U)}\Big] 
\end{equation}
where $U\sim {\rm Beta}(\beta_0, \sum_{i=1}^d \beta_i)$ and $\mu_U = \E[U] = \beta_0/(\sum_{i=0}^d \beta_i)$.\\

{\em (iii)} Moreover, we have
$$
\lambda^{\rm (opt)}_{{\cal G}} (Y,X) = \lambda_p(Y, h^{\rm (opt)}(X)), \ \ \  \mbox{  for all }\ p \in (p_0,1),
$$
where $\lambda_p$ as in \eqref{e:lambda-p} and $p_0 = (1\vee c)/(c+1) = \mu_U\vee (1-\mu_U)$.
\end{proposition}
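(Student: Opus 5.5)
We work inside the Pareto--Breiman framework of Proposition \ref{p:Pareto-Breiman} with $\tau(y,x)=|y|+\|x\|_1$ and $\eta=(V,W)\sim{\rm Dirichlet}(\beta)$, so that $\tau(V,W)=1$ almost surely. Proposition \ref{p:total-Breiman} then says the angular law $\sigma$ is the law of $(V,W)$ itself; in particular $(U,(1-U)\Theta)\sim{\rm Dirichlet}(\beta)$. Neutrality of the Dirichlet law gives that $U=V\sim{\rm Beta}(\beta_0,\sum_{i\ge1}\beta_i)$ is independent of $\Theta=W/\|W\|_1$. Hence $p(du|\theta)$ does not depend on $\theta$. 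The tilted laws $F_\theta$ in \eqref{e:F-theta}, the quantiles $q_\alpha$, and $b(\theta)=1-\mu_U$ are therefore all constant.

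\textbf{Part (i).} Since $U$ has a positive density on $(0,1)$, we are in case (ii) of Theorem \ref{thm:final}, and the solution is a constant $g^{\rm (opt)}\equiv c$ fixed by the constraint $C(c)=c(1-\mu_U)=\mu_U$. This gives
\[
c=\frac{\mu_U}{1-\mu_U}=\frac{\beta_0}{\sum_{i\ge1}\beta_i}.
\]
Its homogeneous extension is $h^{\rm (opt)}(x)=c\|x\|_1$. Calibration follows from Corollary \ref{c:Pareto-Breiman}(i), or directly from $t\P[Y>t]\to\E V$ and $t\P[c\|X\|_1>t]\to c\,\E\|W\|_1$. Optimality over all calibrated homogeneous $h$ is Corollary \ref{c:Pareto-Breiman}(ii).

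\textbf{Part (ii).} By the same corollary,
\[
\lambda=\Lambda(c)=\frac{1}{\mu_U}\,\E\big[U\wedge c(1-U)\big]=\E\Big[\frac{U}{\mu_U}\wedge\frac{1-U}{1-\mu_U}\Big].
\]

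\textbf{Part (iii).} This is the exact, non-asymptotic statement and is the main step. Here $Y=\xi U$ and $h^{\rm (opt)}(X)=c\,\xi(1-U)$ exactly, with $\xi$ standard Pareto independent of $U$; assume $c_\xi=1$, so $\P[\xi>x]=1/x$ for $x\ge1$. For $s\ge 1$,
\[
\P[\xi U>s]=\E\big[\min(1,U/s)\big]=\E[U]/s,
\]
since $U\le1\le s$. Likewise $\P[c\xi(1-U)>s]=c(1-\mu_U)/s=\mu_U/s$ whenever $s\ge c$, because $c(1-U)\le c$. Thus both quantiles equal $y_p=\mu_U/(1-p)$ once $y_p\ge\max(1,c)$.

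For the joint exceedance we use $\min(\xi U,c\xi(1-U))=\xi m(U)$ with $m(U)\le\min(1,c)$. For $s\ge\max(1,c)$ this gives $\P[\xi m(U)>s]=\E[m(U)]/s$. Dividing by $\P[c\xi(1-U)>s]$ yields precision $\E[m(U)]/\mu_U$, which equals the value in (ii) and does not depend on $p$.

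It remains to translate $y_p\ge\max(1,c)$ into a condition on $p$: it holds iff $1-p\le\mu_U/\max(1,c)$. Using $\mu_U=c/(1+c)$, this reads $p\ge1-\min(c,1)/(1+c)=\max(1,c)/(1+c)=p_0$, and $p_0=\mu_U\vee(1-\mu_U)$. The only care needed is that the quantile is attained exactly, which holds here because the tail is exactly $\mu_U/s$ and hence continuous.
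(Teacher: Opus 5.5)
Your proof is correct and follows the paper's argument essentially step for step. Dirichlet neutrality makes $q_\alpha(\theta)$ constant in $\theta$; Theorem \ref{thm:final} and Corollary \ref{c:Pareto-Breiman} then give the constant-$c$ predictor, its calibration and its precision; and the exact Pareto tail $\P[\xi\eta>t]=\E[\eta]/t$, valid once $t$ exceeds the bound on $\eta$, yields (iii) for $t\ge\max\{1,c\}$, i.e.\ for $p\ge p_0$. Your explicit placement in case (ii) of Theorem \ref{thm:final} (here $q_1\equiv 1$, so $C(g_1)=\infty$) and your algebra for $p_0$ are slightly more explicit than the paper's.
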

 \begin{proof} Note that since $\tau(V,W) = V + \|W\|_1 = 1$, in this case $\xi = \tau(Y,X) $ and 
 $(V,W) = (Y,X)/\tau(Y,X)$, are independent. Hence, \eqref{e:p:Pareto-Breiman-i} holds trivially, with 
 $$
 (U,(1-U)\Theta) := (Y,X)/\tau(Y,X) = (V,W)  \sim {\rm Dirichlet}(\beta_0,\wt \beta),
 $$ 
 where $\wt\beta = (\beta_1,\cdots,\beta_d)$. 

 Moreover, by the {\em neutrality property} of the Dirichlet distribution, 
 we have that $U = (1-\|W\|_1)$ and $\Theta:= W/\|W\|_1$ are independent 
 and such that
$$
(U, (1-U)) \sim {\rm Dirichlet}(\beta_0, \|\wt \beta\|_1)\ \ \mbox{ and } \ \ 
\Theta:= W/\|W\|_1 \sim {\rm Dirichlet}(\wt \beta),
$$
Therefore, $f_{U|\Theta}(u|\theta) = f_U(u)$, and the conditional quantile function is 
constant: $q_\beta(\theta) = {\rm const}$ in $\theta$.  This, in view of Theorem \ref{thm:final} (see also 
Corollary \ref{c:Pareto-Breiman}) implies that the function $h^{\rm (opt)} (x)= c \|x\|_1$ yields an
optimal homogeneous predictor, for a suitable calibrating constant $c$.
To calibrate the predictor, we must have
$$
\E[ U ]  = \E [h^{\rm (opt)} (W)] = c \E[ (1-U) \|\Theta\|_1 ] = c\E [ (1-U) ].
$$
Since $U\sim {\rm Beta}(\beta_0,\|\wt\beta\|_1)$, we have that 
$\E[ U ] ={\beta_0}/{(\beta_0 +\|\wt \beta\|_1)},$  and hence $c =\E[U]/\E[1-U] = \beta_0/\|\wt \beta\|_1$.
Finally, by \eqref{e:c:Pareto-Breiman-ii} and \eqref{e:Lambda-functional},  for the {\em optimal extremal precision}, we get
\begin{align}\label{e:lambda-spec-Dirichlet}
\lambda(Y,h^{\rm (opt)} (X))  &= \frac{1}{\E[U]} \E[ U\wedge (c \|W\|_1) ] = 
 \E\Big[  \frac{U}{\E[U]} \wedge \frac{(1-U)}{\E[1-U]}\Big]
\end{align}
proving  \eqref{e:p:Pareto-Dirichlet}.

To prove part (iii), observe first that for all random variables $0<\eta< C_\eta$, independent of the standard $1$-Pareto 
$\xi$, we have that $\P[ \xi\eta >t |\eta ] = \eta/t,$ whenever $t>\eta$.   Thus,
\begin{align*}
\P[ \xi \eta >t]= \E[ \P[ \xi > t/\eta | \eta] ] = \E[ \frac{\eta}{t} ] = \frac{\E[\eta]}{t}\ \ \mbox{ when $t> C_\eta$.}
\end{align*}


That is, the tail of $\xi\cdot \eta$ is precisely Pareto, beyond quantile $p_0:= 1-\E[\eta]/C_\eta$.  This implies that
$$
\P[ Y>t ]=\P[ \xi\cdot U>t] = \frac{\E[U]}{t},\ \ \P[ h^{\rm (opt)}(X) >t ] = \P[ \xi \cdot c(1-U) >t] = c \frac{\E[(1-U)]}{t},
$$
and 
\begin{equation}\label{e:lambda-spec-Dirichlet-1}
\P[ Y\wedge h^{\rm (opt)}(X) > t] = \frac{\E[ U \wedge c(1-U)]}{t},
\end{equation}
for all $t\ge \max\{1, c\}$. The calibration of $h^{\rm (opt)}(X)$ entails $\E[U]= c \E[(1-U)]$, and hence for all
$t\ge \max\{1,c\}$, with
$$
 p := 1-  \frac{\E[U]}{t} = 1- c \frac{\E[(1-U)]}{t} \ge p_0 :=  1- \frac{\E[U]}{\max\{1,c\}} = \max\{1-\E[U],\E[U]\},
$$
we obtain $t = F_Y^\leftarrow(p) = F_{h^{\rm (opt)}(X)}^{\leftarrow}(p)$.  This, in view of \eqref{e:lambda-spec-Dirichlet-1},
entails that for all $p\ge p_0$:
$$
\lambda_p( Y, h^{\rm (opt)}(X)^{\leftarrow}) = \frac{1}{\P[Y>t]} \P[ Y\wedge h^{\rm (opt)}(X) >t ] 
= \frac{\E[ U \wedge c(1-U)]}{\E[U]},
$$
which equals $\lambda_{\cal G}^{\rm (opt)}(Y,X)$, completing the proof of part {\em (iii)}.
\end{proof}

Figure \ref{fig:Spectrally Dirichlet model precision} (left)
illustrates this optimal extremal precision as a function of 
$a_0=\beta_0$ and $a_1=\|\wt \beta\|_1$, where the expectation in 
\eqref{e:p:Pareto-Dirichlet} is expressed
via incomplete Beta functions in terms of $\beta_0$ and $\|\beta\|_1$.  

The right panel of Figure \ref{fig:Spectrally Dirichlet model precision} illustrates
the finite-sample performance of the optimal (oracle) predictor and a general
estimator for the optimal homogeneous predictor implemented in Section \ref{sec:algorithm}.

\begin{figure}
    \centering
    \includegraphics[width=0.45\linewidth]{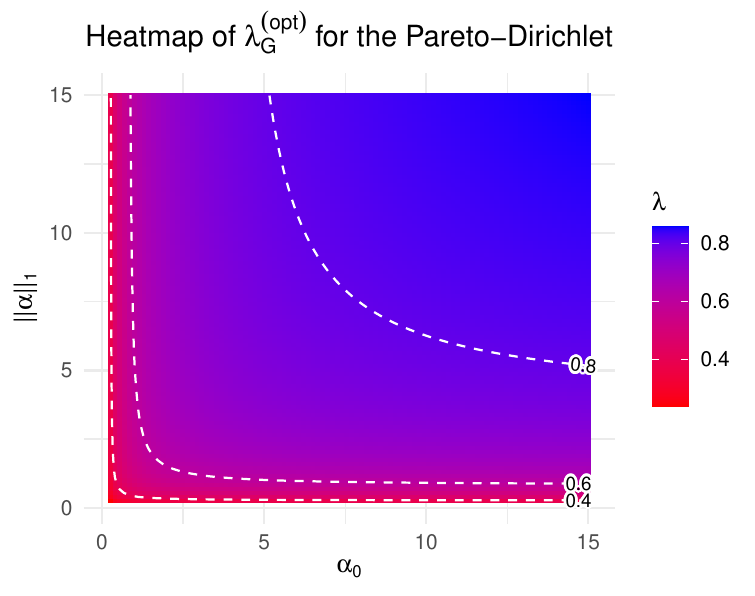}
    \includegraphics[width=0.45\linewidth]{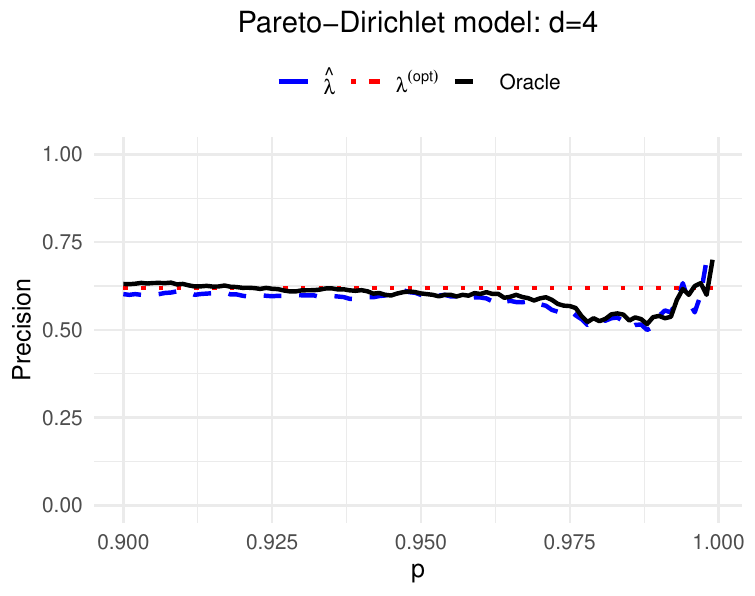}
    \caption{
    \label{fig:Spectrally Dirichlet model precision}
    {\em Left panel:} Optimal extremal precision $\lambda_{\cal G}^{\rm (opt)}$ for
    predicting one component of the Pareto-Dirichlet model via a homogeneous function of the rest \eqref{e:lambda-spec-Dirichlet}.
    {\em Right panel:} The empirical tail-dependence coefficients 
    $\hat\lambda_p$ for: (i) the asymptotically optimal 
    Oracle predictor $h^{(\rm opt )} (X) \propto \|X\|$ 
    (black solid line) (ii) a non-parametric estimator of the optimal predictor 
    $\wh Y := \wh h(X)$ based on a training and testing samples of size
    $n_{\rm train} = n_{\rm test} = 10^4$ (see Section \ref{sec:algorithm}).
    }
\end{figure}

\subsection{On the sub-optimality of homogeneous predictors} \label{sec:ex:suboptimality} 

In some cases, the optimal predictors are in fact homogeneous functions of the covariates (cf
Example \ref{ex:gumbel}).  We will demonstrate next, however, that the class of homogeneous functions
does not always contain neither optimal nor asymptotically optimal predictors.  
The spirit of this example is to show that the extremes of 
$X$ do not necessarily predict well the extremes of $Y$.  

\begin{example}\label{ex:suboptimality} 
Let $\epsilon$ and $Z$ be independent standard $1$-Pareto random variables and
$$
Y := c Z + (1-c) \epsilon,
$$
for some $c\in (0,1]$. Consider the covariate vector:
$$
X = (X_1,X_2)^\top := (e^{-YZ} + Z, Z)^\top
$$
taking values in $\R_+^2$.

Clearly, with 
$h^{\rm (opt)} (x) := -\log(x_1-x_2)/x_2,$ $x = (x_1,x_2) \in \R_+^2$, we have
$Y = h^{\rm (opt)}(X)$, almost surely.  Thus, the optimal extremal and non-extremal precisions 
are equal to $1$. On the other hand, the intuition that the extremes of $X$
(defined in terms of some homogeneous radial function $h$) will predict the extremes of $Y$ fails!

Indeed, for every continuous, $1$-homogeneous $h:\R_+^2\to \R_+$ such that $h(x)>0$, for all $x\not=0$,
it can be shown that
$$
\lambda(Y, h(X)) = \lambda(Y,Z) = c,
$$
which can be made {\em arbitrarily small}.
\end{example}

\section{Practical Implementation}

 \label{sec:algorithm}

 Many of the existing implementations of quantile regression can accommodate {\em importance weights} and thus
 achieve estimators for the quantiles of the tilted distribution in \eqref{e:q-alpha}.  Here, we opted 
 for using the versatile {\em quantile regression forest} of \cite{meinshausen2006quantile} and its
 efficient implementation in the package {\tt ranger} of \cite{wright:ziegler:2017}.
 Specifically, this can be achieved with the {\tt R} commands:
 \begin{align*}
 &\mbox{\tt  weights <- 1 - u}\\
 &\mbox{{\tt data<-data.frame(u=u, theta)}}\\
 &\mbox{\tt rf<-ranger(u$\sim$.,data=data, case.weights=weights, quantreg=TRUE)}
\end{align*}
Here ${\tt u}$ and ${\tt \theta}$ are $(n_r\times 1)$ and $ (n_r\times d)$ arrays, whose $i$th
rows correspond to $U_i$ and the vector $\Theta_i$, respectively, $i=1,\cdots,n_r$. 
The {\tt ranger} function returns a {\em random forest} {\tt S3} object, which can be used to 
generate predictions of $\wh q_\alpha(\theta_{\rm new})$ using the commands
\begin{align*}
 &\mbox{{\tt theta\_new = x\_new/L1\_norm(x\_new)}}\\
 &\mbox{{\tt pred <- predict(rf, data=theta\_new, type="quantiles", quantiles=alpha)}}\\
 &\mbox{{\tt q\_alpha <- pred\$predictions}}
\end{align*}

The following Algorithm \ref{algo:g_opt_hat} summarizes the inference.  The {\tt R code} implementing the 
estimator using the {\tt ranger} package of \cite{wright:ziegler:2017} is given in \cite{optXpred:2026}. An 
{\tt R Shiny App} demonstrating the estimator is deployed on \cite{optXpred_shiny:2026}, where 
further covariate selection heuristics are implemented.

 \begin{algorithm}[t!]
\caption{Optimal homogeneous predictor inference}\label{algo:g_opt_hat}
\begin{algorithmic}[1]
\Require 
\State {\em Training data:} Arrays $Y$ and $X$ of 
sizes $(n\times 1)$ and $(n\times d),\ d\ge 1$; 
\State {\em Threshold probability parameter:} $p_r\approx 1$;
\State {\em Binary search tolerance:} ${\epsilon}>0$.
\For{ $i=1,\cdots,n$ } 
\State  $R_i \gets Y_i + \sum_{j=1}^d X_{i,j}$ 
\EndFor
\State $r\gets {\rm quantile}(R,p_r)$ and $I \gets which(R>=r)$
\For{$i \in I$} \State 
$U_i \gets Y_i/R_i$ and $\Theta_i \gets X_i/\|X_i\|$
\EndFor \\
\State {\bf Procedure:} {\em (Implement conditional quantile)}
 \State For each $a\in (0,1)$, the function 
  ${\tt quantile}(a)$ returns an approximation of $q_\alpha(\theta)$ in \eqref{e:q-alpha},
  given the sample $(U_i,\Theta_i)$ for $i\in I$.
\State {\bf Compute constraint:}
 $$
 c\gets {\rm mean}(U)
 $$
 \State {\bf Procedure:} {\em (Calibrate $\alpha\in (0,1)$ using binary search)} 
    \State Start with $a_0\approx 0, a_0>0$ and $a_1 \gets 1-a_0\approx 1$
     \State Compute $c_j \gets  {\rm mean}( g(a_j,\Theta)),\ j=0,1$, where
    $$
    g(a,\theta) = \frac{{\tt quantile}(a)}{1-{\tt quantile}(a)}.
    $$
    \State Ensure $c_0 < c <c_1$, otherwise degrease starting value $a_0>0$. 
    \While{$a_1-a_0 > \epsilon$}
    \State Let $a_{\rm new}\gets (a_0 + a_1)/2$ and $c_{\rm new} \gets {\rm mean}(g(a_{\rm new},\Theta))$.
    \State If $c_{\rm new}< c$, then $c_0\gets c_{\rm new}$ and $a_0\gets a_{\rm new}$.
     Otherwise, $c_1\gets c_{\rm new}$ and $a_1 \gets a_{\rm new}$.
    \EndWhile
    \State Return calibrated $\alpha\gets a_{\rm new}$
\State {\bf Procedure:} (Optimal homogeneous predictor)
\State Input $X_{\rm new}$ -- an array of size $(1\times d)$ featuring a {\em new} vector of covariates.
\State Compute $\Theta_{\rm new}\gets X_{\rm new}/\|X_{\rm new}\|$
\State \Return
$
\wh Y_{\rm new} := h_{\rm opt}(X_{\rm new}) \gets  \|X_{\rm new}\| g(\alpha,\Theta_{\rm new}).
$
\end{algorithmic}
\end{algorithm}

Note that the sequence $(\alpha_n)_{n\in \mathbb{N}}$ provided by Algorithm \ref{algo:g_opt_hat} fulfill the assumption of theorem \ref{thm:univ_consitency} by application of lemma \ref{lem:conv_alpha}.

\begin{lemma}\label{lem:conv_alpha}
If for any $\theta \in S^d$ the quantile estimator $\hat q_{\alpha_n,n}$ is uniformly consistent on compacts of $(0,1)$, then the sequence $(\alpha_n)_{n\in \mathbb{N}}\subset (0,1)$ defined by the algorithm \ref{algo:g_opt_hat} satisfies
\[
\alpha_n \underset{n\rightarrow\infty}{\overset{\mathbb{P}}{\longrightarrow}}\alpha_{opt}.
\]
\end{lemma}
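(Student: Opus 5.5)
The plan is an argmin-continuity argument for the calibration equation. Let $\varphi(\beta):=C(g_\beta)=\E[(1-U)g_\beta(\Theta)]$ be the population constraint map. Under Assumption \ref{assum:conv_TV_RV}, as shown in the proof of Theorem \ref{thm:univ_consitency}(iii), $\varphi$ is continuous and strictly increasing on $(0,1)$, with limits $0$ and $\infty$ at the endpoints, so $\alpha_{opt}=\alpha^*=\varphi^{-1}(\E[U])$ is the unique root of $\varphi(\beta)=\E[U]$. The algorithm's $\alpha_n$ is, up to the bisection tolerance $\epsilon=\epsilon_n\to 0$, a root of the empirical map
$$
\wh\varphi_n(\beta):=\frac{1}{K(n)}\sum_{i\in{\cal I}_n}\wh g_{\beta}(\wt\Theta_i),
$$
set equal to $\wh c_n:=K(n)^{-1}\sum_{i\in{\cal I}_n}\wt U_i$. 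Here $\wh g_\beta=\wh q_\beta/(1-\wh q_\beta)$. I take the algorithm's mean to be implicitly weighted by $(1-\wt U_i)$, which is what matches $C(\cdot)$. I will show $\sup_{\beta\in[\beta_0,\beta_1]}|\wh\varphi_n(\beta)-\varphi(\beta)|\stackrel{\P}{\to}0$ for every compact $[\beta_0,\beta_1]\subset(0,1)$ and $\wh c_n\stackrel{\P}{\to}\E[U]$. A standard argument then finishes: pick $\beta_0<\alpha^*<\beta_1$ with $\varphi(\beta_0)<\E[U]-\eta<\E[U]+\eta<\varphi(\beta_1)$. With probability tending to one, $\wh\varphi_n(\beta_0)<\wh c_n<\wh\varphi_n(\beta_1)$, so the bisection keeps a bracket inside $[\beta_0,\beta_1]$. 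Strict monotonicity of $\varphi$ plus uniform closeness forces any approximate root there to lie within $\delta(\eta)$ of $\alpha^*$, where $\delta(\eta)\to 0$ as $\eta\to 0$.

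The steps, in order, are as follows.

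(a) Show $\wh c_n\to\E[U]$. Apply Proposition \ref{p:contiguity-restatement} to $T_k=|k^{-1}\sum_{i\le k}U_i-\E[U]|$, which tends to $0$ in probability by the LLN under $P_\infty$. Proposition \ref{p:decpupage} handles the random $K(n)$, and \eqref{eq:CV_rate_UC_theorem-new} supplies the TV rate.

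(b) Split the constraint error. Write
$$
\wh\varphi_n(\beta)-\varphi(\beta)=\frac{1}{K(n)}\sum_{i}(1-\wt U_i)\bigl(\wh g_\beta-g_\beta\bigr)(\wt\Theta_i)+\Bigl[\frac{1}{K(n)}\sum_i(1-\wt U_i)g_\beta(\wt\Theta_i)-\varphi(\beta)\Bigr].
$$
The first term is bounded by $\sup_{\beta\in[\beta_0,\beta_1]}\|\wh g_\beta-g_\beta\|_{\infty,S_\Theta}$. The hypothesis of uniform quantile consistency on compacts, together with $\max_{\theta\in S_\Theta}q_{\beta_1}(\theta)<1$ (continuity on the compact $S_\Theta$), makes $\Psi(q)=q/(1-q)$ uniformly continuous there. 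So this term tends to $0$, exactly as in the proof of \eqref{e:thm:univ_consitency-0}. The sample points $\wt\Theta_i$ lie in $S_\Theta$ only up to a set of vanishing $P_{t_n}$-mass, by the TV convergence.

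(c) Control the second term uniformly in $\beta$ by a Glivenko--Cantelli argument. The class $\{(u,\theta)\mapsto(1-u)g_\beta(\theta):\beta\in[\beta_0,\beta_1]\}$ is monotone in $\beta$, has a continuous mean, and is dominated by the integrable envelope $(1-u)g_{\beta_1}$. A finite bracketing grid in $\beta$, combined with pointwise LLNs and monotonicity, therefore gives uniform convergence under $P_\infty$, as in the classical proof of Glivenko--Cantelli. The result transfers to the exceedance sample via Proposition \ref{p:contiguity-restatement}, with $T_k$ taken as the sup-deviation.

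The main obstacle is step (b)/(c) at the boundaries. Bisection starts at $a_0\approx 0$ and $a_1\approx 1$, where $g_\beta$ blows up and no uniform control holds. I would handle this as sketched above: never claim convergence near the endpoints, and only show that with high probability the bracket is trapped in a fixed compact $[\beta_0,\beta_1]$. That needs only the two inequalities at $\beta_0$ and $\beta_1$, plus the monotonicity of $\beta\mapsto\wh g_\beta$. For the latter, I would use that the weighted forest quantiles are monotone in $\beta$ by construction, as weighted empirical quantiles.
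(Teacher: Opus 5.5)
Your proof is correct, given the two readings you state explicitly, but it follows a different route from the paper.

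\textbf{The paper's argument.} It works directly on the bisection path. Once the first $k-1$ comparisons are correct, the $k$-th midpoint is a fixed dyadic point. The $k$-th comparison can then be wrong only if the estimation error at that point exceeds the population gap $|q(\alpha_k)-q(\alpha_{opt})|$. A union bound over the first $n$ steps, together with a diagonal choice of sample size $l_n$, gives $\P(|\alpha_n-\alpha_{opt}|\ge 2^{-(n-1)})\le 1/n$.

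What this buys: an explicit accuracy per iteration. Structurally it needs consistency only at the finitely many midpoints visited, and no monotonicity of $\beta\mapsto\wh q_\beta$. What it costs: the number of iterations must be tied to the sample size at an unspecified rate, set by the vanishing gaps near $\alpha_{opt}$. It is also phrased for an abstract criterion ``$q(\alpha)$''. It sets aside the empirical target ${\rm mean}(U)$, the role of $\theta$, and the exceedance sampling.

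\textbf{Your argument.} Your Z-estimation argument decouples the tolerance from the sample size: any $\epsilon_n\to 0$ works. It handles the statistics the algorithm actually computes, and transfers them to the peaks-over-threshold sample through Proposition~\ref{p:contiguity-restatement}.

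It also exposes a real issue. The algorithm's ${\rm mean}(g(a,\Theta))$ must carry the weights $1-U_i$. Unweighted, it targets the root of $\E[g_\beta(\Theta)]=\E[U]$, which in general differs from $\alpha^*$. In the Pareto--Dirichlet model, where $g_\beta$ is constant, it gives $g\equiv\E[U]$ instead of $\E[U]/(1-\E[U])$.

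The price is that you need two things the lemma does not literally grant:
\begin{itemize}
\item monotonicity of $\beta\mapsto\wh q_\beta$;
\item consistency uniform in $\theta$ (Assumption~\ref{assum:conv_quantile_estim}(iii)), which is stronger than the lemma's pointwise-in-$\theta$ wording. It is, however, in force wherever the lemma is applied.
\end{itemize}

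\textbf{Two minor remarks.}
\begin{itemize}
\item Given monotonicity, your step (c) is unnecessary. The bisection invariant $\wh\varphi_n(a_0)<\wh c_n\le\wh\varphi_n(a_1)$, combined with $\wh\varphi_n(\beta_0)<\wh c_n<\wh\varphi_n(\beta_1)$, forces $a_0<\beta_1$ and $a_1>\beta_0$. So convergence at two fixed levels $\beta_0<\alpha^*<\beta_1$, taken arbitrarily close to $\alpha^*$, already suffices.
\item In step (b), a vanishing $P_{t_n}$-mass outside $S_\Theta$ is not enough by itself, because $\wh g_\beta$ is uncontrolled off $S_\Theta$. What you need is that, with probability tending to one, no exceedance angle falls outside $S_\Theta$. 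This holds because that probability is at most $n\P[\tau(Y,X)>t_n]\,\|P_{t_n}-P_\infty\|_{\rm TV}\to 0$ by \eqref{eq:CV_rate_UC_theorem-new}. Equivalently, bound the whole statistic under $P_\infty$ and transfer it by contiguity.
\end{itemize}
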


\begin{proof}[Proof of Lemma \ref{lem:conv_alpha}]
For $n \geq 2$ we can write, 
\begin{align*}
\left\{|\alpha_{opt}-\alpha_n|\leq \frac{1}{2^{n-1}}\right\}=&\left\{|\alpha_{opt}-\alpha_{n-1}|\leq \frac{1}{2^{n-2}}\right\}\bigcap \left\{\exists \ l_0>0 | \forall l>l_0,\frac{q(\alpha_{opt})-\hat q_l(\alpha_n)}{q(\alpha_{opt})-q(\alpha_n)}>0\right\}\\
=&\left\{|\alpha_{opt}-\alpha_{2}|\leq 1\right\}\bigcap\left(\bigcap_{k=2}^n \left\{\exists \ l_0>0 | \forall l>l_0,\frac{q(\alpha_{opt})-\hat q_l(\alpha_k)}{q(\alpha_{opt})-q(\alpha_k)}>0\right\}\right)\\
=&\bigcap_{k=2}^n \left\{\exists \ l_0>0 | \forall l>l_0,\frac{q(\alpha_{opt})-\hat q_l(\alpha_k)}{q(\alpha_{opt})-q(\alpha_k)}>0\right\}.\\.
\end{align*}
In the previous equalities, since the number of intersections is finite, we can always use the always integer $l_0$ in each sets. Hence, taking the complementary in the previous equality, we get 
\[
\mathbb{P}\left(|\alpha_{opt}-\alpha_n|\geq \frac{1}{2^{n-1}}\right)\leq\sum_{k=2}^n\mathbb{P}\left(\exists \ l_0>0 | \forall l>l_0,\frac{q(\alpha_{opt})-\hat q_l(\alpha_k)}{q(\alpha_{opt})-q(\alpha_k)}<0\right).
\]
Noticing that, for all $k$, 
\[
\left\{\exists \ l_0>0 | \forall l>l_0,\frac{q(\alpha_{opt})-\hat q_l(\alpha_k)}{q(\alpha_{opt})-q(\alpha_k)}<0\right\}\subset \left\{\exists \ l_0>0 | \forall l>l_0,|\hat q_l(\alpha_k)-q(\alpha_k)| >|q(\alpha_k)-q(\alpha_{opt})|\right\},
\]
since $\hat q_l$ converge uniformly in probability on $(0,1)$, there exists $l_n\in \mathbb{N}$ such that, for all $l >l_n$
\[
\mathbb{P}\left(|\hat q_l(\alpha_k)-q(\alpha_k)| >|q(\alpha_k)-q(\alpha_{opt})|\right)\leq \frac{1}{n(n-2)},\quad \forall \ 2\leq k \leq n.
\]
This leads to 
\[
\mathbb{P}\left(|\alpha_{opt}-\alpha_n|\geq \frac{1}{2^{n-1}}\right)\leq \frac{1}{n}.
\]
\end{proof}

\section{An application to Solar flare prediction} \label{sec:solar-flares}

In this section, we apply the general optimal homogeneous prediction methodology described in 
the previous section to an important open problem from space physics
\citep[see, e.g.][ and the references therein]{bobra2015sola,verma:stoev:chen:2024}.  

In the following, the responses $\{Y(t),\ t=1,2,\cdots\}$ come from a curated X-ray flux time series derived from 
a collection up to 15 different GOES satellite measurements (Figure \ref{fig:flux}). The covariates 
$X_1(t),\cdots,X_d(t),\ t=1,2,\ldots$ are time series of $d=22$ statistics referred to as {\tt SHARP} 
time series. They are derived from solar disk images and capture fundamental physical characteristics of the active 
regions (sun spots) which are believed to drive the formation of solar flares  \citep[][]{bobra2014theh,Bobra2021}.  

The objective is to predict {\em extreme} flux events $\{Y(t+h)>y_0\}$ at some future time period $t+h$, 
based on $\{X_i(t),\ i=1,\cdots,d\}$. The events when the X-ray flux exceeds high thresholds are known as Solar 
flares and they are classified in several severity classes \cite{NOAA_GOES_Xray}.  We will focus on predicting the most extreme 
M- and X-class flares, which correspond to X-ray fluxes exceeding $10^{-5}\ W/m^2$ and $10^{-4}\ W/m^2$, respectively (Figure \ref{fig:flux}). \\

\begin{figure}
    \centering
    \includegraphics[width=0.45\linewidth]{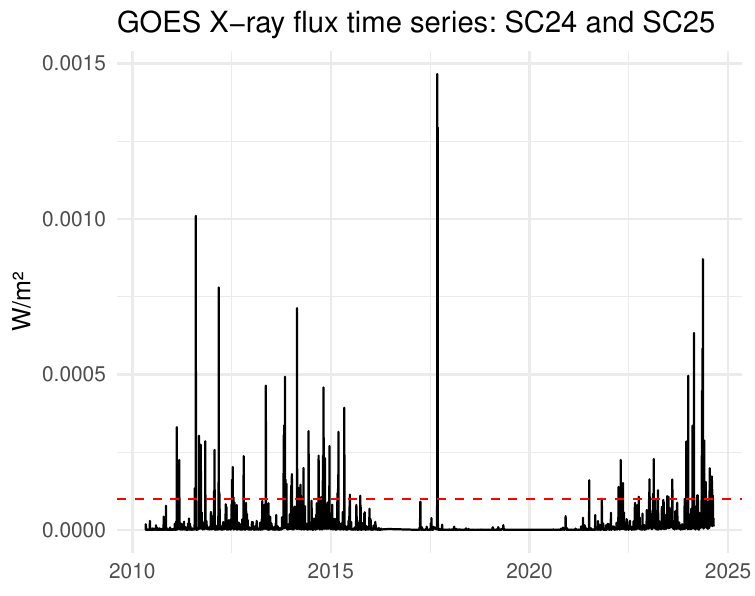}
    \caption{Time series of maximum daily X-ray flux (in Watts per square meter).  The data is based on the highest quality measurements obtained from merging observations from 
    the GOES satellites \citep[][]{NOAA_GOES_Xray}. It spans two solar cycles (SC24 \& SC25). The horizontal line marks the X-class flare threshold ($10^{-4}\ W/m^2$).
    }
    \label{fig:flux}
\end{figure}

{\bf Standardization.} For both the response and the predictors, we consider 
non-overlapping block maxima over a window $w$ corresponding to $24$ hours:
$$
Y^{(w)} (t):= \max_{ w(t-1)+1 < s\le wt} Y(s)\ \ \mbox{ and }\ \ X_{j}^{(w)}(t):=
\max_{w(t-1)+1 < s\le wt} X_j(s).
$$
We then construct estimates $\wh F_{Y^{(w)}}$ and $\wh F_{X_j^{(w)}}$ 
of $Y^{(w)}(t)$ and $X_j^{(w)}(t)$ based on a {\em training sample} 
$t\in {\cal D}_{\rm train}$.  Tail extrapolation is used to ensure that 
the CDFs have unbounded supports and define the training sample with standardized 
approximately $1$-Pareto marginals:
$$
\wt Y(t) := \frac{1}{1-\wh F_{Y^{(w)}}(Y^{(w)}(t))} \ \ \mbox{ and }\ \ 
\wt X_j(t) := \frac{1}{1-\wh F_{X_j^{(w)}}(X_j^{(w)}(t))},
$$
for all $t\in {\cal D}_{\rm train}$, and $j=1,\cdots,d$.

Then, under the assumption that the so-standardized vectors
$(\wt Y(t+h), \wt X_1(t),\cdots, \wt X_d(t)), t, t+h\in {\cal D}_{\rm train}$ are 
realizations from a jointly regularly varying vector $(Y,X)$, we apply the inference
methodology of Section \ref{sec:inference}.  Specifically, we select the threshold
$r$ as a high quantile of the sample of norms, and apply the quantile random forest
to estimate the optimal homogeneous predictor $\wh h$.\\

{\bf Prediction.} Given a test sample ${\cal D}_{\rm test}$, we compute
$$
\wh Y(t+h) := \wh h( \wt X(t)),\ t, t+h\in {\cal D}_{\rm test},
$$
where $\wt X(t) = (\wt X_j(t),\ j=1,\cdots,d)$ are standardized as:
$$
\wt X(t): =\frac{1}{\wh F_{X_j^{(w)}} (X_j^{(w)}(t))},\ t\in {\cal D}_{\rm test}.
$$
Recall that $ \wh F_{X_j^{(w)}} $ is an estimate of the CDF of $X_j^{(w)}$ obtained 
from the training sample.

We predict a certain type of flare whenever $\wh Y(t+h)$ exceeds a threshold. Specifically,
we let $\hat p := \wh F_{Y^{(w)}}(u)$ and produce the binary predictors:
$$
I_t(h):= I\{\wh Y(t+h) > 1/(1-\wh p) \}.
$$
This choice, under the assumption of stationarity, ensures the approximate 
calibration $\E[ I_t(h)] \approx \P[ Y^{(w)}(t+h)> u]$.  More precisely, 
for the  M- and X-class flares, we use the thresholds $u=10^{-5}W/m^2$ and 
$u = 10^{-4} W/m^2$, respectively.  The X-class flares are among the most extreme and rare
events, which can lead to major magnetic storms. While less severe, the M-class flares can 
also cause major disruption of Earth's ionosphere and lead to satellite and electric outages.\\

\begin{figure}
    \centering
    \includegraphics[width=0.45\linewidth]{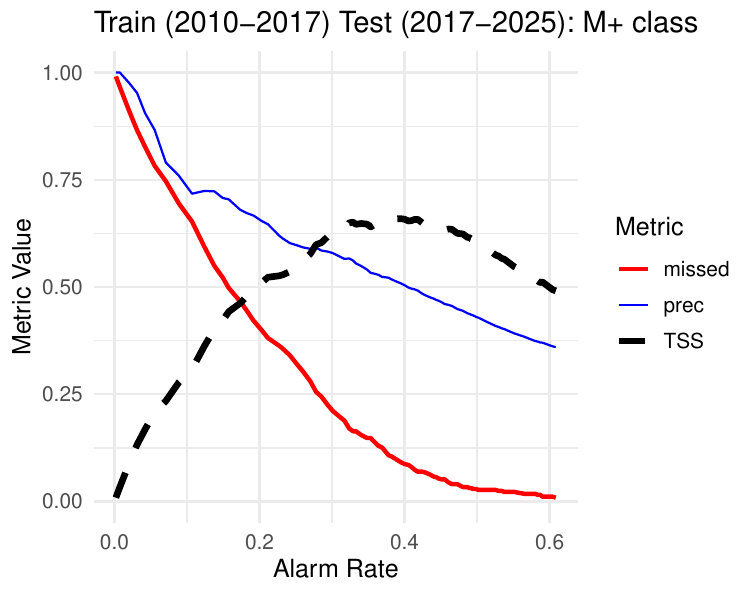}
    \includegraphics[width=0.45\linewidth]{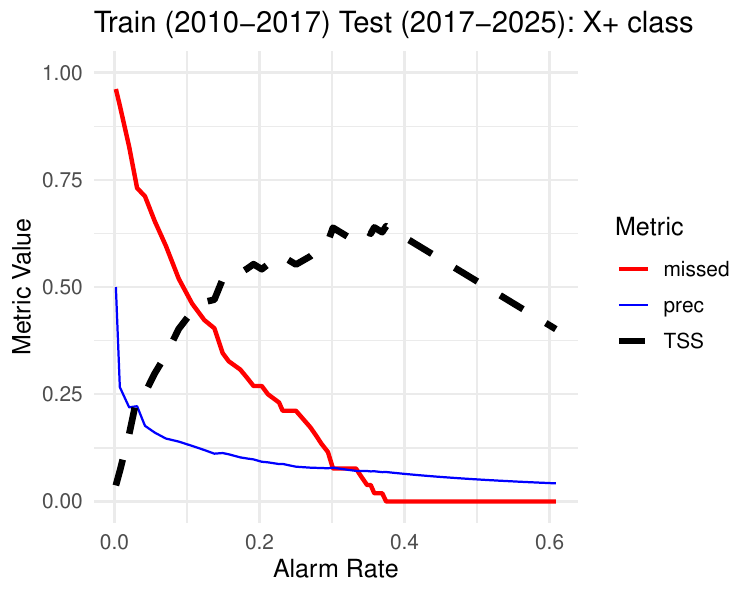}
    \caption{Evaluation metrics for the prediction of M-class (left panel) and 
    X-class (right panel) solar flare events.  The homogeneous predictor was 
    based on 24-hour block-maxima of the SHARP time series covariates
    {\tt TOTUSJH}, {\tt AREA\_ACR},  {\tt ABSNJZH}, {\tt SAVNCPP},
    {\tt TOTPOT}, and a 24-hour lagged flux. The response is the 24-hour 
    block-maximum flux time series.  We used the period of 2010--2017 
    with $n_{\rm train} =1,941$ observations for training 
    and tested on the period 2017-2025 with $n_{\rm test} = 2,018$ observations. 
    The radius threshold was chosen as the $0.97$-th quantile of the empirical distribution and
    the quantile random forest was built using $2000$ trees, i.e., the {\tt ranger} 
    function was called with parameter {\tt num.trees=2000}. }
    \label{fig:solar-flare-prediction} 
  
\end{figure}

{\bf Results.} Figure \ref{fig:solar-flare-prediction} shows a simple, nearly
tuning-free application of the optimal homogeneous prediction
methodology to Solar flare prediction.  Specifically, the homogeneous predictor 
was trained over the period (2010-2017) and applied to forecasting M- and X-class
flares over the period (2017-2025).  Confusion matrices for the binary forecasts 
were used to obtain various metrics displayed on the plots.  Specifically,
letting TP, FP, TN, FN denote true-positive, false-positive, true-negative, and 
false-negative counts of the predictions, we obtain the standard metrics:
$$
{\rm prec} = \frac{TP}{TP+FN}, \ \ \text{TSS} = 
\frac{\mathrm{TP}}{\mathrm{TP} + \mathrm{FN}} - 
\frac{\mathrm{FP}}{\mathrm{FP} + \mathrm{TN}}, \ \mbox{ and }\ 
{\rm missed} = \frac{FN}{TP+FN}.
$$
These metrics depend on the chosen quantile $\tau_p$ for the variable 
$\wh Y=h(\wt X)$, which ultimately determines the alarm rate:
$$
 {\rm alarm} = \frac{{\rm TP} + {\rm FP }}{{ n_{\rm test}}},
$$
where $n_{\rm test}$ is the size of the test sample.

As the alarm rate decays, generally, the precision increases but so does the 
proportion of missed events. Observe that the popular TSS statistic can be 
non-informative or at best misleading.  Indeed, very high values of TSS may 
be associated with quite poor precision and hence quite high false-alarm rate.  
On the other hand, well-calibrated predictors with alarm rate equal to the 
event rate may have quite low TSS values. Nevertheless, the peak of the TSS
values seem to be aligned with an elbow in the missed rate, so it happens to
be a good heuristic in this case.

In principle, however, in addition to TSS, we advocate for examining 
the precision and missed metrics, as a function of the alarm rate. The
practitioner may choose to settle for an acceptable alarm rate that 
achieves good precision and low missed rate that may or may not correspond to 
high TSS.  Optimizing a flare prediction method for achieving high TSS without 
considering calibration or alarm rates could be misleading.
This said, the TSS values achieved by the generic optimal homogeneous predictor 
methodology are surprisingly high.  While the best M-class flare forecasting machines
claim to achieve TSS rates as high as $0.7$ (or sometimes $0.8$ in carefully 
designed studies), our method reaches $0.66$ with 
{\em hardly any tuning} at all, should one look at a suitable 
alarm rate of about $30\%$.  At the same time the existing
X-class flare forecasting methods barely reach TSS as high as $0.5$ in 
operational settings (i.e., without off-line post-processing of the data, which 
can often induce hard-to-control leakage of information from the testing 
data into the training data).  
The maximum TSS values for our method of predicting X-class flares is $0.64$, 
comfortably surpassing the coveted state-of-the-art threshold, should one 
contend with alarm rate of $40\%$.  While we do not claim to have achieved
close to optimal forecasts, this brief analysis demonstrates the great potential of
the optimal homogeneous prediction methodology in the solar flare forecasting 
challenge.

\end{document}